\documentclass[11pt]{article}
\usepackage{a4wide}

\usepackage{amssymb}
\usepackage{amsmath}
\usepackage{amsthm}
\usepackage{tikz-cd}
\usepackage{mathrsfs}
\usepackage{graphicx} 
\usepackage[font=small]{subcaption}

\usepackage{mathtools}
\usepackage{hyperref}
\usepackage{enumerate}

\usepackage{thmtools}
\usepackage{thm-restate}
\usepackage{verbatim}
\usepackage{romannum}

\usepackage{latexsym}
\usepackage{amscd}
\usepackage{graphics}
\usepackage{amsmath}
\usepackage{amssymb}
\usepackage{esint}
\usepackage{mathrsfs}
\usepackage{bbm}
\usepackage{color}
\usepackage{accents}

\newtheorem{theorem}{Theorem}[section]
\newtheorem{prop}[theorem]{Proposition}
\newtheorem{definition}[theorem]{Definition}
\newtheorem{cor}[theorem]{Corollary}
\newtheorem{lemma}[theorem]{Lemma}

\newtheorem{remark}[theorem]{Remark}

\renewcommand{\ker}{\operatorname{ker}}

\renewcommand{\dim}{\operatorname{dim}}

\newcommand{\ind}{\operatorname{ind}}

\newcommand{\Crit}{\mathrm{Crit}}
\newcommand{\PD}{\mathrm{PD}}
\newcommand{\ev}{\mathrm{ev}}
\newcommand{\m}{\mathfrak{m}}

\renewcommand{\H}{\mathrm{H}}
\newcommand{\FC}{\mathrm{FC}}
\newcommand{\FH}{\mathrm{FH}}

\newcommand{\SH}{\mathrm{SH}}
\newcommand{\CZ}{\mathrm{CZ}}
\newcommand{\RFH}{\mathrm{RFH}}

\newcommand{\QC}{\mathrm{QC}}
\newcommand{\QH}{\mathrm{QH}}
\newcommand{\RFC}{\mathrm{RFC}}

\newcommand{\R}{\mathbb{R}}
\newcommand{\Z}{\mathbb{Z}}
\newcommand{\D}{\mathbb{D}}
\newcommand{\N}{\mathbb{N}}
\newcommand{\C}{\mathbb{C}}
\newcommand{\Hom}{\mathrm{Hom}}
\newcommand{\RS}{\mathrm{RS}}
\newcommand{\HW}{\mathrm{HW}}

\newcommand{\wind}{\mathrm{wind}}

\newcommand{\Pin}{\mathrm{Pin}^{\pm}}

\newcommand{\w}{\mathfrak{w}}
\newcommand{\Cv}{C([\mathbf{v}])}
\newcommand{\Cw}{C([\mathbf{w}])}
\newcommand{\overbar}[1]{\mkern 1.5mu\overline{\mkern-1.5mu#1\mkern-1.5mu}\mkern 1.5mu}

\DeclareFontFamily{U}{mathx}{\hyphenchar\font45}
\DeclareFontShape{U}{mathx}{m}{n}{
	<5> <6> <7> <8> <9> <10>
	<10.95> <12> <14.4> <17.28> <20.74> <24.88>
	mathx10
}{}
\DeclareSymbolFont{mathx}{U}{mathx}{m}{n}
\DeclareFontSubstitution{U}{mathx}{m}{n}
\DeclareMathAccent{\widecheck}{0}{mathx}{"71}
\DeclareMathAccent{\wideparen}{0}{mathx}{"75}

\numberwithin{equation}{section}
\AtEndDocument{\bigskip{\footnotesize
\noindent  \textsc{Chonnam National University, Department of Mathematics Education, 61186 Gwangju, South Korea} \par  
\noindent  \textit{E-mail address}: \texttt{\href{mailto:hanwool.bae@jnu.ac.kr}{hanwool.bae@jnu.ac.kr}} \par
  \addvspace{\medskipamount}
\noindent\textsc{Seoul National University, Department of Mathematical Sciences, Research Institute in Mathematics, 
	08826 Seoul, South Korea} \par  
 \noindent \textit{E-mail address}: \texttt{\href{mailto:jungsoo.kang@snu.ac.kr}{jungsoo.kang@snu.ac.kr}} \par
   \addvspace{\medskipamount}
\noindent  \textsc{The Institute of Geometry and Physics, University of Science and Technology of China, 96 Jinzhai Road, Hefei Anhui, 230026, China} \par  
\noindent  \textit{E-mail address}: \texttt{\href{mailto:sunghokim@ustc.edu.cn}{sunghokim@ustc.edu.cn}} \par
}}

\title{Rabinowitz Floer homology for Legendrian submanifolds in prequantization bundles}
\author{Hanwool Bae, Jungsoo Kang, Sungho Kim}
\date{}

\begin{document}
	\pagenumbering{arabic}
	
	\maketitle
	\begin{abstract}
	Let $Y$ be a prequantization bundle over an integral symplectic manifold $(\Sigma,\omega)$. 
	Let $L$ be a closed monotone Lagrangian submanifold that admits a Legendrian lift $\mathcal{L}$ in $Y$. Under the assumption that the minimal Maslov number $N_L$ of $L$ is greater than 2, we define the Rabinowitz Floer homology of $\mathcal{L}$. We then establish an isomorphism between the $\Z_d$-equivariant Rabinowitz Floer homology of $\mathcal{L}$ and the quantum homology of $L$, where $d$ is the degree of the covering map $\mathcal{L}\to L$. Under a more restrictive condition on $N_L$, we show that this map is a ring isomorphism. Using this isomorphism, we compute the quantum homology ring of Lagrangian spheres in quadrics and two-step flag manifolds. Furthermore, we investigate the implications of the quantum invertibility of $\omega$ for the vanishing of the quantum homology of $L$ and the obstructions to topologically simple fillings of $\mathcal{L}$. We also show that if $(\Sigma,\omega)$ admits a polarization and $L$ is disjoint from the Lagrangian trace, the quantum homology of $L$ vanishes. 	
	\end{abstract}
	\setcounter{tocdepth}{1}

	\tableofcontents
	\section{Introduction}\label{sec:Intro}
	
	Let $(\Sigma,\omega)$ be a closed, connected symplectic manifold. We assume that the cohomology class $[\omega]\in \H^2(\Sigma;\R)$ admits an integral lift $[\omega]_\Z\in  \H^2(\Sigma;\Z)=\H^2(\Sigma)$. For notational simplicity, we omit the coefficient ring when it is $\Z$. We consider the prequantization bundle $\pi:Y\to \Sigma$ with the Euler class $-[\omega]_\Z$ and a connection $1$-form $\alpha$ satisfying $d\alpha=\pi^*\omega$. Note that $\alpha$ is a contact form and uniquely determined up to strict contactomorphism. 
Let $L$ be a closed, connected Lagrangian submanifold of $(\Sigma,\omega)$. Let $\mu_L\in\H^2(\Sigma,L)$ denote the Maslov class of $L$. We assume that $L$ is monotone, namely, $\mu_L$ and $\omega$ are positively proportional on $\pi_2(\Sigma,L)$. In addition, we impose the following conditions.
\begin{enumerate}[(C1)]
	\item The holonomy representation of $L$ with respect to $\alpha$ equals $\Z_d\subset S^1$ for some $d\in\N$. 
	\item The minimal Maslov number $N_L$ of $L$ satisfies $N_L>2$.
\end{enumerate}
	If, for example, $\omega(\pi_2(\Sigma))\neq 0$ and the composition of the boundary map and the Hurewicz map $\pi_2(\Sigma,L)\to\pi_1(L)\to\H_1(L)$ is surjective, then (C1) holds with $d=\frac{2c_\Sigma}{\gcd(2c_\Sigma,N_Lm_\Sigma)}$, where $c_\Sigma$ is the minimal Chern number of $\Sigma$ and $m_\Sigma$ is the nonnegative generator of $\omega(\pi_2(\Sigma))$. In particular, if $\H_1(L)$ is trivial, then $d=1$. Alternatively, (C1) is satisfied if $L$ is homologically monotone; see Section  \ref{sec:lift}.
	Under condition (C1), $L$ admits a Legendrian lift $\mathcal{L}\subset (Y,\ker\alpha)$ such that $\pi|_{\mathcal{L}}:\mathcal{L}\to L$ is a $d$-fold covering map. 
 
We denote by $L^\sigma$ the Lagrangian $L$ equipped with a relative $\Pin$-structure $\sigma$. We also write $\mathcal{L}^{\tilde\sigma}$ for $\mathcal{L}$ equipped with the corresponding lift $\tilde\sigma$ of $\sigma$; see Remark \ref{rem:pin_lift} for the precise construction. In this paper, we study the (Lagrangian) Rabinowitz Floer homology $\RFH_*(\mathcal{L}^{\tilde{\sigma}})$, whose chain-level generators are contractible generalized Reeb chords on $(Y,\mathcal{L})$ and boundary operators are defined by counting Floer strips in the symplectization $(\R\times Y,\R\times \mathcal{L})$.
It is worth noting that we do not assume the existence of a Liouville filling of $Y$ or a Lagrangian filling of $\mathcal{L}$. Instead, condition (C2) ensures the necessary compactness properties. From the perspective of augmentations, this corresponds to the existence of trivial augmentations for both periodic Reeb orbits and chords, see Remark \ref{rem:inde_nondeg}.
In the presence of fillings of $Y$ and $L$, this condition can be relaxed to $N_L\geq2$, as discussed in Section \ref{sec:filling}.

\subsection{Main results}
\begin{theorem}\label{thm:isom_transfer}
	Let $(\Sigma,L^\sigma)$ and $(Y,\mathcal{L}^{\tilde{\sigma}})$ be as above. 
	\begin{enumerate}[(a)]
		\item The Rabinowitz Floer homology $\RFH_*(\mathcal{L}^{\tilde{\sigma}})$ and its $\Z_{d}$-equivariant counterpart $\RFH_*^{\Z_{d}}(\mathcal{L}^{\tilde{\sigma}})$ are well-defined as $\Z$-modules. Here, the $\Z_d$-action is induced by the deck transformations of the covering map $\pi|_{\mathcal{L}}:\mathcal{L}\to L$.
		\item There exists a $\Z$-module isomorphism between $\RFH_*^{\Z_{d}}(\mathcal{L}^{\tilde{\sigma}})$ and the quantum homology $\QH_*(L^\sigma)$ of $L^\sigma$.
		\item Assume that $N_L>\max\{\frac{1}{2}(\dim L+1),2\}$. Then, $\RFH_*(\mathcal{L}^{\tilde{\sigma}})$ and $\RFH_*^{\Z_{d}}(\mathcal{L}^{\tilde{\sigma}})$ are well-defined as rings, and the latter is isomorphic to $\QH_*(L^\sigma)$ as a ring.
	\end{enumerate}
\end{theorem}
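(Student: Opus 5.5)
The plan is to use a Morse--Bott model for the Rabinowitz Floer complex and to reduce everything to a pearl-type (Biran--Cornea) complex for $L$.

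\textbf{Setup and part (a).} Since $\alpha$ is a connection form with $d\alpha=\pi^*\omega$, the Reeb flow is the $S^1$-action on the fibres. Hence contractible generalized Reeb chords from $\mathcal{L}$ to itself come in Morse--Bott families. For each period $\eta$ compatible with the holonomy group $\Z_d$, the chords of period $\eta$ are parametrized by their starting points, so each family is diffeomorphic to $\mathcal{L}$. The period-zero family (constant chords) is also a copy of $\mathcal{L}$.

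I will set up a cascade complex. Its generators are critical points of a Morse function on each family, and its differential counts gradient flow lines interspersed with finite-energy holomorphic strips in $(\R\times Y,\R\times\mathcal{L})$. For the cylindrical almost complex structure lifted from $J$ on $\Sigma$, such strips project to $J$-holomorphic discs in $(\Sigma,L)$, possibly with corners at Reeb chords. Conversely, a disc $u$ lifts to a strip whose asymptotic period shift is governed by $\omega(u)$, and hence, by monotonicity, by $\mu_L(u)$.

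Well-definedness ($d^2=0$ and invariance) then rests on compactness. Breaking off punctured planes asymptotic to periodic orbits, or half-planes asymptotic to chords, with no filling to cap them, would be a codimension-one phenomenon only if the bubble carries Maslov index $\le 2$. Condition (C2), $N_L>2$, rules this out by the grading computation: the Conley--Zehnder index of a chord of period $\eta$ shifts by an amount proportional to $N_L$ times the number of fibre windings. The $\Z_d$-action by deck transformations commutes with all these data if the Morse functions and $J$ are chosen $\Z_d$-invariantly, or alternatively one uses a Borel-type construction with $\Z$-coefficients. This gives $\RFH^{\Z_d}_*$. Orientations come from the lifted relative $\Pin$ structure $\tilde\sigma$ of Remark~\ref{rem:pin_lift}.

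\textbf{Part (b).} Passing to the $\Z_d$-quotient, each Morse--Bott family becomes a copy of $L$, indexed by an integer $k$ that records multiples of the minimal period. The equivariant complex is then $C^{\mathrm{Morse}}_*(L)\otimes\Z[T,T^{-1}]$, with $T$ of degree $-N_L$. The cascade differential counts pearly trajectories: Morse flow segments on $L$ joined by holomorphic discs, where a disc of Maslov index $kN_L$ shifts the $T$-exponent by $k$. This is exactly the pearl complex computing $\QH_*(L^\sigma)$ with Laurent coefficients.

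To make this rigorous I will construct a chain map by counting hybrid objects, namely the lifts of pearly trajectories, and show that it is an isomorphism by an energy/action filtration argument. With respect to the period filtration the map is upper triangular, and its diagonal part is the identity on Morse complexes. The \textbf{main obstacle} is the bijection, including signs and transversality, between lifted strips and discs with boundary on $L$. A $d$-fold cover issue arises here: a disc lifts to a strip ending at a chord whose endpoint lies on a translate of $\mathcal{L}$ under a deck transformation. This is precisely why one must take $\Z_d$-equivariant homology rather than $\RFH_*$ itself.

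\textbf{Part (c).} The product on $\RFH$ will be defined by counting pairs of pants, i.e.\ Floer triangles, in the symplectization, in the Morse--Bott version matching the pearl product on $\QH$. The additional difficulty is compactness for the three-punctured moduli spaces. Bubbling must have codimension at least $2$, and the relevant index inequality is the stronger condition $N_L>\tfrac12(\dim L+1)$. This guarantees that any configuration with a nonconstant disc bubble or a broken chord has negative virtual dimension in the relevant degree range.

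Granting this, I will show that the chain isomorphism of (b) intertwines products by a degeneration (neck-stretching) argument. Triangles in $\R\times Y$ project to pearly Y-shaped configurations in $\Sigma$, by the same lifting correspondence used in (b). Hence the isomorphism is multiplicative and unital, with the unit coming from the fundamental class of the period-zero family.
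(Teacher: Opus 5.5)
Your overall architecture matches the paper's. The ingredients are the same: Morse--Bott families of chords identified with $\mathcal L$, the lifted pair $(f_{\mathcal L},Z_{\mathcal L})$, a cylindrical $J_Y$ with horizontal part $J_\Sigma$, projection of strips to pearly trajectories, the $\Z_d$-quotient, $N_L>2$ for compactness, and Floer triangles for the product. The gap is the step you yourself call the main obstacle. Projection to $\Sigma$ must be a \emph{bijection}, with matching transversality and orientations, between the moduli spaces defining the Rabinowitz differential (resp.\ product) and the pearl moduli spaces. That is the whole content of (b) and (c), and you give no argument for it.

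Your fallback does not avoid this. A hybrid chain map that is upper triangular for the period filtration needs exactly the same existence, uniqueness and regularity of lifts in order to be defined and shown to be a chain map.

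There is a second unaddressed bridge. Your cascade complex counts $J_Y$-holomorphic strips, whereas $\RFH_*$ is defined by Floer strips of $\vee$-shaped Hamiltonians, together with limits over $H$ and action windows. That these agree needs proof, as does the stabilization of the limits. In the paper the latter holds because continuation strips between equal-index generators are the unique trivial ones (Proposition \ref{prop:conti_unique}).

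Also, $\mathcal L$ is deck-invariant, so there is no issue of landing on a translate. A lift simply ends over $p$ on a possibly different sheet. The paper handles this by identifying $\RFC^{\Z_d}_*(\mathcal L^{\tilde\sigma})$ with the complex of $\mathcal L/\Z_d\subset Y/\Z_d$, reducing to $d=1$.

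The missing ideas are the following three.

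(i) Lifting is a scalar problem in the $\C^*$-fibres. After trivializing $w^*(\R\times Y)\cong\D\times\C^*$, the condition $\R\times\mathcal L$ prescribes the argument on the two boundary arcs. A lift is then the exponential of a holomorphic function whose argument part is the harmonic extension of that boundary data. Its real part (the harmonic conjugate) is unique up to an additive real constant, which is exactly the $\R$-translation. This gives existence and uniqueness of the lift with prescribed negative chord, and the relation $\w(c_p)-\w(c_q)=d\,\omega([w])$ (Lemma \ref{lem:hol_lift}).

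(ii) $J_Y$-strips correspond bijectively to Floer strips via $(a,u)=(b-e_1,\phi_R^{-e_2}\circ v)$ with $de_1-de_2\circ i+h'(e^{a+e_1})ds=0$. The linearization has index $0$ and is injective by the winding-number argument, since both asymptotic operators have $\mu_{\RS}=\frac12$ (Lemma \ref{lem:index}). A homotopy to an equation where the maximum principle forces $e_2\equiv0$ leaves an ODE with a unique solution (Proposition \ref{prop:correspondence_strip}).

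(iii) The same argument shows that the vertical part of every linearized operator is an isomorphism. Hence regularity of $J_\Sigma$ suffices (Proposition \ref{prop:transv_strip}). It also gives the vertical determinant line a canonical orientation, which is what makes $\tilde\sigma$-orientations match $\sigma$-orientations (Proposition \ref{prop:orientation_commute}).

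With these, the chain isomorphism is just the identification of generators. The triangle analogues (Lemma \ref{lem:hol_lift2}, Proposition \ref{prop:correspondence_triangle}) give (c) with no neck-stretching.

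For (c) you should also name the actual failure of compactness. A triangle can split into two pieces joined through a U-shaped $J_Y$-strip at the negative end. Its two contractible chords each have $\mu_{\RS}\ge N_L$, which gives $1\ge 2N_L-\dim L$ (Proposition \ref{prop:no_escape_triangle}). This is exactly where $N_L>\frac12(\dim L+1)$ enters.
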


If we work with $\Z_2$-coefficients, Theorem \ref{thm:isom_transfer} holds without assuming the existence of a relative $\Pin$-structure on $L$. If we use coefficients other than $\Z$, this will be indicated in the notation; otherwise, all homology groups are taken with $\Z$-coefficients.

Condition (C2)  can be weakened in some situations.  
Theorem \ref{thm:isom_transfer} remains valid under the weaker condition $N_L\geq 2$, provided that $Y$ and $\mathcal{L}$ admit topologically simple Liouville and exact Lagrangian fillings, respectively, and the lifted relative $\Pin$-structure $\tilde\sigma$ extends to the fillings; see Proposition \ref{prop:filling} and Proposition \ref{prop:RFH_well_defined} below. On the other hand, even when there are such fillings, if $\tilde\sigma$ does not extend to the fillings, then $\QH_*(L^\sigma)$ may not have any reasonable relation with the Rabinowitz Floer homology of $\mathcal{L}$; 
see Remark \ref{remark equator in two sphere_short}. 

As usual, $\RFH_*$ is invariant under changes of the contact form and Legendrian isotopy. Let $\alpha'$ be a contact form supporting the same contact structure as $\alpha$, and let $\mathcal{L}'$ be Legendrian isotopic to $\mathcal{L}$. To define $\RFH_*$ for $(\alpha',\mathcal{L}')$, we employ a pair consisting of a Liouville cobordism and an exact Lagrangian cobordism whose concave end is $(c\alpha,\mathcal{L})$ for a small $c>0$ and whose convex end is $(\alpha',\mathcal{L}')$. We then take the Floer homology for $\vee$-shaped Hamiltonians (see Section \ref{sec:LFC}) on the region associated with $(\alpha',\mathcal{L}')$ cf.~\cite[Section 9.5]{CO18}. Since the concave end is $(c\alpha,\mathcal{L})$, compactness is guaranteed by condition (C2) as mentioned above. 

\begin{remark}\label{rem:om=0}
	Assume that $\omega$ vanishes on $\H_2(\Sigma,L)$ and that the image of the inclusion-induced map $\H_1(L) \to \H_1(\Sigma)$ is torsion-free. Then, by Lemma \ref{lem:Leg_lift}, $L$ admits a Legendrian lift $\mathcal{L}$ with $d=1$, and Theorem \ref{thm:isom_transfer} yields the isomorphisms
	\[
	\RFH_*(\mathcal{L}^{\tilde{\sigma}})\cong \QH_*(L^\sigma)\cong\H_*(L; \mathcal{O})\cong \H^{\dim L-*}(L),
	\]
	where $\mathcal{O}$ denotes the local system of orientations. 
	Note that $L$ may be nonorientable, and we adopt the convention that  the quantum homology of $L$ recovers the singular cohomology of $L$ when there is no $J$-holomorphic disk. In this setting, by Proposition \ref{prop:contractible}, the chain-level generators of $\RFH_*(\mathcal{L}^{\tilde{\sigma}})$ are necessarily constant chords on $\mathcal{L}$.
\end{remark}

	The  homology $\RFH_*(\mathcal{L}^{\tilde{\sigma}})$ carries a $\Z[T,T^{-1}]$-module or  algebra structure, depending on the assumption on $N_L$, see Remark \ref{rem:RFH_laurent}. This structure corresponds to the Seidel representation adapted to the present setting, cf.~\cite{Ueb19}, and is compatible with the $\Z[T,T^{-1}]$-algebra structure on $\QH_*(L^\sigma)$, see Remark \ref{rem:laurent}.

Recently, the Frobenius algebra structure	on Rabinowitz Floer homology has been studied extensively in the seminal works of \cite{CHO25,CO22b}. It is natural to expect that the isomorphism in Theorem \ref{thm:isom_transfer} in fact holds at the level of Frobenius algebras. This question will be investigated elsewhere. We also would like to mention the related work \cite{KPS24}, where correspondences involving Fukaya categories  are investigated.

\subsection{Examples of computations of QH and RFH}
When $\mathcal{L}\to L$ is bijective,  the isomorphism $\RFH_*(\mathcal{L}^{\tilde{\sigma}}) \cong\QH_*(L^\sigma)$ allows one to compute either homology from the other. We present some examples of this computation.

Let $N$ be a Riemannian manifold all of whose geodesics are simple, closed and have the same length. Then the unit cotangent bundle $S^*N$ of $N$ is a prequantization bundle over the space $\Sigma_N$ of oriented closed geodesics, denoted by $\pi_{S^*N}:S^*N\to\Sigma_N$, and a unit cotangent fiber projects to a Lagrangian sphere $L_N:=\pi_{S^*N}(S^*_qN)$. We compute the quantum homology ring of $L_N$ when $N$ is $S^k$ or  $\mathbb{CP}^k$ using Theorem \ref{thm:isom_transfer}. 

In the case of $N=S^k$, the base symplectic manifold $\Sigma_N$ is the smooth quadric
\[
Q^{k-1} := \{ [z_0:\dots :z_{k}] \in \mathbb{CP}^{k} \mid z_0^2 +\dots +z_{k}^2=0\}
\]
equipped with the restriction of the Fubini-Study form $\omega_{\mathrm{FS}}$, and the Lagrangian sphere $L_N$ is 
\begin{equation}
	\label{eq:lagrangian_trace_s^k}
	L_{S^k} =\left\{ [i : x_1:\dots :x_k] \in \mathbb{CP}^{k} \mid  \sum_{j=1}^{k} x_j^2 =1, x_j\in \R\right\} \subset Q^{k-1}.
\end{equation}
We only consider the case $k\geq 3$ since $L_{S^2}$ is a great circle of $Q^1=\mathbb{CP}^1$ whose quantum homology is well-known. 
Since $\H^1(L_{S^k};\Z_2)=0$, there is a unique (absolute) $\mathrm{Spin}$-structure $\sigma_0$. The symplectic form $\omega_{\mathrm{FS}}|_{Q^{k-1}}$ defines a cohomology class of $\H^2(Q^{k-1};\Z_2)$. Let $\sigma_1$ be a unique relative $\mathrm{Spin}$-structure on $L_{S^k}$ with respect to this cohomology class.

We use the degree shift
$\QH_*(L_{S^k}^{\sigma})[k-1] := \QH_{*+k-1}(L_{S^k}^{\sigma})$
so that the product structure on $\QH_*(L_{S^k}^{\sigma})[k-1]$ has degree zero.

\begin{prop}(Cf.~\cite[Remark 36]{KS21a})
	There exists a ring isomorphism 
	\[
	\QH_*(L_{S^k}^{\sigma})[k-1]  \cong \mathbb{Z}[x,x^{-1}],
	\]
	for any $\sigma\in\{\sigma_0,\sigma_1\}$, where $x$ has degree $k-1$. 
\end{prop}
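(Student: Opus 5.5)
We will obtain the ring structure from Theorem \ref{thm:isom_transfer}(c), applied with $Y=S^*S^k$, $\Sigma=Q^{k-1}$, $\mathcal{L}=S^*_qS^k$ and $L=L_{S^k}$. The first task is to check the hypotheses. Since $k\ge 3$, $L_{S^k}\cong S^{k-1}$ is simply connected, so $\H_1(L)=0$, (C1) holds with $d=1$, and the Legendrian lift is the unit cotangent fiber itself. The quadric $Q^{k-1}$ is monotone with minimal Chern number $c_\Sigma=k-1$. Since $\pi_2(Q^{k-1},L)\to\pi_2$-level computations give Maslov index $2c_\Sigma=2(k-1)$ for the disks (half-spheres) bounded by $L$ (the standard computation: the long exact sequence with $\pi_1(L)=0$ shows $\pi_2(\Sigma)\to\pi_2(\Sigma,L)$ is onto up to the image of $\pi_2(L)$, and for $k=3$ one checks directly), we get $N_L=2(k-1)$. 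The bound needed in (c) is $2(k-1)>\max\{k/2,2\}$, which holds for $k\ge3$. Therefore $\QH_*(L^\sigma)\cong\RFH_*(\mathcal{L}^{\tilde\sigma})$ as rings, and it suffices to compute the Rabinowitz Floer homology of a cotangent fiber in $S^*S^k$ together with its product.

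Next we compute $\RFH_*(S^*_qS^k)$ using the round metric, which is Morse–Bott. Its generators are the constant chords, forming a copy of $S^{k-1}$, and the Reeb chords of each length $\pi m$ with $m\in\Z\setminus\{0\}$. Each such chord family is again an $S^{k-1}$, namely the geodesics from $q$ to $q$ or to $-q$. Contractibility is automatic, since $\pi_1(S^*S^k)=0$. The Morse–Bott index shifts between consecutive families are $k-1$, coming from the conjugate points of multiplicity $k-1$. Each family contributes $\H_*(S^{k-1})$ in degrees spaced by $k-1$, so the total complex has rank one in each degree $\equiv 0 \pmod{k-1}$ after the shift $[k-1]$. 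By index parity (all generators lie in degrees divisible by $k-1$ and consecutive classes differ by $k-1$), the differentials can at most connect adjacent classes. We will rule this out either by comparing with the known answer $\RFH$ of a cotangent fiber, which is the Rabinowitz wrapped homology of $T^*S^k$ (it fits in the long exact sequence with $\HW(T^*_qS^k)\cong \H_{-*}(\Omega S^k)$ and its dual), or more directly via the $\Z[T,T^{-1}]$-module structure of Remark \ref{rem:RFH_laurent}. Either way the result is free of rank one in each degree $(k-1)j$.

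The remaining step, which we expect to be the main obstacle, is the multiplicative structure: we must show that a degree $k-1$ generator $x$ is invertible and that its powers generate. We would use the compatibility with the Laurent structure (Remark \ref{rem:laurent}). Multiplication by $T$ is a ring automorphism shift of degree $N_L=2(k-1)$, given by the Seidel-type element from the $S^1$-action (the Reeb flow). This already makes the even-degree part equal to $\Z[T,T^{-1}]$. It remains to show that $x^2=\pm T$ for the degree $k-1$ class $x$ (the point class, or the class of the next chord family). On the quantum side, this is the count of Maslov $2(k-1)$ disks through two generic points of $L$. For the Lagrangian sphere in the quadric this count is $\pm1$ (cf.~\cite{KS21a}); alternatively, on the RFH side, the product of the two chord classes is computed by the unique half-geodesic configuration. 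Once $x^2=\pm T$ is established, after rescaling the sign the ring is $\Z[x,x^{-1}]$. The choice $\sigma\in\{\sigma_0,\sigma_1\}$ only affects signs, such as $x^2=\pm T$, and since $T$ is a unit either sign yields a ring isomorphic to $\Z[x,x^{-1}]$ by sending $x\mapsto x$ and absorbing the sign into $T$.
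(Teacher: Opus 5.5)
Your reduction to $\RFH_*(\mathcal{L}^{\tilde\sigma})$ via Theorem~\ref{thm:isom_transfer}(c) is fine, and so is your additive answer. In fact the additive part is immediate: the pearl complex of $L_{S^k}\cong S^{k-1}$ has one generator in each degree divisible by $k-1\geq 2$, so the differential vanishes for degree reasons. Your chord description is off, however. Geodesics from $q$ to $-q$ are not chords of $S^*_qS^k$. With $d=1$ there is one $S^{k-1}$-family of geodesic loops at $q$ for each winding number, and consecutive families are $N_L=2(k-1)$ apart, not $k-1$. Your count would give rank two in every degree.

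The genuine gap is the multiplicative relation $x^2=\pm T$, which is the entire content of the proposition and which you leave unproved.
\begin{itemize}
\item Citing \cite{KS21a} for the disk count imports the very result being proved.
\item The ``unique half-geodesic configuration'' on the RFH side is not an argument. By Lemma~\ref{lem:hol_lift2} and Proposition~\ref{prop:correspondence_triangle}, Floer triangles in $(\R\times Y,\R\times\mathcal{L})$ correspond bijectively to pearl triangles in $Q^{k-1}$. Passing to the symplectization therefore leaves exactly the same enumerative problem: an algebraic count of Maslov $2(k-1)$ disks through generic points of $L$.
\item Your claim that $\sigma_0$ versus $\sigma_1$ ``only affects signs'' is also unsupported. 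Changing the relative spin structure can change such algebraic counts, not just their overall sign; compare Proposition~\ref{prop:QH_flag_zero}, where one spin structure kills $\QH$ entirely.
\end{itemize}

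The missing idea is to use the filling $D^*_qS^k\subset D^*S^k$ multiplicatively, not just additively.
\begin{itemize}
\item By Lemma~\ref{lem:pin_cross}, both $\sigma_0$ and $\sigma_1$ lift to the restriction of the unique spin structure $\hat\sigma$ on the fiber.
\item Hence Corollary~\ref{cor:zoll} (built on Propositions~\ref{prop:RFH_well_defined} and \ref{prop:localization} and Lemma~\ref{lem:viterbo_injective}) gives $\QH_*(L_{S^k}^{\sigma})\cong T^{-1}\HW_*(D^*_qS^k;\hat\sigma)$ as $\Z[T,T^{-1}]$-algebras, for either $\sigma$.
\item By Bott--Samelson, $\HW_*(D^*_qS^k;\hat\sigma)[k-1]\cong\H_*(\Omega S^k)\cong\Z[x]$ as rings.
\item The Viterbo map is an injective ring map and an isomorphism in high degrees, cf.~\eqref{eq:sh_red}. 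So the preimage of $T\cdot 1_{\RFH}$ generates the rank-one group in shifted degree $2(k-1)$, and hence equals $\pm x^2$.
\item Localizing at $T=\pm x^2$ gives $\Z[x,x^{-1}]$.
\end{itemize}
Thus $x^2=\pm T$ comes for free from the Pontryagin ring of $\Omega S^k$, with no disk counting. You mention $\HW_*(T^*_qS^k)$ and $\Omega S^k$ only for the additive computation, but it is their ring structure that you need.
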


Let $N=\mathbb{CP}^k$ for $k\geq 2$. The base symplectic manifold $\Sigma_N$ is the two-step flag manifold 
\[
F=F_{1,2}(\C^{k+1}):=\{ ([z],[w]) \in \mathbb{CP}^k \times \mathbb{CP}^k \mid  \langle z,w\rangle=0\}
\]
equipped with $\omega_F:=(\omega_\mathrm{FS}\oplus-\omega_\mathrm{FS})|_F$. 	Here $\langle \cdot,\cdot \rangle$ denotes the Hermitian inner product. By the Lefschetz hyperplane theorem, we know that $\H^2(F;\Z_2)\cong\H^2(\mathbb{CP}^k \times \mathbb{CP}^k;\Z_2)$, which has 4 elements. A unit cotangent fiber of $\mathbb{CP}^k$ projects to the Lagrangian sphere
\begin{equation}\label{eq:lagrangian_trace_CP^k}
L_{\mathbb{CP}^k} = \{([z],[w]) \in F \mid z_0=w_0=1,\; z_i=-w_i\;1\leq i\leq k \},	
\end{equation}
where $[z]=[z_0:\cdots:z_k]$ and $[w]=[w_0:\cdots:w_k]$. 
\begin{prop}\label{prop:QH_flag}
	Let $b\in \H^2(F;\Z_2)$ with $b\in\{0,[\omega_F]_{\Z_2}\}$ if $k$ is odd, and $b\notin\{0,[\omega_F]_{\Z_2}\}$ if $k$ is even.  
	For the unique relative $\mathrm{Spin}$-structure $\sigma$ associated to $b$, there exists a ring isomorphism
	\[
	\QH_*(L_{\mathbb{CP}^k}^{\sigma})[2k-1] \cong \Z[x,y,y^{-1}]/(x^2),
	\]
	where $x$ and $y$ have degrees $1$ and $2k$. 
\end{prop}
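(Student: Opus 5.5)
We will deduce the proposition from Theorem \ref{thm:isom_transfer}(c), using the prequantization bundle $\pi_{S^*N}:S^*\mathbb{CP}^k\to F$ with $L=L_{\mathbb{CP}^k}$ and $\mathcal{L}=S^*_q\mathbb{CP}^k$. Since $L\cong S^{2k-1}$ is simply connected, the covering $\mathcal{L}\to L$ is a diffeomorphism. Hence $d=1$ and the equivariant theory coincides with $\RFH_*(\mathcal{L}^{\tilde\sigma})$.

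\textbf{Hypotheses of Theorem \ref{thm:isom_transfer}(c).} The minimal Chern number of $F$ is $k$, and $\pi_2(F,L)\cong\pi_2(F)$ because $\pi_1(L)=\pi_2(L)=0$ for $k\ge2$. So $N_L=2c_F=2k$. We need $2k>\max\{k,2\}$, which holds for $k\ge2$. Monotonicity follows from the monotonicity of $F$, since $\mu_L=2c_1$ on the image of $\pi_2(F)$. The ring isomorphism therefore reduces the problem to computing $\RFH_*(\mathcal{L}^{\tilde\sigma})$ as a ring.

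\textbf{Computing $\RFH$.} Because $\mathcal{L}$ is a cotangent fiber in $S^*\mathbb{CP}^k$, the pair $(D^*\mathbb{CP}^k, D^*_q\mathbb{CP}^k)$ gives topologically simple Liouville and exact Lagrangian fillings. By Proposition \ref{prop:filling} and Proposition \ref{prop:RFH_well_defined}, $\RFH_*(\mathcal{L}^{\tilde\sigma})$ agrees with the Lagrangian Rabinowitz Floer homology of the fiber, provided $\tilde\sigma$ extends over the filling $D^*_q$.

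The extension condition is where $b$ enters. The lift $\tilde\sigma$ extends to the disk filling exactly when the relative spin class $b$, pulled back to $S^*\mathbb{CP}^k$, agrees with the class induced from $w_2(T^*\mathbb{CP}^k)$ restricted appropriately, that is, $w_2(\mathbb{CP}^k)=(k+1)h$ mod 2. Checking which $b\in\H^2(F;\Z_2)$ satisfy this should reproduce the stated parity condition. Since $\pi^*[\omega_F]=0$ in $S^*$, the classes $b$ and $b+[\omega_F]_{\Z_2}$ behave identically, which explains why the allowed $b$ come in pairs. This step, identifying the precise Pin/spin twisting on wrapped Floer homology of fibers (a local system twisted by $w_2$), is the main obstacle. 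I would handle it by comparing with the known sign twist in Abouzaid's isomorphism $\HW(T^*_qN)\cong \H_{-*}(\Omega N)$ twisted by $w_2(N)$.

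\textbf{Computing the ring.} With the correct spin structure, wrapped Floer homology of the fiber is $\H_{-*}(\Omega\mathbb{CP}^k)$ as a ring under the Pontryagin product. Over $\Z$ this ring is $\Z[x]/(x^2)\otimes\Z[u]$ with $|x|=1$ and $|u|=2k$, coming from $\Omega\mathbb{CP}^k\simeq S^1\times\Omega S^{2k+1}$. Rabinowitz Floer homology of the fiber is then obtained as a localization inverting the periodicity class, namely the class of the principal Reeb chord family, which corresponds to $u$ and also to the $\Z[T,T^{-1}]$-structure of Remark \ref{rem:RFH_laurent}. This localization yields $\Z[x,y,y^{-1}]/(x^2)$, where $y$ has degree $\pm2k$ depending on conventions.

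\textbf{Degree bookkeeping.} The shift $[2k-1]$ places the unit in degree $0$. To justify the localization, I would use the long exact sequence relating $\RFH$, $\SH$-type wrapped homology and its dual. The Morse–Bott spectral sequence over the $S^{2k-2}$-families of chords (indices shifting by $2k$ per iterate) is then expected to degenerate for degree reasons.
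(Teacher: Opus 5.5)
Your outline is the paper's proof. It runs Theorem~\ref{thm:isom_transfer}(c) with $d=1$, then Corollary~\ref{cor:zoll}: passage to the fiber filling via Proposition~\ref{prop:RFH_well_defined} and Lemma~\ref{lem:pin_cross} with $\hat\sigma$ of background class $\mathrm{pr}^*w_2^{T\mathbb{CP}^k}$, and the localization of Proposition~\ref{prop:localization}, which rests on the Viterbo long exact sequence as you suggest; it finishes with $\H_*(\Omega\mathbb{CP}^k)\cong\Z[x,y]/(x^2)$ and $T(1_{\HW})=\pm y$.

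The points you left open are settled routinely. The Gysin sequences for $S^*\mathbb{CP}^k\to F$ and $S^*\mathbb{CP}^k\to\mathbb{CP}^k$ give exactly the stated condition on $b$. Strictly speaking, $\tilde\sigma$ always extends over the contractible $D^*_q\mathbb{CP}^k$; the condition on $b$ is what makes the extension carry background class $\mathrm{pr}^*w_2^{T\mathbb{CP}^k}$, so that the untwisted identification with $\H_*(\Omega\mathbb{CP}^k)$ applies. Also, $T(1)$ is $\pm y$ rather than some other multiple of $y$ because $\mathcal{V}$ is bijective in degrees $2k-1$ and $4k-1$ while $T$ is invertible on $\RFH_*$. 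The Morse--Bott spectral sequence you mention is not needed, and in any case the chord families are copies of $S^{2k-1}$, not $S^{2k-2}$.
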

	The choice of $\sigma$  is crucial as the following proposition demonstrates.

\begin{prop}\label{prop:QH_flag_zero}
	For the Lagrangian 3-sphere $L_{\mathbb{CP}^2}\subset F_{1,2}(\C^3)$ equipped with the unique (absolute) $\mathrm{Spin}$-structure $\sigma_0$, we have
	\[
	\QH_*(L_{\mathbb{CP}^2}^{\sigma_0};\mathbb{K})=0,
	\] 
	where $\mathbb{K}$ is any field of characteristic different from $2$.
\end{prop}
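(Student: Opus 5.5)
We will deduce the vanishing from Theorem \ref{thm:isom_transfer} by computing the Rabinowitz Floer homology of a unit cotangent fiber in $S^*\mathbb{CP}^2$. Here $L=L_{\mathbb{CP}^2}\cong S^3$ is simply connected, so $d=1$ and $\mathcal{L}\to L$ is a diffeomorphism. Its Legendrian lift is the fiber $\mathcal{L}=S^*_q\mathbb{CP}^2\subset Y=S^*\mathbb{CP}^2$. The minimal Maslov number of $L$ is $N_L=2c_F=4>2$, so (C2) holds and part (b) gives $\QH_*(L^{\sigma_0};\mathbb{K})\cong\RFH_*(\mathcal{L}^{\tilde\sigma_0};\mathbb{K})$. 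Moreover, $Y$ has the Liouville filling $D^*\mathbb{CP}^2$ and $\mathcal{L}$ has the exact filling $D^*_q\mathbb{CP}^2$. Both are topologically simple in the relevant sense, since $c_1(D^*\mathbb{CP}^2)=0$ and the fiber disk is contractible. By Proposition \ref{prop:filling} and Proposition \ref{prop:RFH_well_defined}, $\RFH_*$ computed in the symplectization then agrees with the filled version, provided $\tilde\sigma_0$ extends over the fillings.

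\textbf{Step 1.} Show that the lift $\tilde\sigma_0$ of the absolute Spin structure extends to the Spin structure on $D^*_q\mathbb{CP}^2$ relative to the background class $w_2(D^*\mathbb{CP}^2)=\pi^*w_2(\mathbb{CP}^2)\neq0$. Equivalently, we must identify which $b\in\H^2(F;\Z_2)$ corresponds to the filling-compatible relative Spin structure. Proposition \ref{prop:QH_flag} singles out, for $k=2$ even, the classes $b\notin\{0,[\omega_F]_{\Z_2}\}$ as the ones producing the expected answer $\H_*(\text{wrapped})$. So we expect $\sigma_0$ (i.e. $b=0$) to be exactly the choice that does \emph{not} match $w_2(T^*\mathbb{CP}^2)$. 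Thus Step 1 should instead show that $\tilde\sigma_0$ corresponds to the filling twisted by the nontrivial class $\pi^*w_2(\mathbb{CP}^2)$, a ``background class twisted'' Rabinowitz Floer homology. To do this, we compare the restriction of $b$ to the Lagrangian correspondence with $w_2$ under the isomorphism $\H^2(F;\Z_2)\cong\H^2(\mathbb{CP}^2\times\mathbb{CP}^2;\Z_2)$, and use the same bookkeeping that underlies Proposition \ref{prop:QH_flag}.

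\textbf{Step 2.} Compute the twisted $\RFH_*$ of a cotangent fiber via the long exact sequence relating it to wrapped Floer homology $\HW_*(D^*_q)$ and its dual. By Abbondandolo--Schwarz with twisted signs, $\HW_*$ is the homology of the based loop space $\Omega\mathbb{CP}^2$ with the local system determined by $w_2(\mathbb{CP}^2)$ transgressed to $\H^1(\Omega\mathbb{CP}^2;\Z_2)$. Since $\mathbb{CP}^2$ is not spin, this transgressed class is nonzero. Concretely, we run the Morse--Bott spectral sequence of the energy functional on paths from $q$ to $q$. Its critical manifolds are $\{q\}$ and copies of $S^3$ (the chords $S^*_q\cong S^3$) at each multiple of the period, with index jumps $2k-1=3$ per iterate. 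The twisted orientation local system makes the connecting maps between consecutive $S^3$-families multiplication by $\pm2$ rather than $0$. Over a field $\mathbb{K}$ of characteristic $\neq2$ these maps are isomorphisms, so every page cancels. This kills $\HW_*$ and its dual, and hence $\RFH_*$.

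The main obstacle is Step 2's sign computation: showing that the differential between adjacent Morse--Bott families equals $\pm2$ for $\sigma_0$. An alternative route works directly with $\QH$. With $\Z_2$ or correct twisting, $\QH\cong\Z[x,y^{\pm}]/(x^2)$ by Proposition \ref{prop:QH_flag}, with $x$ the point class. So we would compute the count of Maslov-4 disks through a generic point, which determines the Oh spectral sequence differential $d(\text{pt})=\pm n\,[L]T$. For $\sigma_0$ the two disk families should contribute with equal signs, giving $n=\pm2$, whereas for the other $\sigma$ they cancel. We would try this disk count first: fibre the moduli of Maslov-4 disks through a point over the two $\mathbb{CP}^1$-factor projections and compare the orientations using the change of relative Spin structure formula, under which orientations change by $(-1)^{\langle b,\beta\rangle}$.
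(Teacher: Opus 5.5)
Your strategy matches the paper's. You use Theorem \ref{thm:isom_transfer} and Proposition \ref{prop:RFH_well_defined} to pass to the filled Rabinowitz Floer homology of $D^*_{[q]}\mathbb{CP}^2$ with the structure $\hat\sigma_0$ extending $\tilde\sigma_0$. You then show that wrapped Floer homology with this structure vanishes over $\mathbb{K}$.

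Step 1 is simpler than you present it. Since $\sigma_0$ has background class $0$, its lift is the restriction of the absolute Spin structure on the contractible fiber; this is the argument of Lemma \ref{lem:pin_cross} with $\hat b$ replaced by $0$. The only point is that this background class differs from $\mathrm{pr}^*w_2^{T\mathbb{CP}^2}\neq 0$, which is the class for which $\HW_*$ is untwisted loop homology. Your first formulation, extension relative to $\mathrm{pr}^*w_2$, cannot hold because background classes are invariants; you correct this yourself.

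You differ from the paper in how the vanishing is obtained and used.
\begin{itemize}
\item The paper cites Seidel's $\SH_*(D^*\mathbb{CP}^2;\mathbb{K})=0$ and notes that $\HW_*(D^*_{[q]}\mathbb{CP}^2;\hat\sigma_0,\mathbb{K})$ is a module over it. It then concludes via $\QH_*\cong T^{-1}\HW_*$ as in Corollary \ref{cor:zoll}.
\item You identify $\HW_*$ with twisted based-loop homology and kill $\RFH_*$ with the long exact sequence \eqref{eq:les_SH}.
\end{itemize}
Your route avoids the ring structure, Lemma \ref{lem:T_image} and the localization, but it also needs $\HW^*=0$, which is automatic over a field. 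The paper's route avoids any loop-space computation by outsourcing it to Seidel.

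The step you call the main obstacle needs no Morse--Bott sign computation. We have $\Omega\mathbb{CP}^2\simeq S^1\times\Omega S^5$, and $\langle w_2(\mathbb{CP}^2),[\mathbb{CP}^1]\rangle=1$. Hence the transgressed local system is pulled back from the $S^1$ factor with monodromy $-1$. This gives $\H_*(S^1;\mathbb{K}^{\mathrm{tw}})=0$ for $\mathrm{char}\,\mathbb{K}\neq 2$, and the K\"unneth formula gives the vanishing.

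Your Morse--Bott bookkeeping is also off. The $m$-th family of geodesic loops (an $S^3$) has index $4m-3$ and contributes degrees $4m-3$ and $4m$. So the $\pm 2$ differentials run from degree $4m+1$ to degree $4m$ for every $m\geq 0$, with $m=0$ hitting the constant loop. There are no ``jumps of $3$''.

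The alternative disk count is unnecessary. As sketched, it would still require knowing that each Maslov-$4$ family counts $\pm1$. The sign-change formula $(-1)^{\langle b,\beta\rangle}$ alone does not give this.
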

Proposition \ref{prop:QH_flag} and Proposition \ref{prop:QH_flag_zero} are proved in Section \ref{sec:CP^k}.

\medskip
	
	Next, we consider the Rabinowitz Floer homology of $\mathbb{RP}^k$ in $\mathbb{RP}^{2k+1}$. As proved in \cite[Theorem 1.10]{DRS24}, this homology has rank 1 in every degree as a $\Z_2$-module. Our next result extends this computation as follows. 

\begin{prop}
Let $\mathcal{L}$ be $\mathbb{RP}^k$ in the standard contact $\mathbb{RP}^{2k+1}$ for $k\geq2$. Then, 
there exist ring isomorphisms 
\[
\RFH_*(\mathcal{L};\Z_2)[k]\cong\Z_2[x,x^{-1}], \qquad \RFH_*(\mathcal{L}^{\tilde{\sigma}})[k]\cong \Z_2[y,y^{-1}]
\]	
where $x$ has degree 1 and $y$ has degree 2. Here $[k]$ refers to the degree shift as above.
\end{prop}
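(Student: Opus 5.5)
We apply Theorem \ref{thm:isom_transfer} to the prequantization bundle $\mathbb{RP}^{2k+1}\to\mathbb{CP}^k$. Here $\mathbb{RP}^{2k+1}=S^{2k+1}/\Z_2$ carries the standard contact form, and its Reeb flow is the Hopf flow, whose orbits have half the period. The base is $(\mathbb{CP}^k, 2\omega_{\mathrm{FS}})$, up to normalization. The Legendrian $\mathcal{L}=\mathbb{RP}^k$ projects to $L=\mathbb{RP}^k\subset\mathbb{CP}^k$, the real locus.

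\textbf{Step 1: the covering degree.} The preimage of $L$ in $S^{2k+1}$ is $\{e^{i\theta}x : x\in S^k\}$. The Legendrian $S^k\subset S^{2k+1}$ double covers $L$. Quotienting by $\pm 1$ makes $\mathbb{RP}^k\subset\mathbb{RP}^{2k+1}$ map bijectively onto $L$, so $d=1$. We should double-check this against the formula $d=2c_\Sigma/\gcd(2c_\Sigma,N_Lm_\Sigma)$ with the correct normalization of $\omega$.

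\textbf{Step 2: applying the theorem.} The minimal Maslov number is $N_L=k+1$, so (C2) holds for $k\ge 2$. The ring condition $N_L>\max\{\frac12(k+1),2\}$ also holds. Since $d=1$, Theorem \ref{thm:isom_transfer}(c) gives ring isomorphisms $\RFH_*(\mathcal{L};\Z_2)\cong\QH_*(L;\Z_2)$ and $\RFH_*(\mathcal{L}^{\tilde\sigma})\cong\QH_*(L^\sigma)$. The first uses $\Z_2$-coefficients, so no Pin structure is needed. The second has $\Z$ coefficients on $\RFH$, with $\sigma$ a relative Pin structure on $\mathbb{RP}^k$, which exists since $w_2$ relative to a class in $\H^2(\mathbb{CP}^k;\Z_2)$ can be arranged.

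\textbf{Step 3: the quantum homology rings.} These are classical, and we take the Laurent (Novikov-inverted) version, matching Remark \ref{rem:laurent}.
\begin{itemize}
\item Over $\Z_2$, Oh and Biran showed that $\QH_*(\mathbb{RP}^k;\Z_2)\cong\Z_2[h,T,T^{-1}]/(h^{k+1}=T)$. Here $h$ is the hyperplane class of codegree $1$, and $T$ has degree $-N_L=-(k+1)$. After the shift $[k]$, this is $\Z_2[x,x^{-1}]$ with $x=h$ of degree $1$ (up to sign convention), since $h^{k+1}=T$ is invertible.
\item Over $\Z$ with a Pin structure, we would compute via the pearl complex. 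By parity, orientations of Maslov-$(k+1)$ disks cancel or double. The expected answer, stated with $\Z_2$ in the proposition, suggests the relevant structure is the following. The Floer differential of $\mathbb{RP}^k$ with $\Z$ coefficients kills odd-degree generators; compare the known $\QH(\mathbb{RP}^k;\Z)$ computations, which give $\Z_2$ in alternate degrees. What survives is generated by a degree-$2$ class $y$ that is invertible (with $y^{(k+1)/2}\sim T$, or a similar relation), with torsion coefficients $\Z_2$.
\end{itemize}
I would compute the $\Z$ case using the Biran--Cornea pearl model with Morse function having one critical point per degree. The classical differential on $\H_*(\mathbb{RP}^k;\Z)$ is multiplication by $0$ or $2$. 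The quantum corrections come from Maslov $k+1$ disks: through two generic points there are exactly two such disks, and their orientation signs are determined by $\sigma$.

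\textbf{Main obstacle.} The hardest step is this signed count with the lifted Pin structure $\tilde\sigma$, and then identifying the product so that the degree-$2$ class $y$ squares and inverts correctly with $\Z_2$ coefficients emerging. I would first compute the homology additively, obtaining $\Z_2$ in every even shifted degree. I would then determine the ring structure using the $\Z[T,T^{-1}]$-algebra structure from Remark \ref{rem:RFH_laurent}, together with reduction mod $2$ and comparison with the $\Z_2$ ring $\Z_2[x,x^{-1}]$. Under this comparison, $y$ should map to $x^2$.
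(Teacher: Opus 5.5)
Your Steps 1--2 and the $\Z_2$ isomorphism are the paper's argument: $d=1$, $N_L=k+1>\max\{\frac12(k+1),2\}$, Theorem \ref{thm:isom_transfer}(c), and the classical ring $\QH_*(L;\Z_2)[k]\cong\Z_2[x,x^{-1}]$.

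For the $\Z$ case you do not need the signed pearl count you flag as the main obstacle. The paper simply cites \cite[Section 8]{Z1} and \cite[Section 3.6]{KS21b} for the additive answer: $\QH_*(L^\sigma)[k]$ is $\Z_2$ in even degrees and $0$ in odd degrees.

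The genuine difference is how the ring structure is obtained. The paper uses the closed-open map $\QH_*(\mathbb{CP}^k,b)[2k]\cong\Z[u,u^{-1}]\to\QH_*(L^\sigma)[k]$. This is a nontrivial unital ring map, so the invertible degree-$2$ class $u$ and all its powers map to nonzero elements, which must generate the $\Z_2$ in each even degree.

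Your mod-$2$ comparison also works, but you should state the step that makes it a proof. The reduction $\rho:\QH_*(L^\sigma)\to\QH_*(L;\Z_2)$ is induced by reducing the chain-level pearl product, so it is a unital ring map. Since $\QC_*(L^\sigma)$ is free, $\rho$ factors as $\QH_j(L^\sigma)\to\QH_j(L^\sigma)\otimes\Z_2\hookrightarrow\QH_j(L;\Z_2)$. The first map is an isomorphism because $\QH_j(L^\sigma)$ is $\Z_2$ or $0$. The second is the injection in the universal coefficient sequence. Together with the additive answer, this shows $\rho$ is injective with image exactly the even-degree part $\Z_2[x^2,x^{-2}]$, so $y\mapsto x^2$ gives the claimed isomorphism.

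The $\Z[T,T^{-1}]$-structure plays no role here. On its own it cannot decide, for example, whether $y^2\neq 0$.

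Your route stays entirely on $L$ and reuses the $\Z_2$ ring already needed for the first isomorphism, with no closed-open map or twisted ambient quantum homology. The paper's route takes invertibility from the ambient $\mathbb{CP}^k$ instead, and does not need the $\Z_2$ ring of $L$.
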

\begin{proof}
	Since $L=\mathbb{RP}^k$ in $\mathbb{CP}^k$ lifts to $\mathcal{L}$ in $\mathbb{RP}^{2k+1}$ and $N_L=k+1> \max\{\frac{1}{2}(k+1),2\}$,  
	by Theorem \ref{thm:isom_transfer}, we have $\RFH_*(\mathcal{L};\Z_2)[k]\cong \QH_*(L;\Z_2)[k]$, where the latter is isomorphic to $\Z_2[x,x^{-1}]$. 
	
	Let $\tilde\sigma$ be the lift of a relative $\Pin$-structure $\sigma$ on $L$.
	To compute $\RFH_*(\mathcal{L}^{\tilde{\sigma}})$, we recall from \cite[Section 8]{Z1} and \cite[Section 3.6]{KS21b} that $\QH_*(L^\sigma)[k]$ is isomorphic to $\Z_2$ in even degrees and $0$ in odd degrees. The quantum homology $\QH_*(\mathbb{CP}^k,b)[2k]$ twisted by the background class $b\in\H^2(\mathbb{CP}^k;\Z_2)$ of $\sigma$ (see \cite[Proposition 15]{KS21b}) is isomorphic to $\Z[u,u^{-1}]$ with $\deg u=2$. Now $\RFH_*(\mathcal{L}^{\tilde{\sigma}})[k]\cong\QH_*({L}^{{\sigma}})[k]\cong \Z_2[y,y^{-1}]$ follows from the fact that the closed-open map $\QH_*(\mathbb{CP}^k,b)[2k]\to \QH_*(L^\sigma)[k]$ is a nontrivial ring homomorphism. 
\end{proof}

\subsection{Vanishing of QH for certain Lagrangians}
The isomorphism in Theorem \ref{thm:isom_transfer} is obtained by establishing a chain-level isomorphism for a suitable choice of auxiliary data. In fact, the proof implies the following result.
Let $Y_{d}\to \Sigma$ denote the prequantization bundle with Euler class $-d[\omega]_\Z$. Then $L$ admits a Legendrian lift $\mathcal{L}_{d}\subset Y_{d}$ such that $(Y,\mathcal{L})\to (Y_{d},\mathcal{L}_{d})$ is a $d$-fold covering. This yields the isomorphisms  $\RFH_*^{\Z_{d}}( \mathcal{L}^{\tilde\sigma})\cong \RFH_*( \mathcal{L}^{\tilde\sigma}_{d})\cong \QH_*(L^\sigma)$. The following corollary also immediately follows.

\begin{cor}\label{cor:transfer}
	Let $(\Sigma,L^\sigma)$ and $(Y,\mathcal{L}^{\tilde{\sigma}})$ be as in Theorem \ref{thm:isom_transfer}. Then there exist transfer and projection homomorphisms  
	\[
	\tau:\QH_*(L^\sigma)\to \RFH_* (\mathcal{L}^{\tilde\sigma}),\qquad \wp:\RFH_* (\mathcal{L}^{\tilde\sigma}) \to \QH_*(L^\sigma)
	\]
	such that the composition $\wp\circ\tau$ coincides with the scalar multiplication by $d$ on the $\Z$-module $\QH_*(L^\sigma)$. In particular, if $\RFH_*(\mathcal{L}^{\tilde\sigma})=0$, then  $\QH_*(L^\sigma)$ is torsion of order $d$. 
	\end{cor}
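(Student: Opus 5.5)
The plan is to obtain $\tau$ and $\wp$ as the chain-level transfer and projection for the finite $d$-fold covering $(Y,\mathcal{L})\to(Y_d,\mathcal{L}_d)$, and then to use the isomorphism $\RFH_*(\mathcal{L}^{\tilde\sigma}_d)\cong\QH_*(L^\sigma)$ recorded just before the statement. The deck group $\Z_d$ acts freely on $(Y,\mathcal{L})$ by strict contactomorphisms preserving $\alpha$ and $\mathcal{L}$, and it preserves the lifted $\Pin$-structure $\tilde\sigma$ by Remark \ref{rem:pin_lift}.

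\textbf{Choice of data.} First choose the auxiliary data on $(\R\times Y_d,\R\times\mathcal{L}_d)$: a $\vee$-shaped Hamiltonian and an almost complex structure, both regular for $\RFC(\mathcal{L}_d)$. Pull them back to $(\R\times Y,\R\times\mathcal{L})$; the pullbacks are $\Z_d$-invariant. The generators of $\RFC_*(\mathcal{L})$ are contractible generalized Reeb chords. Each one projects to a chord of $\mathcal{L}_d$. Conversely, a contractible chord downstairs lifts, and its lifts form a free $\Z_d$-orbit, with contractibility and grading preserved.

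Floer strips downstairs with contractible asymptotics lift uniquely once a lift of the input chord is fixed. Hence the moduli spaces upstairs are $d$-fold covers of those downstairs, so regularity is inherited. Orientations agree because $\tilde\sigma$ is the pullback, so the signed counts match. This gives a chain isomorphism
\[
\RFC_*(\mathcal{L}^{\tilde\sigma})\cong \RFC_*(\mathcal{L}^{\tilde\sigma}_d)\otimes_{\Z}\Z[\Z_d],
\]
a free $\Z[\Z_d]$-module structure, which is the ingredient behind the isomorphisms $\RFH_*^{\Z_d}(\mathcal{L}^{\tilde\sigma})\cong\RFH_*(\mathcal{L}^{\tilde\sigma}_d)$.

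\textbf{Definition of the maps.} Define $\wp$ on chains by $\gamma\mapsto\pi(\gamma)$. Define $\tau$ on chains by $c\mapsto\sum_{g\in\Z_d}g\cdot\tilde c$, where $\tilde c$ is any lift of $c$.

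\textbf{Chain maps and the composition.} Both maps are chain maps because the differential upstairs is $\Z_d$-equivariant and covers the differential downstairs. The composition $\wp\circ\tau$ sends $c$ to $d\,c$ already at chain level. Passing to homology and composing with $\RFH_*(\mathcal{L}^{\tilde\sigma}_d)\cong\QH_*(L^\sigma)$ gives the claimed maps with $\wp\circ\tau=d$.

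\textbf{The vanishing consequence.} If $\RFH_*(\mathcal{L}^{\tilde\sigma})=0$, then $\tau=0$. So $d\cdot x=\wp(\tau(x))=0$ for every $x\in\QH_*(L^\sigma)$.

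\textbf{Main obstacle.} The key step is the sign comparison: the orientations of the lifted moduli spaces must agree with those of the moduli spaces downstairs. Closely tied to it is the claim that every relevant strip lifts; this needs contractibility, or at least the asymptotic conditions, to be compatible with the covering. Since $\Z_d$ acts by symmetries preserving $\tilde\sigma$, I expect the coherent orientations to be pulled back directly.
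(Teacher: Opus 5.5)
Your proposal is correct and follows the paper's route. The paper likewise obtains $\wp$ and $\tau$ from the $\Z_d$-covering $(Y,\mathcal{L})\to(Y_d,\mathcal{L}_d)$ as the chain-level projection and the transfer that sums over the $d$ lifts (e.g.\ $\tau(o_{q,0})=o_{q^1_0}+\cdots+o_{q^d_0}$ in the proof of Theorem~\ref{thm:torsion_QH}), composed with the chain isomorphism $\RFC_*(\mathcal{L}^{\tilde\sigma}_d)\cong\QC_*(L^\sigma)$, so that $\wp\circ\tau=d$ already on chains.

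One imprecision, which does no harm: the upstairs complex is a free $\Z[\Z_d]$-module with a $\Z_d$-equivariant differential, but in general it is not $\RFC_*(\mathcal{L}^{\tilde\sigma}_d)\otimes_{\Z}\Z[\Z_d]$ with the untwisted differential $\partial\otimes\mathrm{id}$, because a lifted strip or flow line can end on a lift shifted by the monodromy of $\mathcal{L}\to L$. Your chain-map check uses only equivariance and the fact that upstairs moduli spaces cover downstairs ones, so the argument is unaffected.
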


\begin{theorem}\label{thm:torsion}
	Let $(\Sigma,L^\sigma)$ and $(Y,\mathcal{L}^{\tilde{\sigma}})$ be as in Theorem \ref{thm:isom_transfer}, where $\sigma$ is a $\Pin$-structure. If $[\omega]_\Z$ is invertible in the quantum cohomology $\QH^*(\Sigma)$ of $\Sigma$, then, $\QH_*(L^{\sigma})$ is torsion of order $d$.  Moreover, for any prime number $p$, we have $\QH_*(L^{\sigma};\Z_p)\cong \QH_{*+1}(L^{\sigma};\Z_p)$ for all degrees.	If $[\omega]_\mathbb{Q}$ induced by $[\omega]_\Z$ via the inclusion $\Z\hookrightarrow\mathbb{Q}$ is invertible in $\QH^*(\Sigma;\mathbb{Q})$, then $\QH_*(L^\sigma;\mathbb{Q})=0$.
\end{theorem}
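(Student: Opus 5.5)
We plan to reduce the statement to the vanishing of $\RFH_*(\mathcal{L}^{\tilde\sigma})$ and then invoke Corollary \ref{cor:transfer}. The guiding idea is the classical one for prequantization bundles: the Gysin-type long exact sequence relating the symplectic (or Rabinowitz) homology of $Y$ to the quantum cohomology of $\Sigma$ shows that $\RFH_*(Y)$ is, roughly, $\QH^*(\Sigma)$ localized at quantum multiplication by the Euler class $[\omega]_\Z$. In the Legendrian setting we expect an analogous exact sequence in which the connecting map is the module action of $[\omega]_\Z$ on $\QH_*(L^\sigma)$. This action comes through the closed--open map $\QH^*(\Sigma)\to \QH_*(L^\sigma)$, or equivalently through the $\QH^*(\Sigma)$-module structure on Lagrangian quantum homology.

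First, using the chain-level description from the proof of Theorem \ref{thm:isom_transfer}, we identify $\RFC_*(\mathcal{L}^{\tilde\sigma})$ with the cone of
\[
[\omega]_\Z\ast\cdot : \QC_*(L^\sigma)\to \QC_{*-2}(L^\sigma),
\]
up to regrading, or with a version twisted by $d$-fold covering data. We obtain this by filtering generalized Reeb chords by period. Each Morse--Bott family of chords of a fixed period is a copy of $\mathcal{L}$, and the $\vee$-shaped Hamiltonian splits the chords into a "positive" and a "negative" copy. The differential between adjacent copies counts strips in the symplectization whose projections are disks in $\Sigma$ through a cycle Poincaré dual to $[\omega]_\Z$; the Euler class enters because the section cut out by the connection is dual to it. This yields a long exact sequence
\[
\cdots\to \QH_{*}(L^\sigma)\xrightarrow{\ [\omega]_\Z\ast\ }\QH_{*-2}(L^\sigma)\to \RFH_{*+c}(\mathcal{L}_{d}^{\tilde\sigma})\to\cdots,
\]
or, more precisely, the statement that the tilde-version relevant for $Y$ is governed by $[\omega]_\Z$. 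Here the hypothesis that $\sigma$ is an honest $\Pin$-structure is used: the background class is $0$, so the module structure is over the untwisted $\QH^*(\Sigma)$.

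Second, we argue algebraically. If $[\omega]_\Z$ is invertible in $\QH^*(\Sigma)$, its action on the module $\QH_*(L^\sigma)$ is invertible, so the cone is acyclic. This gives $\RFH_*(\mathcal{L}^{\tilde\sigma})=0$, and Corollary \ref{cor:transfer} then gives $d\cdot\QH_*(L^\sigma)=0$. For $\Z_p$-coefficients, we apply the universal coefficient theorem to the same chain complexes, using that $\QC_*$ is free. If $p\nmid d$, then $\QH_*(L^\sigma;\Z_p)=0$, and the degree-shift isomorphism is trivial. If $p\mid d$, we use the $\Z_d$-equivariant picture: $\RFH^{\Z_d}(\mathcal{L})\cong\RFH(\mathcal{L}_d)\cong \QH_*(L^\sigma)$, while $\RFH(\mathcal{L};\Z_p)=0$. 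We then run the Cartan--Leray spectral sequence of the free $\Z_d$-action (or, with $d$ replaced by its $p$-part, the Gysin/Smith sequence of the covering $\mathcal{L}\to\mathcal{L}_d$ with $\Z_p$-coefficients). The vanishing of the total space forces the equivariant homology to be $2$-periodic through the group cohomology of $\Z_d$. Combined with the degree-$2$ periodicity coming from the Laurent structure of Remark \ref{rem:RFH_laurent}, this should give the shift by one, $\QH_*(L^\sigma;\Z_p)\cong\QH_{*+1}(L^\sigma;\Z_p)$. Finally, over $\mathbb{Q}$ the same cone argument with $[\omega]_\mathbb{Q}$ gives $\RFH_*(\mathcal{L};\mathbb{Q})=0$. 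Since $d$ is invertible in $\mathbb{Q}$, we get $\QH_*(L^\sigma;\mathbb{Q})=0$.

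The main obstacle is the first step. We must identify rigorously the connecting map in the period filtration with quantum multiplication by $[\omega]_\Z$ acting on $\QH_*(L^\sigma)$, with correct signs and $\Pin$-orientations, and ensure that the filtration spectral sequence is compatible with the isomorphism of Theorem \ref{thm:isom_transfer}. We would first try to prove this by a neck-stretching/projection argument: strips between consecutive Morse--Bott families project to pearly trajectories with one marked interior point constrained to a divisor representing $\PD([\omega]_\Z)$. A secondary subtlety is the $p\mid d$ case, where the degree-one shift must be extracted from the $\Z_d$-covering sequence rather than from the cone alone.
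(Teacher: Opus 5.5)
Your first step fails, and the rest of the argument depends on it. $\RFC_*(\mathcal{L}^{\tilde\sigma})$ is not (a twisted version of) the cone of $[\omega]_\Z\ast\cdot$ on $\QC_*(L^\sigma)$.

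Each Morse--Bott family of generalized chords is a single copy of $\mathcal{L}$, a $d$-sheeted cover of $L$, labelled by its winding number $k\in m_L\Z$. Unlike a family of periodic orbits, it has no circle direction, so there is no Gysin connecting map to produce a cone. The strips between families are exactly the lifts of \emph{unconstrained} pearly trajectories (Lemma~\ref{lem:hol_lift} and Proposition~\ref{prop:correspondence_strip}). The jump in winding number is $d\,\omega(A)$, i.e.\ the Novikov exponent, and no cycle dual to $[\omega]_\Z$ appears.

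This is why Theorem~\ref{thm:isom_transfer} gives $\RFH_*(\mathcal{L}^{\tilde\sigma})\cong\QH_*(L^\sigma)$ for $d=1$: one copy, not a cone. Your claim contradicts it. Take the bijective lift $\mathbb{RP}^k\subset\mathbb{RP}^{2k+1}$ over $(\mathbb{CP}^k,2\omega_{\mathrm{FS}})$. There $[\omega]_\Z\equiv 0$ mod $2$, so your cone would have rank $2$ in every degree over $\Z_2$. But $\RFH_*(\mathcal{L};\Z_2)\cong\QH_*(L;\Z_2)$ has rank $1$. The cone picture you have in mind belongs to the periodic-orbit homology $\RFH_*(Y)$, whose orbit families are copies of $Y$, a circle bundle over $\Sigma$. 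A Gysin-type cone is also what one expects for the $(n+1)$-dimensional circle bundle $\pi^{-1}(L)$, not for the Legendrian $\mathcal{L}$.

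What is missing is a mechanism that carries the invertibility of $[\omega]_\Z$ over to $\mathcal{L}$. The paper does this as follows:
\begin{enumerate}
\item Invertibility gives $\RFH_*(Y)=0$ by \cite{BKK24}.
\item Under only $N_L\geq 3$, an $\RFH_*(Y)$-module structure on $\RFH_*(\mathcal{L}^{\tilde\sigma})$ is not available. So compactness is proved only for the specific low-index moduli spaces needed (Proposition~\ref{prop:compactness_module}), and one deduces only $1_{\RFH}=0$ (Proposition~\ref{prop:vanishing_1_RFH}), not $\RFH_*=0$.
\item The transfer then gives $d\cdot 1_{\QH}=\wp\circ\tau(1_{\QH})=\wp(1_{\RFH})=0$. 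This suffices because $1_{\QH}$ is the unit.
\end{enumerate}
There is also a route staying on the base, cf.\ Proposition~\ref{prop:torsion_QH_base}. The class $\mathrm{CO}(\PD_\Sigma[\omega]_\Z)$ is invertible and has no quantum corrections when $N_L\geq 3$. Its classical part $\iota^*[\omega]_\Z$ is $d$-torsion, because $\mathcal{L}/\Z_d$ is a section of $Y/\Z_d$ over $L$.

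Your $\Z_p$ argument also has a wrong ingredient. The Laurent variable $T$ has degree $N_L$ or $2N_L\geq 3$, so it cannot supply $2$-periodicity. That periodicity comes from multiplication by the invertible class $\mathrm{CO}(\PD_\Sigma[\omega]_\Z)$ of degree $\dim L-2$, which gives $\QH_i(L^\sigma)\cong\QH_{i+2}(L^\sigma)$. Now apply the universal coefficient theorem to the finite $d$-torsion groups $\QH_i(L^\sigma)$. Let $r_i$ be the number of $p$-primary cyclic summands of $\QH_i(L^\sigma)$; then $\dim\QH_i(L^\sigma;\Z_p)=r_i+r_{i-1}$. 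Since $r_{i+1}=r_{i-1}$, this yields the shift by one directly, with no Cartan--Leray argument.
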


The invertibility $[\omega]_\Z$ is independent of the choice of an integral lift of $[\omega]$. 
One case where $L$ admits a Legendrian lift $\mathcal{L}$ is when $\omega(\pi_2(\Sigma))\neq0$ and the map $\pi_2(\Sigma,L)\to\H_1(L)$ is surjective as mentioned above. In this setting, one can show without relying on Corollary \ref{cor:transfer} that $\QH_*(L^{\sigma})$ is torsion of order $d=\frac{2c_\Sigma}{\gcd(2c_\Sigma, m_\Sigma N_L)}$.
 Theorem \ref{thm:torsion} applies, for instance, to $\Sigma=\mathbb{CP}^n$ or $\Sigma=\mathbb{CP}^n\times X$, where $X$ is a symplectically aspherical integral symplectic manifold, and building on this, we revisit some insightful results of \cite{Bir06} regarding Lagrangian spheres. We provide a full account of these in Section \ref{subsection:invertibilityofeulerclass}. 

\medskip

A similar vanishing property holds for Lagrangian submanifolds that are disjoint from the Lagrangian traces in polarized symplectic manifolds. The notion of the Lagrangian trace was introduced in \cite{Bir06}, where its role as an obstruction to displaceability was investigated.
To state our next result, which is along these lines, we recall the setting from \cite{Bir06}, see also \cite{Don96,Gir17}. Let $(X,\Omega,\Sigma)$ be a polarized symplectic manifold, meaning that $(X,\Omega)$ is a closed symplectic manifold with an integral lift $[\Omega]_\Z\in\H^2(X)$, $(\Sigma,\omega:=\Omega|_\Sigma)$ is a symplectic hypersurface such that the homology class $[\Sigma]$ is Poincar\'e dual to $k[\Omega]_\Z$ for some $k\in\N$, and there is a Weinstein domain $(W,\lambda,\varphi)$ such that the interior of $W$ is symplectomorphic to $X\setminus\Sigma$. Here $
\lambda$ is a Liouville $1$-form and $\varphi$ is a Morse function such that the Liouville vector field $Z$ is a gradient-like vector field of $\varphi$. The boundary $Y:=\partial W$ is a prequantization bundle over $(\Sigma,k[\Omega]_\Z|_\Sigma)$. The unstable manifold of $Z$ at a critical point $p$ of $\varphi$ of Morse index $\frac{1}{2}\dim W$ is called a Lagrangian cocore disk, denoted by $\Delta(p)$. The boundary $\partial\Delta(p)$ of $\Delta(p)$ is a Legendrian sphere in $Y$, and its projection to $\Sigma$ is a possibly immersed Lagrangian sphere. The Lagrangian trace refers to the union of such Lagrangian spheres in $\Sigma$ for all critical points of $\varphi$ with index $\frac{1}{2}\dim W$. Equivalently, the Lagrangian trace corresponds to the union of the topological boundaries of $\mathrm{int}(\Delta(p))\subset X\setminus\Sigma$ within $X$.
The Lagrangian spheres in \eqref{eq:lagrangian_trace_s^k} and \eqref{eq:lagrangian_trace_CP^k} are examples of Lagrangian traces. 

By the remarkable result in \cite{GPS23,CDRGG24}, Lagrangian cocore disks are generators of the wrapped Fukaya category of $W$. The following theorem shows that their trace also exhibits a rigidity property. We refer to Section \ref{sec:polarized} for the proof.

\begin{theorem}\label{thm:lag_trace}
	Let $(X,\Omega,\Sigma)$ be a polarized symplectic manifold. Let $L$ be a closed monotone Lagrangian submanifold of $(\Sigma,\omega)$ with $N_L\geq 3$. Assume that $L$ admits a $d$-fold Legendrian lift in $Y$. If $L$ is disjoint from the Lagrangian trace, then for any $\Pin$-structure $\sigma$ on $L$, $\QH_*(L^\sigma)$ is torsion of order $d$ and $\QH_*(L^\sigma;\mathbb{Q})=0$. 
\end{theorem}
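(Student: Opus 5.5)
The plan is to reduce to Corollary \ref{cor:transfer}: it suffices to show $\RFH_*(\mathcal{L}^{\tilde\sigma})=0$, since then $\QH_*(L^\sigma)$ is torsion of order $d$. The rational statement then follows from the universal coefficient theorem, because $\QH_*(L^\sigma;\mathbb{Q})\cong \QH_*(L^\sigma)\otimes\mathbb{Q}$ at the chain level (free complexes over $\Z$). To prove the vanishing we exploit the filling. The Weinstein domain $W$ is a Liouville filling of $Y=\partial W$. Since $L$ is disjoint from the Lagrangian trace, the Legendrian lift $\mathcal{L}$ is disjoint from every $\partial\Delta(p)$. Flowing $\mathcal{L}$ backwards along the Liouville vector field $Z$ from $\partial W$, the cylinder $(-\infty,0]\times\mathcal{L}$ avoids the stable manifolds of critical points of index $<\frac12\dim W$ generically (dimension count: $\dim\mathcal{L}=n-1$ and the skeleton pieces have codimension $>n$ except the cocores' duals). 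It avoids index-$n$ ones precisely because $\mathcal{L}$ misses the $\partial\Delta(p)$, which are the intersections with $Y$ of the relevant unstable/stable sets. Hence $\mathcal{L}$ is displaced off the skeleton: the full backward flow of $\mathcal{L}$ escapes every compact set, i.e.\ $\R\times\mathcal{L}$ embeds into the completion $\widehat W$ as an exact Lagrangian disjoint from the skeleton.

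The key step is then to identify $\RFH_*(\mathcal{L}^{\tilde\sigma})$, defined in the symplectization with (C2) giving compactness, with the Rabinowitz Floer homology of the Lagrangian $\R\times\mathcal{L}$ (or of $\mathcal{L}$ as a Legendrian boundary) inside $\widehat W$. Since $N_L\ge 3$ ensures there is no disk or plane bubbling even in the filled setting, I will use the $\vee$-shaped Hamiltonian description and a neck-stretching/Viterbo-type argument. Holomorphic strips with asymptotics on generalized Reeb chords either stay in the symplectization or enter $W$. The latter should be excluded by index and action considerations, together with the trivial-augmentation remark (Remark \ref{rem:inde_nondeg}), so that the two chain complexes agree. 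This is the main obstacle: the setup as stated does not provide a Lagrangian filling of $\mathcal{L}$, so the comparison must be done for the noncompact cylinder. It is also the place where the $\Pin$-structure hypothesis (rather than a relative one) is needed, so that orientations extend over $\widehat W$.

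Finally, the conclusion follows from displaceability. The cylinder $\R\times\mathcal{L}$ lies in the complement of the skeleton, which is itself a piece of symplectization $(-\infty,0]\times Y$. There, the Liouville flow displaces any compact portion of $\mathcal{L}$ to infinity, so $\mathcal{L}$ is displaceable from the compact region in the sense needed for Rabinowitz Floer theory. Equivalently, the wrapped Floer homology of the cylinder vanishes because its unit is killed: the cylinder is Hamiltonian isotopic to one disjoint from any given compact set. The long exact sequence relating $\RFH$ with wrapped Floer homology and wrapped Floer cohomology (cf.~\cite{CO18}), both of which vanish, then gives $\RFH_*(\mathcal{L}^{\tilde\sigma})=0$, and Corollary \ref{cor:transfer} finishes the proof.
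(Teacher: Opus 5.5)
There is a genuine gap, located in your displacement step. You say that $\bigcup_{t\le 0}\phi_Z^{t}(\mathcal{L})$ misses the skeleton, i.e.\ that $\R\times\mathcal{L}\subset\widehat W\setminus\mathrm{Skel}(W)\cong\R\times Y$. That is true for \emph{every} Legendrian in $\partial W$, since points of $\partial W$ leave $W$ under the forward flow. So it does not use disjointness from the trace.

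Your other claim is false. The backward orbits do not escape compact sets; they converge to critical points of $\varphi$ (under your hypothesis, subcritical ones such as the minima). Hence $\R\times\mathcal{L}$ is not a properly embedded exact Lagrangian in $\widehat W$. There is no filling $\mathcal{L}_W$, and the exact sequence \eqref{eq:les_SH} relating $\HW$ and $\RFH$ is not available.

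Pushing $\mathcal{L}$ outward by the Liouville flow is a conformal rescaling, not a compactly supported Hamiltonian displacement, and it kills nothing Floer-theoretically. In fact your argument never distinguishes $\mathcal{L}$ from a $\partial\Delta(p)$ itself. Applied to $S^*_qS^k=\partial\Delta(p)\subset\partial D^*S^k$, it would give $\RFH_*(S^*_qS^k;\tilde\sigma)=0$, contradicting the nonvanishing computed in Section \ref{sec:sphere}.

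Your target is also too strong. With only $N_L\ge 3$, neither the product nor an $\RFH_*(Y)$-module structure on $\RFH_*(\mathcal{L}^{\tilde\sigma})$ is available, and the paper does not claim $\RFH_*(\mathcal{L}^{\tilde\sigma})=0$.

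The missing idea is to use the hypothesis to pass to a subcritical domain. A neighborhood $\mathcal U$ of $\pi^{-1}(L)$ misses every $\partial\Delta(p)$, so contact $(+1)$-surgery along the $\partial\Delta(p)$ gives a subcritical Weinstein subdomain $W_{(+1)}$ with $\phi_Z^{-\epsilon}(\mathcal U)\subset\partial W_{(+1)}$. Thus $\mathcal{L}_{-\epsilon}=\phi_Z^{-\epsilon}(\mathcal{L})$ is a Legendrian in $\partial W_{(+1)}$ with unchanged Reeb chords.

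Working in the cobordism $V=W\setminus\mathrm{int}(W_{(+1)})$ as in \cite[Proposition 9.19]{CO18}, one gets $\RFH_*(\mathcal{L};\partial W)\cong\HW_*(\widehat{\mathcal{L}};V)\cong\RFH_*(\mathcal{L}_{-\epsilon};\partial W_{(+1)})$, and this identification matches the classes $1_\RFH$. Subcriticality gives $\RFH_*(\partial W_{(+1)})=0$. Proposition \ref{prop:vanishing_1_RFH} needs only the index condition \eqref{eq:ind_nondeg}, i.e.\ $N_L\ge 3$, not a full module structure, and it yields $1_\RFH=0$.

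You then apply the transfer only to the unit: $d\cdot 1_\QH=\wp(\tau(1_\QH))=\wp(1_\RFH)=0$. Unitality of $\QH_*(L^\sigma)$ then gives that it is $d$-torsion. Your deduction of the rational statement from this is fine.
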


If $(W,\lambda,\varphi)$ is subcritical,  the Lagrangian trace is empty. It is proved in \cite[Corollary 1.4]{BKK24} that, under some additional assumptions on $W$,  subcriticality implies that $k[\Omega]_\Z|_\Sigma$ is invertible in $\QH^*(\Sigma)$ (with $k=1$). Thus, in this setting, Theorem \ref{thm:lag_trace} recovers Theorem \ref{thm:torsion}.

\begin{remark}
 	A key technical lemma used in the proofs of Theorems \ref{thm:torsion} and  \ref{thm:lag_trace} might be of independent interest. 
	Let $(Y,\alpha)$ be a $(2n+1)$-dimensional closed contact manifold, which is not necessarily a prequantization space. Let $\mathcal{L}$ be a closed Legendrian submanifold of $Y$ equipped with a $\Pin$-structure $\tilde\sigma$. Assume that every contractible periodic Reeb orbit $\gamma$ of $Y$ and every contractible Reeb chord $c$ of $(Y,\mathcal{L})$ is nondegenerate and satisfies 
		$\mu_\CZ(\gamma)\geq 4-n$ and $\mu_\RS(c)\geq 3-\frac{n}{2}$.
		This index condition corresponds precisely to our assumption $N_L\geq3$ in the case of a prequantization bundle. 
		Then $\RFH_*(Y)$ defined via contractible periodic Reeb orbits and $\RFH_*(\mathcal{L}^{\tilde\sigma})$ defined via  contractible Reeb chords are well-defined without any reference to fillings. Now suppose  $\RFH_*(Y)=0$. In the presence of exact (Lagrangian) fillings for $Y$ and $\mathcal{L}^{\tilde\sigma}$, or under a sufficiently strong index condition, the closed-open map would imply $\RFH_*(\mathcal{L}^{\tilde\sigma})=0$. This vanishing does not necessarily hold under the current index condition. However, we show that the distinguished class $1_\RFH\in \RFH_n(\mathcal{L}^{\tilde\sigma})$, which is the image of the unit of $\H^*(\mathcal{L})$ under the continuation map, vanishes. See Appendix \ref{sec:appendix} for details.
\end{remark}

\subsection{Intersections under Legendrian deformations}	
A typical consequence of nonvanishing of $\RFH_*(\mathcal{L};\Z_2)$ is that every compactly supported Hamiltonian diffeomorphism on $\R\times Y$ admits a Lagrangian leafwise intersection point for $\R\times \mathcal{L}$, see  \cite{AF10, Mer14}. Applying this  
to a Hamiltonian diffeomorphism arising as the lift of a contactomorphism on $Y$, we obtain the following Lagrangian intersection property for a class of perturbations more general than Hamiltonian diffeomorphisms, as investigated in \cite{EHS95,Ono96}.
 For the sake of completeness, we provide a sketch of the proof and refer to \cite{AM13,Mer14} for further details.  

\begin{cor}\label{cor:intersection}
Let $(\Sigma,L)$ and $(Y,\mathcal{L})$ be as in Theorem \ref{thm:isom_transfer}. 
	Assume that the covering degree of $\mathcal{L}\to L$ is odd and $\QH_*(L;\Z_2)\neq 0$. Then, for any contactomorphism  $\varphi$ on $Y$ isotopic to the identity,  $\pi\circ\varphi(\mathcal{L})$ intersects $L$.
\end{cor}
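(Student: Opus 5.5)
We argue by contradiction and follow the scheme of \cite{AF10,AM13,Mer14}. Suppose $\pi\circ\varphi(\mathcal{L})\cap L=\emptyset$. Since $\pi^{-1}(L)$ is the union of all Reeb translates of $\mathcal{L}$, this means that no point of $\varphi(\mathcal{L})$ lies on a Reeb orbit through $\mathcal{L}$. In other words, $\varphi(\mathcal{L})$ and $\mathcal{L}$ admit no Reeb chord of any length, positive, negative or zero.

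\textbf{Step 1: nonvanishing of $\RFH$.} By Corollary \ref{cor:transfer} with $\Z_2$-coefficients, which is allowed since no $\Pin$-structure is needed over $\Z_2$, we have $\wp\circ\tau = d\cdot\mathrm{id}$ on $\QH_*(L;\Z_2)$. As $d$ is odd, this is the identity, so $\tau$ is injective. Since $\QH_*(L;\Z_2)\neq0$, we conclude $\RFH_*(\mathcal{L};\Z_2)\neq 0$.

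\textbf{Step 2: lift $\varphi$ to the symplectization.} Write $\varphi=\varphi_1$ for a contact isotopy $\varphi_t$ with $\varphi_t^*\alpha=\rho_t\alpha$. It lifts to an $\R$-equivariant Hamiltonian isotopy $\Phi_t(r,y)=(r-\log\rho_t(y),\varphi_t(y))$ of $\R\times Y$. After cutting off the Hamiltonian outside a large compact region $[-R,R]\times Y$, we obtain a compactly supported Hamiltonian $F_t$ whose time-one map agrees with $\Phi_1$ near $\{0\}\times\mathcal{L}$.

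\textbf{Step 3: leafwise intersections.} Perturb the Rabinowitz action functional by $F$:
\[
\mathcal{A}^F(u,\eta)=\mathcal{A}(u,\eta)-\int_0^1 F_t(u)\,\chi(t)\,dt
\]
on paths with ends on $\R\times\mathcal{L}$. The standard continuation argument from \cite{AF10,Mer14} shows that its Floer homology is isomorphic to $\RFH_*(\mathcal{L};\Z_2)$. Here compactness, meaning the $L^\infty$ bounds on $\eta$ and the absence of bubbling, follows from the action-energy estimates of \cite{AF10} combined with the index condition (C2), exactly as in the well-definedness part of Theorem \ref{thm:isom_transfer}. Since this homology is nonzero by Step 1, $\mathcal{A}^F$ has a critical point. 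Critical points are leafwise intersections: a point $x\in\R\times\mathcal{L}$ such that $\Phi_1(x)$ lies on the characteristic leaf of $\R\times\mathcal{L}$ in $\{r\}\times Y$ through $x$. Projected to $Y$, this says that $\varphi(y)$ equals the Reeb flow applied to some $y'\in\mathcal{L}$. Applying $\pi$ gives a point of $\pi\circ\varphi(\mathcal{L})\cap L$, contradicting our assumption.

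\textbf{Main obstacle.} The delicate point is Step 3. One must check that the perturbed functional still has compact moduli spaces without fillings, and that the cutoff of $F$ introduces no spurious critical points. In the prequantization setting, compactness rests on the monotonicity and $N_L>2$ bubbling analysis already used for Theorem \ref{thm:isom_transfer}(a). The cutoff is handled by choosing the support of $F$ large relative to the region where critical points of the $\vee$-shaped Hamiltonians live, following \cite{AM13}. Since this is a sketch, we would cite those works for the analytic details.
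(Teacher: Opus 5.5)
Your proposal is correct and matches the paper's sketch. The three ingredients are the same: nonvanishing of $\RFH_*(\mathcal{L};\Z_2)$ via Corollary \ref{cor:transfer} with $d$ odd; lifting the contact isotopy to a Hamiltonian isotopy of $\R\times Y$, cut off near infinity; and a perturbed Rabinowitz Floer homology isomorphic to $\RFH_*(\mathcal{L};\Z_2)$, whose generators project to points of $\pi\circ\varphi(\mathcal{L})\cap L$ and so form an empty complex. The differences from the paper are cosmetic: you use the Lagrange-multiplier functional instead of the $\vee$-shaped Hamiltonian model, and you lift $\varphi$ rather than $\varphi^{-1}$. Only the $Y$-component of your leafwise-intersection condition is meaningful, since $\Phi_1$ shifts the $r$-level.
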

\begin{proof}
	We take a path $\{\psi_t\}_{0\leq t\leq 1}$ of contactomorphisms from the identity to $\psi:=\varphi^{-1}$. The associated contact Hamiltonian $h_t:Y\to\R$ is defined by $h_t\circ\psi_t:=\alpha(\frac{d}{dt}\psi_t)$. 
	Then, the symplectization $\tilde{\psi}_t$ of $\psi_t$, i.e.,
	\[
	\tilde{\psi}_t:\R\times Y\to \R\times Y,\qquad \tilde{\psi}_t(r,x):=(r\rho_t(x)^{-1},\psi_t(x)),
	\]
	where $\rho_t:Y\to(0,\infty)$ is a smooth function characterized by $\psi_t^*\alpha=\rho_t\alpha$, is generated by the Hamiltonian $rh_t(x)$ for $(r,x)\in\R\times Y$. We consider a variant of Rabinowitz Floer homology associated with $\tilde{\psi}$, suitably cut off near infinity. Its chain-level generators correspond to chords $\tilde\gamma=(a,\gamma):[0,1]\to \R\times Y$ such that 
	\[
	\tilde\gamma(0)\in \{0\}\times \mathcal{L},\qquad \tilde\gamma(1)\in \R\times \mathcal{L},\qquad  [\tilde\gamma]=1 \;\textrm{in}\;\pi_1(\R\times Y,\R\times \mathcal{L}),
	\]
	and $\tilde\gamma|_{[0,1/2]}$ and $\tilde\gamma|_{[1/2,1]}$ agree with $\{(0,\phi_R^{\tau t}(\gamma(0)))\}_{0\leq t\leq 1}$ for some $\tau\in\R$ and $\{\tilde\psi_t(\tilde\gamma(1/2))\}_{0\leq t\leq 1}$, respectively, up to reparametrization. Here $\phi_R^t$ denotes the Reeb flow on $Y$. This homology, denoted by  $\RFH_*(\mathcal{L},\tilde\psi;\Z_2)$, is defined under a transversality condition and is, as usual in Floer theory, isomorphic to $\RFH_*(\mathcal{L};\Z_2)$.

	Note that if $\tilde\gamma$ is a chain-level generator of $\RFH_*(\mathcal{L},\tilde\psi;\Z_2)$, then $\pi\circ\gamma(0)\in (\pi\circ\varphi(\mathcal{L}))\cap L$. 
	Assume for contradiction that $\pi\circ\varphi(\mathcal{L})$ does not intersect $L$. Then, $\RFH_*(\mathcal{L},\tilde\psi;\Z_2)$ vanishes since its chain group is trivial. This contradicts the fact that $\RFH_*(\mathcal{L};\Z_2)\neq0$, which is a consequence of our assumption and Corollary \ref{cor:transfer}. This finishes the proof.
\end{proof}

\begin{remark}
	The above proof recovers the multiplicity results established in \cite{EHS95,Ono96}: if $L$ admits a bijective Legendrian lift $\mathcal{L}$ and $\pi_2(\Sigma,L)=0$, then $(\pi\circ \psi)(\mathcal{L})$ intersects $L$ at least $\operatorname{rank} \H_*(L;\Z_2)$ times, provided the intersections are transverse. This follows from the fact that $\RFH_*(\mathcal{L},\tilde\psi;\Z_2)\cong \RFH_*(\mathcal{L};\Z_2) \cong \H_*(L;\Z_2)$ as mentioned in Remark \ref{rem:om=0}, and the observation that,  in this setting, the chain-level generators correspond to distinct intersection points. 
\end{remark}

\begin{cor}\label{cor:chord}
Let $(\Sigma,L)$ and $(Y,\mathcal{L})$ be as in Theorem \ref{thm:isom_transfer}. 
	Assume that the covering degree of $\mathcal{L}\to L$ is odd and $\QH_*(L;\Z_2)\neq 0$. Then, for any $\mathcal{L}'$ Legendrian isotopic to $\mathcal{L}$ and for any contact form supporting the same contact structure as $\alpha$, there exists a Reeb chord between $\mathcal{L}$ and $\mathcal{L}'$. 
\end{cor}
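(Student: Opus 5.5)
The plan is to argue by contradiction with an invariance-plus-vanishing argument, modeled on the proof of Corollary \ref{cor:intersection}. Suppose there is a contact form $\alpha'=f\alpha$ with $f>0$, and a Legendrian $\mathcal{L}'$ Legendrian isotopic to $\mathcal{L}$, such that no $\alpha'$-Reeb chord runs from $\mathcal{L}$ to $\mathcal{L}'$. First I will extend the Legendrian isotopy to an ambient contact isotopy $\psi_t$ of $Y$ with $\psi_0=\mathrm{id}$ and $\psi_1(\mathcal{L})=\mathcal{L}'$; this uses the Legendrian isotopy extension theorem. Then I will consider the two-Lagrangian version of Rabinowitz Floer homology $\RFH_*(\mathcal{L},\mathcal{L}';\Z_2)$. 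It is defined by the $\vee$-shaped Hamiltonian construction on the region associated with $(\alpha',\mathcal{L}')$, using the cobordism whose concave end is $(c\alpha,\mathcal{L})$, as described after Theorem \ref{thm:isom_transfer}. Equivalently, I can use the perturbed complex $\RFH_*(\mathcal{L},\tilde\psi;\Z_2)$, where $\tilde\psi$ is the symplectization of $\psi_t$.

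The key steps, in order, are as follows.
\begin{enumerate}
\item \textbf{Invariance.} Under Legendrian isotopy and change of contact form, this homology should be isomorphic to $\RFH_*(\mathcal{L};\Z_2)$. The argument is the standard continuation argument already sketched in the paper. Compactness throughout is guaranteed by (C2), because the concave end is $(c\alpha,\mathcal{L})$.
\item \textbf{Nonvanishing.} By Corollary \ref{cor:transfer} with $\Z_2$-coefficients, $\wp\circ\tau$ is multiplication by $d$. Since $d$ is odd, this map is the identity on $\QH_*(L;\Z_2)\neq0$. Hence $\RFH_*(\mathcal{L};\Z_2)\neq 0$.
\item \textbf{Identifying generators.} The chain-level generators of the two-Legendrian complex are contractible generalized Reeb chords of $\alpha'$ from $\mathcal{L}$ to $\mathcal{L}'$. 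These are either genuine chords, or, in the degenerate case $\tau=0$, constant chords at points of $\mathcal{L}\cap\mathcal{L}'$.
\item \textbf{Conclusion.} If there are no chords, the chain complex has no generators, so the homology is zero. This contradicts step 2.
\end{enumerate}

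The main obstacle is step 3, specifically the constant chords. If $\mathcal{L}\cap\mathcal{L}'\neq\emptyset$, these contribute generators that are not Reeb chords of positive length. I would handle this in one of two ways.
\begin{itemize}
\item Adopt the convention that points of $\mathcal{L}\cap\mathcal{L}'$ count as length-zero chords, in the same spirit as $\pi\circ\gamma(0)\in(\pi\circ\varphi(\mathcal{L}))\cap L$ in the proof of Corollary \ref{cor:intersection}.
\item Alternatively, perturb $\mathcal{L}'$ slightly along the Reeb flow. A point of $\mathcal{L}\cap\mathcal{L}'$ is then converted into a short genuine chord, and a limiting argument as the perturbation goes to zero recovers a chord for the original pair.
\end{itemize}

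A secondary point is to check the transversality and nondegeneracy needed to define the complex. I would handle it by a generic perturbation of $\alpha'$ or of $\mathcal{L}'$, together with a compactness/limiting argument to return to arbitrary data. The limiting argument works because chords of bounded length persist under limits.
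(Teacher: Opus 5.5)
Your argument is the paper's: nonvanishing of $\RFH_*(\mathcal{L};\Z_2)$ from Corollary~\ref{cor:transfer} (odd $d$ is invertible in $\Z_2$), invariance under change of contact form and Legendrian isotopy, and the observation from the proof of Corollary~\ref{cor:intersection} that the generators of the $\tilde\psi$-twisted complex are generalized $\alpha'$-chords between $\mathcal{L}$ and $\mathcal{L}'$. Note that this twisted complex is the one step~3 needs, not $\RFH_*$ of $(\alpha',\mathcal{L}')$ alone, whose generators are self-chords of $\mathcal{L}'$; also, $\tilde\psi$ should come from the inverse of your extending isotopy (as with $\psi=\varphi^{-1}$ in the paper), so that the chords end on $\mathcal{L}'$.

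Your convention (a), counting points of $\mathcal{L}\cap\mathcal{L}'$ as length-zero chords, is exactly what the paper implicitly uses. Drop alternative (b): pushing $\mathcal{L}'$ by the Reeb flow for time $\epsilon$ only shifts every chord length by $\epsilon$, so the short chord it produces is the original intersection point and no limit improves on this.
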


The above corollary follows from the proof Corollary  \ref{cor:intersection}, together with the invariance property of Rabinowitz Floer homology. We refer to \cite{AF09,DRS24} for related quantitative results. 
By working with orientations, the conclusions of Corollaries \ref{cor:intersection} and \ref{cor:chord} remain true for Lagrangians $L$ with the degree $\mathcal{L}\to L$ being  arbitrary, provided that they admit relatively $\Pin$-structures $\sigma$ and satisfy $\mathrm{rank}_\Z\,\QH_*(L^\sigma)\neq 0$.

\subsection{Topologically simple exact Lagrangian fillings}\label{sec:filling}

An exact (or Liouville) filling $W$ of $Y$ is said to be topologically simple if the inclusion-induced map $\pi_1(Y)\to\pi_1(W)$ is injective and the first Chern class $c_1^{TW}$ of $TW\to W$ vanishes on $\pi_2(W)$. Similarly, an exact Lagrangian filling $\mathcal{L}_W\subset W$ is said to be topologically simple if the inclusion-induced map $\pi_1(Y,\mathcal{L})\to \pi_1(W,\mathcal{L}_W)$ is injective and the Maslov class $\mu_{\mathcal{L}_W}$ vanishes on $\pi_2(W,\mathcal{L}_W)$.
 In the presence of such fillings, Theorem \ref{thm:isom_transfer} remains valid even when condition (C2) is weakened to $N_L\geq2$. 
 We state its simplest form below and refer to Proposition \ref{prop:RFH_well_defined} for the general version.
 
\begin{prop}\label{prop:filling}
	Let $(\Sigma,L)$ and $(Y,\mathcal{L})$ be as in Theorem \ref{thm:isom_transfer}, with condition (C2) weakened to $N_L\geq2$.  
	Assume also that $Y$ and $\mathcal{L}$ admit topologically simple exact (Lagrangian) fillings. Then, $\RFH_*(\mathcal{L};\Z_2)$ and $\RFH_*^{\Z_d}(\mathcal{L};\Z_2)$ are well-defined as $\Z_2$-modules. Moreover, $\RFH_*^{\Z_d}(\mathcal{L};\Z_2)\cong \QH_*(L;\Z_2)$ holds. 
\end{prop}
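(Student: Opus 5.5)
The proof follows the same route as Theorem \ref{thm:isom_transfer}, with compactness from (C2) replaced by compactness from the fillings. The plan has three steps: well-definedness of the Floer theory, identification of the generators, and a chain-level isomorphism with the pearl complex of $L$.

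\textbf{Step 1: well-definedness.} Let $W$ and $\mathcal{L}_W$ be the topologically simple fillings. Attach the symplectization end $[1,\infty)\times Y$ and take the completions $\widehat W$, $\widehat{\mathcal{L}}_W$. Define $\RFC_*(\mathcal{L};\Z_2)$ by the usual $\vee$-shaped Hamiltonian construction on $(\widehat W,\widehat{\mathcal{L}}_W)$, restricted to chords that are contractible in $(Y,\mathcal{L})$. Injectivity of $\pi_1(Y,\mathcal{L})\to\pi_1(W,\mathcal{L}_W)$ means that contractibility in $Y$ agrees with contractibility in $W$. The grading uses capping half-disks in $(W,\mathcal{L}_W)$. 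It is independent of the cap because $c_1^{TW}$ vanishes on $\pi_2(W)$ and $\mu_{\mathcal{L}_W}$ vanishes on $\pi_2(W,\mathcal{L}_W)$.

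Compactness is the delicate point.
\begin{itemize}
\item Floer strips cannot escape to infinity, by the maximum principle.
\item Energy is bounded by the action, since $\mathcal{L}_W$ is exact.
\item Bubbling in the interior is impossible, since there are no nonconstant holomorphic spheres or disks in an exact setting.
\item SFT-type breaking at the negative end of the symplectization is absent, since the negative end is capped off by the filling.
\end{itemize}
This is exactly where $N_L\geq 2$ suffices instead of $N_L>2$. The $\Z_2$-equivariant version for the deck group $\Z_d$ is obtained in the standard way. We use $\Z_2$ coefficients so that no orientation and $\Pin$-extension issues arise.

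\textbf{Step 2: invariance and the main obstacle.} The main obstacle is to show that the resulting homology is independent of the fillings. Only then can it be compared with the quantum homology of $L$, which sees holomorphic disks in $\Sigma$ and not in $W$. I would argue as follows.
\begin{itemize}
\item Choose the Morse–Bott model where the generators are the $S^1$-families, i.e.\ the $\Z_d$-orbits of chords. These project to points of $L$ with winding numbers $k\in\Z$.
\item By an action/index computation, show that every differential, after neck-stretching, is computed by cylindrical strips in $\R\times Y$ with boundary on $\R\times\mathcal{L}$.
\item Strips entering $W$ could only arise from buildings with negative ends asymptotic to Reeb orbits or chords. Their total index changes by at least $N_L-2\geq0$.
\item In the borderline case $N_L=2$, use the topological simplicity, i.e.\ the vanishing of $c_1$ and $\mu$ on the filling, to show that such components have index $\geq 1$. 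Then, for generic data, they do not contribute to rigid counts.
\end{itemize}

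\textbf{Step 3: the isomorphism with $\QH$.} Once the differential is reduced to the symplectization, I would project strips in $\R\times Y$ to $\Sigma$. Holomorphic strips with boundary on $\R\times\mathcal{L}$ correspond bijectively to holomorphic disks in $\Sigma$ with boundary on $L$, together with the winding data, via the standard prequantization lifting argument (the $S^1$-invariant $J$). This yields the same chain-level isomorphism
\[
\RFC_*^{\Z_d}(\mathcal{L};\Z_2)\cong \QC_*(L;\Z_2)
\]
as in Theorem \ref{thm:isom_transfer}(b). Equivalently, it identifies the complex with the theory for $(Y_d,\mathcal{L}_d)$. Hence $\RFH_*^{\Z_d}(\mathcal{L};\Z_2)\cong\QH_*(L;\Z_2)$.
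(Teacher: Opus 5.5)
Your overall architecture matches the paper's. You define the complex in the completed filling and stretch the neck so that rigid curves stay in the cylindrical part (Proposition \ref{prop:index positivity B}). You conclude that the symplectization complex coincides with the filling complex (Proposition \ref{prop:RFH_well_defined}), and then apply the lifting bijection from the proof of Theorem \ref{thm:isom_transfer}. The gap is in Step 2, which is the real content of the proposition, and in where you locate the role of $N_L\ge 2$.

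In Step 1 no Maslov condition is needed, since $(\widehat W,\widehat{\mathcal{L}}_W)$ is exact. What the filling buys is that $\partial^2=0$ and the chain-map property of continuation maps are inherited from $(\widehat W,\widehat{\mathcal{L}}_W)$. So in $\R\times Y$ you only need to control index-$1$ Floer strips and index-$0$ continuation strips. You never need the index-$2$ strips and index-$1$ continuation strips that forced $N_L>2$ in Propositions \ref{prop:simple_no_escape} and \ref{prop:conti_unique}.

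For those rigid curves, your count ``the total index changes by at least $N_L-2\ge 0$'' is not the right mechanism, and neither is the patch at $N_L=2$ via vanishing of $c_1^{TW}$ and $\mu_{\mathcal{L}_W}$. Suppose index-$1$ curves enter $W$. Then the top level of the limit building is a punctured Floer strip with trajectories in $\R\times Y$, with negative punctures at Reeb orbits $\gamma_i$ and chords $c_j$. The estimate in the proof of Proposition \ref{prop:simple_no_escape}, with left-hand side $1$ instead of $2$, becomes
\[
1\ \ge\ 1+\sum_i\big(\mu_{\CZ}(\gamma_i)-1\big)+\sum_j\big(\mu_{\RS}(c_j)-1\big).
\]
Every term in the sums is $\ge 1$, because $\mu_{\CZ}(\gamma_i)\ge 2c_\Sigma\ge N_L\ge 2$ and $\mu_{\RS}(c_j)\ge N_L\ge 2$. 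Hence there are no punctures, which is the contradiction.

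These lower bounds hold only for orbits and chords that are contractible in $(Y,\mathcal{L})$. A priori the $\gamma_i$ and $c_j$ are only capped in $(W,\mathcal{L}_W)$ by the lower levels. It is the $\pi_1$-injectivity in topological simplicity that transfers contractibility back to $Y$. The vanishing of $c_1^{TW}$ and $\mu_{\mathcal{L}_W}$ only makes the gradings computed via caps in $W$ and in $Y$ agree.

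You also cannot discard these configurations by genericity. Step 3 forces $J_Y$ to be cylindrical and $S^1$-invariant with regular horizontal part, so punctured strips in $\R\times Y$ are not generically transverse. The paper instead projects the top level to $\Sigma$. The underlying simple chain of pearls has dimension at least its number of nonconstant disks, and every strip projecting to a constant disk must carry a puncture. This is exactly where the $-1$ per puncture above comes from.

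Two smaller points. The deck $\Z_d$-action need not extend to $(W,\mathcal{L}_W)$, so the equivariant complex should be formed only after the reduction to $\R\times Y$, where $H$ and $J_Y$ are $S^1$-invariant. Also, the Morse--Bott families of chords are the $\mathcal{L}$-families $\mathcal{L}^H_k\cong\mathcal{L}$, not $S^1$-families.
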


The fillings $W$ and $\mathcal{L}_W$ yield an augmentation for the Rabinowitz Floer homology of $\mathcal{L}$, which is trivial due to the topological simplicity condition. Recently, \cite{BCSW24} proved that the augmentation variety of $\mathcal{L}$ corresponds to the zero set of the disk potential of $L$, which settles \cite[Conjecture 9.1]{DRG23}. This illuminating result suggests that our results might be extended to more general augmentations.

We apply Proposition \ref{prop:filling} to derive an obstruction to topologically simple exact Lagrangian fillings, provided  that the Rabinowitz Floer homology of $(W,\partial W=Y)$ generated by contractible periodic orbits vanishes. This vanishing property holds, for instance, under the additional assumption that $[\omega]_{\Z_2}$ is invertible and the minimal Chern number $c_\Sigma$ of $\Sigma$ is at least $2$. The following results are proved in Section \ref{sec:filling_proof}.

\begin{theorem}
	Let $(\Sigma,L)$ and $(Y,\mathcal{L})$ be as in Theorem \ref{thm:isom_transfer}, but assume  $N_L=2$ instead of condition (C2). Assume also that  $\QH_*(L;\Z_2)\neq 0$, $c_\Sigma\geq 2$, and $[\omega]_{\mathbb{Z}_2}$ is invertible in $\QH^*(\Sigma;\mathbb{Z}_2)$. Let $W$ be a topologically simple exact filling of $Y$.  If the covering degree  of $\mathcal{L}\to L$ is odd, then 
	$\mathcal{L}$ does not admit any topologically simple exact Lagrangian filling in $W$.
\end{theorem}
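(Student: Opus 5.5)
Suppose, for contradiction, that $\mathcal{L}$ has a topologically simple exact Lagrangian filling $\mathcal{L}_W\subset W$. Since $N_L=2\geq 2$, Proposition \ref{prop:filling} (with $\Z_2$-coefficients, so no $\Pin$-structure is needed) shows that $\RFH_*(\mathcal{L};\Z_2)$ and $\RFH^{\Z_d}_*(\mathcal{L};\Z_2)$ are well-defined. It also gives $\RFH^{\Z_d}_*(\mathcal{L};\Z_2)\cong\QH_*(L;\Z_2)\neq 0$.

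Corollary \ref{cor:transfer} should still hold in the filled setting, since it comes from the same chain-level identification with the $d$-fold quotient $(Y_d,\mathcal{L}_d)$. It gives maps $\tau,\wp$ with $\wp\circ\tau=d$. As $d$ is odd, this is the identity on $\QH_*(L;\Z_2)$. Hence $\RFH_*(\mathcal{L};\Z_2)\neq0$; more precisely, the class $1_\RFH$ is nonzero, because it corresponds to the unit of $\QH_*(L;\Z_2)$ via $\tau$.

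Now use the filling to obtain a contradiction. I would show that $\RFH_*(\mathcal{L};\Z_2)$, defined without reference to fillings, agrees with the filled Lagrangian Rabinowitz Floer homology of $(W,\mathcal{L}_W)$. This holds because topological simplicity forces the augmentations from $W$ and $\mathcal{L}_W$ to be trivial, and I would take that identification as the content of Proposition \ref{prop:RFH_well_defined}.

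Next I would prove that the periodic Rabinowitz Floer homology $\RFH_*(W)$, built from contractible orbits, vanishes. Under $c_\Sigma\geq2$, topological simplicity identifies it with the filling-free $\RFH_*(Y)$ of the Remark preceding Corollary \ref{cor:intersection}. That group fits into the Gysin-type exact sequence / Floer description for prequantization bundles, in which the differential is quantum multiplication by the Euler class $-[\omega]_{\Z_2}$ on $\QH^*(\Sigma;\Z_2)$. Invertibility of $[\omega]_{\Z_2}$ then kills it; equivalently, $\SH_*(W)=0$, since $\RFH_*(W)$ is the cone of $\SH^{-*}\to\SH_*$.

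Finally, the closed–open map $\RFH_*(W)\to\RFH_*(W,\mathcal{L}_W)$ is a unital ring map in the filled setting. So $\RFH_*(W)=0$ forces $1_\RFH=0$ in $\RFH_*(\mathcal{L};\Z_2)$, or more crudely, $\RFH_*(W,\mathcal{L}_W)$ is a module over the zero ring. This contradicts the nonvanishing established above. If the ring structure on $\RFH_*(\mathcal{L})$ is unavailable at $N_L=2$, I would instead invoke the Appendix \ref{sec:appendix} argument: $\RFH_*(Y)=0$ implies $1_\RFH=0$ directly. That argument only needs exact fillings, which we now have.

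The main obstacle is the vanishing step, namely showing that invertibility of $[\omega]_{\Z_2}$ together with $c_\Sigma\geq 2$ yields $\RFH_*(W)=0$ for contractible orbits. There are two things to handle. First, $c_\Sigma\geq2$ guarantees the index bounds needed so that the filled and filling-free theories agree and gradings behave. Second, one must identify the relevant spectral sequence differential with multiplication by $[\omega]$ in quantum cohomology, as in \cite{BKK24}. I would try to adapt the \cite{BKK24} argument directly.
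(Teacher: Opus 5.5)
Your argument is correct and is essentially the paper's proof. The steps match: the contractible periodic $\RFH_*(W,Y;\Z_2)$ vanishes by invertibility of $[\omega]_{\Z_2}$ and $c_\Sigma\geq 2$; the module structure of $\RFH_*(\mathcal{L}_W,\mathcal{L};\Z_2)$ over it then forces $\RFH_*(\mathcal{L}_W,\mathcal{L};\Z_2)=0$; and Proposition \ref{prop:RFH_well_defined} together with Corollary \ref{cor:transfer} for odd $d$ contradicts $\QH_*(L;\Z_2)\neq 0$. The vanishing step you flag as the main obstacle is not reproved in the paper but quoted from \cite[Theorem 1.1 and Remark 1.2]{BKK24}, so you can cite it rather than re-derive it.
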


Note that by Theorem \ref{thm:torsion}, if $N_L=2$ is replaced by $N_L>2$ in the theorem, the covering degree is necessarily even. We refer to Section \ref{sec:filling_proof} for an analogous statement concerning the case where the covering degree of $\mathcal{L}\to L$ is even.

\begin{cor}
	\label{cor:nonexistence_filling_CP^2n_intro}
	Let $L$ be a closed monotone Lagrangian submanifold of $\mathbb{CP}^{2n}$ with the Fubini-Study form. Assume that $N_L=2$ and $\QH_*(L;\Z_2)\neq 0$. Then the Legendrian lift $\mathcal{L}\subset S^{4n+1}$ of $L$ does not admit any exact Lagrangian filling with vanishing Maslov class inside the ball $B^{4n+2}$.
\end{cor}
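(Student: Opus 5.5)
The plan is to deduce the corollary from the preceding theorem, with $\Sigma=\mathbb{CP}^{2n}$ and $W=B^{4n+2}$. So we must check that $(\mathbb{CP}^{2n},L)$ and $(S^{4n+1},\mathcal{L})$ satisfy its hypotheses, and that an exact Lagrangian filling of $\mathcal{L}$ with vanishing Maslov class in $B^{4n+2}$ is topologically simple.

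\emph{The ambient data.} The prequantization bundle of $(\mathbb{CP}^{2n},\omega_{\mathrm{FS}})$, normalized so that $[\omega]_\Z$ is the generator $h$, is the Hopf fibration $S^{4n+1}\to\mathbb{CP}^{2n}$ with the standard contact form. Its standard filling is $B^{4n+2}$, which is topologically simple because $\pi_1(S^{4n+1})=0$ and $c_1(TB)=0$. The minimal Chern number is $c_\Sigma=2n+1\geq 2$. In $\QH^*(\mathbb{CP}^{2n};\Z_2)\cong\Z_2[h,q^{\pm1}]/(h^{2n+1}-q)$ we have $h^{2n+1}=q$, so $h\cdot h^{2n}q^{-1}=1$. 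Hence $[\omega]_{\Z_2}=h$ is invertible.

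\emph{Condition (C1) and the covering degree.} This step needs the most care. Monotone Lagrangians in $\mathbb{CP}^{2n}$ have $\omega$ proportional to $\mu_L$, and $\mu_L$ of a generic disk is compared with $2c_1$ of the line, which is $2(2n+1)$. By the results of Section~\ref{sec:lift} (homological monotonicity, or the holonomy computation), $L$ admits a Legendrian lift, and the holonomy is $\Z_d$ with $d$ dividing $2c_\Sigma/\gcd(2c_\Sigma,N_Lm_\Sigma)$.

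I would compute the holonomy directly. For a loop $\gamma$ in $L$, choose $k\gamma$ bounding a disk $u$, possibly after adding sphere classes. The holonomy of $\gamma$ is $\exp(2\pi i\,\omega(u)/k)$ modulo the contributions of spheres, which are integers. By monotonicity $\omega(u)=\mu(u)/(2(2n+1))$ in units where the line has area $1$.

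The holonomy group is cyclic of order $d$, and we need $d$ odd. The key observation is that the holonomy is determined by $\omega(u)\bmod\Z$, which equals $\mu(u)/(4n+2)$. Since $\mu(u)\in N_L\Z=2\Z$, this lies in $\frac{1}{2n+1}\Z/\Z$, a group of odd order $2n+1$. (Equivalently, $\Z_d\subset\Z_{2n+1}$.) Therefore $d$ divides $2n+1$ and is odd. Here the even dimension $2n$ of the base is exactly what is used.

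This is the main point to verify carefully. In particular, for loops $\gamma$ that are torsion but not boundaries of disks, we need a multiple $k\gamma$ to bound, so the holonomy of $\gamma$ is a $k$-th root of the holonomy of $k\gamma$. One then has to check that it still lies in the $(2n+1)$-st roots of unity. I would argue via the paper's lemma identifying the holonomy group with $\omega$-values on $\H_2(\Sigma,L)$ modulo $\omega(\H_2(\Sigma))$, using that $\H_2(\mathbb{CP}^{2n},L)$ pairs with $\mu_L$ in $2\Z$ and that monotonicity holds on $\H_2$. If this holds only on $\pi_2$, I would instead use the homotopy-level lift construction.

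\emph{Conclusion.} Let $\mathcal{L}_W\subset B^{4n+2}$ be an exact filling with vanishing Maslov class. Since $\pi_1(S^{4n+1},\mathcal{L})\cong\pi_0(\mathcal{L})$ up to the relevant identifications and the filling is connected (or one argues on components), injectivity of $\pi_1(S^{4n+1},\mathcal{L})\to\pi_1(B,\mathcal{L}_W)$ holds trivially. The Maslov class vanishes on $\pi_2(B,\mathcal{L}_W)$ by assumption. Hence $\mathcal{L}_W$ is topologically simple. With $N_L=2$, $\QH_*(L;\Z_2)\neq0$, $c_\Sigma\geq2$, $[\omega]_{\Z_2}$ invertible and $d$ odd, the theorem forbids such a filling.
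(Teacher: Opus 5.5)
Your route is the paper's: apply Theorem \ref{thm:nonexistence_filling_Z2} with $\Sigma=\mathbb{CP}^{2n}$ and $W=B^{4n+2}$, and check that the covering degree is odd. Two steps in your write-up are left loose, and both close in one line.

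The step you flag as delicate is immediate. Since $\pi_1(\mathbb{CP}^{2n})=0$, every loop in $L$ bounds a disk, so $\pi_2(\mathbb{CP}^{2n},L)\to \H_1(L)$ is surjective. Lemma \ref{lem:degree} then gives, on the nose,
\[
d=\frac{2(2n+1)}{\gcd(2(2n+1),2)}=2n+1 .
\]
So you never need multiples $k\gamma$. For $k>1$, your formula $\exp(2\pi i\,\omega(u)/k)$ would only determine the holonomy up to a $k$-th root of unity, and that ambiguity is exactly what could have spoiled oddness. It is vacuous here.

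For the injectivity of $\pi_1(S^{4n+1},\mathcal{L})\to\pi_1(B^{4n+2},\mathcal{L}_W)$, the relevant fact is that $\mathcal{L}$ is connected. It is the horizontal lift of the connected $L$ through a single point. Hence $\pi_1(S^{4n+1},\mathcal{L})\cong\pi_0(\mathcal{L})$ is a point, and injectivity is automatic. Connectedness of the filling plays no role.

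With these two points made explicit, your argument is complete. It also spells out something the paper leaves implicit: in $B^{4n+2}$, topological simplicity of $\mathcal{L}_W$ reduces to vanishing of its Maslov class.
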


The case of Clifford tori is well-understood. It was established in \cite{DR11,TZ18} that the lift of the Clifford torus in $\mathbb{CP}^2$ admits no exact Lagrangian filling in $B^{6}$. Recently, \cite{BCSW24} generalized this nonexistence result to all $\mathbb{CP}^n$ for $n\geq 2$.
It seems possible to extend the above theorem and corollary  by investigating augmentations induced by Lagrangian fillings that are not necessarily topologically simple. This will be pursued elsewhere.

\paragraph{Acknowledgments} 
J Kang was supported by National Research Foundation of Korea grants NRF-2020R1A5A1016126, RS-2023-00211186, and RS-2025-02217108. S Kim was partially supported by National Key R$\&$D Program of China No.~2023YFA1010500 and NSFC No.~12511540054. J Kang is grateful to Georgios Dimitroglou Rizell and Zhengyi Zhou for valuable comments and discussions.

\section{Lagrangian quantum homology}
Let $(\Sigma,\omega)$ be a closed connected symplectic manifold. Let $L$ be a closed connected monotone Lagrangian submanifold. Then $(\Sigma,\omega)$ is also monotone, and the monotonicity constant $\tau_\Sigma>0$ satisfies
\[
c_1^{T\Sigma} = \tau_\Sigma\omega \;\text { on }\; \pi_2(\Sigma),\qquad \mu_L =2 \tau_\Sigma \omega \;\text { on }\; \pi_2(\Sigma,L),
\]
where $c_1^{T\Sigma}$ and $\mu_L$ denote the first Chern class of $\Sigma$ and the Maslov class of $L$, respectively. In this section, we assume that the minimal Maslov number $N_L$ of $L$ satisfies $N_L\geq 2$, where $N_L \in \N=\{1,2,\dots\}$ is defined by $\mu_L(\pi_2(\Sigma,L))=N_L\Z$ if $\mu_L(\pi_2(\Sigma,L))\neq 0$, and we set $N_L=\infty$ if $\mu_L(\pi_2(\Sigma,L))=0$.

This section recalls the construction of the Lagrangian quantum homology ring of $L$ equipped with a relative $\Pin$-structure $\sigma$, following \cite{BC4,Z1}.

\begin{remark}\label{rem:pin}(Relative $\Pin$-structure)
	The pin groups $\mathrm{Pin}^+(n)$ and $\mathrm{Pin}^-(n)$ of $\R^n$ are two different central extensions of the orthogonal group $O(n)$ by $\Z_2=\{-1,+1\}$,
	\[
		1 \to \Z_2 \to \Pin(n) \stackrel{\wp}{\to} \mathrm{O}(n) \to 1.
	\]
	Let $Q$ be a smooth $n$-dimensional submanifold of a smooth manifold $M$. 
	Let $\mathcal{V}$ be a good cover of $M$, and let $\mathcal{U}$ be a good cover of $Q$, which is a refinement of $\mathcal{V}$ restricted to $Q$. 
	We fix a Riemannian metric on $Q$. The frame bundle of the tangent bundle $TQ$ of $Q$ is an $O(n)$-principal bundle, and this defines a cocycle $h\in \check{\mathrm{C}}^{1}(\mathcal{U};\mathrm{O}(n))$ in the $\mathrm{\check{\mathrm{C}}}$ech cochain complex. 
	It admits a lift $g\in \check{\mathrm{C}}^1(\mathcal{U};\Pin(n))$, i.e.,~$\wp(g)=h$, and $\wp( \delta g)=0$ holds since $h$ is a cocycle, where $\delta$ denotes the coboundary operator. This yields $\delta g\in \check{\mathrm{C}}^{2}( \mathcal{U};\Z_2)$, where $\Z_2\cong \ker\wp$. The cohomology class $w^\pm(Q):=[\delta g]\in\check{\H}^2(Q;\Z_2)$ satisfies 
	\[
	w^+(Q)=w_2^{TQ},\qquad w^-(Q)=w_2^{TQ}+(w_1^{TQ})^2
	\]
	where $w_1^{TQ}$ and $w_2^{TQ}$ denote the first and second Stiefel--Whitney classes of $TQ$, respectively. The submanifold $Q$ is said to be relatively $\Pin$ if $w^\pm(Q)$ is in the image of the map $\H^2(M;\Z_2)\to \H^2(Q;\Z_2)$ induced by the inclusion $Q\hookrightarrow M$.

	A relative $\Pin$-structure on $Q$ is an equivalence class of a pair 
	\[
	(g,\beta)\in \check{\mathrm{C}}^1(\mathcal{U};\Pin(n))\times \check{\mathrm{Z}}^2(\mathcal{V};\Z_2),
	\]
	where $\check{\mathrm{Z}}^2(\mathcal{V};\Z_2)=\ker \delta|_{\check{\mathrm{C}}^2(\mathcal{V};\Z_2)}$, 
	such that $\wp(g)=h$ and $\beta|_Q=\delta g$. Here, two pairs $(g_1,\beta_1)$ and $(g_2,\beta_2)$ are said to be equivalent if there is $(a,b)\in \check{\mathrm{C}}^0(\mathcal{U};\Z_2)\times \check{\mathrm{C}}^1(\mathcal{V};\Z_2)$ satisfying $(\delta a\cdot g_1,\delta b\cdot \beta_1)=(g_2,\beta_2)$. 
	When $(g,\beta)$ represents a relative $\Pin$-structure $\sigma$ on $Q$, we call $[\beta] \in \check{\H}^2(\mathcal{V};\Z_2) \cong \H^2(M;\Z_2)$ the background class of $\sigma$. 
\end{remark}

\subsection{Lagrangian quantum complex}\label{sec:QC}
For a smooth Morse function $f_L$ on $L$, we denote by $\Crit f_L$ the set of critical points of $f_L$.
Let $Z_L$ be a negative gradient-like vector field for $f_L$ on $L$. 
Denoting the time-$t$ flow of $Z_L$ by $\varphi_{Z_L}^{t}$, we define the unstable and stable manifolds of $p\in \Crit f_L$ with respect to $Z_L$ by
\[
	W^u_{Z_L}(p):=\{x \in L \mid \lim_{t\to -\infty} \varphi_{Z_L}^{t}(x)=p\},\qquad 
	W^s_{Z_L}(p):=\{x \in L \mid \lim_{t\to +\infty} \varphi_{Z_L}^{t}(x)=p\},
\]
respectively. 
The Morse index $\ind_{f_L}(p)$ of $p$ equals $\dim W^u_{Z_L}(p)$. We assume that $(f_L,Z_L)$ is a Morse-Smale pair, i.e.,~$W^u_{Z_L}(p)\pitchfork W^s_{Z_L}(q)$ for all $p,q\in \Crit f_L$.

Let $\mathcal{J}_{\Sigma}$ be the space of $\omega$-compatible almost complex structures on $\Sigma$. 
To discuss $J_\Sigma$-holomorphic disks for  $J_\Sigma \in \mathcal{J}_\Sigma$, we write $\D:= \{z \in \C \mid |z| \leq 1\}$. 
\begin{definition}\label{def:pearl_Lag}
	For $N \in \mathbb{N}$, let ${\bf A} = (A_1, \dots, A_N) \in (\pi_2(\Sigma,L))^N$. Let 
	\[
	\mathcal{N}_{N}({\bf A};J_{\Sigma})=\big\{ {\bf w}= (w_1, \dots, w_N)\big\}
	\]
	be the moduli space of $N$-tuples of $J_{\Sigma}$-holomorphic disks $w_i : (\D,\partial \D) \to (\Sigma,L)$  representing $A_i\in \pi_2(\Sigma,L)$ for $1\leq i\leq N$.
	We denote by $\mathcal{N}_{N}^{*}({\bf A}; J_{\Sigma}) $ the subspace of $\mathcal{N}_{N}({\bf A}; J_{\Sigma})$ consisting of simple elements, i.e.,~each $w_i$ is either somewhere injective or constant, and $w_i(\D) \not\subset \bigcup\limits_{j\neq i} w_j(\D)$ for every $1\leq i\leq N$ for which $w_i$ is nonconstant.
\end{definition}

Let us abbreviate $\mathcal{D} = (f_L,Z_L,J_\Sigma)$. We consider the evaluation map 
\[
	\ev : \mathcal{N}_{N}({\bf A};J_{\Sigma}) \to L^{2N} ,\quad 
	\mathbf{w} \mapsto \left(w_1(-1),w_1(1), \dots, w_N(-1),w_N(1)\right).
\]
For $p,q\in \Crit f_L$, we define the moduli space of chains of pearls from $q$ to $p$ by
\begin{equation*}
	\mathcal{N}_{N}(q,p;\mathbf{A};\mathcal{D}):= \left(\ev\right)^{-1}\big(W^{u}_{Z_L}(q) \times \Delta_{Z_L}^{N-1} \times W^{s}_{Z_L}(p)\big),
\end{equation*} 
where
\[\Delta_{Z_L} := \{(x,\varphi_{Z_L}^t(x)) \in L \times L \mid x\in L \setminus \Crit f_L, \, t \in \R_{>0}\}.\] 
For $N=1$, this means $\mathcal{N}_{N=1}(q,p;\mathbf{A};\mathcal{D}):=(\ev)^{-1}(W^{u}_{Z_L}(q) \times W^{s}_{Z_L}(p))$. We further define the subspace of simple chains of pearls from $q$ to $p$ by
\[
\mathcal{N}^{*}_{N}(q,p;\mathbf{A};\mathcal{D}):= \mathcal{N}_{N} (q,p;{\bf A};\mathcal{D}) \, \cap\, \mathcal{N}_{N}^{*}({\bf A};\mathcal{D}).
\]

\begin{remark}
	To define the quantum homology of $L$, it suffices to consider $\mathbf{A}$ with all $A_i$ nonzero. We include in the above definition the case where some of the classes $A_i$ vanish, as this will be relevant in Section \ref{sec:proof_proposition}.
\end{remark}

From now on, we assume $\mathbf{A}\in (\pi_2(\Sigma,L)\setminus\{0\})^N$.
Then, there is a free $\R^{N}$-action on $\mathcal{N}_{N}(q,p;\mathbf{A};\mathcal{D})$ given by reparametrizing each $w_i$ by elements of $\mathrm{Aut}(\D;\pm 1)$, the group of biholomorphisms of $\D$ fixing $\pm 1$.
We denote
\[
		\overline{\mathcal{N}}_{N}(q,p;\mathbf{A};\mathcal{D}):=\mathcal{N}_{N}(q,p;\mathbf{A};\mathcal{D})/\R^{N},\qquad \overline{\mathcal{N}}^{*}_{N}(q,p;\mathbf{A};\mathcal{D}):=\mathcal{N}^{*}_{N}(q,p;\mathbf{A};\mathcal{D})/\R^{N}.	
\]For the case $N=0$, we set 
\[
\mathcal{N}_{N=0}(q,p):=W^{u}_{Z_L} (q) \cap W^{s}_{Z_L}(p),\qquad \overline{\mathcal{N}}_{N=0}(q,p):=\mathcal{N}_{N=0}(q,p)/\R,
\]
where the $\R$-action is given by the flow $\varphi_{Z_L}^{t}$.

\begin{prop}(\cite[Section 3.1.2]{BC4})\label{prop:base_transv}
	There is a residual set $\mathcal{J}_{(\Sigma,L)}^{\mathrm{reg}} \subset \mathcal{J}_{\Sigma}$ such that the following hold for every $J_{\Sigma} \in \mathcal{J}_{(\Sigma,L)}^{\mathrm{reg}}$, $p,q \in \Crit f_L$, $N\in\N$, and $\mathbf{A} \in (\pi_2(\Sigma,L)\setminus\{0\})^N$.
	\begin{enumerate}[(a)]
		\item 
		The moduli space $\overline{\mathcal{N}}^{*}_{N} (q,p;{\bf A}; \mathcal{D})$ is a smooth manifold of dimension
		\[ \mathrm{ind}_{f_L}(q)- \mathrm{ind}_{f_L}(p) -1+\mu_L(\mathbf{A}), \quad \text{where }\;\mu_L(\mathbf{A}):=\sum\limits_{i=1}^{N}\mu_L(A_i).\]
		\item 
		If $\mathrm{ind}_{f_L}(q)- \mathrm{ind}_{f_L}(p) + \mu_L(\mathbf{A}) \leq 2$, then every element in $\overline{\mathcal{N}}_{N}(q,p;{\bf A}; \mathcal{D})$ is simple, i.e., \[\overline{\mathcal{N}}_{N}(q,p;{\bf A}; \mathcal{D})=\overline{\mathcal{N}}^{*}_{N} (q,p;{\bf A}; \mathcal{D}).\] 
		Moreover, if $\mathrm{ind}_{f_L}(q)- \mathrm{ind}_{f_L}(p) +\mu_L(\mathbf{A})=1$, then this moduli space is compact. 
	\end{enumerate}
\end{prop}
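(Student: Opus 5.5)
The plan is to follow the standard pearl-complex transversality argument of Biran--Cornea, organized around a universal moduli space and the Sard--Smale theorem.

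\emph{Part (a).} First, consider the universal moduli space of simple $N$-tuples of disks $(\mathbf{w},J_\Sigma)$ with $J_\Sigma$ of class $C^\ell$. Simplicity gives somewhere-injective points for each nonconstant $w_i$ that avoid the images of the other disks. The usual argument (surjectivity of the linearized operator enlarged by variations of $J_\Sigma$ supported near such points) shows that this universal space is a Banach manifold. Next, show that the evaluation map $\ev$ from it to $L^{2N}$ is a submersion. Varying $J_\Sigma$ near injective points lets one move the marked points $w_i(\pm1)$ independently, as in McDuff--Salamon's treatment of evaluation maps, and the simplicity condition $w_i(\D)\not\subset\bigcup_{j\ne i}w_j(\D)$ is exactly what makes these variations independent across $i$. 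Sard--Smale then yields a residual set of $J_\Sigma$ for which $\ev$ on $\mathcal N^*_N(\mathbf A;J_\Sigma)$ is transverse to $W^u(q)\times\Delta_{Z_L}^{N-1}\times W^s(p)$. Taubes' trick passes from $C^\ell$ to $C^\infty$, and intersecting over the countably many $(p,q,N,\mathbf A)$ keeps the set residual.

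\emph{Dimension count.} Each disk space has dimension $n+\mu_L(A_i)$, where $n=\dim L$. The target condition has codimension
\[
(n-\ind q)+(N-1)(n-1)+\ind p .
\]
Subtracting this from $\sum_i\bigl(n+\mu_L(A_i)\bigr)$ and then quotienting by $\R^N$ gives
\[
\ind q-\ind p-1+\mu_L(\mathbf A),
\]
which is the formula stated in (a).

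\emph{Part (b).} Here monotonicity and $N_L\ge2$ are essential.
\begin{enumerate}[(i)]
\item Any non-simple chain can be reduced to a simple one. Replace a multiply covered disk by an underlying simple disk (Lazzarini's decomposition). Drop disks whose images lie in the union of the others, replacing them by suitable flow lines or by merging. Each such operation lowers the total Maslov class by at least $N_L\ge2$, or removes a pearl while preserving the incidence conditions.
\item The reduced simple configuration lies in a regular moduli space whose virtual dimension has decreased by at least $1$ (or $2$). When $\ind q-\ind p+\mu_L(\mathbf A)\le2$ that dimension is negative, so the reduced configuration cannot exist. Hence every element is already simple.
\item For compactness in the case where $\ind q-\ind p+\mu_L(\mathbf A)=1$, use Gromov compactness. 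Possible limits are breaking of flow lines, disk bubbling, sphere bubbling, and a flow segment shrinking to length zero (two pearls colliding). Each limit lies in a moduli space of dimension $\le-1$ after reduction to simple configurations: sphere bubbles cost $2c_\Sigma\ge N_L\ge2$, and disk bubbles or collisions cost at least one dimension. So no such limits occur, and the zero-dimensional space is compact.
\end{enumerate}

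\emph{Main obstacle.} The delicate step is the reduction to simple configurations in (b), in particular controlling pearls whose images lie inside other pearls and keeping track of how the incidence conditions change under this reduction. I would follow Biran--Cornea's combinatorial reduction argument closely; the transversality in (a) is standard by comparison.
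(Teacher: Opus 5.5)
Your outline follows the Biran--Cornea argument that the paper simply cites, and part (a), including the dimension count, is fine. The gap is in (b)(i), the reduction of a non-simple chain to a simple one. Both the simplicity claim and your compactness argument rest on it. You flag this as the delicate step, but the two mechanisms you propose do not preserve the incidence conditions.

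\emph{Non-simple pearls.} A non-simple disk need not be a multiple cover. Lazzarini's decomposition gives simple disks $v_1,\dots,v_k$ with $[w_i]=\sum_j m_j[v_j]$, and no single $v_j$ need pass through both $w_i(-1)$ and $w_i(1)$. So you cannot replace $w_i$ by ``an underlying simple disk''. Instead, replace it by a chain of distinct pieces meeting at common boundary points, i.e.\ joined by flow segments of length zero. That constraint has codimension $n$ rather than $n-1$, so it only lowers the dimension further. These coincidence-type moduli spaces must then also be covered by your transversality statement in (a).

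\emph{Pearls contained in others.} This is the more serious problem. Deleting a pearl whose image lies in $\bigcup_{j\ne i}w_j(\D)$ leaves $w_{i-1}(1)$ and $w_{i+1}(-1)$ with no flow line between them. What you need is a genericity statement forcing such a $w_i$ to be a reparametrization of some $w_j$. Then $w_i(\pm1)\in w_j(\partial\D)$, and you can shortcut the chain through $w_j$, discarding $w_i$ and possibly further pearls. Containment alone does not give this. For example, $z\mapsto z^2$ on the upper half-plane is a simple disk in $(\mathbb{CP}^1,\mathbb{RP}^1)$ whose image contains the upper hemisphere without being a reparametrization of it; its interior crosses $L$ along an arc.

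\emph{Low dimensions.} Passing to an underlying simple chain is exactly the step for which Biran--Cornea assume $\dim L\ge 3$, as the paper notes in the proof of Proposition~\ref{prop:simple_no_escape}. Your argument as written therefore does not cover $\dim L\le 2$. There a separate case analysis is needed. It is short, because $\ind_{f_L}(q)-\ind_{f_L}(p)+\mu_L(\mathbf A)\le 2$ forces $\mu_L(\mathbf A)\le 4$.

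\emph{Dimension accounting in (ii).} You need a drop of at least $2$, not ``at least 1 (or 2)''. A drop of $1$ would leave a $0$-dimensional space when $\ind_{f_L}(q)-\ind_{f_L}(p)+\mu_L(\mathbf A)=2$, and your conclusion would fail. The correct statement is that every reduction step lowers $\mu_L$, hence the dimension, by at least $N_L\ge 2$. This holds because each discarded or replaced pearl loses a positive multiple of $N_L$ in Maslov index, by monotonicity.
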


Next, following \cite[Section 4 and Section 7]{Z1}, we define   the Lagrangian quantum complex of $L$ using canonical orientations. We assume that $L$ is relatively $\Pin$ and fix a relative $\Pin$-structure $\sigma$ on $L$. 

Let $J_\Sigma \in \mathcal{J}_{(\Sigma,L)}^{\mathrm{reg}}$.
Let $p \in \Crit f_L$ and $w\in C^\infty((\D,\partial \D,{-1}) , (\Sigma,L,p))$. 
For any connection $\nabla$ on $T\Sigma$, we have the pullback bundle  $(w^*T\Sigma, (w|_{\partial \D})^*TL)$ equipped with $J_{w}:=w^* J_\Sigma$ and $\nabla_{w}:=w^* \nabla$.
For $p>2$, we consider the Cauchy--Riemann operator associated with $J_w$:
\begin{equation}\label{eq:CR_base}
	D_w=\nabla^{0,1}_{w}: W^{1,p}((\D,\partial\D),(w^*T\Sigma,w|_{\partial \D}^*TL)) \to L^p(\D,\Omega^{0,1}_{\D}\otimes w^*T\Sigma),
\end{equation}
which is a Fredholm operator.
There is a well-defined evaluation map
\[
\ev_{w,{-1}} : W^{1,p}((\D,\partial\D),(w^*T\Sigma,w|_{\partial \D}^*TL)) \to T_p L, \qquad \xi \mapsto \xi({-1}).
\]
For a subspace $V$ of $T_p L$, we define the restriction 
\[
D_w \# V := D_w |_{\left(\ev_{w,{-1}}\right)^{-1}(V)},
\]
which is again a Fredholm operator. The orientation line $\mathfrak{o}{(p,w)}$ is defined as the free $\Z$-module of rank $1$ generated by the two orientations of $\det(D_w \# T_pW^u_{Z_L}(p))$ modulo the relation that they sum up to zero. 
Furthermore, for any $w, w'\in C^\infty((\D,\partial \D,{-1}) , (\Sigma,L,p))$ with $\mu_L(w) = \mu_L(w')$, we have a canonical isomorphism $\mathfrak{o} (p,w)\cong \mathfrak{o} (p,w')$ determined by the chosen relative $\Pin$-structure $\sigma$. See Remark \ref{rem:ori_iso} below. For $(p,k)\in\Crit f_L\times \Z$, we set 
\begin{equation}\label{eq:generator_base}
	\mathfrak{o}(p,k) := \varinjlim\limits_{w \in \mu_L^{-1}(kN_L)} \mathfrak{o}(p,w),\qquad \deg \mathfrak{o}(p,k) := \dim L - \ind_{f_L}(p)-kN_L.
\end{equation}

\begin{remark}\label{rem:ori_iso}(Isomorphism between orientation lines \cite{Z1})
\begin{enumerate}[(i)]
	\item For $p\in L$ and $A \in \pi_2(\Sigma,L)$, we consider the following connected space:
	\[
	C^\infty_{A}:= \{ w \in C^\infty((\D,\partial \D,{-1}) , (\Sigma,L,p)) \mid  w \text{ represents }{A}\}.
	\]
	Then, we have the families of Fredholm operators
	\[
	D_A := (D_w)_{w\in C^\infty_A},\qquad 
	D_A\#V:= (D_w \# V)_{w \in C^\infty_A},
	\]
	whose determinant line bundles $\det(D_A)$ and $\det(D_A\#V)$ are orientable since $L$ is relatively $\Pin$, see \cite[Lemmas 4.1 and 4.2]{Z1}. Thus, there is a canonical isomorphism $\mathfrak{o} (p,w)\cong \mathfrak{o} (p,w')$ if $w$ and $w'$ represent the same homotopy class in $\pi_2(\Sigma,L)$.
	\item Let $w,w'\in C^\infty((\D,\partial \D,{-1}) , (\Sigma,L,p))$ satisfy $\mu_L(w) = \mu_L(w')$. The glued disk $(-w)\#w'$, where $-w$ denotes $w$ with the reversed orientation, satisfies $\mu_L((-w)\#w')=0$. Therefore, the orientation of $\det(D_{(-w)\#w'}\#0)$ is determined by the relative $\Pin$-structure $\sigma$ as we recall in the proof of Proposition \ref{prop:orientation_commute}; see  \cite[Proposition 7.4]{Z1} for a detailed account. 
	Using the gluing isomorphism together with a deformation of the incidence condition, we obtain an isomorphism 
	\[
	\det(D_w \#T_pW^u_{Z_L}(p)) \otimes \det(D_{(-w)\#w'}\#0)  \cong \det(D_{w'} \#T_pW^u_{Z_L}(p)).
	\]
	Since the orientation of $\det(D_{(-w)\#w'}\#0)$ is determined by $\sigma$, this yields an isomorphism
	\begin{equation}\label{eq:pin_iso}
		\psi^{\sigma}_{(w,w')} : \det(D_w \#T_pW^u_{Z_L}(p)) \longrightarrow \det(D_{w'} \#T_pW^u_{Z_L}(p)),
	\end{equation}
	and hence an isomorphism $\mathfrak{o}(p,w)\to \mathfrak{o}(p,w')$.
	\item The orientation of $\det (D_{(-w)\#w'}\#0)$ determined by $\sigma$ corresponds to an element of $\det (D_{(-w)\# w'}\#0)$ up to a positive multiple. 
	 Accordingly, $\psi^{\sigma}_{(w,w')}$ is  well-defined only up to a positive multiple. We shall henceforth consider all maps and commutative diagrams between determinant lines to be defined and understood up to a positive multiple.
\end{enumerate}
\end{remark}

We consider $\mathbf{w}=(w_1,\dots,w_N) \in \mathcal{N}_{N}(q,p;\mathbf{A};\mathcal{D})$ with $\ind_{f_L} (q)- \ind_{f_L} (p) +\mu_L(\mathbf{A})=1$.
Let $w \in C^\infty((\D,\partial \D,{-1}) , (\Sigma,L,p))$. We glue the disks ${w}_1, \dots, {w}_N, w$ together with the integral curves of $Z_L$ connecting $q,w_1,\dots,w_N,p$ to obtain $v \in C^\infty((\D,\partial \D,{-1}) , (\Sigma,L,q))$. We have 
\[ 
\mu_L([v]) = \mu_L(\mathbf{A})+\mu_L([w]).
\]
As established in \cite[Section 4.2.2]{Z1}, we have an isomorphism 
\[
\det(D_w \#T_pW^u_{Z_L}(p))\cong \det(D_v \# T_qW^u_{Z_L}(q)),
\] 
which is obtained by gluing the linearized operators associated to $w_1, \dots, w_N, w$ and deforming the incidence conditions.  This induces an isomorphism between the corresponding orientation lines $\mathfrak{o}(p,w)\cong \mathfrak{o}(q,v)$. Furthermore, this isomorphism commutes with the canonical isomorphisms in Remark \ref{rem:ori_iso}.(ii), which yields an isomorphism
\begin{equation}\label{eq:pearl_bdry}
	C([\mathbf{w}]) : \mathfrak{o}(p,k) \longrightarrow \mathfrak{o}(q,l),
\end{equation}
where $k=\mu_L(w)/N_L$ and $l =\mu_L(v)/N_L$. 
As indicated by the notation, reparametrizing $\mathbf{w}$ does not affect the isomorphism. 
For the case $N=0$, an element $[z] \in \overline{\mathcal{N}}_{N=0}(q,p)$ with $\ind_{f_L}(q)-\ind_{f_L}(p)=1$ induces an isomorphism
\[
	C([z]) : \mathfrak{o}(p,k) \longrightarrow \mathfrak{o}(q,k),
\]
as in Morse theory, using the orientation of $T_z \mathcal{N}_{N=0}(q,p)$ given by $Z_L$.

For later purposes, we introduce the following space:~for $(q,l),(p,k)\in \Crit f_L\times \Z$,
\begin{equation}\label{eq:moduli_base}
	\overline{\mathcal{N}}^{*}((q,l),(p,k);\mathcal{D}):=  \begin{cases}
		\overline{\mathcal{N}}_{N=0}(q,p) & \text{if }k=l, \\[1ex]
		\displaystyle\bigcup_{N\in\N}\bigcup_{\mathbf{A}}\, \overline{\mathcal{N}}^{*}_{N}(q,p;\mathbf{A};\mathcal{D}) & \text{if }k\neq l, 
	\end{cases}	
\end{equation}
where the second union ranges over all $\mathbf{A}\in (\pi_2(\Sigma,L)\setminus\{0\})^N$ with $\mu_L(\mathbf{A})=(l-k)N_L$. When $\deg \mathfrak{o}(p,k)-\deg \mathfrak{o}(q,l)=1$, each element $[\mathbf{w}]$ in this moduli space induces an isomorphism $C([\mathbf{w}]):\mathfrak{o}(p,k) \to \mathfrak{o}(q,l)$. Henceforth, we also write $[\mathbf{w}]$ for $[z]$ in the case of $k=l$.

\medskip

Finally, we define the quantum complex associated with $\mathcal{D}=(f_L,Z_L,J_\Sigma)$. To indicate  dependence of the chosen relative $\Pin$-structure $\sigma$, we write  $L^{\sigma}$ for the Lagrangian submanifold $L$ equipped with $\sigma$. The chain module is defined by
\[
\QC_*(L^{\sigma};\mathcal{D}) := \bigoplus_{(p,k)} \mathfrak{o}(p,k),
\] 
where the direct sum ranges over all $(p,k)\in\mathrm{Crit} f_L\times\Z$. The boundary map is defined by 
\begin{equation}\label{eq:differential_base}
	\partial_{\mathcal{D}} : \QC_*(L^{\sigma};\mathcal{D}) \to \QC_{*-1}(L^{\sigma};\mathcal{D}),\qquad \partial_{\mathcal{D}} := \bigoplus_{(p,q,k,l)} \sum_{[\mathbf{w}]} C([\mathbf{w}]).
\end{equation}
The direct sum ranges over all $(p,k),(q,l)\in \Crit f_L\times \Z$  with $\deg \mathfrak{o}(p,k)- \deg \mathfrak{o}(q,l)=1$. The sum runs over all $[\mathbf{w}] \in \overline{\mathcal{N}}^{*}((q,l),(p,k);\mathcal{D})$. The boundary map $\partial_{\mathcal{D}}$ squares to zero, see \cite[Section 4.2.2 and Section 7.3]{Z1}.
The quantum homology of $L^\sigma$ is defined by 
\[ 
\QH_i(L^\sigma):=\H_i(\QC_*(L^{\sigma};\mathcal{D}),\partial_{\mathcal{D}}),
\]
and it is independent of the choice of $\mathcal{D}$. 

\begin{remark}\label{rem:laurent}(Module structure)
	Let us assume that $N_L <\infty$.
	When $N_L$ is even, the chain complex $\QC_*(L^{\sigma})$ admits a module structure over the Laurent polynomial ring $\Z[T,T^{-1}]$, where the formal parameter $T$ has degree $N_L$.
	For any $A,B \in \pi_2(\Sigma,L)$ with $\mu_L(B)=-N_L$, we have an isomorphism 
	\[
	\det(D_A \#T_pW^u_{Z_L}(p)) {\longrightarrow} \det(D_{A\cdot B} \#T_pW^u_{Z_L}(p)) 
	\]
	defined in the same way as the one in \eqref{eq:pin_iso}, which exists under the condition $N_L\in 2\Z$, see \cite[Proposition 7.4]{Z1}. This commutes with the isomorphism in \eqref{eq:pin_iso}, and therefore we obtain an isomorphism $
	T:\mathfrak{o}(p,k) \to \mathfrak{o}(p,k-1)$
	for every $(p,k)\in\Crit f_L\times \Z$. It commutes with the boundary map $\partial_{\mathcal{D}}$, thus induces a $\Z[T,T^{-1}]$-module structure on $\QH_*(L^\sigma)$. 
	
	If $L$ is nonorientable and $N_L$ is odd, there is still a $\Z[T,T^{-1}]$-module structure, but with $\deg T=2N_L$ in general. We refer to \cite[Lemma 14]{KS21b} for a detailed account. 
\end{remark}

\subsection{Product structure of Lagrangian quantum homology}\label{sec:product_QH}

In this section, we provide a brief overview of the quantum product  on $\QH_*(L^\sigma)$; for details, see \cite[Section 5.2]{BC4} and \cite[Section 4.2.3]{Z1}.
Let $\mathcal{D}^P = (f_L,Z^1_L,Z^2_L, Z^3_L,J_{\Sigma})$ be a tuple consisting of a smooth Morse function $f_L$ on $L$, negative gradient-like vector fields $Z^1_L,Z^2_L, Z^3_L$ for $f_L$, each forming  a Morse-Smale pair with $f_L$, and  $J_{\Sigma} \in \mathcal{J}_{\Sigma}$.
We denote
\begin{align}\label{eq:auxiliary_Lag}
	\begin{split}
		&\mathbf{N}=(N_1,N_2,N_3)\in (\N\cup\{0\})^3,\qquad N=N_1+N_2+N_3,  \\[.5ex] 
		&\mathbf{A}=(A_1,\dots, A_{N_1}, A, A_{N_1+1},\dots, A_N) \in (\pi_2(\Sigma,L))^{N+1},\quad
		A_i\neq 0 \;\; \forall i\in\{1,\dots,N\}.
	\end{split}
\end{align}
For $\mathbf{w}=(w_1,\dots,w_{N_1}, w, w_{N_1+1},\dots, w_{N})\in \mathcal{N}_{N+1} (\mathbf{A}; J_\Sigma)$,
we define 
\begin{align*}
	&\ev_P : \mathcal{N}_{N+1} (\mathbf{A}; J_\Sigma) \longrightarrow L^{2N+3} \\[.5ex]
	&\ev_P(\mathbf{w}) := \big( w_1(-1), w_1 (1), \dots, w_{N_1} (-1), w_{N_1} (1), w(-1),\\[.5ex] 
	&\hspace{2.2cm} w(e^{-\frac{\pi i}{3}}), w_{N_1+1}(-1), w_{N_1+1}(1),\dots, w_{N_1+N_2}(-1), w_{N_1+N_2} (1), \\[.5ex]
	& \hspace{2.2cm} w(e^{\frac{\pi i}{3}}), w_{N_1+N_2+1}(-1), w_{N_1+N_2+1}(1),\dots , w_{N}(-1), w_{N}(1) \big). 
\end{align*}  
\begin{definition}\label{def:pearl_Sigma}
	For $p, q, r \in \Crit f_L$, 
	we define the moduli space
	\begin{equation*}
		\mathcal{N}_{\mathbf{N}}(r,p,q ; \mathbf{A} ; \mathcal{D}^P) := (\ev_P)^{-1} \Big(  W^u_{Z^1_L}(r) \times  \Delta_{Z^1_L} ^{N_1}\times \Delta_{Z^2_L}^{ N_2}\times W^s_{Z^2_L}(p) \times  \Delta_{Z^3_L}^{N_3} \times W^s_{Z^3_L}(q) \Big) ,
	\end{equation*}
	where 
	\[
	\Delta_{Z^i_L}:= \big \{ (x,\varphi_{Z^i_L}^t(x)) \in L \times L \mid x \in (L \setminus \Crit f_L), t\in \R_{>0}  \big \}\quad  \text{for }\, i=1,2,3.
	\]
	We also consider the subspace $\mathcal{N}^{*}_{\mathbf{N}}(r,p,q ; \mathbf{A} ; \mathcal{D}^P)$ consisting of $\mathbf{w}$ such that $(w_1,\dots,w_{N_1},w)$,
		$(w, w_{N_1+1},\dots,w_{N_1+N_2})$, and $(w, w_{N_1+N_2+1},\dots,w_{N})$ are simple in the sense of Definition \ref{def:pearl_Lag}. 
\end{definition}

There is a free $\R^{N}$-action on $\mathcal{N}_{\mathbf{N}}(r,p,q; \mathbf{A} ;\mathcal{D}^P)$ and $\mathcal{N}_{\mathbf{N}}^*(r,p,q; \mathbf{A} ;\mathcal{D}^P)$  given by reparametrizing each nonconstant disk $w_i$.
We denote the respective quotient spaces by 
\[
	\overline{\mathcal{N}}_{\mathbf{N}}(r,p,q;\mathbf{A};\mathcal{D}^P),\qquad 
	\overline{\mathcal{N}}^{*}_{\mathbf{N}}(r,p,q;\mathbf{A};\mathcal{D}^P).
\]
In the case $N=0$, where no such action exists, we set $\overline{\mathcal{N}}_{\mathbf{N}} = {\mathcal{N}}_{\mathbf{N}}$ and $\overline{\mathcal{N}}^{*}_{\mathbf{N}} = {\mathcal{N}}^{*}_{\mathbf{N}}$.

\begin{prop}(\cite[Section 5.2]{BC4} and \cite[Section 3.3]{BC6})\label{prop:transv_base_triangle}
	There is a residual set $\mathcal{J}_{(\Sigma,L)}^{\mathrm{reg},P} \subset \mathcal{J}_{\Sigma}$ such that every $J_{\Sigma} \in \mathcal{J}_{(\Sigma,L)}^{\mathrm{reg},P}$ 
	satisfies the following properties for a generic tuple  $(Z^1_{L},Z^2_{L},Z^3_{L})$ and for every $p,q,r \in \Crit f_L$, $\mathbf{N}$, and $\mathbf{A}$.
	\begin{enumerate}[(a)]
		\item 
		The moduli space $\overline{\mathcal{N}}^{*}_{\mathbf{N}} (r,p,q;{\bf A}; \mathcal{D}^P)$ is a smooth manifold of dimension
		\[ 
		\mathrm{ind}_{f_L}(r)- \mathrm{ind}_{f_L}(p)-\mathrm{ind}_{f_L}(q) +\mu_L(\mathbf{A}),\quad {where }\; \mu_L(\mathbf{A}):=\mu_L(A)+\sum\limits_{i=1}^{N}\mu_L(A_i).
		\] 
		\item If $\mathrm{ind}_{f_L}(r)- \mathrm{ind}_{f_L}(p)-\mathrm{ind}_{f_L}(q) +\mu_L(\mathbf{A}) \leq 1$, then every element in $\overline{\mathcal{N}}_{\mathbf{N}}(r,p,q;{\bf A}; \mathcal{D}^P)$ is simple, i.e., \[\overline{\mathcal{N}}_{\mathbf{N}}(r,p,q;{\bf A}; \mathcal{D}^P)=\overline{\mathcal{N}}^{*}_{\mathbf{N}} (r,p,q;{\bf A}; \mathcal{D}^P).\] 
		Moreover, if $\mathrm{ind}_{f_L}(r)- \mathrm{ind}_{f_L}(p)-\mathrm{ind}_{f_L}(q) +\mu_L(\mathbf{A})=0$,  it is compact.
	\end{enumerate}
\end{prop}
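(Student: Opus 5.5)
We follow the scheme behind Proposition \ref{prop:base_transv}, adapted to the Y-shaped configurations of Definition \ref{def:pearl_Sigma}, as in \cite[Section 5.2]{BC4} and \cite[Section 3.3]{BC6}. For part (a), the space of tuples $\mathbf{w}$ with $(w_1,\dots,w_{N_1},w)$, $(w,w_{N_1+1},\dots,w_{N_1+N_2})$ and $(w,w_{N_1+N_2+1},\dots,w_N)$ simple is cut out transversally for $J_\Sigma$ in a residual set. This is the standard Sard--Smale argument on the universal moduli space over $\mathcal{J}_\Sigma$: somewhere injectivity of each nonconstant disk and the non-covering condition give surjectivity of the universal linearized operator together with the evaluation maps. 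If the central disk $w$ is constant, it is a point of $L$, and surjectivity at that component comes from perturbing the gradient-like vector fields. The resulting universal moduli space is a Banach manifold and the evaluation map $\ev_P$ is a submersion onto $L^{2N+3}$.

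We then choose $(Z^1_L,Z^2_L,Z^3_L)$ generically so that $\ev_P$ is transverse to the product of unstable/stable manifolds and flow diagonals $\Delta_{Z^i_L}$. The dimension count is as follows. The unparametrized disks contribute $\sum_i(n+\mu_L(A_i))-N$ after quotienting by $\R^N$, and the central disk contributes $n+\mu_L(A)$ with no quotient, since three marked points fix it. Each $\Delta$ has dimension $n+1$ in $L^2$, hence codimension $n-1$. The condition $W^u(r)$ has codimension $n-\ind(r)$, and each stable manifold $W^s(p)$, $W^s(q)$ has codimension $\ind$. Summing, the non-index terms cancel: each of the $N$ flow lines gives $+1$, which offsets the $-N$ from reparametrization, and the leftover copies of $n$ cancel. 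The dimension is therefore $\ind(r)-\ind(p)-\ind(q)+\mu_L(\mathbf{A})$.

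For part (b), take a non-simple element and pass to an underlying simple configuration. Following \cite[Section 3]{BC6}, we replace multiply covered disks by simple disks, using Lazzarini's decomposition, and we delete disks whose images are covered by others, shortening the chain accordingly. By monotonicity each removal lowers the Maslov index by at least $N_L\ge2$, while at most one gradient-flow incidence is lost. This produces an element of another simple moduli space whose expected dimension is at most $\ind(r)-\ind(p)-\ind(q)+\mu_L(\mathbf{A})-1<0$ when the original index is $\le 1$. That space is empty by part (a), which proves simplicity.

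The main obstacle is this reduction when the repeated disk is the central trivalent disk $w$. Removing $w$ or one of its branches changes the combinatorial type: it can collapse to a chain of pearls of Definition \ref{def:pearl_Lag} with extra incidences, or to a configuration with a constant center. For each such degenerate type we will set up a separate evaluation-map transversality and verify that its expected dimension really drops. Genericity of the triple $(Z^1_L,Z^2_L,Z^3_L)$, in particular the $Z^i_L$ being pairwise distinct, is what keeps the three trajectories emanating from a common point transverse.

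For compactness in index $0$, a sequence either breaks at a critical point, bubbles a disk or sphere, or a flow segment shrinks to zero length. Bubbling of a sphere costs $2c_\Sigma\ge N_L\ge2$ in Maslov index. Disk bubbling or breaking yields configurations of negative expected dimension among the simple strata, so these strata are empty. Flow segments shrinking to zero produce a codimension-one stratum, which is also empty in index $0$. Hence the moduli space is a compact $0$-manifold.
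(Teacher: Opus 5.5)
Your overall route (Sard--Smale in $J_\Sigma$, genericity of the gradient-like fields for the incidences, reduction of non-simple configurations, Gromov compactness) is the Biran--Cornea argument the paper cites; the paper gives no proof of its own. Your dimension count in (a) is correct.

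\textbf{The reduction in (b) does not close.} Put $\delta:=\ind_{f_L}(r)-\ind_{f_L}(p)-\ind_{f_L}(q)+\mu_L(\mathbf{A})$. Your bound for the reduced configuration is $\delta-1$. For $\delta=1$ this gives $\le 0$, not $<0$, so nothing is excluded in exactly the case needed for the one-dimensional spaces that make $\star$ a chain map.

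The $+1$ you allow for a `lost incidence' does not belong to the reduced moduli space. Consider the three reduction moves:
\begin{itemize}
\item cutting out the disks between two disks of the same image, and moving a marked point on the surviving disk;
\item deleting a multiple cover whose two special points coincide, and concatenating the adjacent flow lines;
\item replacing a multiple cover by its underlying simple disk.
\end{itemize}
In the first two, each deleted disk carries $n+\mu_L(A_k)-1$ parameters and disappears together with exactly one $\Delta$-incidence of codimension $n-1$. The third deletes nothing. Hence the reduced virtual dimension drops by precisely the lost Maslov index, which is $\ge N_L\ge 2$. It drops further if special points on the central disk collide, since the centre then acquires an $\R$ of reparametrizations. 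Any leftover one-parameter freedom lives in the fibre of the reduction map, not in the reduced space. With this count, $\delta\le 1$ gives $\le -1$.

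Also, a multiple cover of the centre can reverse the cyclic order of its three marked points (e.g.\ $z\mapsto z^2$). This produces a $Y$-configuration with the $p$- and $q$-branches interchanged. It has the same dimension formula, but its transversality must be included among your degenerate types.

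\textbf{A constant central disk at a critical point.} This second problem affects the statement as formulated here (one $f_L$, three gradient-like fields), not only your write-up. Your claim that transversality for a constant central disk comes from perturbing the $Z^i_L$ fails at critical points of $f_L$. All three $Z^i_L$ vanish there, so no choice of them moves $W^u_{Z^1_L}(r)$, $W^s_{Z^2_L}(r)$, $W^s_{Z^3_L}(r)$ off the point $r$.

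Take $p=q=r$, $\mathbf{N}=(0,0,0)$ and $A=0$. The constant disk at $r$ lies in $\overline{\mathcal{N}}^{*}_{\mathbf{N}}(r,r,r;\mathbf{A};\mathcal{D}^P)$ for every choice of data, yet the asserted dimension is $-\ind_{f_L}(r)<0$ whenever $\ind_{f_L}(r)>0$. So (a) cannot be established with a single Morse function, and pairwise distinctness of the $Z^i_L$ is not the relevant genericity.

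Biran--Cornea \cite{BC4} define the product with three Morse functions in general position, so that the critical points feeding different branches are distinct. Your argument needs this, or an equivalent branchwise perturbation of $f_L$. Configurations whose constant centre sits at a shared critical point must likewise be ruled out in your compactness step.

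\textbf{A smaller point.} The universal evaluation map is not a submersion at tuples in which the same disk occurs in two different branches, which $\mathcal{N}^*_{\mathbf{N}}$ permits since simplicity is only imposed branchwise. Transversality there has to come from the independent choice of Morse data on the different branches, not from perturbing $J_\Sigma$.
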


Let $J_\Sigma \in \mathcal{J}^{\mathrm{reg}}_{(\Sigma,L)} \cap \mathcal{J}^{\mathrm{reg},P}_{(\Sigma,L)}$. For generic $(Z^1_L, Z^2_L, Z^3_L)$, denote $\mathcal{D}_i=(f_L, Z^i_L, J_\Sigma)$ for $i=1,2,3$. An element  $[\mathbf{w}]\in \overline{\mathcal{N}}^*_{\mathbf{N}} (r,p,q;\mathbf{A};\mathcal{D}^P)$ with 
$\ind_{f_L}(r)-\ind_{f_L}(p)-\ind_{f_L}(q)+\mu_L(\mathbf{A})=0$
 induces an isomorphism 
\[
C([\mathbf{w}]) : \mathfrak{o}(p,k) \otimes \mathfrak{o}(q,k') \longrightarrow \mathfrak{o}(r,k''),
\]
via an argument analogous to that used for the map in \eqref{eq:pearl_bdry}. Here $k''=k+k'+\mu_L(\mathbf{A})/N_L$.
At the chain level, the quantum product is defined by
\[
	\star_{\mathcal{D}_1,\mathcal{D}_2,\mathcal{D}_3}:= \bigoplus \sum C([\mathbf{w}]): \QC_l(L^{\sigma};\mathcal{D}_2) \otimes \QC_m(L^{\sigma};\mathcal{D}_3) \to \QC_{l+m-\dim L}(L^{\sigma};\mathcal{D}_1)
\]
The direct sum runs over all $(p,q,r,k,k',k'') \in (\Crit f_L)^3\times\Z^3$ with $\deg\mathfrak{o}(p,k)+\deg\mathfrak{o}(q,k')-\deg\mathfrak{o}(r,k'')=\dim L$. 
The sum ranges over $[\mathbf{w}] \in \overline{\mathcal{N}}^*_{\mathbf{N}} (r,p,q;\mathbf{A};\mathcal{D}^P) $ for all  $\mathbf{N}$ and $\mathbf{A}$ satisfying $\mu_L(\mathbf{A})=N_L(k''-k-k')$. 
The map $\star_{\mathcal{D}_1,\mathcal{D}_2,\mathcal{D}_3}$ is a chain map and thus induces
\[ 
\star :  \QH_l(L^\sigma) \otimes \QH_m(L^\sigma) \longrightarrow \QH_{l+m-\dim L}(L^\sigma).
\]
This quantum product structure is associative and unital. Moreover, it is compatible with the $\Z[T,T^{-1}]$-module structure described in Remark \ref{rem:laurent}, and thus endows $\QH_*(L^\sigma)$ with the structure of a $\Z[T,T^{-1}]$-algebra. Note that, by our convention, $\QH_*(L^\sigma)\cong\H_{*}(L;\mathcal{O})\cong \H^{\dim L-*}(L)$ as rings in the absence of quantum contributions, where $\mathcal{O}$ denotes the local system of orientations.

	\section{Lagrangian Rabinowitz Floer homology}\label{sec:LRFH}
	Let $(\Sigma,\omega)$ and $L$ be as in the previous section. From now on, we assume that the cohomology class $[\omega]$ is in the image of $\H^2(\Sigma;\Z)\to \H^2(\Sigma;\R)$ induced by $\Z\hookrightarrow\R$. We fix an integral lift $[\omega]_\Z\in \H^2(\Sigma;\Z)$ of $[\omega]$ and consider a prequantization bundle $(Y,\alpha)$ of $(\Sigma, [\omega]_{\Z})$. This means that there is  a principal circle bundle $\pi: Y\longrightarrow \Sigma$
	with Euler class $e_Y=-[\omega]_\Z$, using the convention  $S^1=\R/\Z$ for the circle. 
	The 1-form $\alpha$ is a connection form satisfying $\pi^*\omega=d\alpha$. It follows that $\alpha$ is a contact form whose associated Reeb vector field $R$ is tangent to the fibers and generates a 1-periodic flow. The condition $\pi^*\omega=d\alpha$ determines $\alpha$ uniquely up to strict contactomorphism.
		
	\subsection{Legendrian lift}\label{sec:lift}
	
	Since $\omega$ vanishes on $L$, the parallel transport with respect to $\alpha$ gives rise to a holonomy representation
	\[
	\mathrm{hol}_L^\alpha : \H_1(L) \longrightarrow S^1=\R/\Z.
	\]
	From now on, we assume that the image of $\mathrm{hol}_L^\alpha$ is a finite cyclic subgroup, i.e.,~there exists $d\in\N$ such that
	\begin{equation}\label{eq:holonomy}
		\mathrm{hol}_L^\alpha  (\H_1(L)) =\Big(\frac{1}{d} \Z\Big) /\,\Z.
	\end{equation}
	Since $d\alpha=\pi^*\omega$, this yields $\omega(\pi_2(\Sigma,L))\subset \frac{1}{d}\Z$. Under this assumption, $L$ lifts to a Legendrian submanifold $\mathcal{L}$ in $(Y,\alpha)$ such that the covering map
	\[
	\pi|_{\mathcal{L}}: \mathcal{L} \longrightarrow L
	\]
	has deck transformation group $\Z_{d}$ generated by the time-$\frac{1}{d}$ Reeb flow $\phi_R^{1/{d}}$.
	
		\begin{lemma}
		\label{lem:degree}
		Assume that $\omega(\pi_2(\Sigma))\neq0$ and the composition $\pi_2(\Sigma, L)\to \pi_1(L) \to \H_1(L)$ is surjective. Then, the equality in \eqref{eq:holonomy} holds with 
		\[
		d=\frac{2c_\Sigma}{\mathrm{gcd}(2c_\Sigma, N_L  m_\Sigma)}
		\] 
		where $m_\Sigma$ is the positive integer satisfying $\omega(\pi_2(\Sigma))=m_\Sigma\Z$ and $c_\Sigma:= \tau_\Sigma m_\Sigma$ is the minimal Chern number of $\Sigma$.
	\end{lemma}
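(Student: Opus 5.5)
The plan is to compute the image of $\mathrm{hol}_L^\alpha$ directly from the curvature identity $d\alpha=\pi^*\omega$, and then to use monotonicity to identify the group $\omega(\pi_2(\Sigma,L))\subset\R$.

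\textbf{Step 1 (holonomy equals symplectic area).} Let $u:(\D,\partial\D)\to(\Sigma,L)$ represent a class $A\in\pi_2(\Sigma,L)$, and let $\gamma=u|_{\partial\D}$. The pullback bundle $u^*Y\to\D$ is trivial, so I choose a section $s:\D\to Y$ with $\pi\circ s=u$. The horizontal lift $\tilde\gamma$ of $\gamma$ starting at $s(1)$ can be written as $\tilde\gamma(t)=\phi_R^{\theta(t)}(s(e^{2\pi i t}))$. Since $\alpha(R)=1$, $\alpha(\tilde\gamma')=0$ and $\phi_R^*\alpha=\alpha$, the total shift is
\[
\theta(1)-\theta(0)=-\int_{\partial\D}s^*\alpha=-\int_\D s^*d\alpha=-\int_\D u^*\omega=-\omega(A).
\]
Hence $\mathrm{hol}_L^\alpha([\gamma])=\pm\,\omega(A)\bmod\Z$, with the sign fixed by convention. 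The sign does not affect the image, since the image is a subgroup.

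By hypothesis, $\pi_2(\Sigma,L)\to\H_1(L)$ is surjective, so every class in $\H_1(L)$ is represented by some such boundary loop $\gamma$. Therefore
\[
\mathrm{hol}_L^\alpha(\H_1(L))=\omega(\pi_2(\Sigma,L))\bmod\Z .
\]

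\textbf{Step 2 (compute $\omega(\pi_2(\Sigma,L))$).} By monotonicity, $\mu_L=2\tau_\Sigma\omega$ on $\pi_2(\Sigma,L)$, so $\omega(\pi_2(\Sigma,L))=\frac{1}{2\tau_\Sigma}\mu_L(\pi_2(\Sigma,L))$. This set is nonzero. Indeed, a sphere $S$ with $\omega(S)=m_\Sigma\neq0$ maps into $\pi_2(\Sigma,L)$, and the standard relation $\mu_L=2c_1^{T\Sigma}$ on spheres gives $\mu_L(S)=2\tau_\Sigma m_\Sigma=2c_\Sigma\neq 0$. Thus $N_L<\infty$ and $\mu_L(\pi_2(\Sigma,L))=N_L\Z$. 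Substituting $\tau_\Sigma=c_\Sigma/m_\Sigma$ then gives
\[
\omega(\pi_2(\Sigma,L))=\frac{N_L m_\Sigma}{2c_\Sigma}\,\Z .
\]

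\textbf{Step 3 (elementary arithmetic).} Put $a=N_Lm_\Sigma$ and $b=2c_\Sigma$, both positive integers. Write $g=\gcd(a,b)$, $a=ga'$, $b=gb'$ with $\gcd(a',b')=1$. The image of $\frac{a}{b}\Z=\frac{a'}{b'}\Z$ in $\R/\Z$ is the cyclic subgroup generated by $a'/b'$. Since $a'$ is a unit modulo $b'$, this subgroup equals $\big(\frac{1}{b'}\Z\big)/\Z$. Hence $d=b'=\frac{2c_\Sigma}{\gcd(2c_\Sigma,N_Lm_\Sigma)}$.

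The only step requiring care is Step 1: the Stokes computation for the lifted section and the consistency of the sign with the convention $e_Y=-[\omega]_\Z$. Steps 2 and 3 are formal once Step 1 is in place.
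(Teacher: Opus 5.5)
Your proposal is correct and follows the paper's argument: the holonomy of $\partial A$ equals $\pm\omega(A)$ by Stokes' theorem and $d\alpha=\pi^*\omega$, monotonicity gives $\omega(\pi_2(\Sigma,L))=\frac{N_Lm_\Sigma}{2c_\Sigma}\Z$, and reducing modulo $\Z$ yields $d$. Your additional details fill in steps the paper leaves implicit: the explicit horizontal-lift computation, the check that $N_L<\infty$ via a sphere class with $\mu_L=2c_\Sigma$, and the gcd reduction.
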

	\begin{proof}
		For any $a\in \H_1(L)$, there is $A\in \pi_2(\Sigma,L)$ such that the map in the statement sends $A$ to $a$. We compute
		$\mathrm{hol}_L^\alpha(a)=\omega(A)=\frac{\mu_L(A)}{2\tau_\Sigma}$, where the first equality follows from $d \alpha=\pi^{*}\omega$ and Stokes' theorem. 
		Therefore, $\mathrm{hol}_L^\alpha$ is surjective onto $\frac{N_L m_\Sigma}{2c_\Sigma}\Z$ modulo $\Z$, and the claim follows.
	\end{proof}
	The inclusion $L\subset \Sigma$ induces the long exact sequence 
	\[
	\cdots\longrightarrow\H_2(\Sigma,L)\stackrel{\partial}{\longrightarrow} \H_1(L)\stackrel{\iota_*}{\longrightarrow} \H_1(\Sigma)\longrightarrow\cdots.
	\]

	\begin{lemma}\label{lem:Leg_lift}
		Assume that $L$ is not necessarily monotone, and suppose that $\omega(\H_2(\Sigma,L))\subset\mathbb{Q}$, i.e.~$\omega(\H_2(\Sigma,L))=\frac{a}{b}\Z$ with $\gcd(a,b)=1$.  Then, there exists a connection 1-form $\alpha$ satisfying $\pi^*\omega=d\alpha$ such that $\mathrm{hol}_L^\alpha$ satisfies \eqref{eq:holonomy} with some $d\in\N$. Moreover, if in addition $\iota_*(\H_1(L))$ is torsion-free, then $d=b$. 
	\end{lemma}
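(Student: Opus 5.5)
The plan is to exploit the freedom in the choice of connection form. Any two connection forms on $Y$ with curvature $\omega$ differ by $\pi^*\beta$ for a closed $1$-form $\beta$ on $\Sigma$. Changing $\alpha$ to $\alpha+\pi^*\beta$ changes the holonomy along a loop $\gamma\subset L$ by $\int_\gamma\beta$ mod $\Z$. So $\mathrm{hol}^{\alpha+\pi^*\beta}_L=\mathrm{hol}^\alpha_L+\langle[\beta],\iota_*(\cdot)\rangle$, where $[\beta]\in\H^1(\Sigma;\R)$ is arbitrary. The holonomy is therefore determined up to adding any homomorphism $\H_1(L)\to\R/\Z$ that factors through $\iota_*$ and lifts to a real class on $\Sigma$.

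First I record the constraint coming from Stokes' theorem. For $a=\partial A$ with $A\in\H_2(\Sigma,L)$, we have $\mathrm{hol}^\alpha_L(a)=\omega(A)\in\frac{a}{b}\Z$ mod $\Z$, and this value is independent of the choice of $\alpha$. Modulo $\Z$, the set $\frac{a}{b}\Z/\Z$ equals $\frac1b\Z/\Z$ because $\gcd(a,b)=1$. So on $\partial\H_2(\Sigma,L)=\ker\iota_*$ the holonomy takes values exactly $\frac1b\Z/\Z$.

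Next, the holonomy descends to a homomorphism
\[
\bar h:\H_1(L)/\ker\iota_*\cong\iota_*\H_1(L)\subset\H_1(\Sigma)\longrightarrow (\R/\Z)\big/(\tfrac1b\Z/\Z)\cong\R/\tfrac1b\Z .
\]
Write $\iota_*\H_1(L)=F\oplus T$ with $F$ free and $T$ finite, and note that $T$ is torsion.

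On $F$ I want to cancel $\bar h$ by a choice of $\beta$. The subgroup $F$ sits in $\H_1(\Sigma)/\mathrm{tors}$ but may not be a direct summand. A homomorphism $F\to\R$ always extends to $\H_1(\Sigma)/\mathrm{tors}\otimes\R$, because $F\otimes\R$ is a subspace. Hence any real lift of $-\bar h|_F$ is realized by some $[\beta]\in\H^1(\Sigma;\R)$. With this $\beta$, the new holonomy vanishes on a subgroup mapping onto $F$ modulo $\ker\iota_*$.

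On $T$, a torsion element of order $m$ satisfies $m\cdot(\text{value})\in\frac1b\Z$, so all holonomy values lie in $\frac{1}{bm'}\Z/\Z$ for $m'=|T|$. The image of the holonomy is therefore a finitely generated subgroup of $\Q/\Z$, hence finite cyclic, of the form $\frac1d\Z/\Z$, and \eqref{eq:holonomy} holds with $b\mid d$.

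For the second statement, when $\iota_*\H_1(L)$ is torsion-free we have $T=0$. Then after the adjustment the image is exactly $\frac1b\Z/\Z$, which gives $d=b$.

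The main obstacle is the splitting step. I need to choose a complement $F'\subset\H_1(L)$ mapping isomorphically onto $F$, which is possible since $F$ is free. Then I define the holonomy correction on $F'$ and check that it is compatible on $\ker\iota_*$, where $\beta$ contributes nothing because $\langle[\beta],\iota_*\partial A\rangle=0$. I also have to confirm that every real cohomology class is represented by a closed form $\beta$, so that $\alpha+\pi^*\beta$ is still a connection form with $d(\alpha+\pi^*\beta)=\pi^*\omega$.
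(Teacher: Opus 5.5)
Your proposal is correct and follows essentially the paper's route. You modify $\alpha$ by $\pi^*\beta$ so that the holonomy vanishes (or lands in $\frac1b\Z/\Z$) on a lift $F'$ of the free part of $\iota_*\H_1(L)$. You then control the torsion part through $\ker\iota_*=\mathrm{im}\,\partial$, just as the paper does with $\exp(\mathcal{T})$.

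The only real difference is the extension step. You extend the real homomorphism on $F$ to $\H_1(\Sigma)$ using the injection $F\otimes\R\hookrightarrow \H_1(\Sigma)\otimes\R$. The paper instead builds $\kappa$ explicitly from a Smith normal form. Your version is a slightly cleaner way to do the same thing.
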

	\begin{proof}
		Let $\alpha'$ be any connection 1-form satisfying $\pi^*\omega=d\alpha'$.  
	Then $\mathrm{hol}_L^{\alpha'}(\mathrm{im\,}\partial)=(\frac{1}{b}\Z)/\Z$ as in the proof of Lemma \ref{lem:degree}. 
	
	Let $\mathcal{T}$ be the torsion subgroup of $\mathrm{im\,}\iota_*$. We split $\mathrm{im\,}\iota_*\cong \mathcal{F}\oplus\mathcal{T}$, where $\mathcal{F}\cong \Z^r$. 
	We also choose a splitting $\H_1(\Sigma)\cong \mathcal{F}_\Sigma\oplus\mathcal{T}_\Sigma$, where $\mathcal{T}_\Sigma$ is the torsion subgroup and $\mathcal{F}_\Sigma\cong\Z^s$ for $s\geq r$.
	Let $\operatorname{pr}_{\mathcal{F}_{\Sigma}}:\H_1(\Sigma) \to \mathcal{F}_{\Sigma}$ be the projection associated with this splitting.
	Then the restriction $(\operatorname{pr}_{\mathcal{F}_{\Sigma}})|_{\mathcal{F}}$ is injective.
	By the Smith normal form applied to this injective homomorphism, there exist a basis $\{e_1,\dots,e_r\}$ of $\mathcal{F}$, a basis $\{f_1,\dots,f_s\}$ of $\mathcal{F}_{\Sigma}$, and positive integers $m_1,\dots,m_r$ such that $\operatorname{pr}_{\mathcal{F}_{\Sigma}}(e_i)=m_if_i$ for $i=1,\dots,r$.

	We choose $\tilde{e}_i\in \H_1(L)$ satisfying $\iota_*(\tilde{e}_i)=e_i$ for $i=1,\dots,r$, and define a homomorphism 
	\[
	\kappa_0: \mathrm{im\,}\iota_* \to \R,\qquad \kappa_0(e_i)=\mathrm{hol}_L^{\alpha'}(\tilde e_i),
	\]
	where we regard $\mathrm{hol}_L^{\alpha'}(\tilde e_i)$ as a real number in $[0,1)$. Note that $\kappa_0(\mathcal{T})=0$ as $\kappa_0$ is a homomorphism. We then extend $\kappa_0$ to a homomorphism $\kappa:\H_1(\Sigma)\to \R$. Explicitly, we define $\kappa$ by 
	\[
	\kappa(\mathcal{T}_\Sigma)=0,\quad \kappa(f_i)=\frac{1}{m_i}\kappa_0(e_i) \,\text{ for }\, 1\leq i\leq r,\quad \kappa(f_i)=0 \,\text{ for }\, r+1\leq i\leq s.
	\]
	Let $\beta$ be a closed 1-form on $\Sigma$ representing $\kappa$ via $\H^1_\mathrm{dR}(\Sigma)\cong \Hom(\H_1(\Sigma),\R)$.
	
	 For any $a\in\H_1(L)$, we write 
	\[
	a=c_1\tilde{e}_1+\cdots +c_r\tilde{e}_r+\mathfrak{t},\qquad c_1,\dots,c_r\in \Z,\quad \iota_*(\mathfrak{t})\in\mathcal{T}.
	\] 
	The holonomy of $a$ with respect to a new connection 1-form $\alpha:=\alpha'-\pi^*\beta$ equals 
	\[
	\mathrm{hol}_L^\alpha(a)=\sum_{i=1}^rc_i \big(\mathrm{hol}_L^{\alpha'}  (\tilde e_i) - \kappa(e_i)\big) + \mathrm{hol}_L^{\alpha'}  (\mathfrak{t})-\kappa(\iota_*(\mathfrak{t})) = \mathrm{hol}_L^{\alpha'}  (\mathfrak{t}) \quad \text{mod}\;\Z.
	\]
	Let $\exp(\mathcal{T})\in\N$ be the exponent of the torsion subgroup $\mathcal{T}$, i.e.,~it is the smallest natural number such that $\exp(\mathcal{T}) \eta=0$ for any $\eta\in\mathcal{T}$. Then, $\exp(\mathcal{T}) \mathfrak{t}\in\mathrm{im\,}\partial$ since $\iota_*(\exp(\mathcal{T}) \mathfrak{t})=\exp(\mathcal{T})\iota_*(\mathfrak{t})=0$. Therefore, 
	\[
	\exp(\mathcal{T})\mathrm{hol}_L^\alpha(a)=\exp(\mathcal{T})\mathrm{hol}_L^{\alpha'}(\mathfrak{t})=\mathrm{hol}_L^{\alpha'}  (\exp(\mathcal{T})\mathfrak{t}) \in \Big(\frac{1}{b}\Z\Big)/\Z
	\]
	This proves that $\mathrm{hol}_L^\alpha(a)\in (\frac{1}{\exp(\mathcal{T})b}\Z)/\Z$, and hence $\mathrm{hol}_L^\alpha(\H_1(L))=(\frac{1}{d}\Z)/\Z$ for some $d\in\N$ dividing $\exp(\mathcal{T})b$. 
	
	Suppose now that $\mathcal{T}=0$. Then, $\mathrm{hol}_L^\alpha(a)=\mathrm{hol}_L^{\alpha'}  (\mathfrak{t})$ where $\iota_*(\mathfrak{t})=0$, or equivalently $\mathfrak{t}\in\mathrm{im}\,\partial$. Therefore, $\mathrm{hol}_L^\alpha(\H_1(L))=\mathrm{hol}_L^{\alpha'}(\mathrm{im\,}\partial)=(\frac{1}{b}\Z)/\Z$, and this finishes the proof.
	\end{proof}

	\begin{cor}
		Assume $L$ is homologically monotone, i.e.,~$\mu_L =2 \tau_\Sigma \omega$ holds on $\H_2(\Sigma,L)$ for some $\tau_\Sigma>0$. Then, there exists a connection 1-form $\alpha$ satisfying $\pi^*\omega=d\alpha$ such that $\mathrm{hol}_L^\alpha$ satisfies \eqref{eq:holonomy} with some $d\in\N$. Moreover, if $\iota_*(\H_1(L))\subset \H_1(\Sigma)$ is torsion-free, then $d=\frac{2c_\Sigma^h}{\mathrm{gcd}(2c_\Sigma^h, N_L^h  m_\Sigma^h)}$, where the constants are the homological counterparts of those in Lemma \ref{lem:degree}, e.g.~$m_\Sigma^h\in\N$ is the positive integer satisfying $\omega(\H_2(\Sigma))=m_\Sigma^h\Z$.
	\end{cor}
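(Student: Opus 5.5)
The corollary follows by combining Lemma \ref{lem:Leg_lift} with the monotonicity relation. For the existence part, first check that the hypothesis $\omega(\H_2(\Sigma,L))\subset\mathbb{Q}$ of Lemma \ref{lem:Leg_lift} holds. Homological monotonicity gives $\omega=\mu_L/(2\tau_\Sigma)$ on $\H_2(\Sigma,L)$, and $\mu_L$ takes integer values, so $\omega(\H_2(\Sigma,L))\subset\frac{1}{2\tau_\Sigma}\Z$. It therefore suffices to show $\tau_\Sigma\in\mathbb{Q}$. Since $[\omega]$ has an integral lift, $\omega(\H_2(\Sigma))\subset\Z$. Homological monotonicity restricts on $\H_2(\Sigma)$ to $c_1^{T\Sigma}=\tau_\Sigma\omega$, because $\mu_L$ restricted to absolute classes is $2c_1$.

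If $\omega(\H_2(\Sigma))\neq0$, pick a class $B$ with $\omega(B)=m_\Sigma^h\neq0$. Then $\tau_\Sigma=c_1(B)/m_\Sigma^h\in\mathbb{Q}$.

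If $\omega(\H_2(\Sigma))=0$, then $\omega$ on $\H_2(\Sigma,L)$ descends through $\partial$ to a homomorphism on $\mathrm{im}\,\partial\subset\H_1(L)$. The image of $\omega$ is then a finitely generated subgroup $\frac{1}{2\tau_\Sigma}N_L^h\Z$ of $\R$. In this case I would avoid rationality of $\tau_\Sigma$ altogether and argue directly: $\omega(\H_2(\Sigma,L))$ is cyclic, generated by $r:=N_L^h/(2\tau_\Sigma)$. This case needs extra care, since $r$ need not be rational, and rationality seems required for \eqref{eq:holonomy}. Presumably the intended setting has $\omega(\H_2(\Sigma))\neq0$, or one rescales $\omega$. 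I would note this caveat and treat the main case, where Lemma \ref{lem:Leg_lift} applies directly and yields $\alpha$ and $d$.

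For the formula, assume $\iota_*(\H_1(L))$ is torsion-free. Lemma \ref{lem:Leg_lift} gives $d=b$, where $\omega(\H_2(\Sigma,L))=\frac{a}{b}\Z$ in lowest terms. Compute this group: it equals $\frac{1}{2\tau_\Sigma}\mu_L(\H_2(\Sigma,L))=\frac{N_L^h}{2\tau_\Sigma}\Z$. Write $\tau_\Sigma=c_\Sigma^h/m_\Sigma^h$, where $c_\Sigma^h$ is the generator of $c_1(\H_2(\Sigma))$; this holds by $c_1=\tau_\Sigma\omega$ on $\H_2(\Sigma)$. Then
\[
\omega(\H_2(\Sigma,L))=\frac{N_L^h m_\Sigma^h}{2c_\Sigma^h}\Z ,
\]
whose reduced denominator is $\frac{2c_\Sigma^h}{\gcd(2c_\Sigma^h,N_L^hm_\Sigma^h)}$, which is the claimed value of $d$.

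The main obstacle is bookkeeping with the homological constants rather than anything conceptual. One must confirm that $\mu_L$ restricted to the image of $\H_2(\Sigma)\to\H_2(\Sigma,L)$ equals $2c_1$, so that $c_\Sigma^h=\tau_\Sigma m_\Sigma^h$. One must also confirm that the generator of $\mu_L(\H_2(\Sigma,L))$ is $N_L^h$ by definition. These are routine.
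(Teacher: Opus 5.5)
Your route is the intended one. The paper gives no separate argument: the corollary is Lemma \ref{lem:Leg_lift} applied once $\omega(\H_2(\Sigma,L))\subset\mathbb{Q}$ has been checked via monotonicity, with $b$ computed as in Lemma \ref{lem:degree}. Your computation $\omega(\H_2(\Sigma,L))=\frac{N_L^h m_\Sigma^h}{2c_\Sigma^h}\Z$ and the resulting reduced denominator are correct.

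The one gap is the case $\omega(\H_2(\Sigma))=0$, which you leave open. You suggest it might need an extra assumption or a rescaling of $\omega$; rescaling is not available in any case, since $[\omega]_\Z$ fixes the bundle $Y$ and the normalization $d\alpha=\pi^*\omega$. In fact this case never occurs. $\Sigma$ is a closed symplectic manifold of dimension $2n>0$, so $\int_\Sigma\omega^n>0$ and $[\omega]\neq 0$ in $\H^2(\Sigma;\R)$. Since $\H_2(\Sigma;\R)=\H_2(\Sigma)\otimes\R$, some integral class $B$ satisfies $\omega(B)\neq 0$.

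This observation gives everything you need:
\begin{itemize}
\item $m_\Sigma^h\geq 1$, which the statement already presupposes by calling it a positive integer.
\item $\tau_\Sigma=c_1(B)/\omega(B)\in\mathbb{Q}_{>0}$.
\item $c_\Sigma^h=\tau_\Sigma m_\Sigma^h$ is a positive integer, because $c_1$ is integral.
\item $N_L^h<\infty$, because $\mu_L=2\tau_\Sigma\omega$ is nonzero on the image of $\H_2(\Sigma)\to\H_2(\Sigma,L)$.
\end{itemize}
So your ``main case'' is the only case, and with this one-line remark the proof is complete.
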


	\subsection{Reeb chords and indices}\label{sec:Reeb}
	We denote the fibers of $\pi$ and $\pi|_{\mathcal{L}}$ over $p\in L\subset\Sigma$ by  
	\[
	Y_p:=\pi^{-1}(p),\qquad \mathcal{L}_p:= Y_p\cap \mathcal{L}.
	\]
	We define the relative winding number of a chord $c : ([0,1], \{0,1\}) \to (Y_p, \mathcal{L}_p)$ by
	\[
	\mathfrak{w}(c) := \deg \big(\tilde{c}:[0,1]/(0\sim 1)\to Y_p/ \Z_d \big),
	\]
	where $\tilde c$ is a continuous map $\tilde c$ between circles induced by $c$.
	\begin{prop}\label{prop:contractible}
		Let $m_L$ be the nonnegative integer satisfying $\omega(\pi_2(\Sigma,L))  = \frac{m_L}{d} \Z$, which by monotonicity is given by $m_L=\frac{d N_L}{2\tau_\Sigma}$.
		Then $c: ([0,1], \{0,1\}) \to (Y_p, \mathcal{L}_p)$ is trivial in $\pi_1(Y,\mathcal{L})$ if and only if $m_L$ divides $\mathfrak{w}(c)$.
	\end{prop}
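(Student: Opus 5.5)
The plan is to convert the question about $\pi_1(Y,\mathcal{L})$ into a computation of holonomy, using the long exact sequence of homotopy of the pair together with the fibration structure. A chord $c$ in the fiber $Y_p$ with endpoints in $\mathcal{L}_p$ is trivial in $\pi_1(Y,\mathcal{L})$ if and only if it is homotopic, rel endpoints sliding in $\mathcal{L}$, to a constant path. Equivalently, there is a map $u:(\D,\partial\D)\to(Y,\cdot)$ whose boundary is $c$ followed by a path $\gamma$ in $\mathcal{L}$ from $c(1)$ back to $c(0)$.

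For the forward direction, I would project such a $u$ to $\Sigma$. The boundary of $c$ collapses to the point $p$, so $\pi\circ u$ is a disk with boundary $\pi\circ\gamma\subset L$, and it defines a class $A\in\pi_2(\Sigma,L)$. By Stokes and $d\alpha=\pi^*\omega$,
\[
\omega(A)=\int_{\D}u^*d\alpha=\int_c\alpha+\int_\gamma\alpha=\int_c\alpha ,
\]
since $\alpha$ vanishes on the Legendrian $\mathcal{L}$. With the Reeb flow 1-periodic, $\int_c\alpha$ is the signed fiber length traversed by $c$. Its endpoints lie in $\mathcal{L}_p$, which is a $\Z_d$-orbit spaced by $1/d$, so $\int_c\alpha=\mathfrak{w}(c)/d$ once we check the normalization of the winding number in $Y_p/\Z_d$. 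Hence $\mathfrak{w}(c)/d\in\omega(\pi_2(\Sigma,L))=\frac{m_L}{d}\Z$, which gives $m_L\mid\mathfrak{w}(c)$.

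For the converse, suppose $\mathfrak{w}(c)=km_L$. I would pick $A\in\pi_2(\Sigma,L)$ with $\omega(A)=km_L/d$, represented by a disk $v$ whose boundary starts and ends at $p$ (after moving the basepoint along $L$ using connectedness). Since $\D$ is contractible, $v$ lifts to $Y$. Along $\partial\D$ I would choose the lift horizontally, i.e.\ take the horizontal lift of the loop $\partial v$ starting at $c(0)$; this lift stays in $\mathcal{L}$, because horizontal lifts of paths in $L$ starting on $\mathcal{L}$ remain in $\mathcal{L}$. Its endpoint is $\phi_R^{\mathrm{hol}}(c(0))$, where the holonomy equals $-\omega(A)$ mod $1$ up to sign convention. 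The disk $v$ gives a null-homotopy in $Y$ of the concatenation of this horizontal path $\gamma$ with the fiber path of length $\omega(A)$ closing it up; this is exactly the Stokes computation again. That fiber path has the same endpoints and the same length $\int\alpha=km_L/d=\mathfrak{w}(c)/d$ as $c$, so it is homotopic to $c$ rel endpoints within $Y_p$. Therefore $c$ is homotopic to the path $\gamma^{-1}\subset\mathcal{L}$, which makes $c$ trivial in $\pi_1(Y,\mathcal{L})$.

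The main obstacle is bookkeeping. First, signs and orientation conventions: the Euler class $-[\omega]_\Z$, the orientation of the Reeb flow, and the identification of $\mathfrak{w}(c)$ with $d\int_c\alpha$, including the degree of $\tilde c$ in $Y_p/\Z_d$. Second, the rigorous construction of the lift of $v$ with prescribed boundary behavior. For the latter, I would trivialize $v^*Y\cong\D\times S^1$ and write the lift explicitly, reading off the boundary correction as $\int_{\D}v^*\omega$.
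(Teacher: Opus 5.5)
Your argument is correct, and it is a more hands-on version of the paper's proof. The paper applies the long exact homotopy sequence of $(Y_p,\mathcal{L}_p)\hookrightarrow(Y,\mathcal{L})\to(\Sigma,L)$ and identifies $\pi_1(Y_p,\mathcal{L}_p)\cong\Z$ via $\mathfrak{w}$. It then computes the connecting map as $\delta(A)=d\,\omega(A)$ by lifting disks (Remark~\ref{rem:disk_lift}) and concludes $\ker i_*=\mathrm{im}\,\delta=m_L\Z$.

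Your converse is exactly that lifting construction plus the easy half of exactness, $i_*\circ\delta=0$. Your forward direction replaces the harder half, $\ker i_*\subset\mathrm{im}\,\delta$, by a single Stokes computation on the projected null-homotopy. This works because you only need $\int_c\alpha=\mathfrak{w}(c)/d$ to lie in $\omega(\pi_2(\Sigma,L))$; you never need $[c]$ to literally equal $\delta(A)$. Your proof is therefore self-contained and does not appeal to an exact sequence for the pair of fibrations $Y\to\Sigma$, $\mathcal{L}\to L$ (note $\mathcal{L}\neq\pi^{-1}(L)$). The paper's version is shorter and more conceptual.

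There is one bookkeeping slip in your converse. The closing fiber arc runs from $\gamma(1)$ back to $c(0)$, so it is its reverse that should be compared with $c$. Choose the sign of $A$ so that this reversed arc has $\alpha$-length $\mathfrak{w}(c)/d$; this is allowed because $\omega(\pi_2(\Sigma,L))$ is a subgroup. The common initial point and equal signed length then force equal endpoints and a homotopy rel endpoints in $Y_p$. You get $c\simeq\gamma$ rel endpoints rather than $\gamma^{-1}$, which is harmless since either path lies in $\mathcal{L}$.
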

	\begin{proof}
		The fibration 
		$
		(Y_p, \mathcal{L}_p) \stackrel{i}{\hookrightarrow}  (Y,\mathcal{L}) \to (\Sigma,L)	
		$
		induces the long exact sequence  
		\begin{equation}\label{eq:homotopy_les}
			\dots \to \pi_2(Y,\mathcal{L}) \to \pi_2(\Sigma,L) \xrightarrow{\delta} \pi_1( Y_p, \mathcal{L}_p) \xrightarrow{i_*} \pi_1(Y,\mathcal{L}) \to \cdots .
		\end{equation}
		Through the isomorphism $\pi_1(Y_p,\mathcal{L}_p) \cong \pi_1(\R/\Z, \frac{1}{d}\Z/\Z) \cong \pi_1(\R/\frac{1}{d}\Z) \cong \Z$,  the homotopy class $[c] \in \pi_1(Y_p,\mathcal{L}_p)$ corresponds to $\mathfrak{w}(c) \in \Z$. 
		On the other hand, for $A\in \pi_2(\Sigma,L)$, $\delta(A)$ corresponds to $-d\cdot e_Y (A) = d\cdot \omega(A)$, which yields  $\mathrm{im\,} \delta  = d\cdot\omega (\pi_2(\Sigma,L)) = m_L \Z$. See Remark \ref{rem:disk_lift} below.
		Therefore, $[c]  \in \ker i_*  = \mathrm{im\,} \delta$ if and only if $m_L$ divides $\mathfrak{w}(c)$.
	\end{proof}

	Consider the half-disk
	$\D_+ = \{ z \in \mathbb{C}\, |\, |z| \leq 1, \mathrm{Im}\ z > 0\}$.
	The boundary of the closure $\overline{\D}_+$ decomposes into $\partial \overline{\D}_+= \partial_1 \overline{\D}_+\cup \partial_2 \overline{\D}_+$, where
	\[
		\partial_1 \overline{\D}_+ := \{z \in \overline{\D}_+ \,|\,|z| =1\},\qquad 
		\partial_2 \overline{\D}_+ := \{ z\in \overline{\D}_+ \,|\,\mathrm{Im}\ z = 0\}=[-1,1].
	\]	
	
	\begin{remark}\label{rem:disk_lift}
		For a given $u:(\D,\partial\D,1)\to (\Sigma,L,p)$, the map $u$ restricted to $\D\setminus\{1\} \cong \D_+$ lifts to a map $(\D_+, \partial_1 \D_+)\to (Y,\mathcal{L})$. If we denote the closure of this lifted map  by $\tilde u: (\overline{\D}_+,\partial_1 \overline{\D}_+)\to (Y,\mathcal{L})$, then $\tilde{u}|_{\partial_2 \overline{\D}_+}:[-1,1]\to Y_p$. 
		If we choose our lift to satisfy $\tilde{u}|_{\partial_2 \overline{\D}_+}(-1)\in\mathcal{L}_p$, then $\tilde{u}|_{\partial_2 \overline{\D}_+}(1)\in\mathcal{L}_p$  with $\mathfrak{w}(\tilde{u}|_{\partial_2 \overline{\D}_+})=d\cdot\omega([u])$. This is how the map $\delta$ in \eqref{eq:homotopy_les} is defined.
	\end{remark}
	
	We now turn to the study of Reeb chords of $\mathcal{L}$. Let $c : ([0,1],\{0,1\}) \to (Y_p,\mathcal{L}_p)$ be a generalized Reeb chord, meaning that $\dot c(t)=T R(c(t))$ for some $T \in \frac{1}{d} \Z$.
	Up to reparametrization, such $c$ corresponds to an orbit of $R$, an orbit of $-R$, or a constant orbit according as $T>0$, $T<0$, or $T=0$, respectively.
	Assume that $c$ is trivial in $\pi_1(Y,\mathcal{L})$. There is a capping with the opposite orientation, denoted by 
	$\overline{c} : (\overline{\D}_+,\partial_1 \overline{\D}_+) \to (Y, \mathcal{L})$, i.e.,
	\begin{equation}\label{eq:capping}
	\overline{c}(\tau) = c\left(\frac{1-\tau}{2}\right),\qquad \tau\in [-1,1] = \partial_2 \overline{\D}_+.		
	\end{equation}
	Let us denote $n=\dim \mathcal{L}$ and $\xi=\ker \alpha$. We choose a symplectic trivialization of $\xi$ adapted to $\mathcal{L}$ over $\overline{c}$:
	\[
	\Phi : \overline{c}^*\xi \to \overline{\D}_+ \times \C^{n}, \qquad \Phi(T_{\overline{c}(r)}\mathcal{L})=\R^n\quad\forall r\in \partial_1 \overline{\D}_+,
	\]
	where in the latter we abuse notation by viewing $\Phi$ as a map $\xi_{\overline{c}(z)}\to \{z\}\times \C^{n}=\C^{n}$.
	Then, using the restriction of $\Phi$ to $\partial_2\overline{\D}_+$, we obtain a path $c^\Phi:[0,1]\to \mathrm{Lag}(\C^{n})$ of Lagrangian subspaces in $\C^{n}$ defined by
	\[
	c^{\Phi}(t) := \Phi\left(d\phi_{R}^{Tt} (T_{c(0)} \mathcal{L})\right),
	\]
	where $\Phi$ is evaluated over the point $\overline c(1-2t)$.
	We define $\mu_\mathrm{RS} (c,\overline{c})$ for a capped chord $(c,\bar c)$ by the Robbin--Salamon index of the path $c^{\Phi}$ with respect to the reference Lagrangian subspace $\R^{n}\subset\C^n$, see \cite[Section 2]{RS93}.
	This is independent of the choice of a symplectic trivialization of $\overline{c}^*\xi$ adapted to $\mathcal{L}$. 
	Note that $c^{\Phi}$ is actually a loop of Lagrangian subspaces since $d\phi_R^T (T_{c(0)} \mathcal{L}) = T_{c(1)} \mathcal{L}$, and both $T_{c(0)} \mathcal{L}$ and $T_{c(1)} \mathcal{L}$ are mapped to $\R^{n}$ by $\Phi$. Therefore, 
	the Robbin--Salamon index of $c^\Phi$ is simply the Maslov index $\mu_\mathrm{Mas}(c^\Phi)$ of $c^{\Phi}$, i.e.,~$\mu_\mathrm{RS}(c,\overline{c}) = \mu_\mathrm{Mas}(c^{\Phi})$. 
	We consider the Maslov class 
	\[
	\mu_{\mathcal{L}} :\pi_2(Y, \mathcal{L}) \longrightarrow \mathbb{Z},
	\]
	where $\mu_{\mathcal{L}}([u])$ is the Maslov index of $(u|_{\partial \D})^* T\mathcal{L}$ defined by a symplectic trivialization of $u^*\xi$. 
	Since $\pi^*T\Sigma\cong\xi$ and $(\pi|_{\mathcal{L}})^*TL\cong T\mathcal{L}$, we have $\pi^*\mu_L=\mu_{\mathcal{L}}$. By  Lemma \ref{lem:Maslov_vanish}, the index $\mu_\mathrm{RS}(c,\overline{c})$ is independent of the choice of a capping, and we simply write $\mu_\mathrm{RS}(c)=\mu_\mathrm{RS}(c,\overline{c})$.

	\begin{lemma}\label{lem:Maslov_vanish}
		The Maslov class $\mu_{\mathcal{L}}$ is zero.
	\end{lemma}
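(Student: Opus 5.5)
The idea is to push a disk in $(Y,\mathcal{L})$ down to $(\Sigma,L)$ and compare Maslov indices. Let $u:(\D,\partial\D)\to(Y,\mathcal{L})$ represent a class in $\pi_2(Y,\mathcal{L})$. As noted just before the lemma, $\xi\cong\pi^*T\Sigma$ via $d\pi$, and this isomorphism carries $T\mathcal{L}$ to $(\pi|_{\mathcal{L}})^*TL$. A symplectic trivialization of $(\pi\circ u)^*T\Sigma$ therefore pulls back to one of $u^*\xi$, and the two boundary loops of Lagrangian subspaces agree. Hence
\[
\mu_{\mathcal{L}}([u])=\mu_L([\pi\circ u]),
\]
and it suffices to prove $\mu_L([\pi\circ u])=0$.

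By monotonicity, $\mu_L=2\tau_\Sigma\omega$ on $\pi_2(\Sigma,L)$. So I only need
\[
\omega([\pi\circ u])=\int_\D u^*\pi^*\omega=\int_\D u^*d\alpha=\int_{\partial\D}u^*\alpha .
\]
The last equality is Stokes' theorem. The boundary integral vanishes because $u(\partial\D)\subset\mathcal{L}$ and $\mathcal{L}$ is Legendrian, so $\alpha|_{T\mathcal{L}}=0$.

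An equivalent check uses the exact sequence \eqref{eq:homotopy_les}. The image of $\pi_2(Y,\mathcal{L})\to\pi_2(\Sigma,L)$ equals $\ker\delta$. By Remark~\ref{rem:disk_lift}, $\delta(A)$ corresponds to $d\cdot\omega(A)$ in $\pi_1(Y_p,\mathcal{L}_p)\cong\Z$. Hence $\omega$ vanishes on $\ker\delta$, and so does $\mu_L$.

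I do not expect a real obstacle. The only care needed is basepoints: $\pi_2(Y,\mathcal{L})$ may be viewed as free homotopy classes of disks, and the Stokes argument does not depend on basepoints, so nothing is lost. If $\pi_2(Y,\mathcal{L})$ is meant as a set of relative homotopy classes, the Maslov index is in any case a homotopy invariant, so the computation above covers every class.
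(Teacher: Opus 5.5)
Your proposal is correct and follows the paper's own proof: identify $\mu_{\mathcal{L}}=\pi^*\mu_L$ via $\xi\cong\pi^*T\Sigma$, apply monotonicity $\mu_L=2\tau_\Sigma\omega$, and then use $\pi^*\omega=d\alpha$, Stokes' theorem and $\alpha|_{T\mathcal{L}}=0$. Your alternative check through the exact sequence \eqref{eq:homotopy_les} and Remark~\ref{rem:disk_lift} is also valid, but it is not needed.
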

	\begin{proof}
		For a given smooth map $u :(\D,\partial\D) \to (Y, \mathcal{L})$,
		\begin{align*}
			\mu_{\mathcal{L}} ([u])&=\pi^* \mu_L ([u])
			= \pi^* ( 2\tau_\Sigma \omega)([u])
			= 2\tau_\Sigma  (d\alpha)([u])
			= 2\tau_\Sigma \int_{\partial \D} u^*\alpha
			=0.
		\end{align*}
		The last equality follows from the fact that $\mathcal{L}$ is a Legendrian submanifold of $(Y,\alpha)$. 
	\end{proof}

	The composition $\pi\circ \overline{c}$ maps $\partial_2 \overline{\D}_+$ to $p$. We interpret this composite map as 
	\[
	\pi \circ \overline{c} : (\D, \partial \D,-1) \longrightarrow (\Sigma, L,p).	
	\]

	\begin{prop}
		\label{prop:maslov index of projection}
		For any capped chord $(c,\overline{c})$, we have
		\[
		\mu_\mathrm{RS}(c) = -\mu_L([ \pi \circ \overline{c}]) = -2\tau_\Sigma\omega ([\pi \circ \overline{c}])=\frac{2\tau_\Sigma\mathfrak{w}(c)}{d}.
		\]
	\end{prop}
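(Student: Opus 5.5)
The statement has three equalities, and I would prove them from right to left in order of difficulty.

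The middle equality $-\mu_L([\pi\circ\overline c]) = -2\tau_\Sigma\omega([\pi\circ\overline c])$ is monotonicity on $\pi_2(\Sigma,L)$. The composite $\pi\circ\overline c$ is a disk with boundary on $L$, because $\overline c(\partial_1\overline{\D}_+)\subset\mathcal{L}$ and $\pi\circ\overline c(\partial_2\overline{\D}_+)=p$.

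For the last equality I compute $\omega([\pi\circ\overline c])$ by Stokes' theorem, using $d\alpha=\pi^*\omega$:
\[
\omega([\pi\circ\overline c])=\int_{\overline{\D}_+}\overline c^*d\alpha=\int_{\partial_1\overline{\D}_+}\overline c^*\alpha+\int_{\partial_2\overline{\D}_+}\overline c^*\alpha .
\]
The first term vanishes because $\mathcal{L}$ is Legendrian. By \eqref{eq:capping}, the second term is the integral of $\alpha$ over $c$ traversed backwards, which is $-T$ since $\alpha(R)=1$. Because $\mathcal{L}_p$ is a $\Z_d$-orbit of the period-one Reeb flow, the chord $c$ (flowing for time $T$) wraps $T d$ times around $Y_p/\Z_d$. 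Hence $\mathfrak w(c)=dT$, and $-2\tau_\Sigma\omega([\pi\circ\overline c])=2\tau_\Sigma\mathfrak w(c)/d$. I would double-check this sign against Remark \ref{rem:disk_lift}, where a lift with $\mathfrak w=d\,\omega([u])$ is traversed in the opposite direction from $\overline c$.

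The main step is the first equality $\mu_\RS(c)=-\mu_L([\pi\circ\overline c])$. The plan is to identify both sides with Maslov indices of loops relative to a single trivialization. Choose the trivialization $\Phi$ of $\overline c^*\xi$ adapted to $\mathcal{L}$ by pulling back a trivialization $\Psi$ of $(\pi\circ\overline c)^*T\Sigma$ via $\xi\cong\pi^*T\Sigma$. This requires $(\pi\circ\overline c)^*T\Sigma$ to be trivialized over the disk $\D$, which contains the collapsed segment $\partial_2\overline{\D}_+$. Over $\partial_2\overline{\D}_+$ the map $\pi\circ\overline c$ is constant at $p$, so $\Psi$ is a constant frame of $T_p\Sigma$ there. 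The lift $\Phi$ then differs from it by the linearized Reeb flow: since the flow preserves $\alpha$ and covers the identity on the fiber, $d\pi\circ d\phi_R^{Tt}=d\pi$ on $\xi$.

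Working through this, the loop $c^\Phi(t)$ is the constant frame transported along the chord. Under $\Phi$, it becomes exactly the boundary loop of $\Psi^{-1}$ applied to the segment's trivialization mismatch. More cleanly, I would instead trivialize $\overline c^*\xi$ so that it is constant along $\partial_2\overline{\D}_+$ in the $\pi^*$-sense; this frame need not send $T\mathcal{L}$ to $\R^n$ along $\partial_1\overline{\D}_+$. Then the Maslov index of $T\mathcal{L}$ along $\partial_1$ computed in this frame equals $\mu_L([\pi\circ\overline c])$. The loop along $\partial_2$ is constant, and homotoping to an adapted frame moves the index from the $\partial_1$ part to the $\partial_2$ part. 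The total Maslov index of the boundary loop of $\overline c^*T\mathcal{L}$ over $\partial\overline{\D}_+$ vanishes, since the disk is trivialized, so the index moves with opposite sign. This gives $\mu_\RS(c)=\mu_\mathrm{Mas}(c^\Phi)=-\mu_L([\pi\circ\overline c])$, with orientations coming from the reversed parametrization in \eqref{eq:capping}.

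The delicate point is the sign and orientation bookkeeping in this last step, including that $\partial_2$ is traversed from $\overline c(1)$ to $\overline c(-1)$. As a consistency check I would verify the result on the simplest case, a great circle in $\mathbb{CP}^1$ lifting to $S^3$.
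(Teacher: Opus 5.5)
Your overall route is the paper's. You compare the $\mathcal{L}$-adapted trivialization $\Phi$ with the frame $\Psi$ pulled back from a trivialization of $(\pi\circ\overline c)^*T\Sigma$. Because $d\pi\circ d\phi_R^{Tt}=d\pi$, this frame sends the Reeb-transported Lagrangians over $\partial_2\overline{\D}_+$ to a constant subspace. The last equality then follows from Stokes. Your Stokes computation, $\omega([\pi\circ\overline c])=-T$ and $\mathfrak{w}(c)=dT$, is correct and is just Remark \ref{rem:disk_lift} made explicit. Your first attempt cannot work when $\mu_L([\pi\circ\overline c])\neq 0$: it tries to make the pulled-back frame itself adapted to $\mathcal{L}$ along $\partial_1\overline{\D}_+$, and you rightly abandon it.

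The genuine problem is the sign mechanism in the main step. Consider the boundary loop of Lagrangians over $\partial\overline{\D}_+$: it is $T\mathcal{L}$ on $\partial_1\overline{\D}_+$ and $d\phi_R^{Tt}(T_{c(0)}\mathcal{L})$ on $\partial_2\overline{\D}_+$. Its Maslov index does not vanish; it is an invariant of the bundle pair. Your own computation in the frame $\Psi$ shows that, with the counterclockwise boundary orientation, it equals $\mu_L([\pi\circ\overline c])$.

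What vanishes is the Maslov index of the transition loop $(\Phi\circ\Psi^{-1})|_{\partial\overline{\D}_+}$ of symplectic matrices, because this loop extends over $\overline{\D}_+$ (Remark \ref{rem:equality of maslov indices}). Hence the index is transferred from $\partial_1\overline{\D}_+$ to $\partial_2\overline{\D}_+$ with the \emph{same} sign. In the frame $\Phi$, the $\partial_2$ part traversed from $\tau=-1$ to $\tau=1$ has index $\mu_L([\pi\circ\overline c])$. The minus sign comes solely from the fact that $c^\Phi(t)$ is evaluated at $\overline c(1-2t)$, so it runs from $\tau=1$ to $\tau=-1$, against the boundary orientation. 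This is the paper's ``opposite orientation of the capping''.

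As written, you get one sign flip from the false vanishing claim and then also invoke the reversed parametrization. Read literally, that double-counts and gives $+\mu_L([\pi\circ\overline c])$. The vanishing claim also contradicts your own $\Psi$-frame computation unless $\mu_L([\pi\circ\overline c])=0$.

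To fix it, replace ``the total vanishes'' by ``the total is independent of the trivialization over $\overline{\D}_+$''. The argument then becomes exactly the paper's proof. There the frame adapted along $\partial_2$ is called $\Psi$, and the loop $c^\Psi$ along $\partial_1$ is identified with the clockwise boundary loop of $\pi\circ\overline c$.
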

	
	\begin{proof}
		
		Let us consider another symplectic trivialization 
		$
		\Psi : \overline{c}^*\xi \to \overline{\D}_+ \times \C^{n}
		$
		such that 
		\[ 
		\Psi(d\phi_R^{Tt} (T_{c(0)} \mathcal{L}))= \R^n \qquad  \forall  t\in [0,1],
		\]
		where $\Psi$ is evaluated over the point $\overline c(1-2t)$.
		This gives rise to a loop of Lagrangian subspaces 
		\[ 
		c^\Psi:[0,1]/(0\sim 1) \to \mathrm{Lag}(\C^{n}),\qquad 
		c^{\Psi}(t) :=
		\Psi (T_{\overline{c}(e^{\pi i (1-t)})} \mathcal{L}).
		\]
		This is indeed a loop since $d\phi_R^{T}(T_{c(0)} \mathcal{L})= T_{c(1)}\mathcal{L}$. As observed in  
			Remark \ref{rem:equality of maslov indices} below, the Maslov index $\mu_\mathrm{Mas}(c^{\Psi})$ of $c^{\Psi}$ agrees with $\mu_\mathrm{Mas}(c^{\Phi})=\mu_\mathrm{RS}(c)$.
						
			It remains to verify $\mu_\mathrm{Mas}(c^{\Psi})=-\mu_L([\pi\circ \overline{c}])$. Since $\pi^*T\Sigma\cong \xi$, the trivialization $\Psi$ induces a symplectic trivialization of $(\pi \circ \overline{c})^* T\Sigma$, which we denote by $\psi$. We define 
			\[
			((\pi\circ \overline{c})|_{\partial \D})^\psi : \R/\Z \to \mathrm{Lag}(\C^{n}),\qquad ((\pi\circ \overline{c})|_{\partial \D})^\psi(t):= \psi(T_{(\pi\circ \overline{c})(e^{\pi i(1-2t)})}L).
			\]
			Since $(\pi|_{\mathcal{L}})^*TL\cong T\mathcal{L}$, we have $c^\Psi =((\pi\circ \overline{c})|_{\partial \D})^\psi $. Therefore, 
			\[
			\mu_\mathrm{Mas}(c^\Psi)=\mu_\mathrm{Mas}(((\pi\circ \overline{c})|_{\partial \D})^\psi)=-\mu_L([\pi\circ \overline{c}]),
			\]  
			where the minus sign appears as we use the opposite orientation of the capping, see \eqref{eq:capping}.
			The last equality in the statement follows from $\frac{1}{d}\mathfrak{w}(c)=-\omega([\pi\circ\overline{c}])$, see  Remark \ref{rem:disk_lift}.
		\end{proof}
		
		\begin{remark}\label{rem:equality of maslov indices}
			For two Lagrangian loops $c^\Phi$ and $c^\Psi$ in the proof of the preceding proposition,   
			$\mu_\mathrm{Mas}(c^{\Phi}) = \mu_\mathrm{Mas}(c^{\Psi})$ holds. Indeed, recall that if two Lagrangian loops $\ell_1(t)$ and $\ell_2(t)$ satisfy $\ell_2(t)=A(t)\ell_1(t)$ for some loop of symplectic matrices $A(t)$, then $\mu_\mathrm{Mas}(\ell_2)=2\mu_{\mathrm{CZ}}(A)+\mu_\mathrm{Mas}(\ell_1)$, where $\mu_{\mathrm{CZ}}$ denotes the Conley--Zehnder index. 
			In our situation, $c^\Phi$ and $c^\Psi$ are related by the loop of symplectic matrices $\Phi\circ\Psi^{-1}$ restricted to $\partial \overline{\D}_+$. Since $(\Phi\circ\Psi^{-1})|_{\partial \overline{\D}_+}$ is contractible as this admits an extension to $\overline{\D}_+$, its Conley--Zehnder index equals zero. 
		\end{remark}

		\subsection{Lagrangian Floer chain complex for $\vee$-shaped Hamiltonians}\label{sec:LFC}
		Rabinowitz Floer homology was first introduced in \cite{CF09}. We adopt the formulation from \cite{CFO10,CO18} using $\vee$-shaped Hamiltonians rather than the Rabinowitz action functional. 
		In this section, we define the Floer chain complex of the Lagrangian submanifold $\R\times \mathcal{L}$ in the symplectization $\R\times Y$ of $Y$, associated to a $\vee$-shaped Hamiltonian.
		From now on, we assume that the minimal Maslov number $N_L$ of $L$ is greater than $2$. 
		This assumption guarantees the necessary compactness properties of the relevant Floer moduli spaces, even in the absence of a symplectic filling of $Y$ or a Lagrangian filling of $\mathcal{L}$.

		\subsubsection{Hamiltonian chords for $\vee$-shaped Hamiltonians}\label{sec:Ham_chords}
		For a small $\epsilon>0$, we denote by $\mathcal{H}$ the set of smooth functions $H: \R \times Y \to \R$ such that $H(r,y)=h(e^r)$ for a smooth function $h : \R_{>0} \to \R$ satisfying
		\begin{equation}\label{eq:Hamiltonian}
			h(1)<0, \qquad   \begin{cases}
				h(\rho)=\text{constant}, &\rho \in (0,e^{-\epsilon}),\\[.5ex]
				h''(\rho)<0, & \rho \in (e^{-\epsilon},e^{-\epsilon+\eta}),\\[.5ex]
				h(\rho)=-a\rho+b^-,  &\rho \in (e^{-\epsilon+\eta}, e^{-\eta}),\\[.5ex]
				h''(\rho)>0, & \rho \in (e^{-\eta},e^{\eta}),\\[.5ex]
				h(\rho)=a\rho+b^+, & \rho \in (e^\eta, +\infty),
			\end{cases}
		\end{equation}
		for some $a \in (0,\infty) \setminus \frac{1}{d}\Z$, $b^\pm \in \R$, and $\eta \in (0,\frac{\epsilon}{2})$. 
		For $H \in \mathcal{H}$, consider the action functional
		\begin{equation}\label{eq:action_functional}
			\begin{split}
				&\mathcal{A}_H : C^{\infty} \big( ([0,1], \{0,1\}), (\R \times Y, \R \times \mathcal{L}) \big) \longrightarrow \R\\[1ex]
				&\mathcal{A}_H (x) := \int_{0}^{1}x^*(e^r \alpha) - \int_{0}^{1} H(x(t)) dt.	
			\end{split}
		\end{equation}
		Critical points of $\mathcal{A}_H$ are precisely the chords $x$ of the  Hamiltonian vector field $X_H$ defined by $\iota_{X_H}(d(e^r \alpha))=-dH$.
		If we write $x=(r_x, c_x)$, then $r_x$ is constant and $c_x$ satisfies
		\[
		c_x:([0,1], \{0,1\})\to (Y, \mathcal{L}),\qquad \partial_t c_x=h'(e^{r_x})R(c_x).
		\] 
		Since Reeb chords of $(Y,\mathcal{L})$ have actions in $\frac{1}{d}\N$, it holds that $h'(e^{r_x})\in \frac{1}{d}\Z$. 
		We are particularly interested in critical points appearing in the region $(-\eta,\eta) \times Y$. 
		For each $k\in \mathbb{Z}$, we define 
		\begin{equation}\label{eq:crit_L}
			\mathcal{L}_{k}^H:=\big\{x=(r_x,c_x)\in \Crit\mathcal{A}_H \mid r_x \in (-\eta,\eta),\,h'(e^{r_x})= k/d \big\}.
		\end{equation}
		It is a Morse-Bott critical manifold of $\mathcal{A}_H$. 
		The condition $h'(e^{r_x})= k/d$ is equivalent to the condition that $c_x$ has relative winding number $k$, i.e.,~$\mathfrak{w}(c_x)=k$. Since $h$ is strictly convex on $(e^{-\eta},e^\eta)$, there is a unique point $r^H_k\in(-\eta,\eta)$ such that 
		\begin{equation}\label{eq:beta}
			h' \big( e^{r^H_k} \big) = \frac{k}{d}
		\end{equation} 
		provided $-a<\frac{k}{d}<a$. The action value of $x\in \mathcal{L}_k^H$ is computed as 
		\begin{equation}\label{eq:action}
			\mathcal{A}_H(x)= e^{r_x}h'(e^{r_x})-h(e^{r_x}).
		\end{equation} 
		The space $\mathcal{L}_{k}^H$ for this $k$ is naturally identified with $\mathcal{L}$ via 
		\begin{equation}\label{eq:identify}
			\mathcal{L}_k^H \cong \mathcal{L},\qquad  x=(r^H_k,c_x) \mapsto c_x(0).
		\end{equation}

		Let $x=(r^H_k,c_x)\in \mathcal{L}_k^H$ for some $k =\mathfrak{w}(c_x) \in m_L \mathbb{Z}$. By the property of $m_L$ in Proposition \ref{prop:contractible}, $c_x$ admits a capping $\overline{c}_x$ in $Y$, and the index $\mu_\mathrm{RS} (c_x,\overline{c}_x)$ defined as in Section \ref{sec:Reeb} coincides with $-\mu_L([\pi \circ \overline{c}_x])$ by Proposition \ref{prop:maslov index of projection}. Then,
		\begin{equation*}
			\overline{x} = (r^H_k, \overline{c}_x): (\overline \D_+,\partial_1 \overline \D_+) \to (\R\times Y,\R\times \mathcal{L})
		\end{equation*}
		is a capping of $x$. To define index, we choose a symplectic trivialization 
		\[
		\Phi : \overline{x}^* T(\mathbb{R} \times Y)=\overline{x}^* ((\R\partial_r\oplus\R R)\oplus\xi)  \longrightarrow  \overline \D_+ \times (\C \oplus \C^{n}),
		\]
		where $n=\frac{1}{2}\dim\Sigma$ and $\partial_r$ denotes the coordinate vector field on the $\R$-factor. This induces 
		\[
		x^{\Phi}:[0,1]\to \mathrm{Lag}(\C^{n+1}),\qquad  x^{\Phi}(t)=\Phi (d\varphi_{X_H}^t( T_{x(0)} (\mathbb{R} \times \mathcal{L}))),
		\]
		where $\Phi$ is evaluated over the point $\overline x(1-2t)$ and $\varphi_{X_H}^{t}$ denotes the time-$t$ flow of $X_H$.
		We denote by $\mu_\mathrm{RS}(x,\overline{x})$ the Robbin--Salamon index of $x^{\Phi}$ with respect to the reference Lagrangian subspace $\R\oplus \R^n\subset\C\oplus\C^{n}$. Since $\mu_{\mathcal{L}}=0$ by Lemma \ref{lem:Maslov_vanish}, $\mu_\mathrm{RS}(x,\overline{x})$ is independent of the choice of a capping $\overline{x}$ of $x$, and we simply write $\mu_\mathrm{RS}(x)=\mu_\mathrm{RS}(x,\overline{x})$. In fact, since $H(r,p)=h(e^r)$ with convex $h|_{(e^{-\eta},e^\eta)}$, it follows that
		\begin{equation*}
			\mu_\mathrm{RS} (x) = \mu_\mathrm{RS}(c_x)+\frac{1}{2} = -\mu_L ([\pi \circ \overline{c}_x]) +\frac{1}{2}=\frac{2\tau_\Sigma k}{d}+\frac{1}{2}
		\end{equation*}
		where the second and third equalities are proved in Proposition \ref{prop:maslov index of projection}.
		
		\medskip

	Let $(f_L,Z_L)$ be a Morse-Smale pair as in Section \ref{sec:QC}. We consider its lift to $\mathcal{L}$ given by 
	\[
	f_{\mathcal{L}}:= (\pi|_\mathcal{L})^* f_L,\qquad Z_{\mathcal{L}}:= (\pi|_\mathcal{L})^* Z_L.
	\]
	For each $p\in \mathrm{Crit}f_L$, there exist exactly $d$ critical points 
	\[
	p^1,\;\;p^2=\phi_R^{\frac{1}{d}}(p^1),\;\;\dots,\;\; p^{d}=\phi_R^{\frac{d-1}{d}}(p^1) \in \Crit f_\mathcal{L}
	\]
	lying over $p$, and all of them have the same index, i.e.,~$\mathrm{ind}_{f_L} (p) = \mathrm{ind}_{f_{\mathcal{L}}} (p^i)$. 
	Note that the Morse complex of $(f_L,Z_L)$ is identical to the $\Z_{d}$-equivariant Morse complex of $(f_{\mathcal{L}},Z_{\mathcal{L}})$.

		For each $p \in \mathrm{Crit} f_L$, we write
		\begin{equation}\label{eq:d_many_chords}
			p_k^1,\, \dots \,,p_k^{d} \in \mathcal{L}_k^H	
		\end{equation}
		for the Hamiltonian chords  corresponding to $p^1,\dots , p^{d} \in \mathrm{Crit} f_{\mathcal{L}}$, respectively, via the identification $\mathcal{L}_{k}^H \cong \mathcal{L}$ in \eqref{eq:identify}. In other words, $p_k^i$ is the Hamiltonian chord with $p_k^i(0)=(r^H_k,p^i)$ whose $Y$-component has relative winding number $k$ for $i=1,\dots,d$.
		The Floer chain complex we will define has $p_k^1,\dots,p_k^{d}$ for $k \in m_L\Z$ as generators. Throughout the paper, when it is unnecessary to distinguish between $p^1,\dots, p^{d}$, we simply write $\tilde p$. Similarly, we use $\tilde p_k$ to represent $p_k^1,\dots,p_k^{d}$. 
		We define the index of $\tilde p_k\in \{p_k^1,\dots,p_k^{d}\}$ by
		\begin{align}
			\begin{split}\label{eq:index}
				\mu(\tilde p_k):= \dim \mathcal{L}+\mu_\mathrm{RS}(\tilde p_k)  -\frac{1}{2} -\ind_{f_{\mathcal{L}}}(\tilde{p})&=  \dim \mathcal{L}+\frac{ 2\tau_\Sigma k}{d} - \mathrm{ind}_{f_{\mathcal{L}}}(\tilde{p})\\
				&= \dim L+\frac{k N_L}{m_L}   - \mathrm{ind}_{f_L} (p).
			\end{split}
		\end{align}
		Our choice of $-\frac{1}{2}$ above ensures that the correspondence between the quantum homology of $L$ and the Rabinowitz Floer homology of $\mathcal{L}$ is degree-preserving.
		
		\subsubsection{Moduli space of Floer strips}
		Let $\mathcal{J}_Y$ denote the set of cylindrical almost complex structures $J_Y$ on $\R \times Y$, meaning that the following properties hold:
		\begin{itemize}
			\item $J_Y \partial_r = R$,	where $R$ is the Reeb vector field, and $r$ is the $\R$-coordinate of $\R \times Y$. 
			\item $J_Y$ is invariant under translations in the $r$-coordinate.
			\item $\xi=\ker\alpha$ is invariant under $J_Y$, and $J_Y|_{\xi}$ is compatible with $d\alpha|_{\xi}$.
		\end{itemize}
		The horizontal subbundle $T^\mathrm{h} Y \subset TY$ defined by the connection $1$-form $\alpha$ agrees with $\xi$ and with $\pi^*T\Sigma$. Therefore, for each $J_Y\in \mathcal{J}_Y$, there is $J_\Sigma\in \mathcal{J}_\Sigma$ such that $\pi_*(J_Y|_\xi)=J_\Sigma$. We call $J_\Sigma$ the horizontal part of $J_Y$ and, unless otherwise specified, use this notation whenever $J_Y$ is fixed.
			
		For $(H,J) \in \mathcal{H}\times \mathcal{J}_Y$, we consider Floer strips, namely smooth maps 
		\[
		\tilde{v}=(b,v):(\R \times [0,1], \R \times \{0,1\}) \to (\R \times Y, \R \times \mathcal{L})
		\] 
		that solve the Floer equation
		\begin{equation}\label{eq:Floer_equation_strip}
			\partial_s \tilde{v} + J_Y(\tilde{v}) (\partial_t \tilde{v} - X_H(\tilde{v}) ) =0
		\end{equation}
		on $\R\times (0,1)$, with the asymptotic condition 
		\[
		\ev_{\pm}(\tilde{v})= (\ev_{\pm}(b),\ev_{\pm}(v)) := \lim_{s\to \pm \infty} \tilde{v} (s, \cdot) \in \mathrm{Crit} \mathcal{A}_H.
		\]
		The projection $\pi \circ v$ of $v$ defined on $\R \times [0,1]\cong \D \setminus\{\pm 1\}$ has finite $\omega$-energy. By the removal of singularity theorem, it extends to a $J_\Sigma$-holomorphic map defined on the closed disk $\D$.
		By abuse of notation, we continue to write the map defined on $\D$ by $\pi \circ v$, i.e.,
		\begin{equation}\label{eq:extend_disk}
			\pi \circ v : (\D, \partial \D) \longrightarrow (\Sigma, L).
		\end{equation}
	
		Throughout this section, we only consider integers $k,l \in m_L\Z$ for which $r^H_{k},r^H_l \in (-\eta,\eta)$ exist as in \eqref{eq:beta}.
		For a Floer strip $\tilde{v}=(b,v)$, we set 
		\begin{equation}\label{eq:asymptotic_strip}
			\mathfrak a_{\pm}(\tilde v):=(\ev_\pm(b), \w(\ev_\pm(v))),
		\end{equation}
	    where we identify the constant path $\ev_\pm(b)$ with its value in $\R$.
		\begin{definition}
			Let $\mathbf{A}:=(A_1,\dots,A_N) \in (\pi_2(\Sigma,L)\setminus \{0\})^N$ for $N \in \N$, and let $k,l \in m_L\Z$.  
			We define the moduli space
			\begin{equation*}
				\mathcal{M}_{N,l,k}(\mathbf{A};H,J_Y) =\{\mathbf{v}=(\tilde{v}_1,\dots,\tilde{v}_N)\}
			\end{equation*}
			of $N$-tuples of Floer strips $\tilde{v}_i=(b_i,v_i)$ with respect to $(H, J_Y)$ such that 
			\begin{itemize}
				\item $\pi \circ v_i$ represents $A_i$ for $i=1,\dots,N$,
				\item $(r^H_l,l)=\mathfrak{a}_-(\tilde v_1)$, $(r^H_k,k)=\mathfrak{a}_+(\tilde v_N)$, and  $\mathfrak{a}_+(\tilde v_i)=\mathfrak{a}_-(\tilde v_{i+1})$ for $1 \leq i \leq N-1$.
			\end{itemize}

			We also define the subspace 
			\[
			\mathcal{M}^*_{N,l,k}(\mathbf{A};H,J_Y) \subset \mathcal{M}_{N,l,k}(\mathbf{A};H,J_Y)
			\]
			consisting of simple elements $\mathbf{v}$, namely $	(\pi\circ v_1,\dots,\pi\circ v_N) \in \mathcal{N}^{*}_N(\mathbf{A};J_\Sigma)$.
		\end{definition}

		We consider the evaluation map 
		\[
			\ev :\mathcal{M}_{N,l,k}(\mathbf{A};H,J_{Y}) \to \mathcal{L}^{2N} ,\quad 
			\ev(\mathbf{v}) :=\big(\ev_{-,0}(\tilde{v}_1),\ev_{+,0}(\tilde{v}_1),\dots, \ev_{-,0}(\tilde{v}_N),\ev_{+,0}(\tilde{v}_N)\big),
		\]
		where $\ev_{\pm,0}(\tilde{v}):=\ev_\pm(v)(0)$.

		\begin{definition}\label{def:strips_trajectories}
			Let us abbreviate $\mathcal{D}_H=(f_{\mathcal{L}},Z_{\mathcal{L}},H,J_Y)$. For $(\tilde p,k),(\tilde q,l)\in \Crit f_{\mathcal{L}}\times  m_L\Z$, we define 
			\[
			\mathcal{M}_{N}(\tilde{q}_{l},\tilde{p}_{k};\mathbf{A};\mathcal{D}_H):= (\ev)^{-1} \left(W^u_{Z_{\mathcal{L}}}(\tilde{q}) \times \Delta_{Z_\mathcal{L}}^{N-1} \times W^s_{Z_{\mathcal{L}}}(\tilde{p}) \right), 
			\]
			where \[\Delta_{Z_\mathcal{L}}:=\{(x,\varphi_{Z_{\mathcal{L}}}^t(x))\in \mathcal{L}\times \mathcal{L} \; |\; x\in \mathcal{L} \setminus \Crit f_{\mathcal{L}},\; t\in \R_{>0} \},\] see Figure \ref{fig:pearl_Leg}. For $N=1$, this means $\left(\ev\right)^{-1}(W^u_{Z_{\mathcal{L}}}(\tilde{q}) \times W^s_{Z_{\mathcal{L}}}(\tilde{p}))$.
			We also define 			\[
			\mathcal{M}^*_{N}(\tilde{q}_{l},\tilde{p}_{k};\mathbf{A};\mathcal{D}_H)
			:= \mathcal{M}_{N}(\tilde{q}_{l},\tilde{p}_{k};\mathbf{A};\mathcal{D}_H) \, \cap \, \mathcal{M}^*_{N,l,k}(\mathbf{A};H,J_Y).
			\]
		\end{definition}
		\begin{figure}[h]
			\centering
			\includegraphics[height = 3.5cm]{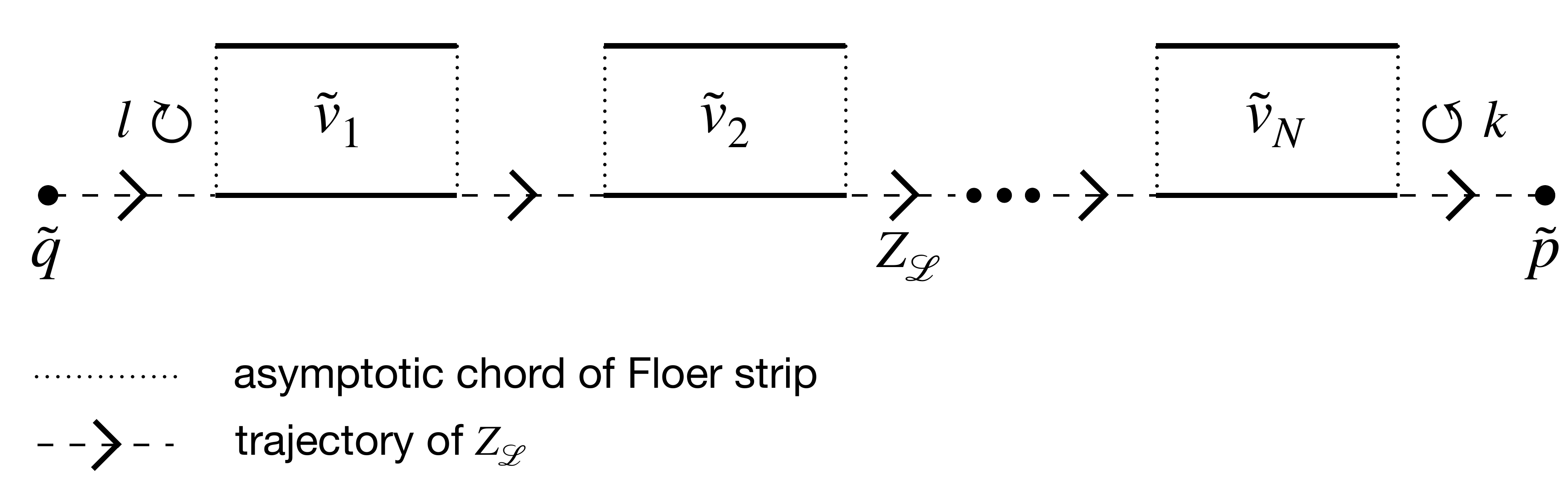}
			\caption{An element of $\mathcal{M}_{N}(\tilde{q}_{l},\tilde{p}_{k};\mathbf{A};\mathcal{D}_H)$.}
			\label{fig:pearl_Leg}
		\end{figure} 
		There is a canonical projection map
		\begin{equation}\label{eq:moduli_projection_strip}
			\Pi : \mathcal{M}_{N}(\tilde{q}_{l},\tilde{p}_{k};\mathbf{A}; \mathcal{D}_H) \to \mathcal{N}_{N} (q,p;{\bf A};\mathcal{D}),\quad
				\Pi(\mathbf{v}):=(\pi \circ v_1,\dots, \pi\circ v_N)
		\end{equation} 
		This map restricts to $\Pi:\mathcal{M}^{*}_N(\tilde{q}_{l},\tilde{p}_{k};\mathbf{A};\mathcal{D}_H)\to\mathcal{N}^{*}_N(q,p;{\bf A};\mathcal{D})$.
		
		\begin{prop}\label{prop:transv_strip}
			Let $\mathcal{J}_{(Y,\mathcal{L})}^{\mathrm{reg}} \subset \mathcal{J}_Y$ be the subset of $\mathcal{J}_Y$ whose horizontal part $J_\Sigma$ belongs to $\mathcal{J}_{(\Sigma, L)}^{\mathrm{reg}}$ defined in Proposition \ref{prop:base_transv}. 
			Then for every choice of 
			\[
			(\tilde p,k),(\tilde q,l)\in \Crit f_{\mathcal{L}}\times  m_L\Z,\quad N \in \mathbb{N}, \quad \text{and }\;\mathbf{A} = (A_1,\dots,A_N) \in (\pi_2(\Sigma, L))^N,
			\]
			and for every $J_Y \in \mathcal{J}_{(Y,\mathcal{L})}^{\mathrm{reg}}$, the moduli space
			$\mathcal{M}^*_{N}(\tilde{q}_{l},\tilde{p}_{k};\mathbf{A};\mathcal{D}_H) $
			is a smooth manifold of dimension $\mu(\tilde{p}_{k})-\mu(\tilde{q}_{l})+N-1$.
		\end{prop}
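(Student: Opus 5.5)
The plan is to reduce transversality upstairs to transversality of the projected pearly trajectories in $\Sigma$, via the projection $\Pi$ in \eqref{eq:moduli_projection_strip}. I will show that $\Pi$ is a local diffeomorphism onto an open subset of $\mathcal{N}^*_N(q,p;\mathbf{A};\mathcal{D})$, after a suitable enlargement that accounts for the extra freedom in the Floer strips. Proposition \ref{prop:base_transv} already says the target is a smooth manifold of the expected dimension for $J_\Sigma\in\mathcal{J}^{\mathrm{reg}}_{(\Sigma,L)}$, so no further perturbation of $J_Y$ is needed. This is why the regular set is defined purely through the horizontal part.

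\textbf{Step 1: a single Floer strip is determined by its projection plus finitely many parameters.} Take $\tilde v=(b,v)$ solving \eqref{eq:Floer_equation_strip} for $H=h(e^r)$ and cylindrical $J_Y$. Since $X_H=h'(e^r)R$ and $J_Y$ is $\R$-invariant and commutes with the Reeb splitting, the equation splits into two parts. The horizontal part, obtained by applying $\pi_*$, gives $\bar\partial_{J_\Sigma}(\pi\circ v)=0$. The vertical part is a scalar equation for $(b,\theta)$, where $\theta$ is a local fibre coordinate:
\[
\partial_s b-\alpha(\partial_t v)+h'(e^b)=0,\qquad \alpha(\partial_s v)+\partial_t b=0.
\]
With respect to the horizontal disk, this is a $\bar\partial$-type equation perturbed by the gradient of $h$. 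Fix the disk $u=\pi\circ v$ with boundary on $L$ and asymptotic point data. The vertical data then solve a nonlinear $\bar\partial$-problem on $\R\times Y$ over a one-complex-dimensional line bundle. The boundary condition is $\mathcal{L}$, which is Legendrian, so the problem is an exact Lagrangian problem in the $(b,\theta)$ cylinder, twisted by the curvature $u^*\omega$.

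I will show that this problem is Fredholm and surjective, with explicit index. The approach is to linearize and check that the vertical operator is a rank-one real Cauchy--Riemann operator on the strip with Morse--Bott ends at $r^H_l$ and $r^H_k$. Automatic transversality for rank-one operators (Riemann--Roch on the disk) then gives surjectivity. The index of the total linearized operator equals the horizontal index $\mu_L(A)+\dim L$ (before quotienting) plus the vertical contribution. The vertical contribution I expect to compute from the change in $\mu_{\mathrm{RS}}$ between the asymptotic chords: $(2\tau_\Sigma/d)(k-l)$ is to be matched with $\mu_L(A)$ using Proposition \ref{prop:maslov index of projection} and Remark \ref{rem:disk_lift}. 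The winding difference $k-l$ equals $d\,\omega(A)$, which is consistent with this.

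\textbf{Step 2: assembling the index formula.} Surjectivity of the vertical part, combined with regularity of the horizontal disks (simple chains, $J_\Sigma$ regular), makes the full linearization of each strip onto. Evaluation maps at $\pm\infty$, $t=0$ are submersions because the horizontal evaluations in Proposition \ref{prop:base_transv} are transverse to $W^u\times\Delta^{N-1}\times W^s$. The lift along the covering $\mathcal{L}\to L$ is a local diffeomorphism. So $\mathcal{M}^*_N(\tilde q_l,\tilde p_k;\mathbf{A};\mathcal{D}_H)$ is cut out transversally.

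Count dimensions in the unquotiented space. The base pearls carry $\ind(q)-\ind(p)-1+\mu_L(\mathbf{A})+N$, the $+N$ undoing the $\R^N$ quotient. Each Floer strip has an $s$-shift, which corresponds to the $\R$-action in $\mathrm{Aut}(\D;\pm1)$, so the strips do not gain an extra $N$ there. Instead, each strip contributes one vertical parameter from the Morse--Bott end, namely the translation in the Reeb/$r$ direction being fixed by the $r^H$ constraint. Substituting $\mu_L(\mathbf{A})=(k-l)N_L/m_L$ and \eqref{eq:index} gives $\mu(\tilde p_k)-\mu(\tilde q_l)=\ind(q)-\ind(p)+\mu_L(\mathbf{A})$, hence the stated dimension $\mu(\tilde p_k)-\mu(\tilde q_l)+N-1$.

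\textbf{Main obstacle.} I expect the main obstacle to be the vertical automatic-transversality step, in particular the correct treatment of the Morse--Bott asymptotics and the resulting bookkeeping of the $+N$. I would first try a reduction to an explicit ODE. Write $v=\tilde u\cdot e^{2\pi i\theta}$ with $\tilde u$ a fixed horizontal lift of $u$ on $\D_+$. The vertical equation then becomes $\bar\partial(b+i\theta)=$ (a term depending only on $u$) $-\,h'(e^b)$, with boundary condition $\theta\in\frac1d\Z$ constant on each boundary component. Since $h'$ is strictly increasing on $(e^{-\eta},e^\eta)$, a maximum-principle argument should give uniqueness of the solution up to the asymptotic data. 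The linearization is then $\bar\partial+h''(e^b)e^b$ with real boundary conditions, whose kernel is one-dimensional and whose cokernel vanishes.
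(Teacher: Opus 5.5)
Your overall architecture matches the paper's: split $D_{\tilde v}$ along $\R\partial_r\oplus\R R\oplus\xi$ into an upper-triangular operator, get the horizontal part onto from $J_\Sigma\in\mathcal{J}^{\mathrm{reg}}_{(\Sigma,L)}$, get the rank-one vertical part onto by an automatic-transversality (winding) argument, and reduce transversality of the evaluation map to the base. The gap is in the step that carries the proposition, namely the index and kernel of the vertical operator. You state it incorrectly, and in two mutually incompatible ways.

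\begin{itemize}
\item In Step 1 you assign the vertical part the contribution $(2\tau_\Sigma/d)(k-l)$. This equals $\mu_L(A)$, so adding it to the horizontal index $\dim L+\mu_L(A)$ double counts.
\item In the last paragraph you say the vertical linearization has a one-dimensional kernel and vanishing cokernel. That would add one dimension per strip, i.e.\ $N$ too many.
\end{itemize}

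Step 2 reaches the right number only through the sentence that each strip ``contributes one vertical parameter \dots\ fixed by the $r^H$ constraint''. That is an unjustified cancellation, not a computation. The ``suitable enlargement accounting for extra freedom'' is also unnecessary: there is no extra freedom.

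The missing fact is that the vertical operator is an isomorphism.
\begin{itemize}
\item It acts on $W^{1,p,\delta}((Z,\partial Z),(\C,\R))$ with no Morse--Bott extension. The $r$-coordinate at the ends is pinned at $r^H_l,r^H_k$, since strict convexity of $h$ makes $h'(e^r)=k/d$ uniquely solvable. All the Morse--Bott freedom $V_\pm\cong T\mathcal{L}$ lives in the horizontal part.
\item Its asymptotic operators are $-J_0\frac{d}{dt}-\mathrm{diag}(h''(e^{b_\pm})e^{b_\pm},0)$ with $h''>0$. By Lemma \ref{lem:index}, $\alpha_-=0$ and $\mu_{\mathrm{RS}}=\frac12$ at both ends, independently of $k,l$, so the index is $0$.
\item The winding comparison $\wind_{+\infty}(\zeta)\le 0<\frac12\le \wind_{-\infty}(\zeta)$ then forces trivial kernel.
\end{itemize}

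All dependence on $k-l$ is carried by the horizontal operator, whose kernel and cokernel are those of the disk operator $D_w$. This gives $\dim \mathcal{M}^*_{N,l,k}(\mathbf{A};H,J_Y)=N\dim L+\mu_L(\mathbf{A})$. Subtracting the codimension $(\dim\mathcal{L}-\ind(\tilde q))+(N-1)(\dim\mathcal{L}-1)+\ind(\tilde p)$ and using \eqref{eq:index} with Proposition \ref{prop:maslov index of projection} yields the stated formula.

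With this correction, your count via the unquotiented base pearl space is exactly right. It amounts to $\Pi$ being a local diffeomorphism onto $\mathcal{N}^*_N(q,p;\mathbf{A};\mathcal{D})$ itself, but as written the proposal does not establish it.

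The maximum-principle ODE reduction you sketch at the end is the content of Lemma \ref{lem:hol_lift} and Proposition \ref{prop:correspondence_strip}. The paper uses these later for bijectivity of $\Pi$, not here. The uniqueness you derive from that reduction also contradicts your claimed one-dimensional kernel.
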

		\begin{proof}
			We first show that $\mathcal{M}^{*}_{N,l,k}(\mathbf{A};H,J_Y)$ of  $J_Y \in \mathcal{J}_{(Y,\mathcal{L})}^{\mathrm{reg}}$ is a smooth manifold of dimension $N \dim L+\mu_L(\mathbf{A})$ where $\mu_L(\mathbf{A})=\sum\limits_{i=1}^{N}\mu_L(A_i)$. We abbreviate $Z=\R\times[0,1]$ and $\partial Z=\R\times\{0,1\}$, and consider the linearized operator associated to a Floer strip $\tilde{v}=(b,v)$:
			\[
				D_{\tilde{v}}: W^{1,p,\delta}_{V_\pm}\big((Z, \partial Z),(\tilde{v}^*T(\R \times Y), \tilde{v}|_{\partial Z}^*T(\R\times \mathcal{L})\big) \to
				 L^{p,\delta}\big(Z,\Omega^{0,1}_{Z}\otimes (\tilde{v}^*T(\R \times Y))\big)		
			\] 
			for $p>2$ and sufficiently small $\delta>0$.  
			Here, $V_+= T_{\tilde x}\mathcal{L}$ and $V_-= T_{\tilde y}\mathcal{L}$ for $\tilde x=\ev_{+}(v)(0)$ and $\tilde y=\ev_{-}(v)(0)$. The kernels of the associated asymptotic operators are precisely given by $\{d\varphi_{X_H}^t(v_\pm)\}_{t\in[0,1]}$ for $v_\pm\in V_\pm$, where $\varphi_{X_H}^t$ denotes the flow of $X_H$. The notation $W^{1,p,\delta}_{V_{\pm}}$ means that it consists of sections $\sigma$ of regularity $W_{\mathrm{loc}}^{1,p}$ such that $e^{\pm \delta s}(\sigma(\pm s,t) - \hat{v}_\pm(\pm s,t))$ is of class $W^{1,p}$ near the corresponding ends for some $v_\pm\in V_\pm$. Here $\hat{v}_\pm$ is a smooth section satisfying $\hat{v}_\pm(\pm s,t)=d\varphi_{X_H}^t(v_\pm)$ for large $s$.
			Writing $w := \pi \circ v$, we consider the decomposition 
			\begin{align}\label{eq:isom_bundles}
				\begin{split}
					\tilde{v}^*T(\R \times Y) &\cong \tilde{v}^* (\R \partial_r \oplus \R R)\oplus w^*T\Sigma, \\[.5ex]
					\tilde{v}|_{\partial Z}^* T(\R \times \mathcal{L}) &\cong \tilde{v}|_{\partial Z}^*(\R \partial_r \oplus 0)\oplus w|_{\partial Z}^* TL.
				\end{split}
			\end{align}
			As in \cite[Lemma 5.22]{DL2} or in \cite[Equation (3.19)]{BKK24}, $D_{\tilde{v}}$ is written as
			\begin{equation}\label{eq:D_decomposition}
				D_{\tilde{v}}= 
				\begin{pmatrix}
					D_{\tilde{v}}^\mathrm{v} & L_\mathrm{cpt} \\[.5ex]
					0 &  D_{\tilde{v}}^\mathrm{h}
				\end{pmatrix}
			\end{equation}
			with respect to the splitting in \eqref{eq:isom_bundles}. Here $L_\mathrm{cpt}$ is a compact operator. The vertical part $D_{\tilde{v}}^\mathrm{v}$ has the form 
			\[
				D_{\tilde{v}}^\mathrm{v}: W^{1,p,\delta}((Z,\partial Z),(\C,\R))\to L^{p,\delta}(Z,\C),\quad 				D_{\tilde{v}}^\mathrm{v}\sigma:=\partial_s\sigma+J_0\partial_t\sigma+\begin{pmatrix}
					h''(e^b)e^b&0 \\
					0 &  0
				\end{pmatrix}	\sigma.
			\]
			The horizontal part $D_{\tilde v}^\mathrm{h}$ is the Cauchy--Riemann operator
			\[
				D_{\tilde{v}}^\mathrm{h}=\nabla_w^{0,1}: W^{1,p,\delta}_{V_\pm}\big((Z, \partial Z),(w^*T\Sigma, w|_{\partial Z}^*TL)\big) 
				\to L^{p,\delta}\big(Z,\Omega^{0,1}_{Z}\otimes w^*T\Sigma\big).
			\]
			The kernels of the asymptotic operators of $D_{\tilde{v}}^\mathrm{h}$ are $T_xL$ and $T_yL$ for $x=\pi(\tilde x)$ and $y=\pi(\tilde y)$, which justifies the notational abuse $V_+=T_{\tilde x}\mathcal{L}\cong T_x L$ and $V_-=T_{\tilde y}\mathcal{L}\cong T_y L$.
			As mentioned in \eqref{eq:extend_disk}, we could think of $w$ as a $J_\Sigma$-holomorphic map from the disk $(\D,\partial\D)$. Let $D_{w}$  be the associated  Cauchy--Riemann operator considered in \eqref{eq:CR_base}. Then the kernel and cokernel of the horizontal part $D_{\tilde{v}}^\mathrm{h}$ are canonically isomorphic to those of $D_{w}$, see \cite[page 29]{DL2} for the proof in the periodic case. Since $J_\Sigma\in \mathcal{J}_{(\Sigma, L)}^{\mathrm{reg}}$, $D_w$ and thus $D_{\tilde{v}}^\mathrm{h}$ are surjective. 
			
			Next, we show that $D_{\tilde{v}}^\mathrm{v}$ is surjective. The asymptotic operators of $D_{\tilde{v}}^\mathrm{v}$ are given by
			\[ 
			A_\pm = -J_0\frac{d}{dt}- \begin{pmatrix} h''(e^{b_\pm})e^{b_\pm} & 0 \\ 0 & 0\end{pmatrix} : W^{1,2}( ([0,1],\{0,1\}),(\C,\R)) \to L^2([0,1], \C),
			\]
			where $J_0$ is the complex structure on $\C$ and  $b_\pm:=\ev_\pm(b)$ for $\ev_\pm$ defined after \eqref{eq:Floer_equation_strip}.  
			Let $\alpha_-(A_{\pm})\in\frac{1}{2}\Z$ be the maximum relative winding number of eigenfunctions associated with the negative eigenvalues of $A_{\pm}$. By \cite{FK16}, we have  
			\begin{equation}\label{eq:RS_line_bundle}
				\mu_\RS(A_{\pm})=2\alpha_-(A_{\pm})+\frac{1}{2}.
			\end{equation}
			Since $h''(e^{b_{\pm}})>0$, Lemma \ref{lem:index} below implies that $\alpha_-(A_{\pm})=0$, and hence $\mu_\RS(A_{\pm})=\frac{1}{2}$.  
			As $\ind(D_{\tilde{v}}^\mathrm{v})=\mu_\RS(A_{+})-\mu_\RS(A_{-})=0$, we have $\mathrm{coker\,}  D_{\tilde{v}}^\mathrm{v}=0$ if and only if $\ker D_{\tilde{v}}^\mathrm{v}=0$. Assume that there is nonzero $\zeta\in \ker D_{\tilde{v}}^\mathrm{v}$. Due to the asymptotic formula of $\zeta$ established in \cite[Theorem 3.12]{Abb04}, the asymptotic relative winding numbers $\wind_{\pm\infty}(\zeta)$ satisfy
			\begin{equation}\label{eq:asymp_winding}
				\wind_{+\infty}(\zeta)\leq \alpha_-(A_{+})=0,\qquad \wind_{-\infty}(\zeta)\geq \alpha_+(A_{-})=\frac{1}{2},
			\end{equation}
			where $\alpha_+(A_{-})=\alpha_-(A_{-})+\frac{1}{2}$ is the minimum relative winding number of eigenfunctions associated with the positive eigenvalues of $A_{-}$. 
			The algebraic count of zeros of $\zeta$ equals $\wind_{+\infty}(\zeta)- \wind_{-\infty}(\zeta)<0$, which contradicts the fact that all zeros of $\zeta$ are positive. This proves that $\ker D_{\tilde{v}}^\mathrm{v}=0$ and $D_{\tilde{v}}^\mathrm{v}$ is surjective. 
			
			Therefore, $D_{\tilde{v}}$ is surjective and its index is computed as 
			\[
			\ind(D_{\tilde{v}})=\ind(D_{\tilde{v}}^\mathrm{v})+\ind({D}_w)=\dim L+\mu_L([w]).
			\]
			This proves that $\mathcal{M}^*_{N,l,k}(\mathbf{A};H,J_Y)$ is a smooth manifold of dimension $N\dim L+\mu_L(\mathbf{A})$.

			It remains to prove the following transversality result:
			\[
			\left({\ev} : \mathcal{M}^*_{N,l,k}(\mathbf{A};H,J_Y) \to \mathcal{L}^{2N}\right) \pitchfork \left(W^u_{Z_\mathcal{L}}(\tilde{q}) \times \Delta_{Z_\mathcal{L}}^{N-1} \times W^s_{Z_\mathcal{L}}(\tilde{p})\right).
			\]
			By \eqref{eq:D_decomposition} and the fact that $Z_\mathcal{L}$ is the horizontal lift of $Z_L$, it suffices to achieve this transversality in the horizontal direction, or equivalently in the projection to $\Sigma$. The latter is part of Proposition \ref{prop:base_transv}.(a). 	
			The dimension of $\mathcal{M}^*_{N}(\tilde{q}_{l},\tilde{p}_{k};\mathbf{A};\mathcal{D}_H) $ is computed as
			\[
			\begin{split}
				&\dim\mathcal{M}^*_N(\tilde{q}_{l},\tilde{p}_{k};\mathbf{A};{\mathcal{D}}_{Y}) \\[.5ex]
				&\quad =N \dim L +\mu_L(\mathbf{A}) - (\dim\mathcal{L}- \ind_{f_\mathcal{L}}(\tilde{q})+ (N-1)(\dim \mathcal{L}-1)+\ind_{f_\mathcal{L}}(\tilde{p}))\\[.5ex]
				&\quad =\ind_{f_\mathcal{L}}(\tilde{q})-\ind_{f_\mathcal{L}}(\tilde{p})+\mu_L(\mathbf{A})+N-1\\[.5ex]
				&\quad = \mu(\tilde{p}_{k}) -\mu(\tilde{q}_{l})+N-1,
			\end{split} 
			\]
			where the last line follows from Proposition \ref{prop:maslov index of projection}. 
			This completes the proof.
		\end{proof}
		
		Let $J_Y \in \mathcal{J}_{(Y,\mathcal{L})}^{\mathrm{reg}}$.
		For $N=0$, let
		\[
		\mathcal{M}_{N=0}(\tilde{q}_{l}, \tilde{p}_{k}):= \begin{cases*} W^u_{Z_{\mathcal{L}}}(\tilde{q})\cap W^s_{Z_{\mathcal{L}}}(\tilde{p}) & if $k = l$, \\  \emptyset & if $k \neq l$. \end{cases*}
		\]
		There are a free $\R$-action on $\mathcal{M}_{N=0}(\tilde{q}_{l}, \tilde{p}_{k})$ given by $Z_{\mathcal{L}}$ and a free $\R^N$-action on $\mathcal{M}_N(\tilde{q}_{l}, \tilde{p}_{k};\mathbf{A};\mathcal{D}_H)$ given by the translation in the $s$-direction on each strip component.
		We define
		\begin{equation}\label{eq:moduli_leg}
			\begin{split}
				&\overline{\mathcal{M}}(\tilde{q}_{l}, \tilde{p}_{k}; \mathcal{D}_H):=\mathcal{M}_{N=0}(\tilde{q}_{l}, \tilde{p}_{k})/\R \cup \bigcup_{N\in \N}\mathcal{M}_N(\tilde{q}_{l}, \tilde{p}_{k};\mathbf{A};\mathcal{D}_H)/\R^N, \\[.5ex]
				&\overline{\mathcal{M}}^*(\tilde{q}_{l}, \tilde{p}_{k};\mathcal{D}_H):=\mathcal{M}_{N=0}(\tilde{q}_{l}, \tilde{p}_{k})/\R \cup \bigcup_{N\in \N}\mathcal{M}^*_N(\tilde{q}_{l}, \tilde{p}_{k};\mathbf{A};\mathcal{D}_H)/\R^N,
			\end{split}
		\end{equation}
		where the latter space has dimension $\mu(\tilde{p}_{k})-\mu(\tilde{q}_{l})-1$ by Proposition \ref{prop:transv_strip}.

		\begin{prop}\label{prop:simple_no_escape}
			Assume that $\mu(\tilde{p}_{k})-\mu(\tilde{q}_{l})\leq 2$.
			\begin{enumerate}[(a)]
				\item If $N_L\geq2$, all elements of $\overline{\mathcal{M}} (\tilde{q}_{l},\tilde{p}_{k};\mathcal{D}_H)$ are simple, i.e.,
				$\overline{\mathcal{M}} (\tilde{q}_{l},\tilde{p}_{k};\mathcal{D}_H)=\overline{\mathcal{M}}^{*} (\tilde{q}_{l},\tilde{p}_{k};\mathcal{D}_H)$.
				
				\item Assume $N_L>2$. Then, there exist $r_\pm\in\R$ such that every element in this moduli space has its image in $(r_-,r_+)\times Y$. Consequently, this moduli space is compact up to breaking.
			\end{enumerate}  
		\end{prop}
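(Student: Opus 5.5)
The plan is to reduce both statements to the base Lagrangian $L\subset\Sigma$, using the projection $\Pi$ from \eqref{eq:moduli_projection_strip} and the index identity from the proof of Proposition \ref{prop:transv_strip}:
\[
\mu(\tilde p_k)-\mu(\tilde q_l)=\ind_{f_L}(q)-\ind_{f_L}(p)+\mu_L(\mathbf{A}).
\]

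For (a), let $\mathbf{v}=(\tilde v_1,\dots,\tilde v_N)\in\overline{\mathcal M}(\tilde q_l,\tilde p_k;\mathcal D_H)$.

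First I will show that no component has constant projection. Suppose $\pi\circ v_i$ is constant. Then $\tilde v_i$ lies in $\R\times Y_x$ for some $x\in L$, with boundary on the discrete set $\R\times\mathcal L_x$. Its relative winding number is therefore locally constant along the strip, so both asymptotic chords lie in the same $\mathcal L^H_k$. In particular they have the same $r$-coordinate $r^H_k$ and hence the same action by \eqref{eq:action}. The energy identity $E(\tilde v_i)=\mathcal A_H(\text{left end})-\mathcal A_H(\text{right end})$ then gives $E(\tilde v_i)=0$. So $\tilde v_i$ is $s$-independent, which is excluded in the quotient by $\R^N$.

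Hence $\Pi(\mathbf v)\in\overline{\mathcal N}_N(q,p;\mathbf A;\mathcal D)$ with all $A_i\neq 0$. The index identity gives $\ind_{f_L}(q)-\ind_{f_L}(p)+\mu_L(\mathbf A)\le 2$. Proposition \ref{prop:base_transv}(b), valid for $N_L\ge 2$ and $J_\Sigma\in\mathcal J^{\mathrm{reg}}_{(\Sigma,L)}$, then shows that $\Pi(\mathbf v)$ is simple, which is exactly the definition of $\mathbf v$ being simple.

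For (b), the upper bound $r_+$ comes from the standard maximum principle. On $(\eta,\infty)\times Y$ the Hamiltonian is $ae^r+b^+$ and $J_Y$ is cylindrical, so the function $e^{b}$ is subharmonic along Floer strips there. The boundary lies on the conical Lagrangian $\R\times\mathcal L$, which is Legendrian in the contact slices, so the Neumann-type boundary condition holds and gives no boundary maximum. Since the asymptotic ends lie in $(-\eta,\eta)$, the strip stays below $r_+=\eta$, or a slightly larger value.

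The lower bound $r_-$ is where the hypothesis $N_L>2$ enters, and I expect it to be the main obstacle. On $(-\epsilon+\eta,-\eta)$ the slope of $h$ is negative, and below $-\epsilon$ the Hamiltonian is constant. A maximum principle still prevents interior minima in the linear region, but no such argument applies in the region where $h$ is constant. I will therefore argue by contradiction using SFT-type compactness.

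Suppose a sequence of elements of the moduli space has $\inf b\to-\infty$. Energy is uniformly bounded by the action difference of the ends, and the projected $\omega$-energy is also bounded. After rescaling in the $r$-direction, SFT compactness produces a limiting building. Its bottom levels are $J_Y$-holomorphic curves in $\R\times Y$ with boundary on $\R\times\mathcal L$, having positive punctures at contractible Reeb chords or orbits and no negative punctures.

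Each such curve projects to a nonconstant $J_\Sigma$-holomorphic disk or sphere, capping a chord of winding $w>0$ with $m_L\mid w$ by Proposition \ref{prop:contractible}. By Proposition \ref{prop:maslov index of projection} it therefore carries Maslov index at least $N_L$, or $2c_\Sigma\ge N_L$ for a sphere. The top level is a chain of Floer strips and trajectories of the kind counted in $\overline{\mathcal M}$. Its Fredholm index equals the original index minus the index absorbed by the bottom levels, and at each positive puncture one must correct by the $\mu_\RS$ of the chord, computed from Proposition \ref{prop:maslov index of projection} and the $\frac12$-shift.

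The target inequality is that the remaining top component has virtual dimension at most $2-N_L-1<0$ after dividing by translations. Since $J_\Sigma$ is regular, the top component is simple and transversely cut out by part (a) applied to the lower index, so negative dimension forces it to be empty, a contradiction. The delicate bookkeeping is the comparison between the $\mu_\RS$ of the chord and the Maslov index of the projected disk, together with the check that the strict inequality $N_L>2$, rather than $N_L\ge 2$, is exactly what makes this dimension negative. Once $r_\pm$ are established, the usual Floer compactness in the compact region $[r_-,r_+]\times Y$ gives compactness up to breaking.
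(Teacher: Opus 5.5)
Your part (a) and the upper bound $r_+$ in (b) are fine. For (a) the paper simply invokes \cite[Proposition 3.1.3]{BC4}, i.e.\ Proposition \ref{prop:base_transv}(b) applied to $\Pi(\mathbf v)$, exactly as you do. Your energy argument excluding constant projections is harmless but redundant, since $A_i\neq 0$ is built into the definition of the moduli space. The gap is in the lower bound $r_-$.

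The top level $\mathbf v_\infty$ of the SFT limit consists of \emph{punctured} Floer strips, and some of them may project to a constant disk. Such a component is a strip inside a single fiber $\R\times Y_x$ whose two ends have different winding numbers, the difference being carried by a negative puncture. Your energy argument from (a) no longer rules these out once punctures are present.

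For such a component the projected chain of pearls has a zero class and no $\R$-translation. Proposition \ref{prop:base_transv}(b) does not apply to it, and transversality of punctured strips in $\R\times Y$ is not available for the $S^1$-invariant cylindrical $J_Y$, whose horizontal part is the only regular piece. Your bound "virtual dimension $\le 2-N_L-1<0$ after dividing by translations" tacitly assumes a free translation on every top-level component. Notice that this bound already gives a contradiction for $N_L=2$. But there, a fiberwise strip over a point with one puncture at a chord of index $2$, sitting above a Maslov-$2$ half-plane, is not excluded by any index count; this is Maslov-$2$ disk bubbling. So your bookkeeping cannot be where $N_L>2$ enters.

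The paper closes this gap as follows. For $\dim L\ge 3$, pass to the underlying simple chain of pearls of $\Pi(\mathbf v_\infty)$, with classes $\mathbf A^*$ having $N^*_0$ zero entries. The translations only give $\dim\mathcal N^*_{N^*}(q,p;\mathbf A^*;\mathcal D)\ge N^*-N^*_0$. However, every constant-projection strip must carry at least one puncture, so $m\ge N^*_0$. Combined with the index identity \eqref{eq:index_0}, this yields
\[
2\;\ge\;1+\big(\mu_L(\mathbf A)-\mu_L(\mathbf A^*)\big)+\sum_{i}\big(\mu_\CZ(\gamma_i)-1\big)+\sum_{j}\big(\mu_\RS(c_j)-1\big).
\]
It is this loss of $1$ per puncture that requires $\mu_\RS(c_j)\ge N_L\ge 3$ and $\mu_\CZ(\gamma_i)\ge 2c_\Sigma\ge 4$ in order to force $m=0$. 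For $\dim L\le 2$, where the reduction to simple chains is not used, you also need the separate direct estimate from \eqref{eq:index_0}: first it forces $\mathbf A=0$, and then a Morse flow line from $q$ to $p$ gives the contradiction.
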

		The proof of the above proposition is deferred  to Section \ref{sec:proof_proposition}.

		\subsubsection{Orientation lines}\label{sec:orientation_lines}
		
		Let $\dot{\D} := \D\setminus \{-1\}$ be the punctured disk and let
		\[
		\epsilon_- : (-\infty,0] \times [0,1] \longrightarrow \dot\D
		\]
		be negative holomorphic strip-like coordinates near the boundary puncture $-1\in\D$. We take any $\tilde p_k\in \mathcal{L}_k^H$ for $(\tilde{p},k)\in\Crit f_\mathcal{L}\times m_L\Z$. A capping of $\tilde p_k$ is a smooth map 
		\[
		\hat x:(\dot{\D}, \partial \dot{\D})\longrightarrow(\R \times Y, \R\times \mathcal{L})
		\]  
		such that $\hat{x}(\epsilon_-(s,\cdot))$ converges to $\tilde p_k$ in the $C^\infty$-topology as  $s\to-\infty$. Instead of providing a general construction of an orientation line $\mathfrak{o}(\tilde p_k,\hat x)$, which involves all relevant choices of an almost complex structure and a connection as in \cite[Section 3.7]{Z1}, we define $\mathfrak{o}(\tilde p_k,\hat x)$ with a specific choice of an almost complex structure and a connection. To this end, we decompose 
		\begin{equation} \label{eq:decomposition_tangent}
			\hat x^*T(\R\times Y) \cong \hat x^* (\R \partial_r\oplus \R R)\oplus \hat x^*\xi \cong (\dot{\D} \times \C) \oplus w^* T\Sigma,
		\end{equation}
		where $w:= \pi_{\R\times Y}\circ \hat x$, with $\pi_{\R\times Y}:\R\times Y \to \Sigma$ denoting the natural projection induced by $\pi$, and where $\R \partial_r$ and $\R R$ correspond to $\R$ and $i\R$, respectively.
		Note that $\hat{x}^*T\mathcal{L}$ corresponds to $w^*TL$ through the isomorphism.  
		For $p>2$ and $\delta>0$, we consider an operator
		\[
		D_{\hat x}: W^{1,p,\delta}_V\big((\dot{\D}, \partial\dot{\D}),(\hat x^*T(\R \times Y), \hat{x}|_{\partial\dot{\D}}^*T(\R\times \mathcal{L})\big) \to L^{p,\delta}\big(\dot{\D},\Omega^{0,1}_{\dot{\D}}\otimes (\hat x^*T(\R \times Y))\big)
		\]
		which is of the form { $D_{\hat x}= \begin{pmatrix}
				D_{\hat x}^\mathrm{v} & L_\mathrm{cpt} \\
				0 &  D_{\hat x}^\mathrm{h}
			\end{pmatrix}$} with respect to the above decomposition. The horizontal part $D_{\hat x}^\mathrm{h}$ is the Cauchy--Riemann operator
		\[
		D_{\hat x}^\mathrm{h}=\nabla_w^{0,1}: W^{1,p,\delta}_V\big((\dot{\D}, \partial\dot{\D}),(w^*T\Sigma, w|_{\partial\dot{\D}}^*TL)\big) \to L^p\big(\dot \D,\Omega^{0,1}_{\dot{\D}}\otimes w^*T\Sigma\big)
		\]
		for the connection $\nabla_w=w^*\nabla$ on $w^* T\Sigma$, cf.~\eqref{eq:CR_base}. Here, $V$ is the kernel of the asymptotic operator associated to the puncture $-1\in \D$, which is canonically isomorphic to $T_{\tilde p}\mathcal{L}\cong T_pL$. The notation $W^{1,p,\delta}_V$ is explained in the preceding section.  
		The vertical part $D_{\hat x}^\mathrm{v}$ is given by
		\[
		D_{\hat x}^\mathrm{v}: W^{1,p,\delta}((\dot\D,\partial{\dot\D}),(\C,\R))\to L^{p,\delta}(\dot\D,\C),\qquad \sigma\mapsto  \partial_s \sigma + J_0 \partial_t \sigma + B\sigma
		\]
		for some smooth map $B:\dot\D\to \operatorname{End}_{\R}(\C)$ such that $B(\epsilon_-(s,t))= \begin{pmatrix}
			h''(e^{r_x})e^{r_x} & 0 \\
			0 & 0
		\end{pmatrix}$ 
		for $s \ll 0$, $B(\epsilon_-(s,t))=0$ for $s$ sufficiently close to $0$, and $B=0$ on the complement of $\epsilon_-$. The off-diagonal term $L_\mathrm{cpt}$ denotes an arbitrary compact operator. We include it to account for the compact operator that appears in \eqref{eq:D_decomposition}. For a sufficiently small $\delta>0$, the operator $D_{\hat x}$ is Fredholm.
		
		Using the evaluation map
		\begin{equation*}
		\begin{split}
			&\ev_{-1} :  W^{1,p,\delta}_V\big((\dot{\D}, \partial\dot{\D}),(\hat x^*T(\R \times Y), \hat{x}|_{\partial\dot{\D}}^*T(\R\times \mathcal{L}))\big) \longrightarrow T_{\tilde{p}} \mathcal{L},\\[.5ex]
			&  \ev_{-1}(\zeta):= \lim_{s\to -\infty} \pi_\mathcal{L}\circ \zeta(\epsilon_{-}(s,0)),
		\end{split}			
		\end{equation*}
		where $\pi_\mathcal{L}:\R\times T_{\tilde{p}}\mathcal{L}\to T_{\tilde{p}}\mathcal{L}$ is the obvious projection,  
		we define the Fredholm operator
		\begin{align*}
			&D_{\hat{x}} \# T_{\tilde{p}}W^{u}_{Z_{\mathcal{L}}} (\tilde{p}) := 	D_{\hat{x}}|_{(\ev_{-1})^{-1} ( T_{\tilde{p}} W^{u}_{Z_{\mathcal{L}}} (\tilde{p}))}.
		\end{align*}
		We denote the orientation line of $\det(D_{\hat{x}} \# T_{\tilde{p}}W^{u}_{Z_{\mathcal{L}}} (\tilde{p}))$ by $\mathfrak{o}(\tilde{p}_k,\hat x)$. Note that $\R\times\mathcal{L}$ is relatively $\Pin$ in $\R\times Y$ since $L$ is so in $\Sigma$, see Remark \ref{rem:pin_lift}. Therefore, if $\hat x'$ is another capping of $\tilde{p}_k$ such that gluing  $\hat x$ and $\hat{x}'$ with the reversed orientation along $\tilde{p}_k$ yields a disk that is  trivial in $\pi_2(\R\times Y,\R\times \mathcal{L})$, then, by \cite[Lemma 3.6]{Z1}, there is a canonical isomorphism 
		\[
		\mathfrak{o}(\tilde{p}_k,\hat x)\cong \mathfrak{o}(\tilde{p}_k,\hat x').
		\] 
		
		\begin{remark}\label{rem:pin_lift}
			Let $\sigma$ be a relative $\Pin(n)$-structure on $L\subset\Sigma$ as in Remark \ref{rem:pin}. We define a relative $\Pin(n+1)$-structure $\tilde\sigma$ on $\R\times L\subset \R\times Y$ as follows.
			Let $(g,\beta)\in \check{\mathrm{C}}^1(\mathcal{U};\Pin(n)) \times \check{\mathrm{Z}}^2(\mathcal{V}; \Z_2) $ be a representative of $\sigma$. 
			The projection  $\pi : Y \to \Sigma$ induces a relative $\Pin(n)$-structure on $\mathcal{L}\subset Y$ defined by the pair $(\pi^*g,\pi^*\beta) \in \check{\mathrm{C}}^1(\tilde{\mathcal{U}};\Pin(n)) \times \check{\mathrm{Z}}^2(\tilde{\mathcal{V}};\Z_2)$ for some good covers $\tilde{\mathcal{V}}$ on $Y$ and $\tilde{\mathcal{U}}$ on $\mathcal{L}$ finer than $\{\pi^{-1}(V)\}_{V\in \mathcal{V}}$ and $\{\pi^{-1}(U)\}_{U \in \mathcal{U}}$, respectively. Consider the  diagram 
			\begin{equation*}
				\begin{tikzcd}
					\Z_2 & \Pin(n) & O(n) \\ \Z_2 & \Pin(n+1) & O(n+1),
					\arrow[from=1-1, to=1-2]
					\arrow[from=1-1, to=2-1, "="]
					\arrow[from=1-2, to=1-3]
					\arrow[from=1-2, to=2-2, "i"]
					\arrow[from=1-3, to=2-3, "i"]
					\arrow[from=2-1, to=2-2]
					\arrow[from=2-2, to=2-3]
				\end{tikzcd}
			\end{equation*}
			where the vertical maps $i$ are the obvious inclusions with respect to $\R^{n+1}=\R\oplus\R^n$. 
			Extending $(i_*\pi^*g,\pi^*\beta) \in \check{\mathrm{C}}^1(\tilde{\mathcal{U}};\Pin(n+1)) \times \check{\mathrm{Z}}^2(\tilde{\mathcal{V}};\Z_2)$ trivially in the $\R$-direction, we obtain a relative $\Pin(n+1)$-structure $\tilde\sigma$ on $\R\times\mathcal{L}\subset \R\times Y$. We call $\tilde \sigma$ the lift of $\sigma$.
		\end{remark}

		The projection $w=\pi_{\R\times Y}\circ \hat x$ smoothly extends over $-1$, and thus we may also regard $w$ as a smooth map $w:(\D,\partial\D)\to (\Sigma,L)$.
		Then, the kernel and cokernel of the horizontal part $D_{\hat x}^\mathrm{h}$ are canonically isomorphic to those of the Cauchy--Riemann operator $D_{w}$ in \eqref{eq:CR_base}, see \cite[page 29]{DL2} for the proof in the periodic case. This yields an isomorphism of determinant line bundles
		\[
		\det(D_{\hat{x}})\cong  \det(D_{\hat{x}}^\mathrm{v})\otimes \det(D_{\hat x}^\mathrm{h})\cong \det(D_{\hat{x}}^\mathrm{v})\otimes \det(D_{w}).
		\]
		Moreover, through the isomorphism \eqref{eq:decomposition_tangent}, we have
		$T_{\tilde{p}} W^{u}_{Z_{\mathcal{L}}}(\tilde{p}) \cong 0 \oplus T_p W^{u}_{Z_L}(p)$
		and 
		\begin{equation}
			\label{eq:det_decomp}
			\det(D_{\hat{x}} \# T_{\tilde{p}}W^{u}_{Z_{\mathcal{L}}} (\tilde{p}))\cong \det(D_{\hat{x}}^\mathrm{v}) \otimes \det(D_{w}\# T_pW^u_{Z_L}(p)).
		\end{equation}
			
		\medskip
		
		Next, we observe that  $\det(D_{\hat{x}}^\mathrm{v})$ admits a canonical orientation. 
		We consider positive holomorphic strip-like coordinates $\epsilon_+ : [0,\infty) \times [0,1] \to \dot\D$ near the puncture $-1\in\D$. 
		We define an operator $D_B^+: W^{1,p,\delta}((\dot\D,\partial{\dot\D}),(\C,\R))\to L^{p,\delta}(\dot\D,\C)$ analogously to $D_{\hat x}^\mathrm{v}$ with $\epsilon_-$ replaced by $\epsilon_+$.  
		Gluing $(\dot\D,\epsilon_+)$ and $(\dot\D,\epsilon_-)$ along the respective cylindrical ends, we obtain $\D$. We also glue the operators $D_B^+$ and $D_{\hat x}^\mathrm{v}$, which have the same asymptotic operator, and obtain an operator $D_B:W^{1,p}((\D,\partial \D),(\C,\R))|_{(\ev_{-1})^{-1}(0)}\to L^p(\D,\C)$. We therefore have an isomorphism of determinant lines 
		\begin{equation}
			\label{eq:gluing_iso}
			\det (D_B^+) \otimes \det (D_{\hat x}^\mathrm{v} ) \cong \det (D_B).
		\end{equation}
		 We homotope $D_B$ to the Cauchy--Riemann operator $\bar{\partial}_{(\C,\R)}\# 0:W^{1,p}((\D,\partial{\D}),(\C,\R))|_{(\ev_{-1})^{-1}(0)}\allowbreak\to L^p(\D,\C)$ through Fredholm operators. Since $\bar{\partial}_{(\C,\R)}\#0$ is an isomorphism, $\det(\bar{\partial}_{(\C,\R)}\#0)$ has a canonical orientation, and this yields an orientation on $\det (D_B)$. The computation in the proof of Proposition \ref{prop:transv_strip} shows that $D_B^+$ has index zero and is surjective, and hence, it is an isomorphism. Thus, $\det (D_B^+)$ admits a canonical orientation. Using \eqref{eq:gluing_iso}, we obtain a canonical orientation on $\det (D_{\hat x}^\mathrm{v})$.

		The isomorphism in \eqref{eq:det_decomp} and the  orientation on $\det (D_{\hat x}^\mathrm{v} \#0)$ induces  an isomorphism of orientation lines
		\begin{equation}\label{eq:projection_orientation}
			f_{\hat x} : \mathfrak{o}(\tilde{p}_k,\hat x) \longrightarrow \mathfrak{o}(p,w).
		\end{equation}
		
		Moreover, for any $(\tilde p,k),(\tilde q,l)\in \Crit f_{\mathcal{L}}\times m_L\Z$ satisfying $\mu(\tilde{p}_{k})-\mu(\tilde{q}_{l})=1$, every element $[\mathbf{v}] \in \overline{\mathcal{M}}^*(\tilde{q}_{l}, \tilde{p}_{k};\mathcal{D}_H)$ induces an isomorphism 
		\begin{equation}
			\label{eq:rfh_bdry}
			C([\mathbf{v}]) :  \mathfrak{o}(\tilde{p}_{k},\hat x) \longrightarrow \mathfrak{o}(\tilde{q}_{l},\hat y) 
		\end{equation}
		where $\hat y$ is a capping of $\tilde{q}_{l}$ obtained by gluing $\hat x$ with $\mathbf{v}$, see \cite[Section 4.2.2]{Z1} for details.

		\begin{prop}\label{prop:orientation_commute}
			With the notation above, let $\hat{x},\hat{x}'$ be two cappings of $\tilde{p}_k \in\mathcal{L}_k^H$. Then,
			the relative $\mathrm{Pin}^{\pm}(n+1)$-structure $\tilde{\sigma}$ on $\R \times \mathcal{L} \subset \R\times Y$ determines an isomorphism 
			\begin{equation}
				\label{eq:pin_iso_rfh}
				\phi^{\tilde{\sigma}}_{(\hat{x}, \hat{x}')} : \mathfrak{o}({\tilde{p}_k,\hat{x}})\longrightarrow \mathfrak{o}({\tilde{p}_k,\hat{x}'}).
			\end{equation}
			Moreover, this isomorphism fits into the following commutative diagram
			\[
			\begin{tikzcd}
				& \mathfrak{o}({\tilde{q}_{l},\hat{y}})  \arrow[rr, "\phi^{\tilde{\sigma}}_{(\hat{y}, \hat{y}')}", swap, near start] \arrow[ddd, "f_{\hat{y}}", dotted] &                                  & \mathfrak{o}({\tilde{q}_{l},\hat{y}'})  \arrow[ddd, "f_{\hat{y}'}"]  \\
				\mathfrak{o}({\tilde{p}_{k},\hat{x}}) \arrow[ru, "\Cv"] \arrow[rr, "\phi^{\tilde{\sigma}}_{(\hat{x}, \hat{x}')}", swap, near start] \arrow[ddd, "f_{\hat{x}}"] &                                                                                                                                         & \mathfrak{o}({\tilde{p}_{k},\hat{x}'}), \arrow[ru, "\Cv"] \arrow[ddd, "f_{\hat{x}'}"] &                                                                    \\
				& & &\\
				& \mathfrak{o}({q,v}) \arrow[rr, "\psi^{\sigma}_{(v, v')}",dotted, swap, near start]                                                                                     &                                  & \mathfrak{o}({q,v'})                                   \\
				\mathfrak{o}({p,w}) \arrow[ru, "\Cw",dotted] \arrow[rr, "\psi^{\sigma}_{(w, w')}",swap, near start]                           &                                                                                                                                         & \mathfrak{o}({p,w'})     \arrow[ru,, "\Cw"]                       &                                                                   
			\end{tikzcd}      
			\]
			where
			\begin{itemize}
				\item the capping $\hat{y}$ of $\tilde{q}_l$ is obtained by gluing $\hat{x}$ with $\mathbf{v}$, and likewise for $\hat{y}'$,
				\item $w= \pi_{\R\times Y}\circ \hat{x}$ and $v=\pi_{\R\times Y}\circ \hat{y}$, and similarly for $w'$ and $v'$,
				\item $\mathbf{w}=\Pi(\mathbf{v})$, with $\Pi$ as in \eqref{eq:moduli_projection_strip},
				\item the vertical maps are given by \eqref{eq:projection_orientation},
				\item the maps $c([\mathbf{w}])$ and $c([\mathbf{v}])$ are defined just above \eqref{eq:pearl_bdry} and in \eqref{eq:rfh_bdry}, respectively,
				\item the horizontal maps on the bottom face are given by \eqref{eq:pin_iso}.
			\end{itemize} 
		\end{prop}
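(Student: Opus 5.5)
The plan is to build $\phi^{\tilde\sigma}_{(\hat x,\hat x')}$ by mimicking the construction of $\psi^\sigma_{(w,w')}$ in Remark \ref{rem:ori_iso}.(ii), but on the symplectization. After that, the diagram is checked face by face, using the splitting \eqref{eq:det_decomp} to reduce everything to the base.

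\textbf{Step 1: defining $\phi^{\tilde\sigma}$.} Glue $\hat x'$ to $\hat x$ with reversed orientation along $\tilde p_k$. This gives a disk $u:=(-\hat x)\#\hat x'$ in $(\R\times Y,\R\times\mathcal{L})$, and $\mu(u)=0$ by Lemma \ref{lem:Maslov_vanish}. Using $\tilde\sigma$ from Remark \ref{rem:pin_lift}, the determinant line $\det(D_u\#0)$ has an orientation determined by the relative $\Pin(n+1)$-structure, as in \cite[Proposition 7.4]{Z1}. Gluing together with deformation of the incidence condition gives
\[
\det(D_{\hat x}\#T_{\tilde p}W^u_{Z_{\mathcal{L}}}(\tilde p))\otimes\det(D_u\#0)\cong\det(D_{\hat x'}\#T_{\tilde p}W^u_{Z_{\mathcal{L}}}(\tilde p)),
\]
and this defines $\phi^{\tilde\sigma}_{(\hat x,\hat x')}$ up to a positive multiple.

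\textbf{Step 2: reduction to the base.} Apply the splitting into vertical and horizontal parts to $D_u$. The vertical part is $\bar\partial_{(\C,\R)}\#0$, up to homotopy through Fredholm operators, which is an isomorphism with its canonical orientation. The horizontal part is $D_{\pi\circ u}\#0$ with $\pi\circ u=(-w)\#w'$.

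Next I would show that the orientation on $\det(D_u\#0)$ coming from $\tilde\sigma$ corresponds, under this splitting, to the canonical vertical orientation tensored with the orientation on $\det(D_{(-w)\#w'}\#0)$ coming from $\sigma$. To do this I recall the recipe: trivialize $u^*T(\R\times Y)$ and the boundary Lagrangian along $\partial\D$ using the $\Pin$ structure, then deform to a standard split problem. Since $\tilde\sigma$ is $(i_*\pi^*g,\pi^*\beta)$ extended trivially in $\R$, the $\Pin(n+1)$-trivialization of $(u|_{\partial\D})^*T(\R\times\mathcal{L})$ can be chosen as $\R\oplus(\text{the }\Pin(n)\text{-trivialization of }((\pi\circ u)|_{\partial\D})^*TL)$. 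With that choice, the standard model operator splits as the trivial line bundle problem direct-sum the base problem. The Koszul sign between the two factors is trivial, because the vertical factor has index $0$, or at least it is independent of $(\hat x,\hat x')$.

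\textbf{Step 3: commutativity.} The front and back faces commute by this compatibility, combined with the fact that gluing isomorphisms commute with the splitting \eqref{eq:det_decomp}. Here the vertical parts glue via \eqref{eq:gluing_iso}, and all the vertical operators involved are isomorphisms with canonical orientations. The left and right faces, $f\circ C([\mathbf v])=C([\mathbf w])\circ f$, follow the same way. The relevant operator for $\mathbf v$ is block upper triangular, as in \eqref{eq:D_decomposition}, with a vertical part $D^{\mathrm v}_{\tilde v}$ that is an invertible index-$0$ operator (proof of Proposition \ref{prop:transv_strip}). Its canonical orientation is consistent with those of $D^{\mathrm v}_{\hat x}$ and $D^{\mathrm v}_{\hat y}$ under gluing, because all of them are homotoped to standard isomorphisms through isomorphisms. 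The top face is associativity of gluing, as in \cite[Section 4.2.2]{Z1}, and the bottom face is the corresponding statement in \cite{Z1}.

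\textbf{Main obstacle.} I expect Step 2 to be the hardest part: checking that the $\Pin(n+1)$-induced orientation really is the product orientation. This requires tracking the construction in \cite[Section 3]{Z1} through the stabilization $\Pin(n)\to\Pin(n+1)$, and checking that the compact off-diagonal term $L_{\mathrm{cpt}}$ can be homotoped to zero without changing orientations. My first attempt would be to handle the homotopy by scaling $L_{\mathrm{cpt}}\mapsto tL_{\mathrm{cpt}}$, which keeps the operators Fredholm and the family upper triangular.
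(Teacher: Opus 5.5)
Your proposal is correct and follows essentially the same route as the paper: $\phi^{\tilde\sigma}_{(\hat x,\hat x')}$ is defined via \cite[Proposition 7.4]{Z1} applied to the glued Maslov-zero disk $(-\hat x)\#\hat x'$. The front face is reduced through \eqref{eq:det_decomp} to the observation that, for a trivialization respecting the splitting, the boundary loop is $\R\oplus F_v$ and carries the structure induced by the trivial $\mathrm{Pin}(1)$-structure and $\sigma_{F_v}$, so the vertical factor gets its canonical orientation; the top and bottom faces are taken from \cite{Z1}. Your extra remarks (no Koszul sign because the vertical factors have index zero, and scaling away $L_{\mathrm{cpt}}$) only fill in details that the paper leaves implicit.
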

		
		\begin{proof} 
			The maps $\psi^{\sigma}_{(w,w')}$ and $\phi^{\tilde{\sigma}}_{(\hat{x}, \hat{x}')}$ are defined as in \cite[Proposition 7.4]{Z1}. We recall the construction of these maps and  show the commutativity of the front face of the diagram.
			We define disk maps
			\[
			\tilde{v}:=(-\hat{x})\#\hat{x}',\qquad v:=(-w)\#w'
			\]
			where the minus sign indicates reversal of orientation.
			Since the Maslov class of $\mu_\mathcal{L}$ on $\pi_2(Y,\mathcal{L})\cong\pi_2(\R\times Y,\R\times \mathcal{L})$  vanishes by Lemma \ref{lem:Maslov_vanish}, we have $\mu_L(v)=\mu_{\mathcal{L}}(\tilde{v})=0$. We take any unitary trivialization of $v^*T\Sigma$. Then $v|_{\partial\D}^*TL$ induces a loop $F_v$ of Lagrangian subspaces in $\C^{n}$, which is contractible since $\mu_L(v)=0$. 
			
			Let $\Omega_0(\mathrm{Lag}(\C^{n}))$ be the space of contractible loops of Lagrangian subspaces in $\C^{n}$. On one hand, for each $F\in\Omega_0(\mathrm{Lag}(\C^{n}))$, we denote by $\mathfrak{o}_{F}$ the orientation line of $\bar{\partial}_{(\C^n,F)}\#0$, where $\bar{\partial}_{(\C^n,F)}:W^{1,p}((\D,\partial\D),(\C^n,F))\to L^p(\D,\C^n)$ is the standard Cauchy--Riemann operator and $\#0$ refers to the restriction to $(\mathrm{ev}_{-1})^{-1}(0)$ as before. 
			On the other hand, viewing $F\in\Omega_0(\mathrm{Lag}(\C^{n}))$ as a real vector bundle over $S^1$, we associate to $F$ two $\Pin(n)$-structures. Then the maps 
			\[
			\mathfrak{o}_{F}\mapsto F,\qquad \{\text{$\Pin(n)$-structures on $F$}\}\mapsto F
			\] 
			define isomorphic double covers over $\Omega_0(\mathrm{Lag}(\C^{n}))$. Over a constant loop $F_c \in \Omega_0(\mathrm{Lag}(\C^{n}))$, we have the canonical  orientation $o_{F_c}^+$ of $\mathfrak{o}_{F_c}$ as $\bar{\partial}_{(\C^n,F_c)}\#0$ is an isomorphism, and the trivial $\Pin(n)$-structure $\mathrm{p}_{F_c}^\mathrm{triv}$. 
			
			Let $o_{F_v}$ be one of the two orientations of $\mathfrak{o}_{F_v}$ that belongs to the same component as $o_{F_c}^+$. The relative $\Pin(n)$-structure $\sigma$ on $L\subset\Sigma$ determines a $\Pin(n)$-structure $\sigma_{F_v}$ of $F_v$. If $\sigma_{F_v}$ lies in the same component as $\mathrm{p}_{F_c}^\mathrm{triv}$, then we choose the orientation $o_{F_v}$, and otherwise   we choose $-o_{F_v}$.
			Finally, the isomorphism $\psi^{\sigma}_{(w,w')}$ is given by
			\begin{align*}
				\det(D_w \# T_p W^{u}
				_{Z_L}(p)) &\to\det(D_w \#  T_p W^{u}
				_{Z_L}(p))\otimes \det(D_v\#0) \cong \det(D_{w'} \# T_{p}W^{u}_{Z_L}(p)), \\[.5ex]
				o_{(p,w)} &\mapsto o_{(p,w)}\otimes (\pm o_{F_v})
			\end{align*} 
			where the sign $\pm$ is determined by $\sigma$ as mentioned above.

			Similarly, we choose a unitary trivialization of $\tilde{v}^*T(\R\times Y)$. Then $\tilde{v}|_{\partial{\D}}^* T(\R\times\mathcal{L})$ gives a contractible loop $F_{\tilde{v}}\in\Omega_0(\mathrm{Lag}(\C^{n+1}))$. The relative $\Pin(n+1)$-structure $\tilde{\sigma}$ determines an orientation $\pm o_{F_{\tilde v}}$ of $\det(D_{\tilde v}\#0)$, and this defines an isomorphism 
			\[
			\phi^{\tilde{\sigma}}_{(\hat{x},\hat{x}')}: \det (D_{\hat{x}} \# T_{\tilde{p}}W^{u}_{Z_{\mathcal{L}}} (\tilde{p}))\longrightarrow \det (D_{\hat{x}'} \# T_{\tilde{p}}W^{u}_{Z_{\mathcal{L}}} (\tilde{p})) 
			\]
			as above.	Due to \eqref{eq:det_decomp}, the front page of the diagram in the statement is isomorphic to 
			\[	 	
			\begin{tikzcd}
				\det(D_{\hat{x}}^\mathrm{v}) \otimes \det(D_{w}\# T_pW^u_{Z_L}(p))
				& {\det(D_{\hat{x}'}^\mathrm{v} )\otimes \det(D_{w'}\# T_{p}W^{u}_{Z_{L} } (p))}
				\\
				{\det(D_{w} \# T_{p}W^{u}_{Z_{L}} (p))} & {\det(D_{w'} \# T_{p}W^{u}_{Z_{L}} (p))}.
				\arrow["{\phi^{\tilde{\sigma}}_{(\hat{x},\hat{x}')}}", from=1-1, to=1-2]
				\arrow["{f_{\hat{x}}}"', from=1-1, to=2-1]
				\arrow["{f_{\hat{x}'}}", from=1-2, to=2-2]
				\arrow["{\psi^{\sigma}_{(w,w')}}"', from=2-1, to=2-2]
			\end{tikzcd}
			\]
			To show the commutativity of this diagram, we take a unitary trivialization of $\tilde{v}^*T(\R\times Y)$, which respects the splitting $\tilde{v}^*T(\R\times Y)  \cong ({\D} \times \C) \oplus v^* T\Sigma$ as in \eqref{eq:decomposition_tangent}. Then, $F_{\tilde{v}}\in\Omega_0(\mathrm{Lag}(\C^{n+1}))$ corresponding to $\tilde{v}|_{\partial{\D}}^* T(\R\times\mathcal{L})$ has the form $\R\oplus F_v$, where $F_{v}\in\Omega_0(\mathrm{Lag}(\C^{n}))$ corresponds to $v|_{\partial{\D}}^* TL$. Here $\R$ denotes the real part of the first $\C$-factor of $\C^{n+1}$. Furthermore, the relative $\Pin(n+1)$-structure $\tilde\sigma$ induces a $\Pin(n+1)$-structure $\mathrm{p}_1^\mathrm{triv}\otimes\sigma_{F_v}$ on $F_{\tilde{v}}=\R\oplus F_v$, where $\mathrm{p}_1^\mathrm{triv}$ denotes the trivial $\Pin(1)$-structure on the trivial line bundle over $S^1$. 
			
			The vertical part of $\phi^{\tilde\sigma}_{(\hat{x},\hat{x}')}$ is given by the gluing isomorphism
			\[
			\det (D_{\hat{x}}^\mathrm{v} ) \otimes \det (D_{\tilde{v}}^\mathrm{v} \#0) \longrightarrow  \det (D_{\hat{x}'}^\mathrm{v})
			\]
			together with the orientation on $\det (D_{\tilde{v}}^\mathrm{v} \#0) $ determined by the relative $\Pin$-structure $\tilde\sigma$. Here the vertical part $D_{\tilde{v}}^\mathrm{v}$ of $D_{\tilde{v}}$ is homotopic to the standard Cauchy--Riemann operator $\overline{\partial}_{(\C,\R)}$. Since the associated $\Pin(1)$-structure is trivial, $\det(D_{\tilde{v}}^\mathrm{v}\#0)$ is oriented by the canonical orientation. Then, one can readily see that  $\phi^{\tilde\sigma}_{(\hat{x},\hat{x}')}$ maps the canonical orientation on $\det(D_{\hat{x}}^\mathrm{v})$ to that on $\det(D_{\hat{x}'}^\mathrm{v})$. This proves that the above diagram commutes. 
			
			The commutativity of the top and bottom faces of the diagram in the statement is established in \cite{Z1}. 
			The commutativity of the remaining faces follows by arguments similar to those given above.
		\end{proof}
		
		Let $\tilde{p}_k\in \mathcal{L}_k^H$ with $(\tilde{p}, k) \in \Crit f_{\mathcal{L}} \times m_L\Z$. Using \eqref{eq:pin_iso_rfh}, we define
		\begin{equation}\label{eq:generator_leg}
			\mathfrak{o}(\tilde{p}_{k}):= \varinjlim_{\hat{x}} \mathfrak{o}(\tilde{p}_k,\hat{x}).
		\end{equation}
		In view of Proposition \ref{prop:orientation_commute}, and by abuse of notation, we write $C([\mathbf{v}])$ for the induced isomorphism
		\[
		C([\mathbf{v}]):\mathfrak{o}(\tilde{p}_{k})\longrightarrow \mathfrak{o}(\tilde{q}_{l}).
		\]
		Following \eqref{eq:index}, we endow $\mathfrak{o}(\tilde{p}_k)$ with the grading
		\begin{equation}\label{eq:generator_leg_grading}
			\deg \mathfrak{o}(\tilde{p}_k):= \mu(\tilde{p}_k)= \dim L+ \frac{ 2\tau_\Sigma k}{d}  - \mathrm{ind}_{f_L} (p).
		\end{equation}
		
		\subsubsection{Floer chain complex}\label{sec:floer_chain}
		For real numbers $a<b$ and $H \in \mathcal{H}$, we consider the set
		\[
		\w^{(a,b)}_\mathcal{L}(H):= \big\{ k \in m_L \Z \,|\, \exists \, r \in (-\eta,\eta) \text{ with } h'(e^r) = \tfrac{k}{d} \text{ and } a< e^r h'(e^r) - h(e^r) <b\big\},
		\]
		where $H(r,y)=h(e^r)$ for $(r,y)\in \R\times Y$, as in Section \ref{sec:Ham_chords}.
		In view of Proposition \ref{prop:contractible}, \eqref{eq:beta}, and \eqref{eq:action}, $(\tilde p,k)\in \Crit f_{\mathcal{L}}\times \m^{(a,b)}_\mathcal{L}(H)$ determines $\tilde p_k=(r^H_k,c_{\tilde{p}_k}) \in \Crit \mathcal{A}_H $ such that $c_{\tilde p_k}(0)=\tilde p$,  $\mathfrak{w}(c_{\tilde p_k})=k$,  $a<\mathcal{A}_H(\tilde p_k)<b$, and $\tilde{p}_k$ admits a capping. 
		We write $\mathcal{L}^{\tilde{\sigma}}$ for the Legendrian submanifold $\mathcal{L}$ equipped with the lifted $\Pin$-structure $\tilde{\sigma}$.
		The Floer complex associated to ${\mathcal{D}}_H=(f_\mathcal{L},Z_\mathcal{L},H,J_Y)$  on $\mathcal{L}^{\tilde{\sigma}}$ with action window $(a,b)\subset \R$ is defined by 
		\[
		\FC^{(a,b)}_*(\mathcal{L}^{\tilde{\sigma}};\mathcal{D}_H) := \bigoplus_{(\tilde{p},k)}\mathfrak{o}(\tilde{p}_k).
		\]
		where the direct sum ranges over $(\tilde{p},k)\in\mathrm{Crit} f_{\mathcal{L}}\times \m^{(a,b)}_\mathcal{L}(H)$. 
		The boundary map is defined by 
		\begin{equation}\label{eq:differential_Leg}
			\partial_{\mathcal{D}_H} : \FC^{(a,b)}_*(\mathcal{L}^{\tilde{\sigma}};\mathcal{D}_H) \to \FC^{(a,b)}_{*-1}(\mathcal{L}^{\tilde{\sigma}};\mathcal{D}_H), \quad \partial_{\mathcal{D}_H} := \bigoplus_{(\tilde{p},\tilde{q},k,l)} \sum_{ [\mathbf{v}] } C([\mathbf{v}]).
		\end{equation}
		The direct sum ranges over all $(\tilde{p},k),(\tilde{q},l)\in \Crit f_\mathcal{L}\times \m^{(a,b)}_\mathcal{L}(H)$ with $\mu(\tilde{p}_k)-\mu(\tilde{q}_l)=1$.
		The sum runs over $[\mathbf{v}]\in \overline{\mathcal{M}}^*(\tilde{q}_{l}, \tilde{p}_{k};\mathcal{D}_H)$, where each  $C([\mathbf{v}])$ is an isomorphism from $\mathfrak{o}(\tilde{p}_k)$ to $\mathfrak{o}(\tilde{q}_l)$.  Note that $X_{H}$ has chords other than those contained in $(-\eta,\eta)\times Y$. Nevertheless, by \cite[Lemmas 2.2 and 2.3]{CO18}, Floer cylinders asymptotic to chords in $(-\eta,\eta)\times Y$ do not break along chords outside this region. Together with Proposition \ref{prop:simple_no_escape}, this implies that  $\partial_{\mathcal{D}_H}$ is a well-defined boundary operator. We denote the resulting Floer homology by
		\[
		\FH_i^{(a,b)}(\mathcal{L}^{\tilde{\sigma}};\mathcal{D}_H) : = \H_i (\FC^{(a,b)}_*(\mathcal{L}^{\tilde{\sigma}};\mathcal{D}_H),\partial_{\mathcal{D}_H}).
		\]

		\subsection{Lagrangian Rabinowitz Floer homology}
		\subsubsection{Floer continuation strips}
		The set $\mathcal{H}$ of $\vee$-shaped Hamiltonians has a strict partial order $\prec$ : $H_+ \prec H_-$ if $H_+ < H_-$ pointwise. 
		A monotone homotopy for $H_+ \prec H_-$ refers to a family of smooth functions $H_s : \R \times Y \to \mathbb{R}$, smoothly parametrized by $s\in \mathbb{R}$, satisfying the following requirements:
		\begin{enumerate}
			\item[(i)] {$H_s$ satisfies the condition \eqref{eq:Hamiltonian} for $\mathcal{H}$ with $a \in (0,\infty)$ depending smoothly on $s$, }
			\item[(ii)] $H_s = H_-$ for $s \leq -1$ and $H_s = H_+$ for $s\geq 1$, and $\frac{\partial}{\partial s} H_s \leq 0$.
		\end{enumerate}
		By the definition of $\mathcal{H}$, $H_s$ has the form $H_s(r,y) = h_s(e^r)$ for some smooth family of smooth functions $h_s : \mathbb{R}_{>0} \to \mathbb{R}$ and $X_{H_s}$ vanishes on $(-\infty, -\epsilon) \times Y$.
		
		Let $H_s$ be a monotone homotopy for $H_+ \prec H_-$, and let $J_{Y} \in \mathcal{J}_Y$. 
		A smooth solution  $\tilde{v}=(b,v) : (\mathbb{R} \times [0,1], \R \times \{0,1\} ) \to (\R \times Y,\R \times \mathcal{L})$ of
		\begin{equation}\label{eq:Floer_equation_conti}
			\partial_s \tilde{v} + J_{Y} (\tilde{v}) (\partial_t \tilde{v} - X_{H_s} (\tilde{v}))=0
		\end{equation}
		with the asymptotic condition
		\begin{align*}
			\ev_\pm(\tilde{v})=(\ev_\pm(b),\ev_\pm(v)) 
			:=\lim_{s\to \pm\infty} \tilde{v}(s, \cdot) \in \mathrm{Crit}\mathcal{A}_{H_\pm}
		\end{align*}
		is referred to as a {\it Floer continuation strip} with respect to $(H_s,J_Y)$. 
		
		\medskip
		
		Let $\mathbf{N}:=(N_-, N_+)\in (\mathbb{N}\cup \{0\})^2$, $N:= N_- + N_+ +1$, and 
		\[
		\mathbf{A}=(A^-_1,\dots, A^-_{N_-},A,A^+_1,\dots, A^+_{N_+}) \in  \pi_2(\Sigma,L)^N,
		\]
		where $A^-_i\neq 0$ and $A^+_j\neq 0$ for all $1\leq i\leq N_-$ and $1\leq j\leq N_+$. 
		Let $k_\pm \in m_L \mathbb{Z}$ with $r^{H_{\pm}}_{k_{\pm}} \in (-\eta,\eta)$ as in \eqref{eq:beta}.
		For a Floer continuation strip $\tilde{v}=(b,v)$, we use the same notation as in \eqref{eq:asymptotic_strip} and set
		\[
			\mathfrak{a}_{\pm}(\tilde{v}):= (\ev_\pm(b), \w(\ev_\pm(v))).
		\]

		\begin{definition}
			 The moduli space
			\begin{equation*}
				\mathcal{M}_{\mathbf{N},k_-,k_+}(\mathbf{A};H_s,J_{Y})
			\end{equation*}
			consists of tuples 
			$
			\mathbf{v}=(\tilde{v}^{-}_1,\dots,\tilde{v}^{-}_{N_-}, \tilde{v}, \tilde{v}^{+}_1,\dots,\tilde{v}^{+}_{N_+})$
			such that, for every $i=1,\dots, N_-$ and for every $j =1,\dots,N_+$,
			\begin{itemize}
				\item $\tilde{v}^-_i=(b^-_i,v^-_i)$ is a Floer strip with respect to $(H_-,J_Y)$ with $[\pi \circ v^-_i]=A^-_i$,
				\item $\tilde{v}=(b,v)$ is a Floer continuation strip with respect to $(H_s,J_Y)$ with $[\pi \circ v]=A$,
				\item $\tilde{v}^+_j=(b^+_j,v^+_j)$ is a Floer strip with respect to $(H_+,J_Y)$ with $[\pi \circ v^+_j]=A^+_j$,
			\end{itemize}
			and for every $i=1,\dots,N_- -1$ and $j=1,\dots,N_+ -1$,
			\begin{align*}
				&(r^{H_-}_{k_-}, k_-)=\mathfrak{a}_-(\tilde{v}^-_1),\quad \mathfrak{a}_+(\tilde{v}^-_i)=\mathfrak{a}_-(\tilde{v}^-_{i+1}), \quad \mathfrak{a}_+(\tilde{v}^-_{N_-})=\mathfrak{a}_-(\tilde{v}), \\[0.5ex] 
				&\mathfrak{a}_+(\tilde{v})=\mathfrak{a}_-(\tilde{v}^+_1), \quad \mathfrak{a}_{+}(\tilde{v}^+_j)=\mathfrak{a}_-(\tilde{v}^+_{j+1}),\quad \mathfrak{a}_+(\tilde{v}^+_{N_+})=(r^{H_+}_{k_+},k_+).
			\end{align*}
			We define the subspace
			\[
			\mathcal{M}^*_{\mathbf{N},k_-,k_+}(\mathbf{A};H_s,J_{Y}) \subset \mathcal{M}_{\mathbf{N},k_-,k_+}(\mathbf{A};H_s,J_{Y})
			\]
			consisting of simple elements $\mathbf{v}$, i.e.,
			\[
			(\pi\circ v^-_1,\dots,\pi\circ v^-_{N_-}, \pi\circ v, \pi\circ v^+_1,\dots,\pi\circ v^+_{N_+}) \in \mathcal{N}^{*}_N(\mathbf{A};J_\Sigma).	
			\]
		\end{definition}
		We also consider the evaluation map
		\begin{align*}
			&{\ev}:\mathcal{M}_{\mathbf{N},k_-,k_+}(\mathbf{A};H_s,J_Y) \to \mathcal{L}^{2N_-} \times \mathcal{L}^{2} \times \mathcal{L}^{2N_+},\\[.5ex]
			&{\ev}(\mathbf{v}):=\big(\ev_{-,0}(\tilde{v}^-_1),\ev_{+,0}(\tilde{v}^-_1),\dots, \ev_{-,0}(\tilde{v}^-_{N_-}),\ev_{+,0}(\tilde{v}^-_{N_-}),\ev_{-,0}(\tilde{v}), \ev_{+,0} (\tilde{v}),\\
			&\hspace{19mm} \ev_{-,0}(\tilde{v}^+_1),\ev_{+,0}(\tilde{v}^+_1) ,\dots,\ev_{-,0}(\tilde{v}^{+}_{N_+}),\ev_{+,0}(\tilde{v}^{+}_{N_+})\big),
		\end{align*}
		where $\ev_{\pm,0}(\tilde{v}):=\ev_\pm(v)(0)$. 
		Let us abbreviate ${\mathcal{D}}_{H_s}=(f_{\mathcal{L}}, Z_{\mathcal{L}}, H_s,J_Y)$. Let $\tilde{p}^{H_+}_{k_+} \in \mathrm{Crit} \mathcal{A}_{H_+}$ and $\tilde{q}^{H_-}_{k_-} \in \mathrm{Crit}\mathcal{A}_{H_-}$ be the Hamiltonian chords of $H_{\pm}$ in $(-\eta,\eta)\times Y$ with relative winding number $k_\pm$ corresponding to $\tilde{p},\tilde{q}\in\Crit f_{\mathcal{L}}$ via the correspondence \eqref{eq:identify}, respectively.

		\begin{definition}\label{def:conti_trajectories}
			We define the moduli space
			\[\mathcal{M}_{\mathbf{N}}(\tilde{q}^{H_-}_{k_-},\tilde{p}^{H_+}_{k_+};\mathbf{A};{\mathcal{D}}_{H_s}):= \ev^{-1} \left(W^u_{Z_{\mathcal{L}}}(\tilde{p}) \times \Delta_{Z_\mathcal{L}}^{N-1} \times W^s_{Z_{\mathcal{L}}}(\tilde{q}) \right). \]
			The subspace of simple elements is defined by
			\[
			\begin{split}
				&\mathcal{M}^*_{\mathbf{N}}(\tilde{q}^{H_-}_{k_-},\tilde{p}^{H_+}_{k_+};\mathbf{A};{\mathcal{D}}_{H_s}):=	\mathcal{M}_{\mathbf{N}}(\tilde{q}^{H_-}_{k_-},\tilde{p}^{H_+}_{k_+};\mathbf{A};{\mathcal{D}}_{H_s}) \, \cap \, \mathcal{M}^*_{\mathbf{N},k_-,k_+}(\mathbf{A};H_s,J_Y) .
			\end{split}
			\]
		\end{definition}
		\begin{prop}\label{prop:transv_conti}
			Let $\mathcal{J}_{(Y,\mathcal{L})}^{\mathrm{reg}} \subset \mathcal{J}_Y$ be as in Proposition \ref{prop:transv_strip}. 
			Then, for every choice of $
			\tilde{p}^{H_+}_{k_+} \in \mathrm{Crit} \mathcal{A}_{H_+}$,  $\tilde{q}^{H_-}_{k_-} \in \mathrm{Crit}\mathcal{A}_{H_-}$, $\mathbf{N}$, $\mathbf{A}$	as above 
			and for every $J_Y \in \mathcal{J}_{(Y,\mathcal{L})}^{\mathrm{reg}}$, the moduli space
			$\mathcal{M}^*_{\mathbf{N}}(\tilde{q}^{H_-}_{k_-},\tilde{p}^{H_+}_{k_+};\mathbf{A};\mathcal{D}_{H_s}) $ 
			is a smooth manifold of dimension
			$\mu(\tilde{p}^{H_+}_{k_+})-\mu(\tilde{q}^{H_-}_{k_-})+N-1$.
		\end{prop}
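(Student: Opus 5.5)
The argument follows the proof of Proposition \ref{prop:transv_strip} almost verbatim; the only new ingredient is the $s$-dependence of the vertical operator for the continuation component. First I show that $\mathcal{M}^*_{\mathbf{N},k_-,k_+}(\mathbf{A};H_s,J_Y)$ is a smooth manifold of dimension $N\dim L+\mu_L(\mathbf{A})$. For the components $\tilde v^\pm_i$, which are ordinary Floer strips for $H_\pm$, this is exactly Proposition \ref{prop:transv_strip}. For the continuation component $\tilde v=(b,v)$, I use the splitting \eqref{eq:isom_bundles} again. The linearized operator $D_{\tilde v}$ is then upper triangular as in \eqref{eq:D_decomposition}, because $H_s$ depends only on $r$. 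Hence the horizontal part is the Cauchy--Riemann operator $\nabla^{0,1}_w$ of $w=\pi\circ v$; note that $X_{H_s}$ is a multiple of $R$, so the horizontal equation is still $J_\Sigma$-holomorphic. Its kernel and cokernel agree with those of $D_w$ on the disk $(\D,\partial\D)$, so it is surjective for $J_\Sigma\in\mathcal{J}^{\mathrm{reg}}_{(\Sigma,L)}$.

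The vertical part is now
\[
\sigma\mapsto\partial_s\sigma+J_0\partial_t\sigma+\begin{pmatrix} h_s''(e^{b})e^{b}&0\\0&0\end{pmatrix}\sigma ,
\]
with $s$-dependent coefficient. Its asymptotic operators are those for $H_\pm$ at $b_\pm\in(-\eta,\eta)$, where $h_\pm''>0$. So, as before, $\mu_\RS(A_\pm)=\tfrac12$ and the index is zero. Surjectivity follows from the same winding argument: a nonzero kernel element $\zeta$ satisfies \eqref{eq:asymp_winding} via \cite{Abb04}, while its zeros count positively by the similarity principle for a real-linear Cauchy--Riemann operator in one complex dimension with totally real boundary condition $\R$. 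This gives a contradiction.

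\textbf{Main obstacle.} The step I expect to require care is checking that $h_s''(e^b)$ is positive wherever needed, or rather that nothing beyond the asymptotic data matters. The winding argument only uses the asymptotics and the positivity of zeros, not the sign of the zeroth-order term in the interior, so it should go through regardless of how $b(s,t)$ wanders into regions where $h_s$ is concave. Hence $D_{\tilde v}$ is onto with index $\dim L+\mu_L(A)$, and summing over all components gives $N\dim L+\mu_L(\mathbf{A})$.

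Finally, transversality of $\ev$ to $W^u_{Z_{\mathcal{L}}}\times\Delta_{Z_{\mathcal{L}}}^{N-1}\times W^s_{Z_{\mathcal{L}}}$ reduces, via the triangular form and the fact that $Z_{\mathcal{L}}$ is the horizontal lift of $Z_L$, to the corresponding statement downstairs for simple pearl chains, which is Proposition \ref{prop:base_transv}(a). Here no quotient by $\R$ is taken for the middle disk, and all chains are simple. The dimension count is the same as in Proposition \ref{prop:transv_strip}:
\[
N\dim L+\mu_L(\mathbf{A})-\bigl(\dim\mathcal{L}-\ind(\tilde q)+(N-1)(\dim\mathcal{L}-1)+\ind(\tilde p)\bigr),
\]
which by \eqref{eq:index} and Proposition \ref{prop:maslov index of projection} equals $\mu(\tilde p^{H_+}_{k_+})-\mu(\tilde q^{H_-}_{k_-})+N-1$.
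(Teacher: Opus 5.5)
Your proposal is correct and follows the paper's route: the paper only says the proof is almost identical to that of Proposition \ref{prop:transv_strip}, citing \cite[Proposition 4.5.(a)]{BKK24} for the periodic case. You have made explicit the one change that is needed, namely that the kernel argument for the $s$-dependent vertical operator uses only the asymptotic operators (where $h_\pm''>0$) and the positivity of zeros, so the interior sign of $h_s''(e^b)$ plays no role.

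One point neither you nor the paper spells out is the case $A=0$ for the continuation component, which is exactly the case behind the continuation map. There the evaluation transversality is not literally Proposition \ref{prop:base_transv}(a), which assumes all classes are nonzero. It still follows easily, because a constant middle disk only adds a free marked point on a $Z_L$-flow line.
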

		\begin{proof}
			The proof is almost identical to that of Proposition \ref{prop:transv_strip}. For the corresponding proof in the periodic case, we refer to \cite[Proposition 4.5.(a)]{BKK24}.
		\end{proof}
		
		There is a free $\R^{N-1}$-action on $\mathcal{M}_{\mathbf{N}}^*(\tilde{q}_{k_-}^{H_-},\tilde{p}_{k_+}^{H_+};\mathbf{A};\mathcal{D}_{H_s})$ translating each $\tilde{v}_i^-$ and $\tilde{v}_j^+$. The quotient space 
		\[
		\overline{\mathcal{M}}^*(\tilde{q}_{k_-}^{H_-}, \tilde{p}_{k_+}^{H_+};\mathcal{D}_{H_s} ):=\bigcup_{\mathbf{N}, \mathbf{A}} \mathcal{M}^*_{\mathbf{N}}(\tilde{q}_{k_-}^{H_-},\tilde{p}_{k_+}^{H_+};\mathbf{A};\mathcal{D}_{H_s})/\R^{N-1},
		\]
		is a smooth manifold of dimension $\mu(\tilde{p}_{k_+}^{H_+})-\mu(\tilde{q}_{k_-}^{H_-})$ by Proposition \ref{prop:transv_conti}.

		\begin{prop}\label{prop:conti_unique}
			Assume that we are in the same setting as in Proposition \ref{prop:transv_conti}. 
			\begin{enumerate}[(a)]
				\item Assume condition (C2), i.e.,~$N_L>2$. If $\mu(\tilde{p}^{H_{+}}_{k_+})-\mu(\tilde{q}^{H_{-}}_{k_-})\leq1$, all elements of the moduli space ${\mathcal{M}}_{\mathbf{N}}(\tilde{q}^{H_{-}}_{k_-},\tilde{p}^{H_{+}}_{k_+};\mathbf{A};\mathcal{D}_{H_s})$ are simple, i.e.,
				\[
				{\mathcal{M}}_{\mathbf{N}}(\tilde{q}^{H_{-}}_{k_-},\tilde{p}^{H_{+}}_{k_+};\mathbf{A};\mathcal{D}_{H_s})={\mathcal{M}}^{*}_{\mathbf{N}}(\tilde{q}^{H_{-}}_{k_-},\tilde{p}^{H_{+}}_{k_+};\mathbf{A};\mathcal{D}_{H_s}).
				\]
				Moreover, there exist $r_\pm\in\R$ such that every element in this moduli space has its image in $(r_-,r_+)\times Y$. Consequently, this moduli space is compact up to breaking.
				\item Assume $N_L\geq 2$ and $\mu(\tilde{p}^{H_{+}}_{k_+})=\mu(\tilde{q}^{H_{-}}_{k_-})$. Then 
				$
				{\mathcal{M}}_{\mathbf{N}}(\tilde{q}^{H_{-}}_{k_-},\tilde{p}^{H_{+}}_{k_+};\mathbf{A};\mathcal{D}_{H_s})
				$
				is nonempty if and only if 
				\[
				N_{+}=N_{-}=0,\quad \tilde{p}=\tilde{q}, \quad k_+=k_-.
				\]
				In this case, it consists of a single element $\mathbf{v}=\tilde{v}$ whose projection to $\Sigma$ is a constant map at $p=q$.
			\end{enumerate}
		\end{prop}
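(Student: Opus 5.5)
Both parts will be handled by projecting to $\Sigma$ and using index additivity under the splitting \eqref{eq:D_decomposition}, following the pattern of Proposition \ref{prop:simple_no_escape} (see also \cite[Proposition 4.5]{BKK24} for the periodic analogue).

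\emph{Part (a).} Let $\mathbf{v}\in\mathcal{M}_{\mathbf{N}}(\tilde q^{H_-}_{k_-},\tilde p^{H_+}_{k_+};\mathbf{A};\mathcal{D}_{H_s})$. The tuple $\Pi(\mathbf{v})$ of projected disks is a pearly configuration in $(\Sigma,L)$ from $q$ to $p$, with total Maslov index $\mu_L(\mathbf{A})=\frac{N_L}{m_L}(k_+-k_-)$ by Proposition \ref{prop:maslov index of projection}.

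For simplicity I argue as in the proof of \cite[Proposition 3.1.2]{BC4} and Proposition \ref{prop:base_transv}(b). If $\Pi(\mathbf{v})$ were not simple, each multiply covered or repeated disk could be replaced by its underlying simple disk, or the repetition removed. This produces a simple configuration of strictly smaller Maslov index, lower by at least $N_L$. The constant component of the continuation strip may remain.

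The horizontal index formula from Proposition \ref{prop:transv_conti} then gives the reduced simple configuration virtual dimension at most $\mu(\tilde p)-\mu(\tilde q)-N_L+(\text{number of free components})$. Counting the quotient by the $\R$-actions, this is negative as soon as $\mu(\tilde p)-\mu(\tilde q)\le 1$ and $N_L\ge 2$. Since $J_\Sigma$ is regular, this is a contradiction. Extra care is needed when the continuation component projects to a constant disk; in that case its horizontal part is a point, and the Morse incidence still drops dimension.

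For the no-escape claim I follow the argument deferred to Section \ref{sec:proof_proposition}. If the $r$-coordinate were unbounded along a sequence, SFT-compactness would yield a limiting building. The building would contain a holomorphic half-strip or plane in the symplectization asymptotic to a Reeb chord or orbit, together with a negative end. Its projection is a $J_\Sigma$-disk or sphere, so the Fredholm index of the remaining piece drops by at least $N_L>2$, respectively by $2c_\Sigma\ge N_L$. The rigid piece would then have index $\le 1-N_L+2<0$, which is impossible by regularity. Here the chord index formula \eqref{eq:index} converts winding numbers to Maslov indices. This index bookkeeping in the SFT limit is the main obstacle, and it is exactly where $N_L>2$ rather than $N_L\ge 2$ is needed. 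Compactness up to breaking then follows from standard Gromov--Floer compactness on $(r_-,r_+)\times Y$, using \cite[Lemmas 2.2, 2.3]{CO18} to exclude breaking at chords outside $(-\eta,\eta)\times Y$.

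\emph{Part (b).} The dimension formula gives $\dim=\mu(\tilde p)-\mu(\tilde q)+N-1-(N-1)=0$ after the quotient. Each Floer strip $\tilde v^\pm_i$ is nonconstant with $A^\pm_i\ne0$, so it carries a free $\R$-translation. First I will use action monotonicity, since $\partial_sH_s\le0$: energy is nonnegative and it equals $\mathcal{A}_{H_-}(\tilde q)-\mathcal{A}_{H_+}(\tilde p)$ plus a nonnegative term. Next I project horizontally. If any $A^\pm_i\ne0$, or if $A\ne0$, the Maslov index of the projected configuration is at least $N_L\ge2$. The index equality $\mu(\tilde p)=\mu(\tilde q)$ together with \eqref{eq:index} then forces $\operatorname{ind}_{f_L}(q)-\operatorname{ind}_{f_L}(p)\le -N_L$, and generic Morse--Smale incidence makes the moduli space empty. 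Hence $N_\pm=0$ and $A=0$, so $k_+=k_-$ and $\pi\circ v$ is constant at a point $x$, which must lie in $W^u(q)\cap W^s(p)$; combined with the index equality this forces $p=q$ and $x=p$.

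It remains to solve the vertical equation. With $\pi\circ v$ constant, $\tilde v$ lies in $\R\times Y_p$ with boundary on $\R\times\mathcal L_p$. This is a one-dimensional continuation problem in the cylinder $\R\times\R/\Z$ for the homotopy $h_s$. Its linearization is $D^{\mathrm v}$ of index $0$, which is injective by the winding argument in Proposition \ref{prop:transv_strip}. A solution exists by the usual continuation of the nondegenerate critical point $r^{H_s}_k$, and uniqueness follows from a convexity (maximum principle) argument for the $r$-coordinate. The start point $\tilde p$ is fixed, and the deck ambiguity is fixed by the winding number $k$, which gives $\tilde p=\tilde q$.
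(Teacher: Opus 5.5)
Your treatment of simplicity in (a) and of the fiber equation in (b) is essentially the paper's. Two steps, however, do not work as written.

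\textbf{The key step of (b).} Your exclusion of $N_\pm>0$ and of $A\neq 0$ fails. The index equality does give $\ind_{f_L}(q)-\ind_{f_L}(p)=-\mu_L(\mathbf{A})\le -N_L$. But Morse--Smale genericity says nothing about such configurations: pearly trajectories that climb in Morse index by the Maslov index of their disks are exactly what the quantum differential counts. What regularity of $J_\Sigma$ actually gives is
$\dim\mathcal{N}^*_N(q,p;\mathbf{A};\mathcal{D})=\ind_{f_L}(q)-\ind_{f_L}(p)+\mu_L(\mathbf{A})+N-1=N-1$.
Emptiness must therefore come from exhibiting $N$ independent free $\R$-actions on this space, and the Floer strips supply only $\R^{N-1}$.

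The missing idea is that $\pi\circ v$, for the continuation strip, is itself a $J_\Sigma$-holomorphic disk. If it is nonconstant, reparametrization by $\mathrm{Aut}(\D;\pm1)$ gives one more free $\R$-action on the \emph{horizontal} moduli space, even though the continuation strip has no translation symmetry. This forces $A=0$. If it is constant, then unless $N=1$ and $p=q$, the constant disk sits on a nontrivial $Z_L$-trajectory and can be slid along it, again an extra free $\R$. This is what yields $N_\pm=0$, $p=q$ and $k_+=k_-$.

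For the fiber equation, note that the maximum principle is applied to the angular part $g=v-\frac{k}{d}t$, giving $g\equiv 0$. After that, $b$ solves an ODE with a unique solution. Injectivity of $D^{\mathrm v}$ by itself only gives local uniqueness.

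\textbf{The no-escape part of (a).} Your inequality $1-N_L+2<0$ is false for $N_L=3$, which (C2) allows. Moreover, a rigid index-$0$ punctured continuation strip cannot be excluded by ``regularity'': it has no translation symmetry, and transversality for punctured curves in $\R\times Y$ is not available to you. The argument has to go through the projection $\Pi(\mathbf{v}_\infty)$.

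\begin{enumerate}
\item For the underlying simple chain with $N_0^*$ constant entries, $\dim\mathcal{N}^*_{N^*}(q,p;\mathbf{A}^*;\mathcal{D})\ge N^*-N_0^*$.
\item Every Floer strip with constant projection must carry a puncture, so $m_{\mathrm{int}}+m_\partial\ge N_0^*-1$. The $-1$ accounts for the continuation strip, which may project to a constant without a puncture.
\item Substituting into the index identity gives $1\ge\sum_i(\mu_\CZ(\gamma_i)-1)+\sum_j(\mu_\RS(c_j)-1)$. Thus each puncture costs $\mu-1$.
\item To force $m=0$ you need $\mu_\RS(c_j)\ge N_L\ge 3$ together with $\mu_\CZ(\gamma_i)\ge 2c_\Sigma\ge 4$.
\end{enumerate}

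The bound $2c_\Sigma\ge 4$ comes from $N_L\mid 2c_\Sigma$ with $N_L\ge 3$. Your bound $2c_\Sigma\ge N_L$ alone is not enough.
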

		We postpone the proof of the proposition to Section \ref{sec:proof_proposition}.

		\subsubsection{Lagrangian Rabinowitz Floer homology of $\mathcal{L}$}\label{sec:lag_rfh}
		
		Let $H_s$ be a monotone homotopy for  $H_+ \prec H_-$, and let $J_Y \in \mathcal{J}^{\mathrm{reg}}_{(Y,\mathcal{L})}$.
		Proposition \ref{prop:conti_unique}.(b) implies that the only relevant moduli spaces for the filtered continuation map with action window $(a,b)$ are $\overline{\mathcal{M}}^*(\tilde{p}_{k}^{H_-}, \tilde{p}_{k}^{H_+};\mathcal{D}_{H_s})=\{\mathbf{v}=\tilde{v}\}$, where $(\tilde{p},k)$ ranges over $\Crit f_{\mathcal{L}}\times \w^{(a,b)}_{\mathcal{L}}(H_+)$. 
		Gluing a capping disk $\hat{x}$ of $\tilde{p}^{H_+}_{k}$ and $\mathbf{v}$, we obtain a capping disk $\hat{y}$ of $\tilde{p}^{H_-}_{k}$. As in the boundary map case discussed earlier, see also \cite[Section 3.9.1]{Z1}, we obtain an isomorphism of determinant lines 
		\[
		\det (D_{\hat{x}} \# T_{\tilde{p}} W^{u}_{Z_{\mathcal{L}}} (\tilde{p}) ) \longrightarrow \det (D_{\hat{y}} \# T_{\tilde{p}} W^{u}_{Z_{\mathcal{L}}} (\tilde{p}))
		\]
		and in turn an isomorphism of orientation lines 
		$C([\mathbf{v}]) : \mathfrak{o}(\tilde{p}^{H_+}_{k}, \hat{x}) \to \mathfrak{o}(\tilde{p}^{H_-}_{k},\hat{y}).
		$
		As in Proposition \ref{prop:orientation_commute}, one can see that this isomorphism is independent of the choice of cappings. Thus we simply write 
		\[
		C([\mathbf{v}]) : \mathfrak{o}(\tilde{p}^{H_+}_{k}) \longrightarrow \mathfrak{o}(\tilde{p}^{H_-}_{k}). 
		\]
		Moreover, the following diagram commutes:
		\[
		\begin{tikzcd}
			\mathfrak{o}(\tilde{p}^{H_+}_{k},\hat{x}) \arrow[r,"C({[\mathbf{v}]})"] \arrow[d, "f_{\hat{x}}",swap]
			& \mathfrak{o}(\tilde{p}^{H_-}_{k},\hat{y}) \arrow[d,"f_{\hat{y}}"] \\
			\mathfrak{o}(p,w) \arrow[r, equal]           
			&  \mathfrak{o}(p,w),
		\end{tikzcd}
		\]
		where $w=\pi_{\R\times Y}\circ\hat{x}=\pi_{\R\times Y}\circ\hat{y}$. 
		We define the chain-level continuation map by
		\begin{equation*}
			\mathfrak{c} = \mathfrak{c}_{H_+,H_-}: \FC_*^{(a,b)} (\mathcal{L}^{\tilde\sigma};{\mathcal{D}}_{H_+}) \to \FC_*^{(a,b)} (\mathcal{L}^{\tilde\sigma};{\mathcal{D}}_{H_-}),\qquad 	
			\mathfrak{c} := \bigoplus_{(\tilde{p},k)}  C([\mathbf{v}]),
		\end{equation*}
		where the direct sum runs over all $(\tilde{p},k) \in \mathrm{Crit} f_{\mathcal{L}}\times \mathfrak{w}^{(a,b)}_\mathcal{L}(H_{+})$, and $\mathbf{v}$ is the unique element of $\overline{\mathcal{M}}(\tilde{p}^{H_-}_{k}, \tilde{p}^{H_+}_{k};\mathcal{D}_{H_s})$.
		The map $\mathfrak{c}$ is a chain map by Proposition \ref{prop:conti_unique}.(a), and the induced continuation homomorphism on homology is independent of the choice of $H_s$.
		The Floer homology modules $\FH_*^{(a,b)}(\mathcal{L}^{\tilde\sigma};{\mathcal{D}}_H)$ and the continuation homomorphisms form a direct system over $(\mathcal{H},\prec)$. 
		We define the Rabinowitz Floer homology of $\mathcal{L}^{\tilde\sigma}$ for the finite action window $(a,b)\subset \mathbb{R}$ by
		\begin{equation*}
			\RFH_*^{(a,b)} (\mathcal{L}^{\tilde\sigma}) := \varinjlim_{H\in \mathcal{H}} \FH_*^{(a,b)}(\mathcal{L}^{\tilde\sigma};{\mathcal{D}}_H).
		\end{equation*}
		On the other hand, the natural action filtration chain homomorphisms
		\begin{equation*}
			\FC_*^{(a,b)}(\mathcal{L}^{\tilde\sigma};\mathcal{D}_H) \twoheadrightarrow \FC_*^{(a',b)}(\mathcal{L}^{\tilde\sigma};\mathcal{D}_H),\quad \FC_*^{(a,b)}(\mathcal{L}^{\tilde\sigma};\mathcal{D}_H)\hookrightarrow \FC_*^{(a,b')}(\mathcal{L}^{\tilde\sigma};\mathcal{D}_H)
		\end{equation*}
		for $a<a'<b<b'$	induce a bidirect system on $\RFH_*^{(a,b)} (\mathcal{L}^{\tilde\sigma})$.
		The (full) Rabinowitz Floer homology of $\mathcal{L}^{\tilde\sigma}$ is defined by
		\begin{equation*} 
			\RFH_{*}(\mathcal{L}^{\tilde\sigma}) := \varinjlim_{b \uparrow +\infty} \varprojlim_{a \downarrow -\infty} \RFH_{*}^{(a,b)} (\mathcal{L}^{\tilde\sigma}).
		\end{equation*}
		
		Note that the limits on action window and Hamiltonians stabilize in each degree since there are only finitely many generators in each degree according to \eqref{eq:index} and $\mathfrak{c} (\mathfrak{o}(\tilde{p}^{H_+}_k))=\mathfrak{o}(\tilde{p}^{H_-}_k)$.
		Therefore, it is reasonable to think of  
		$\RFH_j(\mathcal{L}^{\tilde\sigma})$ as the homology of 
		\[
		\RFC_j(\mathcal{L}^{\tilde\sigma}):= \varinjlim_{H\in \mathcal{H}}\FC_j^{(a,b)}(\mathcal{L}^{\tilde\sigma};\mathcal{D}_H)
		\] 
		for a sufficiently large action window $(a,b)$ depending on $j$. We write 
		\[
		\mathfrak{o}(\tilde p_k):= \varinjlim_{H\in \mathcal{H}}\mathfrak{o}(\tilde{p}^{H}_k)
		\]
		for the rank-one free submodule of $\RFC_*(\mathcal{L}^{\tilde\sigma})$ associated with $(\tilde p,k)$.
		It also follows that there is a canonical isomorphism
		$
		\RFH_j(\mathcal{L}^{\tilde\sigma})\cong \FH_j^{(a,b)}(\mathcal{L}^{\tilde\sigma};\mathcal{D}_H) 
		$
		for sufficiently large $H \in \mathcal{H}$ and  $(a,b)$ depending on $j$. 
		
		\begin{remark}
				\label{rem:RFH_laurent}
				When $N_L<\infty$, Remark \ref{rem:laurent} shows that $\QC_*(L^{\sigma})$ admits a $\Z[T,T^{-1}]$-module structure. By the proof of Theorem \ref{thm:isom_transfer}, see  \eqref{eq:projection_orientation2}, this induces a $\Z[T,T^{-1}]$-module structure on $\RFC_*(\mathcal{L}^{\tilde{\sigma}})$, where the action of $T$ is  given by $T:\mathfrak{o}(\tilde{p}_k)\to\mathfrak{o}(\tilde{p}_{k+m_L})$ when $N_L$ is even, and by $T:\mathfrak{o}(\tilde{p}_k)\to\mathfrak{o}(\tilde{p}_{k+2m_L})$ otherwise. Thus,  
				 $\RFH_*(\mathcal{L}^{\tilde{\sigma}})$ admits a $\Z[T,T^{-1}]$-module structure.	The multiplication by $T$ should coincide with the Seidel morphism constructed using the $S^1$-action on $Y$ given by the Reeb flow, see \cite{Sei97,HL09,Ueb19}.
		\end{remark}
		
		\begin{lemma}
			\label{lem:RFH_fg}
			If $N_L<\infty$, then $\RFH_*(\mathcal{L}^{\tilde{\sigma}})$ is a finitely generated $\mathbb{Z}[T,T^{-1}]$-module. 
		\end{lemma}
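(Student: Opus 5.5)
The plan is to work at the chain level, using the stabilization discussed just before Remark \ref{rem:RFH_laurent}. In each degree $j$ the module $\RFH_j(\mathcal{L}^{\tilde\sigma})$ is the homology of $\RFC_*(\mathcal{L}^{\tilde\sigma})=\bigoplus_{(\tilde p,k)}\mathfrak{o}(\tilde p_k)$, where $(\tilde p,k)$ ranges over $\Crit f_{\mathcal{L}}\times m_L\Z$ and $\deg \mathfrak{o}(\tilde p_k)=\dim L+\frac{kN_L}{m_L}-\ind_{f_L}(p)$ by \eqref{eq:generator_leg_grading}. The first step is to record the action of $T$ described in Remark \ref{rem:RFH_laurent}. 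It is an isomorphism $\mathfrak{o}(\tilde p_k)\to\mathfrak{o}(\tilde p_{k+m_L})$ if $N_L$ is even, and $\mathfrak{o}(\tilde p_k)\to\mathfrak{o}(\tilde p_{k+2m_L})$ otherwise. It commutes with the boundary operator, since it is transported from $\QC_*(L^\sigma)$ through the identification \eqref{eq:projection_orientation2}. With this, $\RFC_*(\mathcal{L}^{\tilde\sigma})$ becomes a $\Z[T,T^{-1}]$-chain complex.

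The second step is to show that this chain complex is a finitely generated free $\Z[T,T^{-1}]$-module. Let $e=1$ if $N_L$ is even and $e=2$ otherwise. Since $N_L<\infty$, we have $m_L\ge 1$, so the set $\Crit f_{\mathcal{L}}\times m_L\Z$ is the disjoint union of the $T$-orbits of the finite set $\Crit f_{\mathcal{L}}\times\{0,m_L,\dots,(e-1)m_L\}$. The set $\Crit f_{\mathcal{L}}$ is finite, with $d\cdot\#\Crit f_L$ elements. Choosing a generator of each line $\mathfrak{o}(\tilde p_{jm_L})$ for $0\le j<e$ therefore gives a finite $\Z[T,T^{-1}]$-basis of $\RFC_*(\mathcal{L}^{\tilde\sigma})$, of rank $e\,d\,\#\Crit f_L$.

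The third step is to pass to homology. The ring $\Z[T,T^{-1}]$ is a localization of the Noetherian ring $\Z[T]$, so it is Noetherian. Submodules of finitely generated modules over it are therefore finitely generated, and so are their quotients. The cycles form a submodule of the finitely generated free module $\RFC_*(\mathcal{L}^{\tilde\sigma})$, hence are finitely generated, and the homology is a quotient of the cycles. Consequently $\bigoplus_j\RFH_j(\mathcal{L}^{\tilde\sigma})$ is finitely generated.

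The main obstacle is justifying that the direct limit over $H$ together with the bidirect limit over action windows really produces the total complex $\RFC_*$ described above, on which $T$ acts compatibly. The degree-wise stabilization is already noted in the text. It must be checked that the isomorphisms $\RFH_j\cong\FH_j^{(a,b)}(\mathcal{L}^{\tilde\sigma};\mathcal{D}_H)$ for different $j$ are compatible with the $T$-action, which shifts both degree and action. I would handle this through the explicit description $\mathfrak{o}(\tilde p_k)=\varinjlim_H \mathfrak{o}(\tilde p^H_k)$ and the fact that the continuation maps are the identity on generators, $\mathfrak{c}(\mathfrak{o}(\tilde p^{H_+}_k))=\mathfrak{o}(\tilde p^{H_-}_k)$.
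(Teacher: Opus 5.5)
Your proof is correct, but it finishes by a different algebraic route than the paper.

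The paper argues degree by degree over $\Z$. By \eqref{eq:index}, each $\RFC_j(\mathcal{L}^{\tilde\sigma})$ has finitely many generators, so each $\RFH_j(\mathcal{L}^{\tilde\sigma})$ is a finitely generated abelian group. Since $T$ is an isomorphism shifting degree by a fixed amount, generators of $\RFH_j$ for $j$ in one window of that many consecutive degrees generate everything over $\Z[T,T^{-1}]$.

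You instead show at chain level that $\RFC_*(\mathcal{L}^{\tilde\sigma})$ is a free $\Z[T,T^{-1}]$-module of rank $e\,d\,\#\Crit f_L$ with $T$-linear differential. You then use the Noetherianity of $\Z[T,T^{-1}]$.

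The two routes buy different things. The paper's version is shorter, uses only that $\Z$ is Noetherian, and exhibits explicit generators. Yours gives the stronger structural fact that $\RFC_*(\mathcal{L}^{\tilde\sigma})$ is a finite-rank free $\Z[T,T^{-1}]$-complex. It also handles the case $\deg T=2N_L$ (nonorientable $L$, odd $N_L$) explicitly; the paper's phrase ``$T:\RFH_j\to\RFH_{j+N_L}$'' glosses over this case, where the window must have length $2N_L$.

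The compatibility of the limits over $H$ and action windows with the $T$-action, which you flag as the main obstacle, is needed equally by both arguments. The paper takes it for granted via Remark \ref{rem:RFH_laurent}. Your plan, using $\mathfrak{o}(\tilde p_k)=\varinjlim_H\mathfrak{o}(\tilde p^H_k)$ together with $\mathfrak{c}(\mathfrak{o}(\tilde p^{H_+}_k))=\mathfrak{o}(\tilde p^{H_-}_k)$, is the natural way to make it precise.
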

		\begin{proof}
			From the index formula in \eqref{eq:index}, we know that $\RFC_j(\mathcal{L}^{\tilde{\sigma}})$ has finitely many generators  for each $j\in \Z$. Since $T : \RFH_{j}(\mathcal{L}^{\tilde{\sigma}} ) \to \RFH_{j+N_L}(\mathcal{L}^{\tilde{\sigma}})$ is an isomorphism for every $j\in \Z$, we conclude that $\RFH_*(\mathcal{L}^{\tilde{\sigma}})$ is finitely generated  over $\Z[T,T^{-1}]$.
		\end{proof}

		\subsubsection{Proofs of Proposition \ref{prop:simple_no_escape} and Proposition \ref{prop:conti_unique}}\label{sec:proof_proposition}
		Before embarking on the proofs, we briefly describe the setup for Floer strips with punctures. We refer to \cite{DL2,BKK24} for details. 
		Let $\Gamma := \Gamma_{\partial} \cup \Gamma_{\mathrm{int}}$ where $\Gamma_{\partial}$ and $\Gamma_{\mathrm{int}}$ are (possibly empty) finite subsets of $\R \times \{0,1\}$ and $\R \times (0,1)$, respectively.
		We consider smooth maps 
		\[
		\tilde{v} = (b,v) :\big((\R \times [0,1]) \setminus \Gamma, (\R \times \{0,1\}) \setminus \Gamma_\partial\big) \longrightarrow (\R \times Y, \R \times \mathcal{L})
		\] 
		solving the Floer equation in \eqref{eq:Floer_equation_strip} 
		and converging to critical points of $\mathcal{A}_H$ at the positive and negative ends. 
		Furthermore, we require the map $\tilde{v}$ to satisfy the following asymptotic behavior near punctures. For $z\in \Gamma_{\partial}$ and holomorphic strip-like coordinates $\epsilon_z : (-\infty,0] \times [0,1] \to (\R\times[0,1]) \setminus \{z\}$ near $z$, there is a Reeb chord $c_z:([0,T],\{0,T\}) \to (Y,\mathcal{L})$ such that
		\[
		\lim_{s\to -\infty}(b\circ\epsilon_z(s,t), v\circ\epsilon_z(s,t))=(-\infty, c_z(Tt)).
		\]
		For $z\in \Gamma_{\mathrm{int}}$, a similar asymptotic behavior   holds with $c_z$ replaced by a closed Reeb orbit $\gamma_z : \R/T\mathbb{Z} \to Y$.
		We refer to such solutions as {\emph{punctured Floer strips}}.
		If $\tilde{v}$ is a punctured Floer strip, then the projection $\pi \circ v$ has finite energy. 
		By the removal of singularity theorem, $\pi \circ v$ extends to a $J_\Sigma$-holomorphic map defined on $\D$, identifying $\D\setminus\{\pm 1\}$ with $\R \times [0,1]$. 
		We abuse notation and write $\pi \circ v:(\D,\partial \D) \to (\Sigma, L)$ for  the extended map.  
		
		\medskip
		
		Let $\mathcal{M}_{N,m}(\tilde{q}_{l},\tilde{p}_{k};\mathbf{A};\mathcal{D}_H)$ be the moduli space of punctured Floer strips with trajectories, defined by replacing Floer strips with punctured ones in Definition \ref{def:strips_trajectories}. 
		Here, $m$ denotes the total number of punctures, which are allowed to move freely within $\R \times \{0,1\}$ or $\R \times (0,1)$, depending on whether they belong to $\Gamma_{\partial}$ or $\Gamma_{\mathrm{int}}$, respectively. We again have
		\[ 
		\begin{split}
			\Pi : \mathcal{M}_{N,m}(\tilde{q}_{l},\tilde{p}_{k};\mathbf{A};\mathcal{D}_H) &\longrightarrow \mathcal{N}_{N} (q,p;{\bf A};\mathcal{D}) \\[.5ex]
			(\tilde{v}_1,\dots,\tilde{v}_N)	&\longmapsto (\pi\circ v_1,\dots,\pi\circ v_N).
		\end{split}
		\]
		For a monotone homotopy $H_s$ for $H_{+}\prec H_{-}$, one can analogously define the punctured Floer continuation strip and the moduli space $\mathcal{M}_{\mathbf{N},m}(\tilde{q}^{H_{-}}_{k_-},\tilde{p}^{H_{+}}_{k_+};\mathbf{A};\mathcal{D}_{H_s})$.
		
		\begin{proof}[Proof of Proposition \ref{prop:simple_no_escape}]
			Statement (a) follows from \cite[Proposition 3.1.3]{BC4}.
			We therefore focus on the proof of (b).
			The proof of (b) proceeds along similar lines to those in \cite[Theorem 9.1]{DL1} and \cite[Proposition 4.5]{BKK24}. 
			
			Assume for contradiction that there exists a sequence $\{\mathbf{v}_\nu\}_{\nu\in \N}$ in $\overline{\mathcal{M}} (\tilde{q}_{l},\tilde{p}_{k};\mathcal{D}_H)$ with $\mu(\tilde{p}_{k})-\mu(\tilde{q}_{l})\leq 2$ such that Floer cylinders in $\mathbf{v}_\nu$ are not contained in a bounded subset of $\R \times Y$. Then,  
			by the maximum principle, they escape to the negative end of $\R \times Y$.
			By the SFT compactness theorem in \cite{BEHWZ03,BO2}, $\mathbf{v}_\nu$ converges to a Floer-holomorphic building whose top component lies in $\mathcal{M}_{N,m}(\tilde{q}_{l},\tilde{p}_{k};\mathbf{A};\mathcal{D}_H)$ with $m \geq 1$. We call this component $\mathbf{v}_\infty$. 
			
			Let $\{k_i\}^{m_{\mathrm{int}}}_{i=1}$ and $\{k'_j\}_{j=1}^{m_\partial}$ be the (relative) winding numbers of the asymptotic closed Reeb orbits $\{ \gamma_i \}_{i=1}^{m_\mathrm{int}}$ and Reeb chords $\{c_j \}_{j=1}^{m_\partial} $ of $\mathbf{v}_\infty$ at punctures, respectively, where $m_\mathrm{int}+m_\partial=m$. 
			A closed Reeb orbit $\gamma_i$ can be viewed as a map between circles, $\R/ T \Z\to Y_p$ for some $p\in\Sigma$, and the degree of this map is $k_i$. 
			From Proposition \ref{prop:maslov index of projection} and a similar computation for $\gamma_i$ in \cite[Equation (2.11)]{BKK24}, we have
			\begin{equation}\label{eq:index_CZ_RS}
				\mu_{\mathrm{CZ}}(\gamma_i)= 2\tau_\Sigma k_i, \qquad \mu_{\mathrm{RS}}(c_j)=\frac{2\tau_\Sigma k_j'}{d},
			\end{equation}
			where $\mu_{\mathrm{CZ}}$ denotes the Conley--Zehnder index. 
			We compute
			\begin{align}\label{eq:index_0}
				\begin{split}
					\mu(\tilde{p}_k)-\mu(\tilde{q}_l)
					&= \ind_{f_L}({q})-\ind_{f_{L}}({p})+2\frac{\tau_\Sigma}{d}(k-l)\\
					&=\ind_{f_L}(q)-\ind_{f_L}(p)+2\frac{\tau_\Sigma}{d}\Big(k-l -d\sum_{i=1}^{m_\mathrm{int}} k_i-\sum_{j=1}^{m_\partial}k'_j\Big)\\
					&\quad +2\frac{\tau_\Sigma}{d}\Big(d\sum_{i=1}^{m_\mathrm{int}} k_i+\sum_{j=1}^{m_\partial}k'_j\Big)\\
					&=\ind_{f_L}(q)-\ind_{f_L}(p)+\mu_L(\mathbf{A})+\sum_{i=1}^{m_\mathrm{int}} \mu_\CZ(\gamma_i)+\sum_{j=1}^{m_\partial}\mu_\RS(c_j).
				\end{split}
			\end{align}
			The last equality follows from \eqref{eq:index_CZ_RS} and a similar computation for $\mu_L(\mathbf{A})$. 
			
			We reach a contradiction by showing that $\Pi(\mathbf{v}_\infty)\in \mathcal{N}_{N}(q,p;\mathbf{A};\mathcal{D})$ cannot exist. 
			We first treat the case of $\dim L \geq 3$. Following the argument of \cite[Proposition 3.1.3]{BC4}, which also assumes $\dim L\geq 3$, we take an underlying simple chain of pearls of $\Pi(\mathbf{v}_{\infty})$, and denote by $\mathcal{N}^*_{N^*}(q,p;\mathbf{A}^*;\mathcal{D})$ the moduli space containing it, where $N^* \leq N$.
			From the proof of Proposition \ref{prop:base_transv}.(a), it follows that
			\[
				\dim \mathcal{N}^{*}_{N^*}(q,p;\mathbf{A}^*;\mathcal{D})=\ind_{f_L}(q)-\ind_{f_L}(p)+\mu_L(\mathbf{A}^*)+N^*-1.
			\]
			This formula still holds when some entries of $\mathbf{A}^*$ are zero. 
			Let $N^*_0$ denote the number of zero entries in $\mathbf{A}^*$. 
			Due to a free $\R$-action on each non-constant disk, we have 
			\[
				\dim \mathcal{N}^{*}_{N^*}(q,p;\mathbf{A}^*;\mathcal{D}) \geq N^*_1:=N^*-N^*_0.
			\]
			Furthermore, a Floer strip in $\mathbf{v}_\infty$ which projects to a constant disk in $\Sigma$ has at least one puncture.
			Hence, we have
			\begin{equation}\label{eq:puncture_bound}
				m=m_{\mathrm{int}}+m_\partial \geq N^*_0.	
			\end{equation}
			Combining the above estimates with the assumption $\mu(\tilde{p}_k)-\mu(\tilde{q}_l)\leq 2$, we obtain  
			\begin{align}\label{eq:ineq_index}
				\begin{split}
					2&\geq \dim \mathcal{N}^{*}_{N^*}(q,p;\mathbf{A}^*;\mathcal{D})-N^*+1+\mu_L(\mathbf{A})-\mu_L(\mathbf{A}^*) +\sum_{i=1}^{m_\mathrm{int}} \mu_\CZ(\gamma_i)+\sum_{j=1}^{m_\partial}\mu_\RS(c_j) \\ 
					&\geq -N^*_0+1+\mu_L(\mathbf{A})-\mu_L(\mathbf{A}^*)+ \sum_{i=1}^{m_\mathrm{int}} \mu_\CZ(\gamma_i)+\sum_{j=1}^{m_\partial}\mu_\RS(c_j) \\
					&\geq -m_\mathrm{int}-m_\partial+1+\mu_L(\mathbf{A})-\mu_L(\mathbf{A}^*)+\sum_{i=1}^{m_\mathrm{int}} \mu_\CZ(\gamma_i)+\sum_{j=1}^{m_\partial}\mu_\RS(c_j)\\
					&\geq 1+(\mu_L(\mathbf{A})-\mu_L(\mathbf{A}^*))+\sum_{i=1}^{m_\mathrm{int}}(\mu_\CZ(\gamma_i)-1)+\sum_{j=1}^{m_\partial}(\mu_\RS(c_j)-1).
				\end{split}
			\end{align}
				Without loss of generality, we may assume that $\omega(\pi_2(\Sigma))\neq 0$ since otherwise there is no contractible periodic Reeb orbit and  $m_\mathrm{int}=0$. Thus, the minimal Chern number $c_\Sigma$ of $\Sigma$ is finite, and $N_L$ divides $2c_\Sigma$.
			By condition (C2), i.e.,~$N_L>2$, we have  $c_\Sigma \geq 2$.
			Since $\gamma_i$ and $c_j$ are contractible, \eqref{eq:index_CZ_RS} yields
			\begin{equation}\label{eq:index_lb}
				\mu_{\mathrm{CZ}}(\gamma_i)\geq 2c_\Sigma \geq 4,\qquad \mu_\RS(c_j)\geq N_L\geq 3.
			\end{equation}
			These inequalities together with \eqref{eq:ineq_index} force $m_{\mathrm{int}}=m_{\partial}=0$.
			This contradicts the assumption $m\geq 1$, completing the proof of Proposition \ref{prop:simple_no_escape}.(b) for the case $\dim L \geq 3$.
			
			For the case that $\dim L \leq 2$, we compute from \eqref{eq:index_0}
			\begin{align*}
				0 \leq \mu_L(\mathbf{A})&\leq 2+\ind_{f_L}(p)-\ind_{f_L}(q)-\sum_{i=1}^{m_\mathrm{int}} \mu_\CZ(\gamma_i)-\sum_{j=1}^{m_\partial}\mu_\RS(c_j)\\
				&\leq 4-\sum_{i=1}^{m_{\mathrm{int}}} \mu_\CZ(\gamma_i)-\sum_{j=1}^{m_{\partial}}\mu_\RS(c_j).
			\end{align*}
			Using the inequalities in \eqref{eq:index_lb}, we deduce $\mathbf{A}=0$. This implies that there is a flow line of $Z_L$ from $q$ to $p$, and thus $\ind_{f_L}(q)\geq \ind_{f_L}(p)$. The above estimate together with \eqref{eq:index_lb} yields the contradiction $\mu_L(\mathbf{A})<0$. 
			This completes the proof of Proposition \ref{prop:simple_no_escape}.(b).	
		\end{proof}

		\begin{proof}[Proof of Proposition \ref{prop:conti_unique}.(a)]			
			For $\mathbf{v}\in {\mathcal{M}}_{\mathbf{N}}(\tilde{q}^{H_{-}}_{k_-},\tilde{p}^{H_{+}}_{k_+};\mathbf{A};\mathcal{D}_{H_s})$ with $\mu(\tilde{p}^{H_{+}}_{k_+})-\mu(\tilde{q}^{H_{-}}_{k_-})\leq1$, we show that  $\Pi(\mathbf{v})\in \mathcal{N}_{N}(q,p;\mathbf{A};\mathcal{D})$ is simple. Let $\mathcal{N}^{*}_{N^*}(q,p;\mathbf{A}^*;\mathcal{D})$ be the moduli space containing the underlying simple chain of pearls of $\Pi(\mathbf{v})$. Note that at most one entry of $\mathbf{A}^*$, namely the one corresponding to a Floer continuation strip, can be zero. If all entries of $\mathbf{A}^*$ are nonzero, there is a free $\R^{N^*}$-action given by the translations of the nonconstant disks. Similarly, in the case that one entry of $\mathbf{A}^*$ is zero, there is instead a free $\R^{N^*-1}$-action. However, the constant disk corresponding to the zero entry of $\mathbf{A}^*$ is free to move along an integral curve of $Z_L$. This gives an additional free $\R$-action.
			Therefore, in either case, we have $\dim \mathcal{N}^{*}_{N^*}(q,p;\mathbf{A}^*;\mathcal{D})\geq N^*$, and deduce
			\[
			\begin{split}
				1\geq \mu(\tilde{p}^{H_{+}}_{k_+})-\mu(\tilde{q}^{H_{-}}_{k_-})
				&= \ind_{f_L}(q)-\ind_{f_L}(p)+\mu_L(\mathbf{A})\\
				&=\dim \mathcal{N}^{*}_{N^*}(q,p;\mathbf{A}^*;\mathcal{D}) - N^*+1-\mu_L(\mathbf{A}^*)  +\mu_L(\mathbf{A})\\
				&\geq 1-\mu_L(\mathbf{A}^*)  +\mu_L(\mathbf{A}).
			\end{split}
			\]
			This implies $\mathbf{A}=\mathbf{A}^*$, and we conclude $\Pi(\mathbf{v})$ is simple. 
			
			For compactness, we argue as in the proof of Proposition \ref{prop:simple_no_escape}.(b). The only difference is that \eqref{eq:puncture_bound} is replaced by $m_{\mathrm{int}}+m_\partial \geq N^*_0-1$. 
			The term $-1$ arises from the fact that, in contrast to a nontrivial Floer strip, a nontrivial Floer continuation strip may project to a constant disk in $\Sigma$. This does not pose a problem as the left-hand side of the estimate in \eqref{eq:ineq_index} is replaced by 1 instead of 2 in this case.
			This completes the proof.
		\end{proof}
		
		\begin{proof}[Proof of Proposition \ref{prop:conti_unique}.(b)]
			The proof is similar to that of \cite[Proposition 4.5.(b)]{BKK24}, but simpler. 
			Although \eqref{eq:index_lb} is weakened to 
			\[
			\mu_{\mathrm{CZ}}(\gamma_i)\geq 2c_\Sigma \geq 2,\qquad \mu_\RS(c_j)\geq N_L\geq 2
			\]
			under our relaxed assumption $N_L\geq 2$, this remains  sufficient to deduce the simpleness of the moduli space in the case where $\mu(\tilde{p}^{H_{+}}_{k_+})=\mu(\tilde{q}^{H_{-}}_{k_-})$.

			Next, we show that, for every $\mathbf{v}\in \mathcal{M}_{\mathbf{N}}(\tilde{q}^{H_{-}}_{k_-},\tilde{p}^{H_{+}}_{k_+};\mathbf{A};\mathcal{D}_{H_s})$, $\Pi(\mathbf{v})\in \mathcal{N}^{*}_{N}(q,p;\mathbf{A};\mathcal{D})$ is a constant disk. Recall that there is a free $\R^{N-1}$-action on the shadow of Floer cylinders in $\mathbf{v}$. If the projection of the Floer continuation cylinder $\tilde{v}$ in $\mathbf{v}$ is nonconstant, we have an additional free $\R$-action. This contradicts the fact that  $\dim \mathcal{N}^{*}_{N}(q,p;\mathbf{A};\mathcal{D})=N-1$, which follows from $\mu(\tilde{p}^{H_{+}}_{k_+})=\mu(\tilde{q}^{H_{-}}_{k_-})$. Thus, the projection of $\tilde{v}$ is constant. Suppose $N>1$, i.e.~there is a Floer cylinder in $\mathbf{v}$, or $p\neq q$. In either case, there is a free $\R$-action induced by  translating the constant disk $\pi_{\R\times Y}\circ\tilde v$ along the integral curves of $Z_L$, which again leads to a contradiction. Therefore, we conclude $N=1$, $p=q$, and $k_-=k_+$ by the index formula.

			It remains to show that such $\tilde v$ uniquely exists. Since $p=q=\pi_{\R\times Y}\circ \tilde{v}$ is constant, we write
			\[
			\tilde{v}=(b,v):(\R \times [0,1], \R \times \{0,1\}) \longrightarrow (\R\times Y_p, \R\times\mathcal{L}_p).
			\]
			Since $\mathcal{L}_p=Y_p\cap \mathcal{L}$ is a finite set, $v(s,0)$ and $v(s,1)$ are constant in $s$. This proves $\tilde p=\tilde q$. 
			 The Floer equation \eqref{eq:Floer_equation_conti} reads 
			\begin{equation*}
				\begin{cases}
					\partial_s b-\alpha|_{Y_p}(\partial_t v) + h'_s(e^b) = 0, \\[0.5ex]
					\partial_t b+\alpha|_{Y_p}(\partial_sv)=0,
				\end{cases}	
			\end{equation*}
			with the boundary condition, namely $v(\R,0)$ and $v(\R,1)$ are points in $\mathcal{L}_p$. 
			Identifying $(Y_p, \mathcal{L}_p)$ with $(\R/\Z, \frac{1}{d}\Z/\Z)$, we have $
			v :(\R \times [0,1], \R \times \{0,1\}) \to (\R/\Z, \{0,\tfrac{k}{d} \})$, where $k:=k_+=k_-$.
			We define $g(s,t):=v(s,t)-\frac{k}{d}t$. Then, we have 
			\[
			(b,g): (\R \times [0,1], \R \times \{0,1\})\longrightarrow (\R \times (\R/\Z),\R\times \{0\})
			\] 
			and
			\begin{equation}\label{eq:Floer_fiber}
				\begin{cases}
					\partial_s b-\partial_t g + h'_s(e^{b})-\frac{k}{d} = 0, \\[0.5ex]
					\partial_t b+\partial_s g=0, \\[0.5ex]
					\lim\limits_{s\to \pm \infty} g(s,\cdot)=0.
				\end{cases}	
			\end{equation}	
			Differentiating \eqref{eq:Floer_fiber}, we obtain $\Delta g +  h_s''(e^b)e^{b} \partial_s g=0$.
			Since $g$ vanishes on the boundary $\R \times \{0,1\}$ and converges to zero as $s\to\pm\infty$, the maximum principle implies that $g$ is identically zero.
			Then the equations in \eqref{eq:Floer_fiber} reduce to an ODE for $b$, which is the same as   the one in \cite[Equation (4.7)]{BKK24}. Therefore, the proof of \cite[Proposition 4.7]{BKK24} shows that there exists a unique solution $b(s,t)$, which is independent of $t$. This completes the proof. 
		\end{proof}
		
		\subsection{Product structure on Lagrangian Rabinowitz Floer homology}\label{sec:product}
		In contrast to the assumption (C2), namely $N_L>2$, we assume the following stronger condition for necessary compactness results for a product structure on $\RFH_*(\mathcal{L}^{\tilde\sigma})$. 
		\begin{itemize}
			\item[($\rm{C^P}$)] {\it The minimal Maslov number $N_L$ of $L$ is greater than $\max\{\frac{1}{2}(\dim L+1),2\}$.}
		\end{itemize}
		
		We consider a strip with a slit: 
		\[
		S^{P}:=\big((\R \times [-1,0])\sqcup  (\R \times [0,1]) \big)/ \sim.
		\] 
		The equivalence relation is given by $(s,0^{+}) \sim (s,0^{-})$ for $s \leq 0$, where $0^{+}\in [0,1]$ and $0^{-}\in  [-1,0]$. 
		We endow $S^{P}$ with a natural complex structure. Then, the global coordinate $z=s+it$ is holomorphic everywhere except at the point $(0,0^-)=(0,0^+)$, see \cite[Section 3.2]{AS10a} for details.
 		For $(H,J_Y) \in \mathcal{H}\times \mathcal{J}_Y$, we consider a smooth map $\tilde{v}=(b,v): (S^{P},\partial S^{P}) \to (\R \times Y, \R \times \mathcal{L})$ that solves the Floer equation 
 		$\partial_s\tilde v+J_Y(\tilde{v})(\partial_t\tilde v-X_H(\tilde v))=0$
		in the interior of $S^P$, and is
		asymptotic to a critical point of $\mathcal{A}_H$ at the positive ends and to a critical point of $\mathcal{A}_{2H}$ at the negative end, i.e.,
		\begin{align*}
			(\ev^{m}_{+}(\tilde{v}))(t)&=(\ev^{m}_{+}(b),\ev^{m}_{+}(v))(t):=\lim_{s \to +\infty} \tilde{v}(s,t-2+m) \in \Crit \mathcal{A}_{H} \;\text{ for } m\in \{1,2\}, \\[.5ex]
			(\ev_-(\tilde{v}))(t)&=({\ev}_-(b),{\ev}_-(v))(t):= \lim_{s \to - \infty} \tilde{v}(s,2t-1) 
			\in \Crit \mathcal{A}_{2H}.
		\end{align*}
		We refer to such maps as \textit{Floer triangles} with respect to the pair $(H,J_Y)$. 
		As before, the projection $\pi \circ v$ extends to a $J_\Sigma$-holomorphic disk with boundary in $L$, passing through the points $\pi(\ev_+^1(v))$, $\pi(\ev_+^2(v))$, and $\pi(\ev_-(v))$.
		
		\medskip
		
		Let $\mathbf{k}=(k,k',k'')\in (m_L \Z)^3$. Throughout this section, whenever $\mathbf{k}$  is used, we assume that $r^{H}_k, r^{H}_{k'}, r^{2H}_{k''} \in (-\eta,\eta)$ exist as in \eqref{eq:beta}. Let $\mathbf{N}, N$, and $\mathbf{A}$ be as in \eqref{eq:auxiliary_Lag}, i.e.,
		\[
		\begin{split}
			&\mathbf{N}:=(N_1,N_2,N_3)\in (\N\cup\{0\})^3,\qquad N:=N_1+N_2+N_3,  \\  
			&\mathbf{A}:=(A_1,\dots, A_{N_1}, A, A_{N_1+1},\dots, A_N) \in (\pi_2(\Sigma,L))^{N+1},\quad A_i\neq 0 \quad \;\; i\in\{1,\dots,N\}.
		\end{split}
		\]
		For a Floer triangle $\tilde v=(b,v)$, we use the analogous notation
		\[
			\mathfrak a_-(\tilde v):=(\ev_-(b),\w(\ev_-(v))),\quad \mathfrak a_+^m(\tilde v):=(\ev_+^m(b),\w(\ev_+^m(v)))\;\text{ for } m\in\{1,2\}.
		\]
		\begin{definition}
			The moduli space 
			\[
			\mathcal{M}_{\mathbf{N}, \mathbf{k}}(\mathbf{A}; H,J_Y)=\big\{\mathbf{v}=(\tilde{v}^{2H}_1,\dots, \tilde{v}^{2H}_{N_1}, \tilde{v}, \tilde{v}^{H}_1, \dots, \tilde{v}^{H}_{N_2+N_3})\big\}
			\]
			consists of tuples of curves satisfying the following conditions. For every $i=1,\dots, N_1$ and $j=1,\dots, N_2+N_3$,
			\begin{itemize}
				\item  $\tilde{v}^{2H}_i=(b^{2H}_i,v^{2H}_i)$ is a Floer strip with respect to $(2H,J_Y)$ satisfying $[\pi \circ v^{2H}_i]=A_i$
				\item  $\tilde{v}=(b,v)$ is a Floer triangle with respect to $(H,J_Y)$ satisfying $[\pi \circ v]=A$,
				\item  $\tilde{v}^{H}_j=(b^{H}_j,v^{H}_j)$ is a Floer strip with respect to $(H,J_Y)$ satisfying $[\pi \circ v^{H}_j]=A_{N_1+j}$,
			\end{itemize}
				and for every $i=1,\dots,N_1-1$, $j_1=1,\dots, N_2-1$, and $j_2=N_2+1,\dots, N_2+N_3-1$,
				\[
				\begin{split}
					&(r^{2H}_{k''},k'')=\mathfrak{a}_-(\tilde{v}^{2H}_1),\quad \mathfrak{a}_+(\tilde{v}^{2H}_i)=\mathfrak{a}_-(\tilde{v}^{2H}_{i+1}), \quad \mathfrak{a}_+(\tilde{v}^{2H}_{N_1})=\mathfrak{a}_-(\tilde{v}),\\[0.5ex]
					&\mathfrak{a}^{1}_+(\tilde{v})=\mathfrak{a}_-(\tilde{v}^{H}_{1}),\quad \mathfrak{a}_+(\tilde{v}^{H}_{j_1}) = \mathfrak{a}_-(\tilde{v}^{H}_{{j_1}+1}),\quad \mathfrak{a}_+(\tilde{v}^{H}_{N_2}) = (r^{H}_{k},k),\\[0.5ex]
					&\mathfrak{a}^{2}_+(\tilde{v}) = \mathfrak{a}_-(\tilde{v}^{H}_{N_2+1}), \quad \mathfrak{a}_+(\tilde{v}^{H}_{j_2}) =\mathfrak{a}_-(\tilde{v}^{H}_{j_2+1}),\quad \mathfrak{a}_+(\tilde{v}^{H}_{N_2+N_3}) = (r^{H}_{k'},k'). 
				\end{split}
				\]
			We also define the subspace 
			\[
			\mathcal{M}^{*}_{\mathbf{N}, \mathbf{k}}(\mathbf{A}; H,J_Y) \subset \mathcal{M}_{\mathbf{N}, \mathbf{k}}(\mathbf{A}; H,J_Y)
			\] 
			consisting of simple elements $\mathbf{v}$, that is 
			\[
			\left(\pi\circ {v}^{2H}_1,\dots,\pi\circ {v}^{2H}_{N_1},\pi\circ {v},\pi\circ {v}^{H}_1,\dots,\pi\circ {v}^{H}_{N_2+N_3}\right) \in \mathcal{N}^{*}_{\mathbf{N}}(\mathbf{A};J_\Sigma).
			\]
		\end{definition}
		We consider the evaluation map
		\begin{align*}
			&\ev : \mathcal{M}_{\mathbf{N}, \mathbf{k}}(\mathbf{A}; H,J_Y)  \to \mathcal{L}^{2N+3} \\[.5ex]
			&\ev(\mathbf{v}) := \big( \ev_{-,0}(\tilde{v}^{2H}_1), \ev_{+,0}(\tilde{v}^{2H}_1), \dots, \ev_{-,0} (\tilde{v}^{2H}_{N_1}), \ev_{+,0}(\tilde{v}^{2H}_{N_1}), \ev_{-,0}(\tilde{v}),\\[.5ex]
			& \hspace{1.9cm}  \ev^1_{+,0}(\tilde{v}), \ev_{-,0}(\tilde{v}^H_{1}), \ev_{+,0}(\tilde{v}^H_{1}), \dots,\ev_{-,0}(\tilde{v}^H_{N_2}), \ev_{+,0}(\tilde{v}^H_{N_2}),  \\[.5ex]
			& \hspace{1.9cm}  \ev^2_{+,0}(\tilde{v}), \ev_{-,0}(\tilde{v}^{H}_{N_2+1}), \ev_{+,0}(\tilde{v}^{H}_{N_2+1}), \dots ,\ev_{-,0}(\tilde{v}^{H}_{N_2+N_3}), \ev_{+,0}(\tilde{v}^{H}_{N_2+N_3})  \big),
		\end{align*}
		where $\ev_{-,0}(\tilde{v}):=\ev_{-}(v)(0)$ and $\ev^{m}_{+,0}(\tilde{v}):=\ev^{m}_{+}(v)(0)$ for $m\in \{1,2\}$.
		
		Let $(f_L,Z^i_{L})$ for $i=1,2,3$ be Morse-Smale pairs on $L$ as in Section \ref{sec:product_QH}. Let $(f_\mathcal{L},Z^i_\mathcal{L})$  for $i=1,2,3$ be the lifted Morse-Smale pairs on $\mathcal{L}$. We denote $\mathcal{D}^{P}_{H}:=(f_{\mathcal{L}}, Z^1_{\mathcal{L}}, Z^2_{\mathcal{L}}, Z^3_{\mathcal{L}},H,J_Y)$.
		\begin{definition}
			
			For $(\tilde{p},k), (\tilde{q},k'),(\tilde{r},k'')\in \Crit f_{\mathcal{L}}\times m_L \Z$, we define the moduli space 
			\[
			\mathcal{M}_{\mathbf{N}} (\tilde{r}_{k''},\tilde{p}_{k}, \tilde{q}_{k'}; \mathbf{A}; \mathcal{D}^P_{H}) := \ev^{-1} \Big ( W^u_{Z^1_\mathcal{L}}(\tilde{r}) \times \Delta_{Z^1_\mathcal{L}}^{N_1} \times \Delta_{Z^2_\mathcal{L}}^{N_2} \times W^s_{Z^2_\mathcal{L}}(\tilde{p}) \times \Delta_{Z^3_\mathcal{L}}^{N_3} \times W^s_{Z^3_\mathcal{L}}(\tilde{q}) \Big ),
			\]
			where
			\[
			\Delta_{Z^i_{\mathcal{L}}}:= \{(x,\varphi^{t}_{Z^i_{\mathcal{L}}}(x))\in \mathcal{L} \times \mathcal{L} \;\mid\; x \in (\mathcal{L} \setminus \Crit f_{\mathcal{L}}) ,\; t \in \R_{>0} \} \quad \text{for } i=1,2,3,
			\]   
			see Figure \ref{fig:triangle_Leg}.
			The subspace of simple elements is defined by
			\begin{align*}
				\mathcal{M}^{*}_{\mathbf{N}} (\tilde{r}_{k''}, \tilde{p}_{k}, \tilde{q}_{k'}; \mathbf{A}; \mathcal{D}^P_{H}):= \mathcal{M}_{\mathbf{N}} (\tilde{r}_{k''},\tilde{p}_{k}, \tilde{q}_{k'}; \mathbf{A}; \mathcal{D}^P_{H}) \, \cap \, \mathcal{M}^{*}_{\mathbf{N},\mathbf{k}} (\mathbf{A}; H, J_Y).    
			\end{align*}
		\end{definition}
		\begin{figure}[h]
			\centering
			\includegraphics[height = 6cm]{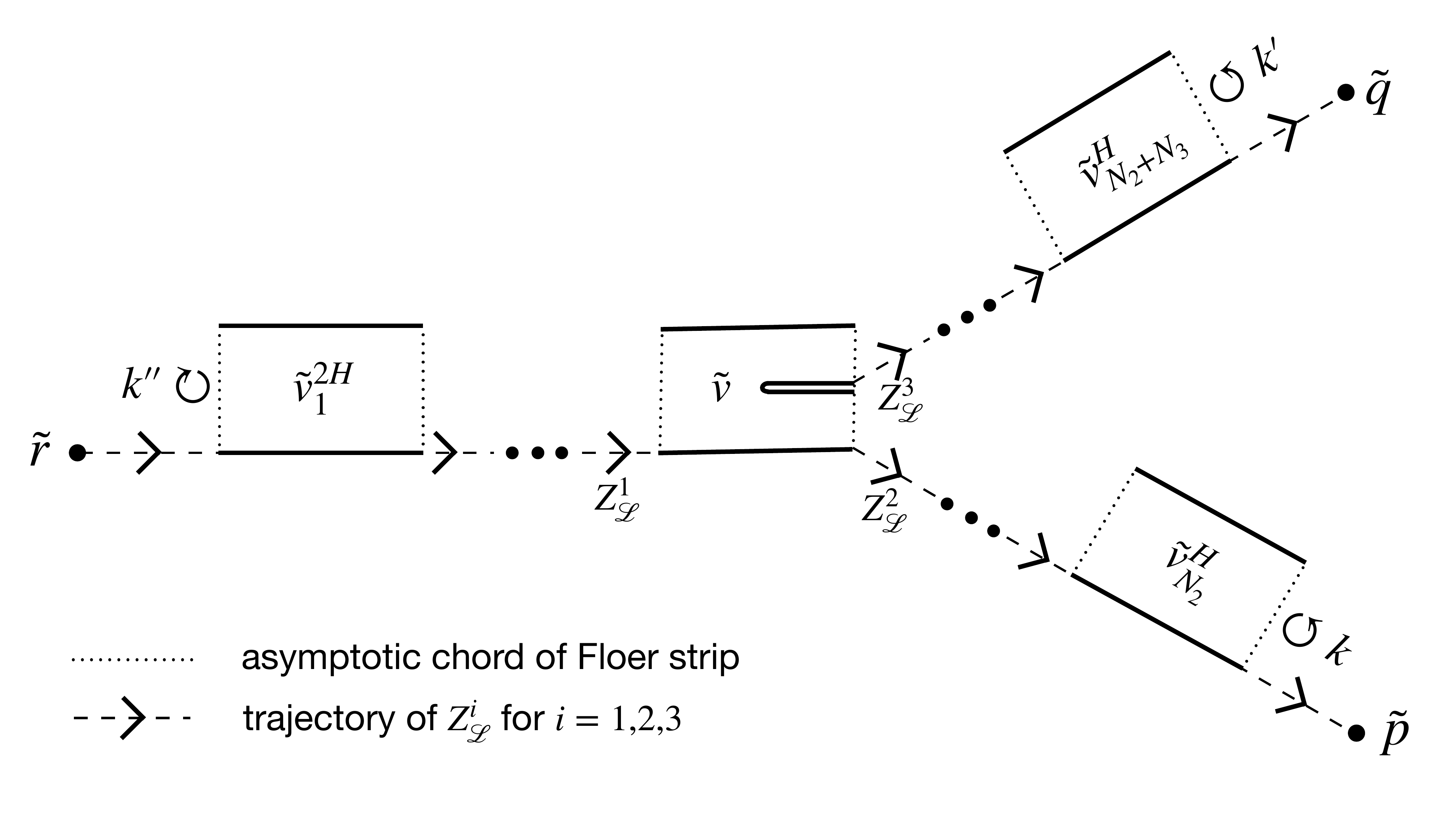}
			\caption{An element of $\mathcal{M}_{\mathbf{N}} (\tilde{r}_{k''},\tilde{p}_{k}, \tilde{q}_{k'}; \mathbf{A}; \mathcal{D}^{P}_{H})$}
			\label{fig:triangle_Leg}
		\end{figure} 
		
		There is a canonical projection map
		\begin{equation}\label{eq:projection_product}
			\Pi:\mathcal{M}_{\mathbf{N}} (\tilde{r}_{k''},\tilde{p}_{k}, \tilde{q}_{k'}; \mathbf{A}; \mathcal{D}^{P}_{H}) \longrightarrow \mathcal{N}_{\mathbf{N}}(r,p,q; \mathbf{A} ; \mathcal{D}^{P}), 
		\end{equation}
		defined by $\Pi (\mathbf{v}) : = \mathbf{w}$, where $\mathbf{w} := (\pi\circ {v}^{2H}_1,\dots,\pi\circ {v}^{H}_{N_2+N_3})$. 
		We have the same projection map with $\mathcal{M}_{\mathbf{N}}$ and $\mathcal{N}_{\mathbf{N}}$ replaced by $\mathcal{M}_{\mathbf{N}}^*$ and $\mathcal{N}_{\mathbf{N}}^*$, respectively.
		
		\begin{prop}\label{prop:transv_triangle}
			Let $\mathcal{J}^{\mathrm{reg},P}_{(Y,\mathcal{L})}$ be the subset of $\mathcal{J}_Y$ consisting of $J_Y$ whose horizontal part $J_\Sigma$ is an element of $\mathcal{J}^{\mathrm{reg},P}_{(\Sigma,L)}$ given in Proposition \ref{prop:transv_base_triangle}.
			Let $J_Y\in \mathcal{J}^{\mathrm{reg},P}_{(Y,\mathcal{L})}$, and let $(Z^1_{\mathcal{L}},Z^2_{\mathcal{L}},Z^3_{\mathcal{L}})$ be generic. Then, for every $(\tilde{p},k), (\tilde{q},k'),(\tilde{r},k'')\in \Crit f_{\mathcal{L}}\times m_L \Z$, $\mathbf{N}$, and $\mathbf{A}$, the moduli space $\mathcal{M}^{*}_{\mathbf{N}} (\tilde{r}_{k''}, \tilde{p}_{k}, \tilde{q}_{k'}; \mathbf{A}; \mathcal{D}^{P}_{H})$ is a smooth manifold of dimension 
			$\mu(\tilde{p}_{k}) + \mu (\tilde{q}_{k'})-\mu(\tilde{r}_{k''})+N-\dim \mathcal{L}$.
		\end{prop}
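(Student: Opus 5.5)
The plan is to imitate the proof of Proposition \ref{prop:transv_strip}, with the Floer triangle taking the place of one of the strips. First I would show that the unconstrained space $\mathcal{M}^{*}_{\mathbf{N},\mathbf{k}}(\mathbf{A};H,J_Y)$ is a smooth manifold of dimension $(N+1)\dim L+\mu_L(\mathbf{A})$. The strip components are handled exactly as before. Each strip contributes $\dim L+\mu_L(A_i)$, since its linearized operator is surjective with index $\dim L+\mu_L(A_i)$.

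For the triangle $\tilde v=(b,v)$, I would use the splitting \eqref{eq:isom_bundles} to write $D_{\tilde v}$ in the upper triangular form \eqref{eq:D_decomposition}. The horizontal part is a Cauchy--Riemann operator on $S^P$ with three boundary punctures and Lagrangian boundary condition $w^*TL$, where $w=\pi\circ v$. By removal of singularities, as in \cite[page 29]{DL2}, its kernel and cokernel are identified with those of $D_w$ on the closed disk. Hence it is surjective for $J_\Sigma\in\mathcal{J}^{\mathrm{reg},P}_{(\Sigma,L)}$, with index $\dim L+\mu_L(A)$.

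The vertical part is an operator on the trivial bundle $(\C,\R)$ over $S^P$. Its three asymptotic operators are those of $H$, $H$ and $2H$ in the convex region. By \eqref{eq:RS_line_bundle} and Lemma \ref{lem:index}, each has $\alpha_-=0$ and Robbin--Salamon index $\frac12$. The index formula for a surface with two positive ends and one negative end gives
\[
\ind D^{\mathrm v}_{\tilde v}=\tfrac12+\tfrac12-\tfrac12+\chi\text{-correction}.
\]
The Euler characteristic correction of the slit strip, relative to the strip, is $-\tfrac12\cdot 2\cdot\tfrac12\cdot$(boundary count). I would compute it concretely by gluing a trivial strip. The expected answer is $\ind D^{\mathrm v}_{\tilde v}=\dim$ of the extra $\R$-direction, namely $1$, coming from the vertical kernel spanned by $\partial_r$-type constant sections in the real direction.

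Surjectivity of the vertical part then follows from the same winding argument. A nonzero cokernel element of the adjoint would have asymptotic windings whose difference forces a negative count of zeros. The zero count on $S^P$ acquires a correction of $-\frac12$ from the slit point, which is an index-$1$ branch point. I expect this winding/zero-count bookkeeping at the slit to be the main obstacle. If it proves awkward, I would instead deform the vertical operator through Fredholm operators with the same asymptotics to the standard $\bar\partial$ on $(\C,\R)$ over $S^P$ with a compact zeroth-order term. That operator is visibly surjective with a one-dimensional kernel of constants in $\R$. Then $\ker=1$ and $\ind=1$ give $\mathrm{coker}=0$.

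Combining the pieces gives total dimension $(N+1)\dim L+\mu_L(\mathbf{A})$, with the vertical factor absorbed. Next comes transversality of $\ev$ to
\[
W^u_{Z^1_\mathcal{L}}(\tilde r)\times\Delta^{N_1}_{Z^1_\mathcal{L}}\times\Delta^{N_2}_{Z^2_\mathcal{L}}\times W^s_{Z^2_\mathcal{L}}(\tilde p)\times\Delta^{N_3}_{Z^3_\mathcal{L}}\times W^s_{Z^3_\mathcal{L}}(\tilde q).
\]
As in Proposition \ref{prop:transv_strip}, each $Z^i_\mathcal{L}$ is the horizontal lift of $Z^i_L$ and $D$ is triangular. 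So this reduces to the corresponding transversality downstairs, which is part of Proposition \ref{prop:transv_base_triangle}(a) for generic $(Z^1_L,Z^2_L,Z^3_L)$; these lift to generic triples on $\mathcal{L}$.

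The dimension then equals $(N+1)\dim L+\mu_L(\mathbf{A})$ minus the codimension of the constraint. That codimension is $(\dim\mathcal{L}-\ind\tilde r)+N(\dim\mathcal{L}-1)+\ind\tilde p+\ind\tilde q$. The result is
\[
\ind\tilde r-\ind\tilde p-\ind\tilde q+\mu_L(\mathbf{A})+N.
\]
Finally, I would use \eqref{eq:index} together with $\mu_L(\mathbf{A})=\frac{2\tau_\Sigma}{d}(k''-k-k')$, obtained as in Proposition \ref{prop:maslov index of projection}. This rewrites the expression as $\mu(\tilde p_k)+\mu(\tilde q_{k'})-\mu(\tilde r_{k''})+N-\dim\mathcal{L}$, as claimed.
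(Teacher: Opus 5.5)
Your outline follows the paper's proof step for step: the triangular splitting \eqref{eq:D_decomposition}, surjectivity of the horizontal part from $J_\Sigma\in\mathcal{J}^{\mathrm{reg},P}_{(\Sigma,L)}$, a winding argument for the vertical part, transversality of $\ev$ pulled back from the base, and your codimension count, which is correct. The gap is in the vertical operator, which you rightly identify as the crux but handle incorrectly.

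\textbf{The vertical index is $0$, not $1$.} The correction term you left open is $\chi(S^P)-\tfrac32=-\tfrac12$, so $\ind D^{\mathrm v}_{\tilde v}=-\tfrac12+\tfrac12+\tfrac12-\tfrac12=0$. There is no ``extra $\R$-direction'': a triangle has no translation symmetry, and even for strips the vertical index is $0$. There is also no kernel of real constants. The domain is $W^{1,p,\delta}$, so sections decay at all three ends. In any case $D^{\mathrm v}_{\tilde v}(1)=h''(e^b)e^b\neq 0$ near the ends, because $h''>0$ there.

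The picture you describe (index $1$, kernel the real constants) is that of $\bar\partial$ on $W^{1,p,\delta}_{\R}$, as in Proposition \ref{prop:correspondence_strip_2}. That is a different operator on a different space. This is also why your fallback fails. The zeroth-order term $\mathrm{diag}(h''(e^b)e^b,0)$ does not decay at the ends, so it is not a compact perturbation of $\bar\partial$. It also cannot be removed while keeping the asymptotic operators fixed, since $\bar\partial$ has the degenerate asymptotic operator $A_0$ of Lemma \ref{lem:index}.

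Moreover, a surjective vertical operator of index $1$ would raise the dimension by $1$ and contradict the formula you are proving. Your total $(N+1)\dim L+\mu_L(\mathbf A)$ is right only because the vertical index is $0$; saying the vertical factor is ``absorbed'' conceals this.

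\textbf{The correct argument}, as in the paper, has two steps. First, the index is $0$. Second, $\ker D^{\mathrm v}_{\tilde v}=0$: a nonzero $\zeta$ would have $\wind_{+\infty,m}(\zeta)\le\alpha_-(A_{b_m})=0$ for $m=1,2$ and $\wind_{-\infty}(\zeta)\ge\alpha_+(A_{b_3})=\tfrac12$. Its zero count $\wind_{+\infty,1}(\zeta)+\wind_{+\infty,2}(\zeta)-\wind_{-\infty}(\zeta)$ would then be negative, contradicting positivity of zeros. No slit correction enters, since the bundle pair is the constant $(\C,\R)$ and both sides of the slit carry the condition $\R$.

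\textbf{Sign error in the final step.} By Lemma \ref{lem:hol_lift2}, $\w(c_p)+\w(c_q)-\w(c_r)=d\,\omega([w])$. Each strip likewise contributes its positive-end winding minus its negative-end winding, and these telescope. Hence $\mu_L(\mathbf A)=\frac{2\tau_\Sigma}{d}(k+k'-k'')$, not $\frac{2\tau_\Sigma}{d}(k''-k-k')$. Only with this sign does $\ind_{f_{\mathcal L}}(\tilde r)-\ind_{f_{\mathcal L}}(\tilde p)-\ind_{f_{\mathcal L}}(\tilde q)+\mu_L(\mathbf A)+N$ become $\mu(\tilde p_k)+\mu(\tilde q_{k'})-\mu(\tilde r_{k''})+N-\dim\mathcal L$ via \eqref{eq:index}.
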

		\begin{proof}
			The proof proceeds along similar lines to that of Proposition \ref{prop:transv_strip}. As in that proof, the linearized operator for the Floer triangle  $\tilde{v}=(b,v)$ decomposes into 
			$
			D_{\tilde{v}}= 
			\begin{pmatrix}
				D_{\tilde{v}}^\mathrm{v} & L_\mathrm{cpt} \\[.5ex]
				0 &  D_{\tilde{v}}^\mathrm{h}
			\end{pmatrix}.
			$
			The horizontal part $D_{\tilde{v}}^\mathrm{h}$ is surjective due to our choice of $J_Y$. The vertical part has the form 
			\[
			\begin{split}
				&D_{\tilde{v}}^\mathrm{v}: W^{1,p,\delta}((S^P, \partial S^P), (\C, \R)) \longrightarrow L^{p,\delta}(S^P, \C),  \nonumber \\
				&D_{\tilde{v}}^\mathrm{v}(\sigma) = \partial_s \sigma + J_0 \partial_t \sigma + \left(\begin{matrix} h^{''}(e^{b})e^{b} & 0 \\ 0 & 0 \end{matrix} \right) \sigma,	
			\end{split}
			\]
			with the asymptotic operators  $A_{b_i}:= -J_0\frac{d}{dt} - \begin{pmatrix} h^{''}(e^{b_i})e^{b_i} & 0 \\ 0 & 0 \end{pmatrix}$ for $b_1=\ev_+^1(b)$, $b_2=\ev_+^2(b)$, and $b_3=\ev_-(b)$. Since $h''(e^{b_i})>0$ for $i=1,2,3$, Lemma \ref{lem:index} and \eqref{eq:RS_line_bundle} imply that $\mu_\mathrm{RS}(A_{b_i})=\frac{1}{2}$ for $i=1,2,3$, and thus
				\[
				\ind D_{\tilde{v}}^\mathrm{v}=\chi(S^P)-\frac{3}{2}+\mu_\mathrm{RS}(A_{b_1})+\mu_\mathrm{RS}(A_{b_2})-\mu_\mathrm{RS}(A_{b_3})=0.\\
				\]
				Assume that there exists a nonzero element $\zeta \in \ker D^{\mathrm{v}}_{\tilde{v}}$.
				Then, the asymptotic relative winding numbers $\wind_{+\infty,m}(\zeta)$ for $m=1,2$ and $\wind_{-\infty}(\zeta)$ satisfy
				\[
				\wind_{+\infty,m}(\zeta) \leq \alpha_{-}(A_{b_m})=0, \qquad \wind_{-\infty}(\zeta) \geq \alpha_{+}(A_{b_3})=\frac{1}{2}, 
				\]
				as in \eqref{eq:asymp_winding}.
				Hence, the algebraic count of zeros of $\zeta$ satisfies \[\wind_{+\infty,1}(\zeta)+\wind_{+\infty,2}(\zeta)-\wind_{-\infty}(\zeta)<0,\] which contradicts the positivity of zeros of $\zeta$. 
				Therefore, $D^{\mathrm{v}}_{\tilde{v}}$ is injective. Since $\ind D^{\mathrm{v}}_{\tilde{v}}=0$, it follows that $D^{\mathrm{v}}_{\tilde{v}}$ and hence $D_{\tilde{v}}$ are surjective.
			
			The dimension of $\mathcal{M}^{*}_{\mathbf{N}} (\tilde{r}_{k''}, \tilde{p}_{k}, \tilde{q}_{k'}; \mathbf{A}; \mathcal{D}^{P}_{H})$ is computed in the same way as in the proof of Proposition \ref{prop:transv_strip}.
			This completes the proof of Proposition \ref{prop:transv_triangle}.
		\end{proof}
		Let $J_Y \in \mathcal{J}_{(Y,\mathcal{L})}^{\mathrm{reg}} \cap \mathcal{J}_{(Y,\mathcal{L})}^{\mathrm{reg},P}$.
		We denote
			\begin{align*}
				\mathcal{M}_{\mathbf{N}} (\tilde{r}_{k''},\tilde{p}_{k}, \tilde{q}_{k'}; \mathcal{D}^P_{H}):= \bigcup_{\mathbf{A}} \mathcal{M}_{\mathbf{N}} (\tilde{r}_{k''},\tilde{p}_{k}, \tilde{q}_{k'}; \mathbf{A};\mathcal{D}^P_{H}).
			\end{align*}
		There is a free $\mathbb{R}^N$-action on $\mathcal{M}_{\mathbf{N}} (\tilde{r}_{k''},\tilde{p}_{k}, \tilde{q}_{k'}; \mathcal{D}^P_{H})$ given by translating the domain $\mathbb{R}\times [0,1]$ of each $\tilde{v}^{2H}_i$ and $\tilde{v}^{H}_j$ in the $\mathbb{R}$-direction. 
		We denote 
		\[			
		\overbar{\mathcal{M}}(\tilde{r}_{k''}, \tilde{p}_{k}, \tilde{q}_{k'}; \mathcal{D}^P_{H}) := \bigcup_{\mathbf{N}} \mathcal{M}_{\mathbf{N}} (\tilde{r}_{k''},\tilde{p}_{k}, \tilde{q}_{k'}; \mathcal{D}^P_{H}) /\mathbb{R}^N, \\
		\]
		As usual, we write $\overbar{\mathcal{M}}^{*}$ to denote the subspace consisting of simple elements.
		
		\begin{prop}\label{prop:no_escape_triangle}
			If $\mu(\tilde{p}_{k})+\mu(\tilde{q}_{k'})-\mu(\tilde{r}_{k''})-\dim \mathcal{L} \leq 1$, the following hold.
		\begin{enumerate}[(a)]
				\item Assume $N_L\geq2$. All elements of $\overbar{\mathcal{M}}(\tilde{r}_{k''}, \tilde{p}_{k}, \tilde{q}_{k'}; \mathcal{D}^P_{H})$ are simple.
				\item Assume $\rm(C^P)$, i.e.,~$N_L>\max\{\frac{1}{2}(\dim L+1),2\}$. Then there exist real numbers $r_- < r_+$ such that the image of every element of this moduli space lies in $(r_-,r_+)\times Y$. Consequently, the moduli space is compact up to breaking.
			\end{enumerate}
		\end{prop}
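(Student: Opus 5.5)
The plan mirrors the proofs of Proposition \ref{prop:simple_no_escape} and Proposition \ref{prop:conti_unique}.(a), replacing the pearl moduli spaces by the triangle moduli spaces of Definition \ref{def:pearl_Sigma} and the dimension formula of Proposition \ref{prop:transv_base_triangle}.

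\textbf{Simplicity, part (a).} This follows from the corresponding statement for $\Pi(\mathbf{v})\in\mathcal{N}_{\mathbf{N}}(r,p,q;\mathbf{A};\mathcal{D}^P)$. First, the index formula \eqref{eq:index} gives the identity
\[
\mu(\tilde p_k)+\mu(\tilde q_{k'})-\mu(\tilde r_{k''})-\dim\mathcal{L}=\ind_{f_L}(r)-\ind_{f_L}(p)-\ind_{f_L}(q)+\mu_L(\mathbf{A}).
\]
Here the winding numbers add along the triangle via Remark \ref{rem:disk_lift}. Hence the hypothesis says that the virtual dimension in Proposition \ref{prop:transv_base_triangle} is at most $1$, and Proposition \ref{prop:transv_base_triangle}.(b) gives simplicity. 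One caveat is that the central disk $\pi\circ v$ may be constant. In that case I would run the argument of Proposition \ref{prop:conti_unique}.(a): pass to the underlying simple configuration $\mathbf{A}^*$, use the free translations of the nonconstant disks, and use the freedom of the constant central piece along gradient lines. This yields $\dim\ge N^*$ minus a correction, which forces $\mu_L(\mathbf{A})=\mu_L(\mathbf{A}^*)$.

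\textbf{No escape, part (b).} Suppose a sequence escapes to the negative end. By SFT compactness \cite{BEHWZ03,BO2} it converges to a building whose top level is a punctured configuration with $m\ge1$ punctures, with $m_{\mathrm{int}}$ interior punctures asymptotic to orbits $\gamma_i$ and $m_\partial$ boundary punctures asymptotic to chords $c_j$. The analogue of \eqref{eq:index_0} reads
\[
\mu(\tilde p_k)+\mu(\tilde q_{k'})-\mu(\tilde r_{k''})-\dim\mathcal{L}=\ind_{f_L}(r)-\ind_{f_L}(p)-\ind_{f_L}(q)+\mu_L(\mathbf{A})+\sum_i\mu_\CZ(\gamma_i)+\sum_j\mu_\RS(c_j).
\]
Next I pass to the underlying simple configuration $\mathbf{A}^*$ and use $\dim\mathcal{N}^*_{\mathbf{N}}\ge N^*_1$. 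Every constant component carries a puncture, except possibly the triangle itself, which can be nontrivial with constant projection. This gives $m\ge N_0^*-1$ and the inequality
\[
1\ge \dim\mathcal{N}^* - N^* + \ldots \ge \mu_L(\mathbf{A})-\mu_L(\mathbf{A}^*)-1+\sum_i(\mu_\CZ(\gamma_i)-1)+\sum_j(\mu_\RS(c_j)-1).
\]
Since $\mu_\CZ\ge 2c_\Sigma\ge4$ and $\mu_\RS\ge N_L\ge3$ by \eqref{eq:index_lb}, each puncture contributes at least $2$. Thus $m\ge1$ contradicts this inequality only if the bookkeeping loses at most one unit.

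\textbf{Main obstacle.} The bookkeeping is delicate when the triangle component itself is constant. The triangle moduli space carries no translation action, and a constant triangle sits at a point of $W^s(p)\cap W^s(q)\cap W^u(r)$. This is a rigid incidence condition, with no flow freedom to exploit as in the continuation case. Moreover, the $\dim L\ge3$ hypothesis used in \cite{BC4} for underlying simple pearls may fail.

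This is where I expect $(\mathrm{C^P})$ to be used. I would compare the Morse indices directly: a constant triangle with all punctures on it requires $\ind(r)-\ind(p)-\ind(q)\ge-\dim L$. The punctures then contribute at least $N_L$ each, so the top component alone forces
\[
1\ge -\dim L+\mu_L(\mathbf{A})+mN_L,
\]
and I would handle this case separately. In the worst case $m=1$ with nothing else, the inequality holds only when $N_L\le\dim L+1$. So I would instead bound using the lower building levels: the punctured curves in $\R\times Y$ below have nonnegative index, which should improve the count to roughly $2N_L-\dim L-1>0$ via $N_L>\frac12(\dim L+1)$.

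Concretely, the escaping part of the limit consists of a chord or orbit connecting the top level to a lower level, and this connecting chord must be capped on both sides. Each side contributes an index of at least $N_L$, which yields the factor of $2$. Making this doubling rigorous is the step I expect to be hardest. Finally, the small-dimension cases are handled as at the end of the proof of Proposition \ref{prop:simple_no_escape}.
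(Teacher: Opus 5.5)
Part (a) is fine. It is essentially the paper's route: the paper just cites \cite[Lemma 5.2.2]{BC4}, and Proposition \ref{prop:transv_base_triangle} already allows the central class $A$ to vanish, so your caveat is unnecessary.

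Part (b), however, has a genuine gap: you never identify the degeneration that actually requires $(\mathrm{C^P})$. You assume the top level of the limit building is a punctured copy of the original configuration, that is, a connected triangle with pearls. Because the triangle has two inputs, the top level can instead \emph{disconnect}:
\begin{itemize}
\item one component $v_{\infty,\mathrm{I}}$ is a Floer strip from a Reeb chord $c_1$ at $-\infty$ up to $\tilde q_{k'}$, with no incidence condition at its negative end;
\item the other component $v_{\infty,\mathrm{II}}$ is a strip from $\tilde r_{k''}$ to $\tilde p_k$ with one boundary puncture at a chord $c_2$;
\item the two are joined in the next level by a U-shaped $J_Y$-holomorphic strip with two positive punctures $c_1,c_2$.
\end{itemize}
The factor $2$ comes from these two distinct top-level chords, not from capping one chord ``on both sides''. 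Lower-level curves never enter the top-level inequality, and a single connecting chord contributes only one $N_L$.

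Concretely, one has $\frac{2\tau_\Sigma}{d}(k+k'-k'')=\mu_L(A_\mathrm{I})+\mu_L(A_\mathrm{II})+\mu_\RS(c_1)+\mu_\RS(c_2)$. For $\dim L\ge3$ the underlying simple configurations give $\dim L-\ind_{f_L}(q)+\mu_L(A^*_\mathrm{I})\ge0$ and $\ind_{f_L}(r)-\ind_{f_L}(p)+\mu_L(A^*_\mathrm{II})\ge0$. The loss of $\dim L$ comes precisely from the free negative end of $v_{\infty,\mathrm{I}}$. Hence $1\ge-\dim L+\mu_\RS(c_1)+\mu_\RS(c_2)\ge-\dim L+2N_L$, contradicting $N_L>\frac12(\dim L+1)$. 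For $\dim L\le2$ one gets $1\ge 0-2-2+3+3$ directly.

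Conversely, the case you single out as the main obstacle is harmless, and your estimate there is off. For $\mathbf{N}=0$, a constant central triangle lies in $W^u_{Z^1_L}(r)\cap W^s_{Z^2_L}(p)\cap W^s_{Z^3_L}(q)$. For generic $(Z^1_L,Z^2_L,Z^3_L)$ this forces $\ind_{f_L}(r)-\ind_{f_L}(p)-\ind_{f_L}(q)\ge0$, not merely $\ge-\dim L$.

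More generally, the central triangle carries no translation and is not one of the pearls, so the only components forced to carry punctures are constant pearls. The simple reduction then gives $\ind_{f_L}(r)-\ind_{f_L}(p)-\ind_{f_L}(q)+\mu_L(\mathbf{A}^*)\ge-m$ with no extra $-1$ loss. Hence $1\ge\sum_i(\mu_\CZ(\gamma_i)-1)+\sum_j(\mu_\RS(c_j)-1)\ge2m$. So a connected punctured top level is excluded by $N_L>2$ alone, exactly as in Proposition \ref{prop:simple_no_escape}, which is how the paper disposes of it. $(\mathrm{C^P})$ is needed only for the splitting scenario above.
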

		
		\begin{proof}
			The statement (a) follows directly from \cite[Lemma 5.2.2]{BC4}.
			
			For (b), we proceed along similar lines as in the proof of Proposition \ref{prop:simple_no_escape}.
			Assume that there exists a sequence $\{\mathbf{v}_{\nu}\}_{\nu \in \N}$ in $\overbar{\mathcal{M}}(\tilde{r}_{k''}, \tilde{p}_{k}, \tilde{q}_{k'}; \mathcal{D}^P_{H})$ such that Floer strips or Floer triangles in $\mathbf{v}_{\nu}$ are not contained in a bounded subset of $\R\times Y$. We analyze the top component $\mathbf{v}_{\infty}$ of the limit of $\mathbf{v}_{\nu}$. There are two possible scenarios. First, $\mathbf{v}_{\infty}$ may develop a puncture. However, this is ruled out by the assumption $N_L>2$ as in the proof of Proposition \ref{prop:simple_no_escape}. 
			
			For simplicity, we describe the alternative scenario in the case of $\mathbf{N}=0$. In this situation, $\mathbf{v}_{\infty}=v_\infty$ consists of two connected curves ${v}_{\infty,\mathrm{I}}$ and ${v}_{\infty,\mathrm{II}}$. Here 
			 ${v}_{\infty,\mathrm{I}}$ is a Floer-holomorphic strip with a $Z_\mathcal{L}$-trajectory from a Reeb chord to $\tilde{q}_{k'}$, while  ${v}_{\infty,\mathrm{II}}$ is a punctured Floer strip with $Z_\mathcal{L}$-trajectories from $\tilde{r}_{k''}$ to $\tilde{p}_{k}$. In the next level of the limit, there appears a U-shaped $J_Y$-holomorphic strip connecting the asymptotic Reeb chord $c_1$ of ${v}_{\infty,\mathrm{I}}$ and the asymptotic Reeb chord $c_2$ at the puncture of ${v}_{\infty,\mathrm{II}}$; see Figure \ref{fig:u_curve}. In both ${v}_{\infty,\mathrm{I}}$ and ${v}_{\infty,\mathrm{II}}$, there is no additional puncture by the assumption $N_L >2$.
			 \begin{figure}[h]
			\centering
			\includegraphics[height = 4.5cm]{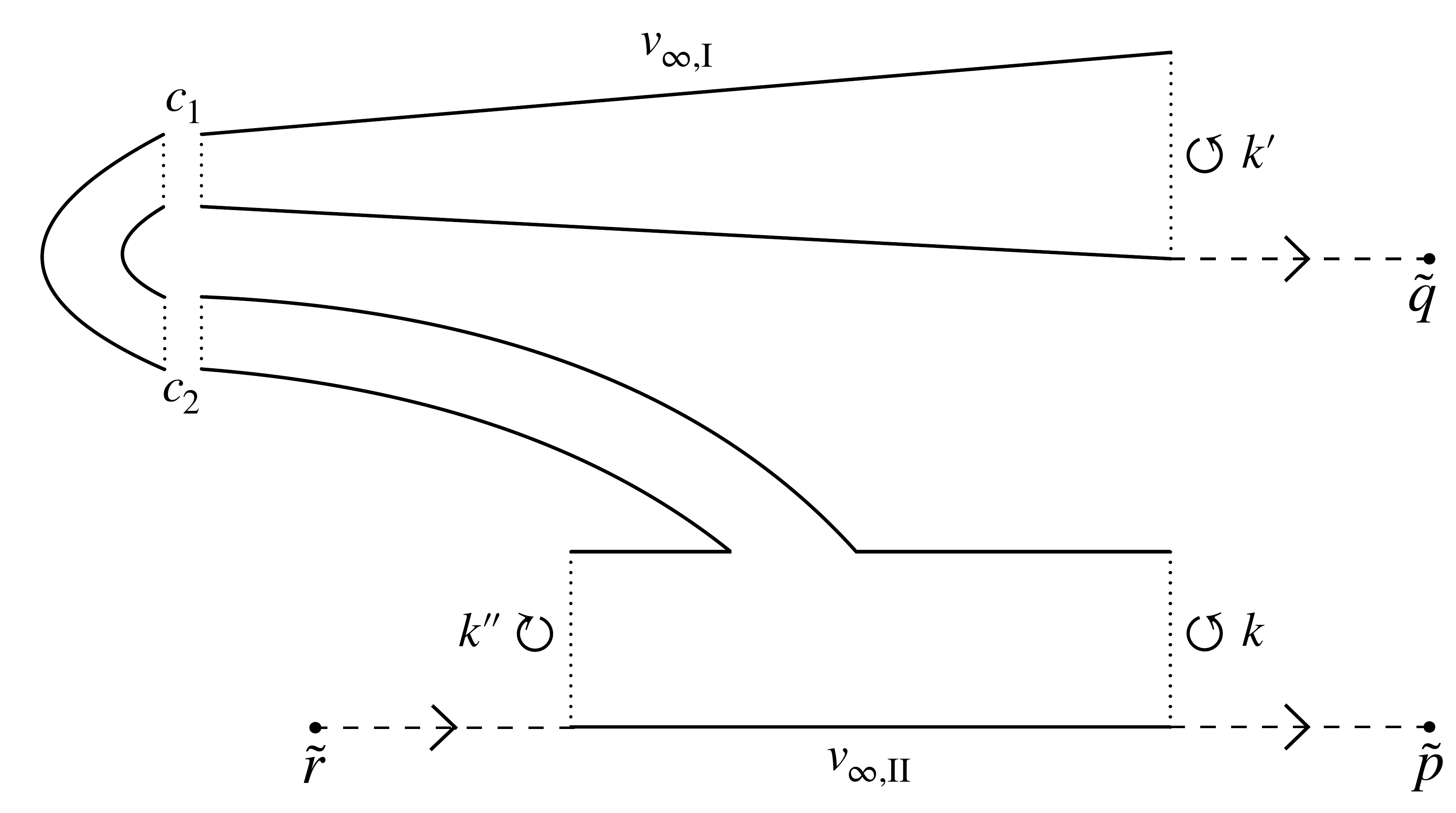}
			\caption{The limit configuration of $\mathbf{v}_{\nu}$ when $\mathbf{N}=0$}
			\label{fig:u_curve}
			\end{figure} 
			
			We first assume that $\dim L \geq 3$.
			Following the notation used in the proof of Proposition \ref{prop:simple_no_escape}, let $\mathcal{N}_{1}(\emptyset,q;{A}_\mathrm{I};\mathcal{D})$ be the moduli space of chains of pearls positively asymptotic to $q$,  with no asymptotic condition at the negative end, containing $\Pi({v}_{\infty,\mathrm{I}})$. Similarly, let  $\mathcal{N}_{1}(r,p;{A}_\mathrm{II};\mathcal{D})$ be moduli space containing $\Pi({v}_{\infty,\mathrm{II}})$. As observed in \eqref{eq:index_0}, the asymptotic Reeb chords $c_1$ and $c_2$ satisfy 
			\begin{equation}\label{eq:index_pop_punctures}
				\frac{2\tau_\Sigma}{d}(k+k'-k'')=\mu_L({A}_\mathrm{I})+\mu_L({A}_\mathrm{II})+\mu_{\RS}(c_1)+\mu_{\RS}(c_2).
			\end{equation}
			Since $\dim L \geq 3$, we may take the underlying simple chains of pearls of $\Pi({v}_{\infty,\mathrm{I}})$ and $\Pi({v}_{\infty,\mathrm{II}})$. We denote by $\mathcal{N}^*_{1}(\emptyset,q;{A}_\mathrm{I}^*;\mathcal{D})$ and $\mathcal{N}^*_{1}(r,p;{A}_\mathrm{II}^*;\mathcal{D})$ the moduli spaces containing these underlying simple chains of pearls, respectively. 
			They have
			\begin{equation}\label{eq:dim_limit}
			\begin{split}
				&\dim \mathcal{N}^*_{1}(\emptyset,q;{A}_\mathrm{I}^*;\mathcal{D}) = \dim L -\ind_{f_L}(q)+\mu_L({A}_\mathrm{I}^*) \geq 0,\\[.5ex]
				&\dim  \mathcal{N}^*_{1}(r,p;{A}_\mathrm{II}^*;\mathcal{D}) = \ind_{f_L}(r)-\ind_{f_L}(p)+\mu_L({A}_\mathrm{II}^*) \geq 0.
			\end{split}				
			\end{equation}
			Using the index condition on $\tilde{p}_{k}$, $\tilde{q}_{k'}$, and $\tilde{r}_{k''}$, we obtain
			\[
			\begin{split}
				1&\geq \mu(\tilde{p}_{k})+\mu(\tilde{q}_{k'})-\mu(\tilde{r}_{k''})-\dim \mathcal{L} \\[.5ex]
				&= \ind_{f_L}(r)-\ind_{f_L}(p)-\ind_{f_L}(q)+\mu_L({A}_\mathrm{I})+\mu_L({A}_\mathrm{II})+\mu_{\RS}(c_1)+\mu_{\RS}(c_2)\\[.5ex]
				&\geq -\dim L + \mu_{\RS}(c_1)+\mu_{\RS}(c_2).
			\end{split}
			\]
			The second line follows from \eqref{eq:index} and \eqref{eq:index_pop_punctures}. The last line follows from $\mu_L({A}_\mathrm{I})\geq \mu_L({A}_\mathrm{I}^*)$, $\mu_L({A}_\mathrm{II})\geq \mu_L({A}_\mathrm{II}^*)$, and \eqref{eq:dim_limit}. Both $c_1$ and $c_2$ are trivial in $\pi_1(Y,\mathcal{L})$ as $\tilde{p}_{k}$, $\tilde{q}_{k'}$, and $\tilde{r}_{k''}$ are so. Thus $\mu_{\RS}(c_1)$ and $\mu_{\RS}(c_2)$ are at least $N_L$. By our assumption $N_L> \frac{1}{2}(\dim L + 1)$, we arrive at a contradiction. In the case of $\mathbf{N}\neq0$, an analogous argument, incorporating  translations of the additional disks, yields a contradiction. 
			
			It remains to consider the case $\dim L \leq 2$. As before, we have
			\[
			\begin{split}
				1&\geq \mu(\tilde{p}_{k})+\mu(\tilde{q}_{k'})-\mu(\tilde{r}_{k''})-\dim \mathcal{L} \\[.5ex]
				&= \ind_{f_L}(r)-\ind_{f_L}(p)-\ind_{f_L}(q)+\mu_L({A}_\mathrm{I})+\mu_L({A}_\mathrm{II})+\mu_{\RS}(c_1)+\mu_{\RS}(c_2)\\[.5ex]
				&\geq 0-2-2+0+0+3+3=2,
			\end{split}
			\]
			which is a contradiction.
			This completes the proof of Proposition \ref{prop:no_escape_triangle}.(b).
		\end{proof}
		
		Each $\mathbf{v}\in \overbar{\mathcal{M}}^{*}(\tilde{r}_{k''}, \tilde{p}_{k}, \tilde{q}_{k'};\mathcal{D}^P_{H})$ with $\mu(\tilde{p}_{k})+\mu(\tilde{q}_{k'})-\mu(\tilde{r}_{k''})=\dim \mathcal{L}$ induces an isomorphism
		\[
		C([\mathbf{v}]) : \mathfrak{o}(\tilde{p}_{k})\otimes \mathfrak{o}(\tilde{q}_{k'}) \longrightarrow \mathfrak{o}(\tilde{r}_{k''}).
		\]
		The chain-level triangle product is defined by  
		\[
			\begin{split}
				&\star_H:= \bigoplus_{(\tilde{p},\tilde{q},\tilde{r},k,k',k'')}\sum_{[\mathbf{v}]} C([\mathbf{v}]):\FC^{(a,b)}(\mathcal{L}^{\tilde{\sigma}};\mathcal{D}_H) \otimes \FC^{(a,b)}(\mathcal{L}^{\tilde{\sigma}};\mathcal{D}_H) \to \FC^{(a+b, 2b)}(\mathcal{L}^{\tilde{\sigma}};\mathcal{D}_{2H})
			\end{split}
		\]
		The direct sum runs over $(\tilde{p},k), (\tilde{q},k')\in \Crit f_{\mathcal{L}} \times \m_{\mathcal{L}}^{(a,b)}(H)$ and $(\tilde{r},k'')\in \Crit f_{\mathcal{L}}\times \m_{\mathcal{L}}^{(a+b,2b)}(2H)$ satisfying $\mu(\tilde{p}_{k})+\mu(\tilde{q}_{k'})-\mu(\tilde{r}_{k''})=\dim \mathcal{L}$. 
		The sum runs over $[\mathbf{v}] \in \overbar{\mathcal{M}}^{*}(\tilde{r}_{k''}, \tilde{p}_{k}, \tilde{q}_{k'};\mathcal{D}^P_{H})$.
		It is a chain map by Proposition \ref{prop:no_escape_triangle} and induces 
		\[ \FH^{(a,b)}_l(\mathcal{L}^{\tilde{\sigma}};\mathcal{D}_H) \otimes \FH^{(a,b)}_m(\mathcal{L}^{\tilde{\sigma}};\mathcal{D}_H) \longrightarrow \FH^{(a+b ,2b)}_{l+m-\dim \mathcal{L}}(\mathcal{L}^{\tilde{\sigma}};\mathcal{D}_{2H}).\]
		By taking limits $ \varinjlim\limits_{b\uparrow+\infty}\varprojlim\limits_{a\downarrow-\infty}  \varinjlim\limits_{H\in\mathcal{H}}$, we define the triangle product
		\[ \star :  \RFH_l(\mathcal{L}^{\tilde{\sigma}}) \otimes \RFH_m(\mathcal{L}^{\tilde{\sigma}}) \longrightarrow \RFH_{l+m-\dim \mathcal{L}}(\mathcal{L}^{\tilde{\sigma}}).\]
		Finally, we remark that the $\Z[T,T^{-1}]$-module structure in Remark \ref{rem:RFH_laurent} is compatible with this product, and therefore $\RFH_*(\mathcal{L}^{\tilde{\sigma}})$ is an algebra over $\Z[T,T^{-1}]$.

		\section{Proof of the main results}
		In this section, we prove Theorem \ref{thm:isom_transfer} by establishing a correspondence between the moduli spaces used to define $\QH(L^{\sigma})$ and those used in the definition of $\RFH(\mathcal{L}^{\tilde{\sigma}})$.
		We first show that a $J_\Sigma$-holomorphic disk in $(\Sigma,L)$ lifts to a $J_Y$-holomorphic strip in $(\R\times Y,\R\times \mathcal{L})$. See \cite{Riz16,KPS24} for related works.
 		We then establish a bijection between $J_Y$-holomorphic strips and Floer strips in $(\R\times Y,\R\times \mathcal{L})$.  
		
		\subsection{Lifting of holomorphic disks}
		
		We fix $J_Y \in \mathcal{J}_{Y}$ and denote its horizontal part by $J_\Sigma$. We denote
		\[
		\mathbb{R}_0:=\{x+iy\in \partial \D \mid y<0 \},\qquad \mathbb{R}_1:=\{x+iy\in \partial \D \mid y>0\}.
		\]
		We tacitly use a biholomorphism $(\D\setminus\{\pm 1\},\R_0,\R_1)\cong (\R\times [0,1],\R\times \{0\},\R\times \{1\})$.
		
		\begin{lemma}\label{lem:hol_lift}
			Let $w:(\D,\partial \D)\to (\Sigma, L)$ be a $J_\Sigma$-holomorphic disk with $q:=w(-1)$ and $p:=w(1)$. Let $c_q:([0,1],\{0,1\})\to(Y_q,\mathcal{L}_q)$ be any generalized Reeb chord. Then, there exists a $J_Y$-holomorphic strip $\tilde{u}=(a,u):\R\times[0,1]\to (\R\times Y,\R\times\mathcal{L})$ such that $\pi\circ u=w$ and
			\[
			\lim_{s\to-\infty} u(s,t) = c_q(t),\qquad \lim_{s\to\infty} u(s,t) = c_p(t),
			\]
			where $c_p:([0,1],\{0,1\})\to(Y_p,\mathcal{L}_p)$ is a generalized Reeb chord with $\w(c_p)-\w(c_q)= d\cdot\omega([w])$. 
			Moreover, if there is another such strip $\tilde u'=(a',u')$, then $u=u'$ and $a-a'$ is constant.
		\end{lemma}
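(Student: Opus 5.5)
We view a $J_Y$-holomorphic strip $\tilde u=(a,u)$ over a fixed $J_\Sigma$-holomorphic map $w$ as a section of a line-bundle problem. Since $J_Y$ is cylindrical with $J_Y\partial_r=R$ and $\pi_*(J_Y|_\xi)=J_\Sigma$, the Cauchy--Riemann equation $\bar\partial_{J_Y}\tilde u=0$ splits into a horizontal part and a vertical part. The horizontal part says exactly that $\pi\circ u$ is $J_\Sigma$-holomorphic. The vertical part is the pair
\[
\partial_s a-\alpha(\partial_t u)=0,\qquad \partial_t a+\alpha(\partial_s u)=0 .
\]
This says the $(0,1)$-part of $da+i\,u^*\alpha$ vanishes.

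First we pull back the prequantization bundle along $w|_{\D\setminus\{\pm1\}}$, which is contractible. We choose a section $u_0$ over the strip $Z=\R\times[0,1]$ with boundary on $\mathcal{L}$. On each boundary line, $w$ maps into $L$, and the Legendrian lift gives a horizontal lift with values in $\mathcal{L}$. We first lift the boundary arcs horizontally. The lift on $\R\times\{0\}$ is chosen so that its end at $s=-\infty$ is $c_q(0)$, and the lift on $\R\times\{1\}$ so that its end at $s=-\infty$ is $c_q(1)$. We then extend over the interior and fix the trivialization $u=u_0\cdot e^{2\pi i\theta}$ with $\theta:Z\to\R$.

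With this trivialization, $u^*\alpha=u_0^*\alpha+d\theta$, and $u_0^*\alpha$ vanishes on the boundary. The vertical equation becomes $\bar\partial(a+i\theta)=-\,i(u_0^*\alpha)^{0,1}$. This is an inhomogeneous $\bar\partial$-problem for the complex function $f=a+i\theta$ on the strip, with Neumann-type boundary condition $\theta|_{\partial Z}=$ const. The constant is $0$ on $t=0$ and a fixed value on $t=1$, namely the fiber offset between the two boundary lifts at the $-\infty$ end. We solve it as follows. Since $d(u_0^*\alpha)=w^*\omega$ has finite area, and by the removal of singularities $w$ extends smoothly over $\pm1$, we can write the problem as a Poisson equation
\[
\Delta\theta=w^*\omega/ds\wedge dt
\]
with Dirichlet data, together with $a$ determined by integrating the conjugate equation. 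Here $\Delta\theta=w^*\omega/ds\wedge dt$ holds up to sign; equivalently, we require $d(a\,dt\ldots)$ to be closed. The Dirichlet problem on the strip with bounded data and integrable source has a unique bounded solution $\theta$. Then $a$ is recovered, uniquely up to an additive constant, from the closed $1$-form $\ast d\theta+\ast u_0^*\alpha$.

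Next we examine the asymptotics. Near $s=\pm\infty$, exponential decay of $w$ toward the constants $q,p$ (strip-like ends) makes the source decay. Hence $\theta(s,\cdot)$ converges to the harmonic function on $[0,1]$ with the given boundary values, which is linear in $t$. So $u(s,\cdot)$ converges to a path in a single fiber that moves at constant speed, i.e.\ a generalized Reeb chord, with endpoints in $\mathcal{L}_q$ and $\mathcal{L}_p$. At $-\infty$, the boundary values were arranged to give exactly $c_q$. This works because the relative winding, and not only the endpoints, is fixed by the choice of integer lift of $\theta$ on $t=1$. At $+\infty$ we obtain $c_p$. The difference $\w(c_p)-\w(c_q)$ is computed by Stokes' theorem applied to $u^*\alpha$ over $Z$: the boundary arcs contribute zero since they are Legendrian, and the ends contribute $\frac1d\w$. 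This gives $d\cdot\omega([w])$, exactly as in Remark \ref{rem:disk_lift}.

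For uniqueness, suppose a second solution exists. Its difference from the first gives $(a-a')+i(\theta-\theta')$, which is holomorphic on $Z$. The imaginary part $\theta-\theta'$ is a real-valued function that is constant on each boundary line, since $\mathcal{L}_x$ is discrete, and it is bounded with the same limits at $-\infty$. By the maximum principle, a bounded harmonic function on the strip vanishing on the boundary is zero, so $u=u'$. Then $a-a'$ is a harmonic function with $d(a-a')=0$ by the Cauchy--Riemann equations, so $a-a'$ is constant.

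I expect the main obstacle to be the analytic existence step with correct asymptotics. Specifically, one must show that the solution has finite Hofer-type behavior, with $a$ bounded appropriately so that the limits are chords rather than drifting, and that convergence is in $C^\infty$. I would handle this by using exponential decay of $w$ on strip-like ends, together with explicit estimates for the Poisson kernel of the strip.
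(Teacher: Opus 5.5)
There is a genuine error in the central existence step: the Poisson equation you impose on $\theta$ is the one satisfied by $a$, not by $\theta$. Write $\beta_0:=u_0^*\alpha=\beta_{0,s}\,ds+\beta_{0,t}\,dt$, so that $u^*\alpha=\beta_0+d\theta$, and use $\ast ds=dt$, $\ast dt=-ds$. Your vertical equations $\partial_s a=\alpha(\partial_t u)$, $\partial_t a=-\alpha(\partial_s u)$ say precisely $da=-\ast(\beta_0+d\theta)$, or equivalently $\ast da=\beta_0+d\theta$.

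Taking $d$ of the second form gives $\Delta a\,ds\wedge dt=d\beta_0=w^*\omega$. Here $a$ carries the Neumann condition $\partial_t a=0$ on $\R\times\{0,1\}$. Taking $d$ of the first form gives $\Delta\theta\,ds\wedge dt=-d{\ast}\beta_0$, i.e.\ $\Delta\theta=-(\partial_s\beta_{0,s}+\partial_t\beta_{0,t})$. Here $\theta$ carries the Dirichlet condition. So the gauge-invariant curl $w^*\omega$ drives $a$, while $\theta$ is driven by the gauge-dependent divergence of $u_0^*\alpha$.

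If $\theta$ solves $\Delta\theta\,ds\wedge dt=\pm w^*\omega$ as you propose, then $d(\ast d\theta+\ast\beta_0)=\pm w^*\omega+d{\ast}\beta_0$, which is nonzero in general. The $1$-form from which you want to integrate $a$ is then not closed, and the construction breaks down. Your phrases ``Neumann-type boundary condition $\theta|_{\partial Z}=$ const'' and ``up to sign; equivalently \dots'' are symptoms of this swap of the roles of $a$ and $\theta$.

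The repair is easy. Solve $\Delta\theta=-\operatorname{div}\beta_0$ with your constant Dirichlet data; this is exactly the integrability condition for $da=-\ast(\beta_0+d\theta)$. Choose $u_0$ to converge exponentially to fiber paths at the ends, which is possible because $w$ is smooth at $\pm1$; then the source is bounded and converges. If $\beta_0\to\phi_\pm(t)\,dt$, the limit $\theta_\pm$ of $\theta(s,\cdot)$ solves $\theta_\pm''=-\phi_\pm'$. Hence the total fiber coordinate $\int_0^t\phi_\pm+\theta_\pm$ is affine in $t$, and the limits are automatically generalized Reeb chords. This settles the asymptotic issue you flagged. Your $\bar\partial$-formulation, the Stokes computation of $\w(c_p)-\w(c_q)$, and the uniqueness argument are fine as written.

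Once corrected, your route is a genuine variant of the paper's. The paper takes a holomorphic trivialization of the pulled-back $\C^*$-bundle $w^*(\R\times Y)$, so lifts become holomorphic maps $\mu=e^{\bar h+ih}:\D\setminus\{\pm1\}\to\C^*$ and the equation is homogeneous. The Legendrian condition then becomes non-constant Dirichlet data for the harmonic function $h$: $f$ on $\R_0$ and $g+\w(c_q)$ on $\R_1$. Uniqueness is the freedom to add a constant to $\bar h$. Your gauge, horizontal along the boundary, trades that variable boundary data for an inhomogeneous term. The paper's version needs only the Poisson integral on the disk; yours makes the end asymptotics more explicit.
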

		\begin{proof}
			The proof is inspired by the argument in \cite[Lemma 7.1]{BK13}. 
			We consider the bundle $\pi_{\mathbb{R}\times Y}:\mathbb{R}\times Y \to \Sigma$ with fibers $\R\times S^1\cong \C^*:=\C\setminus\{0\}$. We 
			choose a unitary trivialization $\Phi: (w^*(\mathbb{R}\times Y), w^*J_{Y}) \to (\D\times \mathbb{C}^*, J_0)$ of the pullback  bundle by $w$ and denote
			\[
			\mathcal{L}_{\mathbb{R}_i}:=\Phi(w|_{\mathbb{R}_i}^*(\mathbb{R}\times \mathcal{L}))\subset \mathbb{R}_i \times \mathbb{C}^*, \quad i=0,1
			.
			\] 
			Note that there are smooth functions $f:\mathbb{R}_0\to \mathbb{R}$ and $g:\mathbb{R}_1\to\mathbb{R}$ such that 
			\begin{align*}
				\mathcal{L}_{\mathbb{R}_0}=\{(\xi,re^{i (f(\xi)+\frac{2\pi\ell}{d}) })\in \mathbb{R}_0\times \mathbb{C}^* \mid  r\in\mathbb{R}_{>0},\, \ell=1,\dots,d\},\\[.5ex]
				\mathcal{L}_{\mathbb{R}_1}=\{(\xi,re^{i (g(\xi)+\frac{2\pi\ell}{d}) } )\in \mathbb{R}_1\times \mathbb{C}^* \mid  r\in\mathbb{R}_{>0},\, \ell=1,\dots,d\},
			\end{align*}
			and
			\[
			\lim_{\sigma\to-\infty} (f-g)(\sigma)=0 ,\qquad 
			\lim_{\sigma\to\infty} (f-g)(\sigma)\in \frac{1}{d}\Z.
			\]
			Viewing $f$ and $g+\w(c_q)$ together as a function on $\partial\D$ of class $L^1$, we know that there is a harmonic function $h:\D\setminus\partial\D \to \mathbb{R}$ whose  radial limit agrees with $f$ on $\R_0$ and with $g+\w(c_q)$ on $\R_1$. Let $\bar{h}$ be a harmonic conjugate of $h$. We define a holomorphic function 
			\[
			\mu:= e^{\bar{h}+ih}:\D\setminus \{\pm 1\}\cong\R\times [0,1]\longrightarrow \mathbb{C}^*.
			\] 
			Through the trivialization $\Phi$, the pair $(w|_{\D\setminus \{\pm 1\}},\mu)$ gives rise to a $J_Y$-holomorphic lift $\tilde u$  of $w|_{\D\setminus \{\pm 1\}}$ such that the negative asymptotic orbit is a generalized Reeb chord over $q$  with relative winding number $\w(c_q)$. After multiplying $\mu$ by a suitable factor $e^{\frac{2\pi i\ell}{d}}$ with $1\leq \ell\leq d$, we can arrange that the negative asymptotic orbit of $\tilde{u}$ coincides with $c_q$. The positive asymptotic orbit of $\tilde u$ is a generalized Reeb chord $c_p$ over $p$, and the identity 
			\[
			\omega([w])=\int_{\R\times [0,1]} \tilde{u}^*d\alpha = \frac{1}{d}(\w(c_p)-\w(c_q))
			\] 
			follows from $\pi^*\omega = d\alpha$ and Stokes' theorem. 
			The described correspondence between $\tilde u$ and $\mu$ is bijective up to the addition of a constant to  $\bar h$. This finishes the proof.
		\end{proof}
		
		\begin{remark}
			In the previous lemma, if $\w(c_q)=0$ or $\w(c_p)=0$, then $\displaystyle \lim_{s\to-\infty}a$ or $\displaystyle\lim_{s\to \infty}a$ is finite, respectively.
			Otherwise, we have
			\[
			\w(c_q)\cdot \big(\lim_{s\to-\infty}a\big)=-\infty,\qquad \w(c_p) \cdot \big(\lim_{s\to\infty}a\big)=+\infty.
			\]	
		\end{remark}
		
		In fact, Lemma \ref{lem:hol_lift} extends to $J_\Sigma$-holomorphic disks with an arbitrary number of boundary punctures. The following lemma addresses the case with three punctures. Its proof is a straightforward modification of that of Lemma \ref{lem:hol_lift}. 
		
		\begin{lemma}\label{lem:hol_lift2}
			Let $w : (\D , \partial \D) \to (\Sigma,L)$ be a $J_\Sigma$-holomorphic disk with $p:=w(e^{\frac{-\pi i}{3}})$, $q:=w(e^{\frac{\pi i}{3}})$, and $r:=w(-1)$.
			Let $c_p:([0,1],\{0,1\})\to (Y_p,\mathcal{L}_p)$ and $c_q:([0,1],\{0,1\})\to (Y_q,\mathcal{L}_q)$ be any generalized Reeb chords.
			Then, there exists a $J_Y$-holomorphic triangle $\tilde{u}=(a,u):(S^P,\partial S^P) \to (\R\times Y, \R \times \mathcal{L})$, together with an integer $\ell \in \{1,\dots, d\}$, such that $\pi \circ u =w$ and 
			\[
			\lim_{s\to \infty} u(s,t-1)=c_p(t), \quad \lim_{s\to \infty} u(s,t)=\phi_R^{\frac{\ell}{d}}(c_q(t)), \quad \lim_{s\to -\infty} u(s,2t-1)= c_r(t),
			\]
			where $c_r:([0,1],\{0,1\})\to (Y_r,\mathcal{L}_r)$ is a generalized Reeb chord with $\w(c_p)+\w(c_q)-\w(c_r)=d\cdot\omega([w])$.
			Moreover, if there exists another such triangle $\tilde{u}'=(a',u')$, together with an integer $\ell'$, satisfying the same properties, then $u=u'$, $\ell=\ell'$, and $a-a'$ is constant.
		\end{lemma}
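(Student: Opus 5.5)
We follow the proof of Lemma \ref{lem:hol_lift}, now on the slit strip $S^P$, which is biholomorphic to $\D$ with three boundary punctures $e^{-\pi i/3}$, $e^{\pi i/3}$, $-1$. The boundary $\partial\D$ minus these points splits into three arcs. We label them $\R_0$ (from $-1$ to $e^{-\pi i/3}$), $\R_{\mathrm{mid}}$ (from $e^{-\pi i/3}$ to $e^{\pi i/3}$) and $\R_1$ (from $e^{\pi i/3}$ to $-1$). They correspond to the lower boundary $\R\times\{-1\}$, the slit $\{s\ge 0\}\times\{0\}$ seen from both sides, and the upper boundary $\R\times\{1\}$ of $S^P$.

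The first step is to fix a unitary trivialization $\Phi:(w^*(\R\times Y),w^*J_Y)\to(\D\times\C^*,J_0)$. In it, the three boundary Lagrangians are $\{re^{i(f_j(\xi)+2\pi\ell/d)}\}$ for smooth phase functions $f_0,f_{\mathrm{mid}},f_1$ on the respective arcs. At each puncture, adjacent phases differ in the limit by an element of $\frac1d\Z$; this is the Legendrian condition together with $\mathcal{L}_x\cong\frac1d\Z/\Z$ in each fiber. A $J_Y$-holomorphic lift over $w$ corresponds, as before, to a holomorphic $\mu=e^{\bar h+ih}$ whose argument $h$ has boundary values in the correct $\frac1d\Z$-translate of $f_j$ on each arc.

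The second step is to choose the boundary data. Prescribing $c_p$ and $c_q$ at the two positive punctures fixes the jump of $h$ at $e^{-\pi i/3}$ and at $e^{\pi i/3}$ up to the appropriate winding offsets $\w(c_p)$, $\w(c_q)$. Concretely, we take boundary values $f_0$, then $f_{\mathrm{mid}}+\w(c_p)/d$-type shifts, then $f_1$ with the accumulated shift. The resulting $L^1$ function on $\partial\D$ has a harmonic extension $h$, and with a harmonic conjugate $\bar h$ we define $\mu$. The jump at $-1$ is then forced, which determines $c_r$. Stokes' theorem with $\pi^*\omega=d\alpha$ gives $\w(c_p)+\w(c_q)-\w(c_r)=d\,\omega([w])$.

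The third step deals with the overall rotation. Multiplying $\mu$ by $e^{2\pi i\ell_0/d}$ is the only global freedom, and we use it to make the asymptotic chord at the $p$-end equal exactly $c_p$. The chord at the $q$-end then agrees with $c_q$ only up to a deck transformation $\phi_R^{\ell/d}$. Once $\Phi$ and the boundary branches are fixed, this $\ell$ is determined by the holonomy of $\mathcal{L}$ along the arc $\R_{\mathrm{mid}}$ of $w$, and this is why it appears in the statement.

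The main obstacle is uniqueness together with the bookkeeping at the punctures. Suppose $\tilde u'$ with $\ell'$ is another lift. Then $\log(\mu'/\mu)$ is holomorphic and has imaginary part locally constant in $\frac{2\pi}{d}\Z$ on each boundary arc. The jumps at the punctures are dictated by the chords. Since the chord at $p$ is fixed to be $c_p$, the argument difference on $\R_0$ vanishes. Boundedness of $\operatorname{Im}\log(\mu'/\mu)$ then lets us apply the maximum principle for the harmonic function $\operatorname{Im}\log(\mu'/\mu)$ with piecewise-constant boundary values whose jumps are forced to be zero, which gives $\operatorname{Im}\log(\mu'/\mu)\equiv 0$. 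It follows that $\ell=\ell'$, $u=u'$, and $\log|\mu'/\mu|$ is a harmonic function with vanishing conjugate, hence constant, so $a-a'$ is constant. The delicate point I expect is checking that the $q$-end jump is genuinely forced, so that $\ell$ is not a free choice; this comes from comparing the winding of $\mathrm{hol}$ along $\R_{\mathrm{mid}}$ for both lifts.
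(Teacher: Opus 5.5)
Your proposal is correct and takes the same route as the paper, which proves this lemma as a straightforward modification of Lemma~\ref{lem:hol_lift}. The steps are: take the harmonic extension of the piecewise phase data, with the jumps at $e^{\mp\pi i/3}$ fixed by $\w(c_p)$ and $\w(c_q)$; exponentiate via a harmonic conjugate; use the residual rotation by $e^{2\pi i\ell/d}$ to match $c_p$, which leaves the deck shift at the $q$-end; and obtain uniqueness up to adding a constant to $\bar h$. Two small clarifications: the boundedness of $\operatorname{Im}\log(\mu'/\mu)$ you invoke follows from the uniform convergence of $u,u'$ to their asymptotic chords, and the $q$-end jump is not delicate, since $\phi_R^{\ell/d}\circ c_q$ and $\phi_R^{\ell'/d}\circ c_q$ have the same winding number $\w(c_q)$ (after which the jump at $-1$ of the piecewise-constant difference vanishes automatically).
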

		
		\subsection{Correspondence between Floer and $J_Y$-holomorphic strips}\label{sec:correspond_strip}
		
		In this section, we establish a correspondence between Floer strips and $J_Y$-holomorphic strips by adapting the proof of \cite[Proposition 6.1]{DL1}, which treats the periodic case.  Let $(H,J_Y) \in \mathcal{H}\times\mathcal{J}_{Y}$ with $H(r,y)=h(e^r)$.
		
		\begin{prop}\label{prop:correspondence_strip}
			Let $\tilde{u}=(a,u):(\R\times[0,1],\R\times\{0,1\}) \to (\R \times Y,\R \times \mathcal{L})$ be a $J_Y$-holomorphic strip. Assume that $\ev_\pm(u)$ exist and there are $r_\pm\in(-\eta,\eta)$ such that $d\cdot h'(e^{r_\pm})=\w(\ev_\pm(u))$, i.e.,~there are  chords of $X_H$ in $(-\eta,\eta)\times Y$ corresponding to $\ev_\pm(u)$.
			Then, there exists a unique pair of a Floer strip $\tilde{v}=(b,v): (\R\times[0,1],\R\times\{0,1\}) \to (\R \times Y,\R \times \mathcal{L})$ and a smooth map $\mathbf{e}=(e_1,e_2):(\R\times[0,1],\R\times\{0,1\}) \to (\C,\R)$ satisfying
			\begin{equation}\label{eq:hol_Floer_strip}
				de_1-de_2\circ i +h'(e^{a+e_1})ds=0,
			\end{equation}
			and 
			\[
			\ev_\pm(\tilde{v})=(r_\pm, \ev_\pm(u)),\qquad (a,u)=(b-e_1, \phi_R^{-e_2}\circ v).
			\]			 
		\end{prop}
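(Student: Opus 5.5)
The plan is to treat $\mathbf{e}=(e_1,e_2)$ as a gauge transformation and reduce the statement to a scalar nonlinear $\bar\partial$-problem on the strip, following the proof of \cite[Proposition 6.1]{DL1}.

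\textbf{Step 1: reduction to \eqref{eq:hol_Floer_strip}.} Given $\tilde u=(a,u)$ and any $\mathbf{e}$ with $e_2|_{\R\times\{0,1\}}=0$, set $b:=a+e_1$ and $v:=\phi_R^{e_2}\circ u$. Since $e_2$ vanishes on the boundary, $v$ still maps $\R\times\{0,1\}$ to $\mathcal{L}$. Next use that $J_Y$ is $r$-invariant, maps $\partial_r$ to $R$, and is preserved by the Reeb ($S^1$-)flow, because its horizontal part is pulled back from $J_\Sigma$. We also have $X_H=h'(e^r)R$. With these, a direct computation gives
\[
\partial_s\tilde v+J_Y(\partial_t\tilde v-X_H(\tilde v))=d\phi\big(\partial_s\tilde u+J_Y\partial_t\tilde u\big)+\big(\partial_se_1-\partial_te_2+h'(e^{b})\big)\partial_r+\big(\partial_se_2+\partial_te_1\big)R .
\]
Evaluating $de_1-de_2\circ i+h'(e^{a+e_1})ds$ on $\partial_s,\partial_t$ gives exactly the two scalar coefficients above. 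Hence $(\tilde u,\mathbf{e})\mapsto\tilde v$ is a bijection between $J_Y$-holomorphic strips with a solution $\mathbf{e}$ of \eqref{eq:hol_Floer_strip} and Floer strips. It remains to solve \eqref{eq:hol_Floer_strip} uniquely with the prescribed asymptotics.

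\textbf{Step 2: asymptotic normalization.} Near $s=\pm\infty$, $\tilde u$ converges to the trivial strip over the chord $\ev_\pm(u)$, which has speed $T_\pm=\w(\ev_\pm(u))/d=h'(e^{r_\pm})$. Thus $a(s,t)=T_\pm s+a_\pm+O(e^{-\delta|s|})$. The requirement $\ev_\pm(\tilde v)=(r_\pm,\ev_\pm(u))$ then translates into $e_2\to0$ and $e_1-(r_\pm-T_\pm s-a_\pm)\to0$ exponentially. So I will fix a smooth reference $\mathbf{e}^0$ with exactly these asymptotics and with $e^0_2=0$ on the boundary. Then I seek $\mathbf{e}=\mathbf{e}^0+\zeta$ with $\zeta\in W^{1,p,\delta}((\R\times[0,1],\R\times\{0,1\}),(\C,\R))$. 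Writing $\zeta$ as a complex function, the equation is
\[
\mathcal{F}(\zeta):=\partial_s\zeta+i\partial_t\zeta+h'\big(e^{a+e^0_1+\mathrm{Re}\,\zeta}\big)-h'\big(e^{a+e_1^0}\big)+\eta_0=0 ,
\]
where $\eta_0:=\bar\partial\mathbf{e}^0+h'(e^{a+e_1^0})$ decays exponentially.

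\textbf{Step 3: existence and uniqueness.} This is the step I expect to be the main obstacle. The difficulty is that $h$ is convex only on $(e^{-\eta},e^{\eta})$, so a global maximum principle is not available a priori. I would first establish a priori $C^0$ bounds confining $b=a+e_1$ to the convex region $(-\eta,\eta)$. For this I would apply the maximum principle to $e_1$, which by differentiating \eqref{eq:hol_Floer_strip} satisfies
\[
\Delta e_1+h''(e^{b})e^{b}\,\partial_s b=0 ,
\]
with Neumann-type boundary behaviour, combined with the fact that $r_\pm\in(-\eta,\eta)$. Once $b$ is confined, the linearization $D\mathcal{F}_\zeta=\partial_s+i\partial_t+\mathrm{diag}(h''(e^b)e^b,0)$ has positive potential. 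It is exactly the operator $D^{\mathrm v}$ from the proof of Proposition \ref{prop:transv_strip}, so it has index $0$, and the winding-number argument there, using \eqref{eq:asymp_winding}, shows it is injective and hence an isomorphism.

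Existence will then follow by a continuity method. I deform $h$ through a family $h_\lambda$, $\lambda\in[0,1]$, from a Hamiltonian for which the solution is explicit (e.g.\ the case where $u$ is a trivial strip, or an interpolation of $\eta_0$ to $0$). The set of $\lambda$ with a solution is open by the implicit function theorem, because the linearization is invertible. It is closed by the a priori bounds together with elliptic bootstrapping.

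For uniqueness, the difference $\zeta-\zeta'$ of two solutions solves a linear equation $\partial_s\xi+i\partial_t\xi+c(s,t)\,\mathrm{Re}\,\xi=0$ with
\[
c=\int_0^1 h''(e^{b_\tau})e^{b_\tau}\,d\tau>0 .
\]
The same winding-number/positivity-of-zeros argument forces $\xi=0$. The last point is that $b$ is defined only up to the constant shift of $a$ in Lemma \ref{lem:hol_lift}; the asymptotic condition $\ev_\pm(b)=r_\pm$ pins this down, so the pair $(\tilde v,\mathbf{e})$ is unique.
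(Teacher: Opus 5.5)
Your Steps 1 and 2 are fine; Step 1 is the computation the paper leaves as ``straightforward'' at the end of its proof. Your uniqueness argument, via the linear equation satisfied by the difference of two solutions, is correct and more direct than the paper's. Note, however, that neither it nor the invertibility of $D\mathcal{F}$ requires $b$ to stay in $(-\eta,\eta)$. By the Carleman similarity principle, zeros of solutions of $\partial_s\xi+i\partial_t\xi+S\xi=0$ with real boundary values are positive for any real $2\times 2$ matrix-valued potential $S$. So the winding argument only sees the asymptotic operators, where $h''(e^{r_\pm})>0$, and you never need $c>0$ in the interior.

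The $C^0$ confinement you call the main obstacle is therefore irrelevant for the linear theory. It is also not what the maximum principle gives you. Your $e_1$ satisfies the inhomogeneous equation $\Delta e_1+c\,\partial_s e_1=-c\,\partial_s a$. Passing to $b=a+e_1$ gives $\Delta b+c\,\partial_s b=\Delta a\geq 0$ with Neumann conditions: $\partial_t a=0$ because $\mathcal{L}$ is Legendrian, and $\partial_t e_1=-\partial_s e_2=0$. This yields only $b\le\max(r_-,r_+)$ and no lower bound.

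The real gap is existence: your continuity path is never pinned down, so closedness is unsupported.
\begin{itemize}
\item Deforming $h$ does not change $u$, and a trivial strip only occurs when $h'(e^{r_+})=h'(e^{r_-})$.
\item Interpolating $\eta_0$ to $0$ does give the trivial start $\zeta=0$. But it turns the equation into $\bar\partial\mathbf{e}+h'(e^{b})=(1-\lambda)\eta_0$, which has no sign structure for your maximum-principle bound.
\end{itemize}

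The missing idea is that $\tilde u$ enters \eqref{eq:hol_Floer_strip} only through the real function $a$, so you may deform $a$ alone. The paper replaces $da$ by $\tau\,da+(1-\tau)\,d\nu$, where $\nu=\nu(s)$ is $t$-independent with slopes $h'(e^{r_\pm})$ at the ends. Along this family the equation keeps the form \eqref{eq:hol_Floer_strip} with $a$ replaced by $\tau a+(1-\tau)\nu$.

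At $\tau=0$, write $(f_1,f_2)=(a+e_1,e_2)$. Then $\Delta f_2+h''(e^{f_1})e^{f_1}\partial_s f_2=0$, with $f_2$ vanishing on $\R\times\{0,1\}$ and at both ends, so $f_2\equiv 0$ by the maximum principle. The problem collapses to the ODE $f_1'+h'(e^{f_1})=\nu'$, which has a unique solution by \cite[Lemma 6.16]{DL1}. Invertibility of $D\mathcal{F}_\tau$ for every $\tau$, which is exactly your winding argument, then carries this explicit solution from $\tau=0$ to $\tau=1$.
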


		\begin{proof} 
			We take a smooth function $\nu : \R \times [0,1] \to \R$ satisfying $\partial_t \nu=0$ and 
			\[
			\nu (s,t) = \begin{cases*}
				h'(e^{r_+})s, & $s>1$, \\[.5ex]
				h'(e^{r_-})s, & $s<-1$.
			\end{cases*}
			\]
			We consider the following family of equations parametrized by $\tau \in [0,1]$,
			\begin{equation}\label{eq:interpolate_tau_strip}
				d(a+e_1)-de_2\circ i +h'(e^{a+e_1})ds -\tau da-(1-\tau)d\nu=0, 
			\end{equation}
			which interpolates  \eqref{eq:hol_Floer_strip} and
			\begin{equation}\label{eq:interpolate_0_strip}
				d(a+e_1)-de_2\circ i +h'(e^{a+e_1})ds -d\nu=0.
			\end{equation}
			Substituting $(f_1,f_2):=(a+e_1,e_2)$, we are interested in the zero set of  
			\[
			\begin{split}
				\mathcal{F}_\tau: 	\mathcal{Y}&\longrightarrow L^{p}(\R\times [0,1], \C)\\[.5ex]
				(f_1,f_2)&\longmapsto	df_1-df_2\circ i +h'(e^{f_1})ds -\tau da-(1-\tau)d\nu,
			\end{split}
			\]
			where $\mathcal{Y}$ consists of $(f_1,f_2)$ satisfying $(f_1-\partial_s \nu,f_2) \in W^{1,p}((\R\times [0,1],\R\times \{0,1\}), (\C,\R))$. Let $(f_1,f_2)$ be a zero of $\mathcal{F}_\tau$. By linearizing $\mathcal{F}_\tau$ at $(f_1,f_2)$, we obtain a linear  operator 
			\begin{align*}
				&D\mathcal{F}_\tau: T_{(f_1,f_2)}\mathcal{Y}= W^{1,p}((\R\times [0,1],\R\times \{0,1\}), (\C,\R)) \longrightarrow L^{p}(\R\times [0,1], \C),  \nonumber \\[.5ex]
				&D\mathcal{F}_\tau(\sigma) = \partial_s \sigma + J_0 \partial_t \sigma + \left(\begin{matrix} h^{''}(e^{f_1})e^{f_1} & 0 \\ 0 & 0 \end{matrix} \right) \sigma.
			\end{align*}
			The asymptotic operators of $D\mathcal{F}_\tau$ are given by 
			\[
			A_\pm:= -J_0\frac{d}{dt} - \begin{pmatrix} h^{''}(e^{r_\pm})e^{r_\pm} & 0 \\ 0 & 0 \end{pmatrix} : W^{1,2}(([0,1], \{0,1\}),(\C,\R)) \to L^{2}([0,1], \C).
			\]
			The operator $D\mathcal{F}_\tau$ is Fredholm, and since $h''(e^{r_\pm})>0$, Lemma \ref{lem:index} and \eqref{eq:RS_line_bundle} imply that $\ind D\mathcal{F}_\tau= \mu_\RS(A_+)-\mu_\RS(A_-)=0$.
			The argument in Proposition \ref{prop:transv_strip} shows that $D\mathcal{F}_\tau$ is an isomorphism for every $\tau\in [0,1]$.
			This yields a bijection between solutions of \eqref{eq:hol_Floer_strip} and those of \eqref{eq:interpolate_0_strip}.

			Suppose now that $(f_1,f_2)=(a+e_1,e_2)$ solves \eqref{eq:interpolate_0_strip}. Then,
			\[
			\Delta f_2= \partial_s(-\partial_t f_1) + \partial_t(\partial_sf_1+h'(e^{f_1})-\partial_s\nu)= h''(e^{f_1})e^{f_1}\partial_t f_1 = - h''(e^{f_1})e^{f_1}\partial_s f_2.
			\]
			Since $f_2$ vanishes on the boundary $\R\times \{0,1\}$ and converges to $0$ as $s\to \pm\infty$, $f_2$ is identically zero by the maximum principle.
			Thus, equation \eqref{eq:interpolate_0_strip} reduces to the ODE:
			\[
				\begin{cases}
					\; \partial_s f_1 + h'(e^{f_1})-\partial_s\nu=0, \\
					\; \partial_t f_1 = 0. 
				\end{cases}
			\]
			An argument in \cite[Lemma 6.16]{DL1} (see also \cite[Proof of Proposition 4.7]{BKK24}) shows that this equation admits a unique solution.
			Therefore, \eqref{eq:hol_Floer_strip} also has a unique solution $\mathbf{e}$. Moreover, a straightforward computation shows that $(b,v)$ induced by $\tilde u$ and $\mathbf{e}$ as in the statement is indeed a solution to the Floer equation. 
			This completes the proof.	
		\end{proof}
		
		\begin{lemma}\label{lem:index}
			For $C \geq 0$, the spectrum $\sigma(A_C)$ of the operator
			\[ A_C := -J_0\frac{d}{dt}- \begin{pmatrix} C & 0 \\ 0 & 0\end{pmatrix} : W^{1,2}( ([0,1],\{0,1\}),(\C,\R)) \to L^2([0,1], \C) \]
			is given by
			\[ \left\{\frac{-C+\sqrt{C^2+4\pi^2k^2}}{2} \; | \; k \in \N \right\} \cup \left\{\frac{-C-\sqrt{C^2+4\pi^2k^2}}{2}\; | \; k\in \N \cup \{0\} \right\}.  \]
			Moreover, eigenfunctions associated with eigenvalues $\frac{-C\pm\sqrt{C^2+4\pi^2k^2}}{2}$ have relative winding numbers $\pm\frac{k}{2}$, respectively.
			Note also that $0 \notin \sigma(A_C)$ for $C >0$, while $\sigma(A_0)=\pi \Z$.
		\end{lemma}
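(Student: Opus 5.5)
We need to prove Lemma \ref{lem:index}: the spectrum of $A_C=-J_0\frac{d}{dt}-\mathrm{diag}(C,0)$ on $W^{1,2}$ paths $[0,1]\to\C$ with boundary values in $\R$, together with the winding numbers of its eigenfunctions. The plan is to solve the eigenvalue ODE explicitly.

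\emph{Step 1: reduce to a second-order problem.} Write $\sigma=(x,y)$ with $J_0(x,y)=(-y,x)$, so $-J_0\dot\sigma=(\dot y,-\dot x)$. The equation $A_C\sigma=\lambda\sigma$ becomes
\[
\dot y - Cx=\lambda x,\qquad -\dot x=\lambda y,\qquad y(0)=y(1)=0.
\]

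\emph{Step 2: find the eigenvalues.} If $\lambda=0$, then $\dot x=0$ and $\dot y=Cx$. The boundary condition $y(0)=y(1)=0$ forces $x=0$ unless $C=0$; for $C=0$ the constant real solutions give the eigenvalue $0$. For $\lambda\neq0$, we have $y=-\dot x/\lambda$, and substituting gives $-\ddot x/\lambda=(C+\lambda)x$ with Neumann conditions $\dot x(0)=\dot x(1)=0$. The Neumann eigenfunctions are $x=\cos(\pi k t)$, $k\ge0$. They require $\lambda(\lambda+C)=\pi^2k^2$, so
\[
\lambda=\frac{-C\pm\sqrt{C^2+4\pi^2k^2}}{2}.
\]
For $k=0$ the $+$ root is $\lambda=0$, which we excluded when $C>0$; when $C=0$ it is recovered by the $\lambda=0$ case above. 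This gives exactly the listed set. It also shows $0\notin\sigma(A_C)$ for $C>0$, and $\sigma(A_0)=\{\pm\pi k\}=\pi\Z$. Since $A_C$ is self-adjoint with compact resolvent, these eigenvalues constitute the whole spectrum.

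\emph{Step 3: compute the winding numbers.} The eigenfunction is
\[
\sigma(t)=\Bigl(\cos\pi k t,\ \tfrac{\pi k}{\lambda}\sin\pi k t\Bigr),
\]
which is an ellipse traversed through angle $\pi k$. The direction of traversal is given by the sign of $\pi k/\lambda$: it is positive for the $+$ root ($\lambda>0$) and negative for the $-$ root ($\lambda<0$). Measuring relative winding in half-turns, the relative winding number is therefore $\pm k/2$, matching the sign convention used in \eqref{eq:RS_line_bundle}. The case $k=0$ gives a constant eigenfunction with winding number $0$.

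No step is a real obstacle. The only delicate point is the sign convention for the winding, i.e.\ checking that the positive eigenvalues correspond to counterclockwise rotation given $J_0$.
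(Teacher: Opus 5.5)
Your proof is correct and follows essentially the same route as the paper, which also solves the eigenvalue equation explicitly as the linear system $\dot v=\begin{pmatrix}0&-\lambda\\ C+\lambda&0\end{pmatrix}v$ and reads off $\lambda(\lambda+C)=\pi^2k^2$ from the boundary condition. Your version is slightly more careful. You justify that the spectrum consists only of eigenvalues, and your eigenfunction $\bigl(\cos\pi kt,\tfrac{\pi k}{\lambda}\sin\pi kt\bigr)$ is the correct ellipse, whereas the paper's $e^{\pm i\pi kt}v(0)$ is literally exact only for $C=0$; the winding count comes out the same either way.
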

		\begin{proof}
			Let $\lambda\in \sigma(A_C)$. 
			Then, there exists  $v:([0,1],\{0,1\})\to(\C,\R)$ satisfying
			\begin{equation}\label{eq:spectrum}
				- \begin{pmatrix} 0 & -1 \\ 1 & 0 \end{pmatrix} \dot{v}- \begin{pmatrix} C & 0 \\ 0 & 0 \end{pmatrix} v= \lambda v \; \iff \;\dot{v}=\begin{pmatrix} 0 & -\lambda \\ C+\lambda & 0 \end{pmatrix} v. 
			\end{equation}
			Due to the boundary condition, the eigenvalues of the matrix $\begin{pmatrix} 0 & -\lambda \\ C+\lambda & 0 \end{pmatrix}$ must be of the form $ i \pi k$ for $k \in \Z$.
			Solving the characteristic equation yields $\lambda=\frac{-C\pm\sqrt{C^2+4\pi^2k^2}}{2}$.
			For $C>0$, $\lambda=0$ is not an eigenvalue since the only solution to \eqref{eq:spectrum} with $\lambda=0$ and the boundary condition is the zero function. 
			For other $\lambda=\frac{-C\pm\sqrt{C^2+4\pi^2k^2}}{2}$, $v(t)=e^{\pm i \pi k t} v(0)$ with $v(0)\in \R \setminus\{0\}$ is an eigenfunction with relative winding number $ \pm \frac{k}{2}$.
		\end{proof}

		The converse of Proposition \ref{prop:correspondence_strip} also holds. We include it here for completeness, although it will not be used. 
		\begin{prop}\label{prop:correspondence_strip_2}
			Let $\tilde{v}=(b,v): (\R\times[0,1],\R\times\{0,1\}) \to (\R \times Y,\R \times \mathcal{L})$ be a Floer strip with $\ev_\pm(b)\subset (-\eta,\eta)$. Then, there exists a pair of a $J_Y$-holomorphic strip $\tilde{u}=(a,u):(\R\times[0,1],\R\times\{0,1\})  \to (\R \times Y,\R \times \mathcal{L})$ and a smooth map $\mathbf{e}=(e_1,e_2):(\R\times[0,1],\R\times\{0,1\}) \to (\C,\R)$ satisfying
			\[
			de_1-de_2\circ i +h'(e^b)ds=0,\quad \ev_\pm(u)=\ev_\pm(v),\quad (b,v)=(a+e_1, \phi_R^{e_2}\circ u).
			\]
			Moreover, such a pair $(\tilde{u},\mathbf{e})$ is determined uniquely up to the translation:
			\[c \cdot ((a,u),(e_1,e_2))=((a+c,u),(e_1-c,e_2))\quad \forall c \in \R.\]
		\end{prop}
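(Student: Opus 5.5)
The approach is to reverse the construction of Proposition \ref{prop:correspondence_strip}. Given the Floer strip $\tilde v=(b,v)$, I look for $\mathbf{e}=(e_1,e_2)$ with $(\R\times\{0,1\})$-boundary values in $\R$ solving
\[
de_1-de_2\circ i=-h'(e^{b})\,ds .
\]
Once such an $\mathbf{e}$ exists, I set $(a,u):=(b-e_1,\phi_R^{-e_2}\circ v)$. Because the right-hand side depends only on the known function $b$, this is a \emph{linear} inhomogeneous Cauchy--Riemann problem for $e_1+ie_2$ on the strip with totally real boundary condition $\R$. There is no need for the $\tau$-deformation used in Proposition \ref{prop:correspondence_strip}.

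The first step is to solve the equation. Write $g:=h'(e^{b})$. It is smooth, bounded, and converges exponentially as $s\to\pm\infty$ to $h'(e^{r_\pm})=\w(\ev_\pm(v))/d$, where $r_\pm=\ev_\pm(b)\in(-\eta,\eta)$. Choose $\nu(s)$ with $\nu'=g_\pm$ near the ends, and write $e_1=-\nu+\hat e_1$, $e_2=\hat e_2$. Then $(\hat e_1,\hat e_2)$ must solve $\bar\partial(\hat e_1+i\hat e_2)=\text{(exponentially decaying term)}$ with $\hat e_2\in\R$-boundary condition, i.e.\ $\hat e_2|_{\R\times\{0,1\}}=0$.

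The linear operator $\bar\partial:W^{1,p,\delta}((\R\times[0,1],\R\times\{0,1\}),(\C,\R))\to L^{p,\delta}$ has asymptotic operator $A_0$ with spectrum $\pi\Z$ by Lemma \ref{lem:index}. With a small positive weight at $+\infty$ and a small negative weight at $-\infty$ (equivalently, after adjoining the one-dimensional space of constants in $\R$), the operator is surjective with one-dimensional kernel. The kernel is precisely the real constants, which gives the solution. Alternatively, one can solve directly: $e_2$ is harmonic-type data determined by $\partial_s e_2=-\partial_t e_1$, and $\Delta e_1=-\partial_s g$ with Neumann condition $\partial_t e_1=0$ on the boundary. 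This can be solved on the strip by Fourier expansion in $t$ (cosines), and the resulting solution is unique up to an additive real constant in $e_1$.

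The second step is to check the properties of the resulting map.
\begin{itemize}
\item A direct computation, reversing the one in Proposition \ref{prop:correspondence_strip}, shows that $(a,u)$ is $J_Y$-holomorphic. This uses that $J_Y$ is invariant under $\phi_R$ and under $r$-translation, and that $X_H=h'(e^r)R$.
\item The boundary condition holds because $e_2\equiv 0$ on $\R\times\{0,1\}$, so $u=v$ there and lies in $\mathcal{L}$.
\item Asymptotically, $e_2\to 0$ since the only allowed decay-compatible kernel element is constant. Hence $\ev_\pm(u)=\ev_\pm(v)$.
\end{itemize}

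The third step is uniqueness. If $(\tilde u',\mathbf{e}')$ is another pair, then $\mathbf{e}-\mathbf{e}'$ is holomorphic with $\R$-boundary values. Moreover $e_2-e_2'\to0$ at both ends, because both $u$ and $u'$ converge to the same chords $\ev_\pm(v)$ rather than to rotated ones. The reflection principle plus the maximum principle then force $e_2=e_2'$, and hence $e_1-e_1'$ is constant; this is exactly the stated translation action.

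I expect the main obstacle to be the functional-analytic setup at the ends. The exponential weights have to be chosen so that $e_2$ is forced to decay, ruling out solutions where $e_2$ tends to a nonzero multiple of $\frac1d$ or winds. At the same time, $e_1$ must be allowed to grow linearly like $-\nu$. Once that is in place, the remaining verifications are routine.
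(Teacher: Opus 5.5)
Your proposal is correct and follows essentially the same route as the paper. You treat $de_1-de_2\circ i=-h'(e^b)\,ds$ as a linear inhomogeneous $\bar\partial$-problem with real boundary values, subtract the linear-growth function $\nu$, solve using surjectivity of $\bar\partial$ on a weighted space whose kernel is exactly the real constants, and set $(e_1,e_2)=(f_1-\nu,f_2)$. The paper actually proves that surjectivity, via the index $\mu_{\mathrm{RS}}(A-\delta)-\mu_{\mathrm{RS}}(A+\delta)=1$ and a winding-number argument for the formal adjoint, whereas you only assert it. Your explicit maximum-principle uniqueness argument is a nice addition.

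One correction to your parenthetical: the space must allow \emph{independent} real asymptotic constants at the two ends, as in the paper's $W^{1,p,\delta}_{\R}$ (equivalently, growth $e^{\delta|s|}$ at both ends). Adjoining a single global constant gives index $0$ and a one-dimensional cokernel, detected by $\iint\bigl(\partial_s\nu-h'(e^b)\bigr)\,ds\,dt$. You would then have to kill that cokernel by adjusting the asymptotic constants of $\nu$.
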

		\begin{proof}
			We consider the Cauchy--Riemann operators
			\[
			\begin{split}
				\overline{\partial}&=\partial_s  + J_0 \partial_t : W^{1,p,\delta}_{\R}((\R\times [0,1],\R\times \{0,1\}), (\C,\R)) \to L^{p,\delta}(\R\times [0,1], \C),\\[.5ex]
				\overline{\partial}'&=\partial_s  + J_0 \partial_t : W^{1,p,-\delta}((\R\times [0,1],\R\times \{0,1\}), (\C,\R)) \to L^{p,-\delta}(\R\times [0,1], \C),
			\end{split}
			\]
			for $0<\delta<\pi$. 
			The notation $W^{1,p,\delta}_{\R}$ means that it consists of sections $\sigma$ of $W^{1,p}_{\mathrm{loc}}$ such that $e^{\delta |s|}(\sigma(s,t)-v_{\pm})\in W^{1,p}$ near the corresponding ends for some $v_{\pm} \in \R$, while $W^{1,p,-\delta}$ means that it consists of sections $\sigma$ such that  $e^{-\delta |s|}\sigma(s,t)\in W^{1,p}$. These two operators have isomorphic kernels and cokernels, see \cite[Lemma 5.20]{DL2}. The relevant asymptotic operators for $\overline{\partial}'$ are $A-\delta$ at the positive end and $A+\delta$ at the negative end, where $A:=-J_0\frac{d}{dt}$. In particular, we compute 
				$\ind  \overline{\partial}=\ind  \overline{\partial}' = \mu_\mathrm{RS}(A-\delta)-\mu_\mathrm{RS}(A+\delta)=1$.
				
				To show the surjectivity of $\overline{\partial}'$,
				we consider the formal adjoint operator $(\overline{\partial}')^*$ of $\overline{\partial}'$. 
				The relevant asymptotic operators for $(\overline{\partial}')^*$ are $A+\delta$ and $A-\delta$ at the positive and negative ends, respectively.
				Assume that there is a nonzero element $\zeta \in \ker (\overline{\partial}')^*$.
				Then, as in \eqref{eq:asymp_winding}, its asymptotic winding numbers satisfy
				\[
				\wind_{+\infty}(\zeta)\leq \alpha_-(A+\delta)=0, \qquad \wind_{-\infty}(\zeta) \geq \alpha_+(A-\delta)=\frac{1}{2}.
				\]
				This leads to a contradiction that the algebraic count of zeros of $\zeta$, $\wind_{+\infty}(\zeta)-\wind_{-\infty}(\zeta)$, is negative.
				Therefore, $\ker(\overline{\partial}')^*=0$.
				It follows that $\overline{\partial}'$ and hence $\overline{\partial}$ are surjective.
				Since $\ind\overline{\partial}=\ind\overline{\partial}'=1$, $\ker\overline{\partial}$ is 1-dimensional and consists precisely of real-valued constant functions.
			
			Next, let $\nu : \R \times [0,1] \to \R$ be a smooth function satisfying $\partial_t \nu=0$ and 
			\[
			\nu (s,t) = \begin{cases*}
				h'(e^{\ev_{+}(b)})s, & $s>1$, \\[.5ex]
				h'(e^{\ev_{-}(b)})s, & $s<-1$.
			\end{cases*}
			\]
			By the exponential convergence of $b$,  we have $\partial_s \nu-h'(e^b)\in L^{p, \delta}(\R \times [0,1], \C)$ for small $\delta>0$. Since $\overline{\partial}$ is surjective, there exists $f_1+ i f_2\in W^{1,p,\delta}_{\R}((\R\times [0,1],\R\times \{0,1\}), (\C,\R)) $ such that 
			\[
			\overline{\partial}(f_1+ i f_2)=\partial_s \nu-h'(e^b).
			\]
			A direct computation as in \cite[p.998]{DL1} shows that $(e_1, e_2):=(f_1-\nu,  f_2)$ satisfies the claimed properties. 
			This completes the proof.
		\end{proof}

		An analogue of Proposition \ref{prop:correspondence_strip} for Floer triangles also holds.
		
		\begin{prop}\label{prop:correspondence_triangle}
			Let $\tilde{u}=(a,u):(S^P, \partial S^P) \to (\R \times Y,\R \times \mathcal{L})$ be a $J_Y$-holomorphic triangle. Assume that $\ev_-(u)$, $\ev_+^{1}(u)$, $\ev_+^{2}(u)$ exist and there are $r_0,r_1,r_2\in(-\eta,\eta)$ such that $d h'(e^{r_0})=\w(\ev_-(u))$, $d h'(e^{r_1})=\w(\ev_+^1(u))$, and $d h'(e^{r_2})=\w(\ev_+^2(u))$, i.e.,~there are  chords of $X_H$ in $(-\eta,\eta)\times Y$ corresponding to $\ev_-(u)$, $\ev_+^{1}(u)$, $\ev_+^{2}(u)$.
			Then there exists a unique pair of a Floer triangle $\tilde{v}=(b,v): (S^P, \partial S^P) \to (\R \times Y,\R \times \mathcal{L})$ and a smooth map $\mathbf{e}=(e_1,e_2):(S^P, \partial S^P) \to (\C,\R)$ such that
			\begin{equation*}
				de_1-de_2\circ i +h'(e^{a+e_1})ds=0,
			\end{equation*}	
			in the interior of $S^P$, $(a,u)=(b-e_1, \phi_R^{-e_2}\circ v)$, and 
			\[
			\ev_-(\tilde{v})=(r_0,\ev_-(u)),\quad \ev_+^1(\tilde{v})=(r_1,\ev_+^1(u)),\quad\ev_+^2(\tilde{v})=(r_2,\ev_+^2(u)).
			\]
		\end{prop}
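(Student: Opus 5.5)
I will follow the proof of Proposition \ref{prop:correspondence_strip} and change only the domain, which is now the slit strip $S^P$ with two positive ends and one negative end. At the negative end the Hamiltonian is $2H$, so the relevant slope there is $2h'$ and the relevant Hessian term is $4h''(e^{r})e^{r}$ after rescaling $t\mapsto 2t-1$. I will treat this by writing the Floer equation on the negative end in the coordinate of $S^P$; it becomes $\partial_s\tilde v+J_Y(\partial_t\tilde v-X_H)=0$ in the global coordinate $z=s+it$. Hence the equation for $\mathbf{e}$ is $de_1-de_2\circ i+h'(e^{a+e_1})ds=0$ on all of $S^P$, as stated.

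First I choose a smooth function $\nu:S^P\to\R$, independent of $t$ on each end, equal to $h'(e^{r_m})s$ on the $m$-th positive end ($m=1,2$) and to $h'(e^{r_0})s$ on the negative end. In the coordinates of $S^P$ the negative end has width $2$, so $h'(e^{r_0})$ is what matches $2h'$ in the rescaled coordinate. I then interpolate via
\[
d(a+e_1)-de_2\circ i+h'(e^{a+e_1})ds-\tau da-(1-\tau)d\nu=0,\qquad \tau\in[0,1].
\]
Writing $(f_1,f_2)=(a+e_1,e_2)$ with $(f_1-\nu,f_2)\in W^{1,p}((S^P,\partial S^P),(\C,\R))$, I consider the map $\mathcal{F}_\tau$ as before. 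Its linearization is $\partial_s+J_0\partial_t+\mathrm{diag}(h''(e^{f_1})e^{f_1},0)$. This is exactly the vertical operator $D^{\mathrm v}_{\tilde v}$ in the proof of Proposition \ref{prop:transv_triangle}. By Lemma \ref{lem:index} and \eqref{eq:RS_line_bundle} its index is $\chi(S^P)-\tfrac32+\tfrac12+\tfrac12-\tfrac12=0$. The winding argument there (zeros counted positively against $\wind_{+\infty,1}+\wind_{+\infty,2}-\wind_{-\infty}<0$) shows it is injective, hence an isomorphism, for every $\tau$.

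Granting a priori bounds on zeros of $\mathcal{F}_\tau$, the implicit function theorem gives a bijection between the solution sets at $\tau=1$ and $\tau=0$ by continuation in $\tau$. The bounds hold because $h'$ is bounded on the relevant range and the ends converge exponentially. This continuation is the main obstacle: I need properness of the family, with no loss of solutions at the ends or at the slit point $(0,0^\pm)$. I would first try to control it by the maximum principle applied to $f_1-\nu$ together with the fact that $h$ is linear outside $(e^{-\eta},e^{\eta})$. The conformal singularity at the slit point must be handled in the weighted/elliptic sense as in \cite{AS10a}.

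At $\tau=0$, the computation $\Delta f_2=-h''(e^{f_1})e^{f_1}\partial_s f_2$ still holds away from the slit point. Here $f_2$ vanishes on $\partial S^P$, which includes both sides of the slit, and $f_2\to0$ at all three ends. The maximum principle then forces $f_2\equiv0$, the slit point being a removable singularity for a bounded solution. Then $\partial_tf_1=0$, so $f_1$ is a function of $s$ alone satisfying $\partial_sf_1+h'(e^{f_1})=\partial_s\nu$. For $s\ll0$ this is a single ODE. For $s>0$ the two sheets are decoupled, but since $f_1$ is continuous across the slit for $s\le0$ and $t$-independent, the ODE solutions on the two sheets must agree, which requires $r_1=r_2$. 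I expect this matching to be the delicate point. If it fails, I would instead take $\nu$ sheet-dependent for $s>0$ and argue that $\ker$ and $\operatorname{coker}$ of the linear problem are trivial directly, so the solution of the $\tau=0$ problem exists and is unique by the same argument as \cite[Lemma 6.16]{DL1}.

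Finally, I set $(b,v)=(a+e_1,\phi_R^{e_2}\circ u)$. A direct computation, identical to the strip case, shows that this is a Floer triangle with the asserted asymptotics, since $e_1\to r_\bullet-\lim a$ and $e_2$ acquires the correct winding. Uniqueness of $\mathbf{e}$ gives uniqueness of the pair.
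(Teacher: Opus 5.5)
Your outline is the paper's proof. It uses the same $\tau$-interpolation with a $t$-independent $\nu$ of slopes $h'(e^{r_0})$, $h'(e^{r_1})$, $h'(e^{r_2})$ on the three ends, sheet-dependent on the positive ends. It identifies $D\mathcal{F}_\tau$ with the vertical operator of Proposition~\ref{prop:transv_triangle}, which has index $0$ and is injective by the winding count. It then gets $f_2\equiv 0$ from the maximum principle and reduces to an ODE for $f_1$ at $\tau=0$. The paper also passes from invertibility of $D\mathcal{F}_\tau$ to a bijection of solution sets without an explicit compactness argument, so it does not resolve the properness issue you flag any differently.

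The step that is actually wrong is your claim that the $\tau=0$ problem forces $r_1=r_2$. For $s>0$ the slit separates the two sheets. So $t$-independence and continuity only make $f_1$ a single function of $s$ on $\{s\le 0\}$. On sheet $m$ for $s>0$ it solves its own equation $\partial_s f_1=\partial_s\nu^{(m)}-h'(e^{f_1})$, and the two sheet solutions only need to share their value at $s=0$. Your $\nu$ was already sheet-dependent on the positive ends, as is the paper's, so there is no conflict; what you call the fallback is simply the argument.

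Concretely, take $\nu=h'(e^{r_0})s$ on $\{s\le\delta\}$ for some $\delta\in(0,1)$, which makes $\nu$ smooth at the branch point. On sheet $m$, let $\partial_s\nu^{(m)}$ move monotonically from $h'(e^{r_0})$ to $h'(e^{r_m})$ on $[\delta,1]$. Since $h''(e^{r_0})>0$, the only solution tending to $r_0$ as $s\to-\infty$ is $f_1\equiv r_0$ on $\{s\le\delta\}$. On each sheet, compare $f_1$ with the level $\rho_m(s)$ defined by $h'(e^{\rho_m(s)})=\partial_s\nu^{(m)}(s)$. This comparison keeps $f_1$ between $r_0$ and $r_m$, so $f_1\to r_m$. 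Hence the $\tau=0$ problem has exactly one solution for arbitrary $r_1,r_2$.

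There are also smaller slips:
\begin{itemize}
\item After rescaling the negative end, the Hessian term is $2h''(e^{r})e^{r}$, not $4h''(e^{r})e^{r}$.
\item The function space should require $f_1-\rho\in W^{1,p}$ for a bounded $\rho$ equal to $r_0,r_1,r_2$ on the ends, not $f_1-\nu$, since $\nu$ grows linearly. The paper's ``$f_1-\partial_s\nu$'' is likewise a slip.
\item $a$ has no limit at an end with nonzero winding, so the asymptotics should be stated as $b=a+e_1\to r_\bullet$.
\end{itemize}
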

		
		\begin{proof}	   
			The proof is essentially identical to that of Proposition \ref{prop:correspondence_strip}. We only highlight the points where it differs. We again consider the family of equations in \eqref{eq:interpolate_tau_strip}, this time with a smooth function $\nu : S^P \to \R$ satisfying $\partial_t \nu=0$ and 
			\[
			\nu (s,t) = \begin{cases*}
				h'(e^{r_0})s, & $s<-1$ and $t \in [-1,1]$, \\[.5ex]
				h'(e^{r_1})s, & $s>1$ and $t \in [-1,0]$, \\[.5ex]
				h'(e^{r_2})s, & $s>1$ and $t \in [0,1]$. 		
			\end{cases*}
			\]
			We then analyze the linearized operator $
			D\mathcal{F}_\tau:  W^{1,p}((S^P,\partial{S}^P), (\C,\R)) \to L^{p}(S^P, \C)$, 
			whose asymptotic operators are
			\[
			A_{\ell}:= -J_0\frac{d}{dt} - \begin{pmatrix} h^{''}(e^{r_\ell})e^{r_\ell} & 0 \\ 0 & 0 \end{pmatrix} : W^{1,2}(([0,1], \{0,1\}),(\C,\R)) \to L^{2}([0,1], \C)
			\]
			for $\ell=0,1,2$.  By Lemma \ref{lem:index} and the standard Fredholm index formula, $D\mathcal{F}_\tau$ is Fredholm with index 
			\[
			\ind D\mathcal{F}_\tau= \chi(S^P)-\frac{3}{2}+ \mu_\RS(A_1)+\mu_\RS(A_2)-\mu_\RS(A_0)=0,
			\]
			as $\mu_\RS(A_\ell)=\frac{1}{2}$ for all $\ell=0,1,2$.
			 The argument in Proposition \ref{prop:transv_triangle} shows that $D\mathcal{F}_\tau$ is an isomorphism for every $\tau\in [0,1]$. The rest is the same as the proof of Proposition \ref{prop:correspondence_strip}.
		\end{proof}
		
		The converse of Proposition \ref{prop:correspondence_triangle} is also true, and its proof is parallel to that of Proposition \ref{prop:correspondence_strip_2}. We omit this, as it is not needed.

		\subsection{Proof of Theorem \ref{thm:isom_transfer}}\label{sec:proof_thm}
		
		The cyclic subgroup $\Z_{d}\subset S^1=\R/\Z$ acts freely on $Y$ and $\mathcal{L}$. Due to our choice of $(H,J_Y)\in\mathcal{H}\times \mathcal{J}_Y$, both of which are invariant under the $S^1$-action, the chain complex $\FC_*^{(a,b)}(\mathcal{L}^{\tilde\sigma};\mathcal{D}_H)$ and hence $\RFC_*(\mathcal{L}^{\tilde\sigma})$ admit a free $\Z_{d}$-action, see \eqref{eq:d_many_chords}. Thus, the $\Z_{d}$-equivariant complex  $\RFC_*^{\Z_{d}}(\mathcal{L}^{\tilde\sigma}):=\RFC_*(\mathcal{L}^{\tilde\sigma})/\Z_d$ is well-defined. 
		Furthermore, the chain-level triangle product on $\RFC_*(\mathcal{L}^{\tilde\sigma})$ defined in Section \ref{sec:product} descends to $\RFC_*^{\Z_{d}}(\mathcal{L}^{\tilde\sigma})$. Thus, $\RFH_*^{\Z_{d}}(\mathcal{L}^{\tilde{\sigma}})$ is defined as a $\Z$-module provided $N_L>2$, and as a ring under the assumption $N_L>\max\{\frac{1}{2}(\dim L+1),2\}$. 
		
		The quotient space $Y/\Z_{d}$ is a prequantization bundle over $\Sigma$ with Euler class $-d[\omega]_\Z$, and $\mathcal{L}/\Z_{d}$ is a Legendrian lift of $L$ to $Y/\Z_{d}$. Note that $(Y,\mathcal{L})\to (Y/\Z_{d},\mathcal{L}/\Z_{d})$ is a $d$-fold covering, and that $\RFC_*^{\Z_{d}}(\mathcal{L}^{\tilde\sigma})$ is canonically isomorphic to the ordinary Rabinowitz Floer complex $\RFC_*(\mathcal{L}^{\tilde\sigma}/\Z_{d})$ for $\R\times \mathcal{L}/\Z_{d}\subset \R\times Y/\Z_{d}$. 
		Hence, it suffices to assume $d=1$, namely $\pi|_{\mathcal{L}}:\mathcal{L}\to L$ is a diffeomorphism, and construct the isomorphism $\RFH_*(\mathcal{L}^{\tilde\sigma})\cong \QH_*(\mathcal{L}^{\sigma})$ as stated in Theorem \ref{thm:isom_transfer}.(b) and (c). 
		To this end, we fix a degree $j\in\N$ and recall from Section \ref{sec:lag_rfh} that 
		\[
		\RFC_*(\mathcal{L}^{\tilde\sigma})=\FC_*^{(a,b)}(\mathcal{L}^{\tilde\sigma};\mathcal{D}_H),\qquad  *\in\{j-1,j,j+1\}
		\] 
		for sufficiently large $H\in\mathcal{H}$ and $(a,b)$. We claim that $\FC_*^{(a,b)}(\mathcal{L}^{\tilde\sigma};\mathcal{D}_H)$ is isomorphic to $\QC_*(L;\mathcal{D})$ in such degrees. By the commutativity of the front face of the diagram in Proposition \ref{prop:orientation_commute}, we have a degree-preserving isomorphism
		\begin{equation}\label{eq:projection_orientation2}
		f_{\tilde{p}_k} : \mathfrak{o}(\tilde{p}_k) \longrightarrow \mathfrak{o}(p,\tfrac{-k}{m_L}).
		\end{equation}
		The definitions and gradings of $\mathfrak{o}(\tilde{p}_k)$ and $\mathfrak{o}(p,\frac{-k}{m_L})$ are given in \eqref{eq:generator_base}, \eqref{eq:generator_leg}, and \eqref{eq:generator_leg_grading}. 
		This together with Remark \ref{rem:disk_lift} yields that the map
		\begin{equation}\label{eq:chain_isom}
			\Theta:= \bigoplus_{(\tilde{p},k)} f_{\tilde{p}_k}: \FC_*^{(a,b)}(\mathcal{L}^{\tilde\sigma};\mathcal{D}_H) \longrightarrow \QC
			_*(L^{\sigma};\mathcal{D})
		\end{equation}
		for $*\in \{j-1,j,j+1\}$, where the direct sum ranges over all $(\tilde{p},k)\in \Crit f_{\mathcal{L}}\times \mathfrak{w}_{\mathcal{L}}^{(a,b)}(H)$ with $\mu(\tilde{p}_k)=*$, is an isomorphism between the two chain modules.
		
		It remains to verify that $\Theta$ is a chain map. Recall from \eqref{eq:differential_Leg} and \eqref{eq:differential_base} that the boundary maps $\partial_{\mathcal{D}_H}$ on $\FC_*^{(a,b)}(\mathcal{L}^{\tilde{\sigma}};\mathcal{D}_H)$ and $\partial_{\mathcal{D}}$ on $\QC_*(L^{\sigma};\mathcal{D})$ restricted to $\mathfrak{o}(\tilde{p}_k)$ and $\mathfrak{o}(p,\frac{-k}{m_L})$, respectively, are given by
		\[
		\partial_{\mathcal{D}_H} |_{\mathfrak{o}(\tilde{p}_k)} = \bigoplus_{(\tilde{q},{l})} \sum_{ [\mathbf{v}] } C([\mathbf{v}]),\qquad \partial_{\mathcal{D}} |_{\mathfrak{o}(p,-k/m_L)}=\bigoplus_{(q,\frac{-l}{m_L})} \sum_{[\mathbf{w}]} C([\mathbf{w}]).
		\]
		Here, the direct sums run over all $(\tilde{q},l)\in \Crit f_{\mathcal{L}}\times \w_{\mathcal{L}}^{(a,b)}(H)$ with $\deg\mathfrak{o}(\tilde{q}_l)=\deg\mathfrak{o}(\tilde{p}_k)-1$ and $(q,\frac{-l}{m_L})\in \Crit f_{L} \times \Z$ with $\deg \mathfrak{o}(q,\frac{-l}{m_L})=\deg \mathfrak{o}(p,\frac{-k}{m_L})-1$.
		The sums run over all $[\mathbf{v}] \in \overline{\mathcal{M}}^{*}(\tilde{q}_{l}, \tilde{p}_{k}; \mathcal{D}_H)$ and $[\mathbf{w}]\in \overline{\mathcal{N}}^{*}((q,\frac{-l}{m_L}),(p,\frac{-k}{m_L});\mathcal{D})$. 
		See \eqref{eq:moduli_base} and \eqref{eq:moduli_leg} for the definitions of the moduli spaces. The projection map $\Pi$ given in \eqref{eq:moduli_projection_strip} yields 
		\[
		\Pi: \overline{\mathcal{M}}^*(\tilde{q}_{l}, \tilde{p}_{k};\mathcal{D}_H)\longrightarrow \overline{\mathcal{N}}^{*}\big((q,\tfrac{-l}{m_L}),(p,\tfrac{-k}{m_L});\mathcal{D}\big),
		\]
		which is bijective since it admits an inverse by Lemma \ref{lem:hol_lift} and Proposition \ref{prop:correspondence_strip}. Therefore, the commutativity of the side faces of the diagram in Proposition \ref{prop:orientation_commute} yields that $\Theta$ is a chain map, i.e.,~$\Theta\circ \partial_{\mathcal{D}_H}= \partial_{\mathcal{D}}\circ \Theta$. This finishes the proof of Theorem \ref{thm:isom_transfer}.(b).
		
		\medskip
		
		Similarly, Lemma \ref{lem:hol_lift2} and Proposition \ref{prop:correspondence_triangle} imply that the projection map in \eqref{eq:projection_product} induces  a bijection between moduli spaces involved in the definitions of the product structures on $\RFH_*(\mathcal{L}^{\tilde{\sigma}})$ and $\QH_*(L^\sigma)$. This yields that, under the assumption $N_L > \max\{\frac{1}{2}(\dim L+1),2\}$, the module isomorphism $\RFH_*(\mathcal{L}^{\tilde{\sigma}})\cong \QH_*(L^\sigma)$ given by \eqref{eq:chain_isom} is a ring homomorphism. This completes the proof. \qed

		\section{Lagrangian RFH and wrapped Floer homology}\label{sec:RFH_filling}
		Let $\mathcal{L}\subset Y$ be a Legendrian lift of $L\subset \Sigma$ as before. Throughout this section, we assume that there exist an exact (or Liouville) filling $(W,\lambda)$ of $Y$ and an exact Lagrangian filling $\mathcal{L}_W\subset W$ of  $\mathcal{L}$.
		By attaching the cylindrical ends using the Liouville flow, we obtain 
		\[
		\widehat{W} := W \cup ([0,\infty) \times Y),\qquad \widehat{\mathcal{L}}_W:=\mathcal{L}_W \cup ([0,\infty) \times \mathcal{L}).
		\]
		The entire symplectization $(\R\times Y,\R\times \mathcal{L})$ is embedded in $(\widehat W,\mathcal{L}_W)$.
		By abuse of notation, we still denote the extension of the Liouville 1-form $\lambda$ to the completion $\widehat{W}$ by $\lambda$.
		In this section, we analyze the interplay between the wrapped Floer homology for $\mathcal{L}_W$ and the Rabinowitz Floer homology for $\mathcal{L}$ or $(\mathcal{L}_W,\mathcal{L})$  under the following conditions:
		\begin{enumerate}
			\item[(B1)] The exact filling $W$ is topologically simple, meaning that the inclusion-induced map $\pi_1(Y)\to \pi_1(W)$ is injective and the first Chern class $c_1^{TW}$ of $W$ vanishes on $\pi_2(W)$. 
			\item[(B2)] The exact Lagrangian filling $\mathcal{L}_W$ is topologically simple, i.e.,~the inclusion-induced map $\pi_1(Y,\mathcal{L})\to \pi_1(W,\mathcal{L}_W)$ is injective and the Maslov class $\mu_{\mathcal{L}_W}$ vanishes on $\pi_2(W,\mathcal{L}_W)$. Moreover, the minimal Maslov number $N_L$ of $L$ satisfies $N_L\geq 2$. 
			\item[(B3)]  There is a relative $\Pin$-structure $\hat{\sigma}$ on $\widehat{\mathcal{L}}_W$ that restricts to the relative $\Pin$-structure $\tilde\sigma$ on $\R\times\mathcal{L}$. More precisely, with respect to the diagram
			\begin{equation}
				\label{eq:B3}
				\begin{tikzcd}[arrows={->},column sep=small]
					\check{\mathrm{C}}^1(\widehat{\mathcal{L}}_W;\Pin(n+1)) \times \check{\mathrm{Z}}^{2}(\widehat{W};\Z_2) \arrow[r,"\iota^*"]	&   \check{\mathrm{C}}^1(\R\times \mathcal{L};\Pin(n+1)) \times \check{\mathrm{Z}}^2(\R\times Y;\Z_2)  \\
					&   \check{\mathrm{C}}^1(L;\Pin(n)) \times \check{\mathrm{Z}}^{2}(\Sigma;\Z_2) \arrow[u],
				\end{tikzcd}
			\end{equation}	
			of \v{C}ech cochains and cocycles with respect to suitable good covers, $\iota^*\hat{\sigma} = \tilde{\sigma}$ holds, where $\tilde{\sigma}$ is obtained from $\sigma$ by the lifting described in Remark \ref{rem:pin_lift} and indicated by the vertical arrow.
			Here, $\iota : (\R\times Y,\R \times \mathcal{L}) \to (\widehat{W},\widehat{\mathcal{L}}_W)$ denotes the embedding.
		\end{enumerate}

		Here, our standing assumption (C2), i.e.,~$N_L>2$, is weakened to $N_L\geq2$.

		\subsection{Lagrangian RFH in the presence of fillings}\label{sec:Floer_complex_general}
		We briefly recall the construction of $\RFH_*(\mathcal{L}_W^{\hat{\sigma}},\mathcal{L})$, where $\hat{\sigma}$ is a relative $\Pin$-structure on $\widehat{\mathcal{L}}_W$. Without this structure, the homology is understood to be defined over $\Z_2$. We denote by $H_W:\widehat{W}\to\R$ the Hamiltonian obtained by extending $H\in\mathcal{H}$ defined in \eqref{eq:Hamiltonian}  to be constant over the rest of $\widehat{W}$.  Since $\mathcal{L}_W$ is exact, there is a smooth function $g$ on ${\widehat{\mathcal{L}}_W}$ such that $dg = \lambda|_{\widehat{\mathcal{L}}_W}$. We define 
		\begin{equation}\label{eq:action_filling}
			\begin{split}
				&\mathcal{A}_{H_{W}} : C^{\infty}(([0,1],\{0,1\}),(\widehat{W},\widehat{\mathcal{L}}_W)) \longrightarrow \R\\
				&\mathcal{A}_{H_{W}}(x):= \int_0^1 x^*\lambda - \int_{0}^{1} H_{W}(x(t))dt-g(x(1))+g(x(0)).
			\end{split}
		\end{equation}
		Critical points of $\mathcal{A}_{H_{W}}$ on $(\R\times Y,\R\times \mathcal{L})\subset (\widehat W,\mathcal{L}_W)$ coincide with those of $\mathcal{A}_{H}$ defined in \eqref{eq:action_functional}. We use the same notation $\tilde{p}_k\in\Crit \mathcal{A}_{H_W}$ for a chord in $(-\eta,\eta)\times Y\subset \widehat{W}$ associated to $(\tilde p,k)\in\Crit f_{\mathcal{L}}\times\Z$. Throughout this section, the same convention applies to all chords denoted by $\tilde{p}_k$, $\tilde{q}_l$, $\tilde{q}_{k'}$, and $\tilde{r}_{k''}$. For any action window $(a,b)\subset\R$, the chain-level generators for $\RFH_*^{(a,b)}(\mathcal{L}_W^{\hat{\sigma}},\mathcal{L})$ consist of the orientation lines associated with $\tilde{p}_k$ that are contractible in $W$ relative to $\mathcal{L}_W$, equipped with equivalence classes of cappings whose actions lie in $(a,b)$. Here two cappings are equivalent if the disk obtained by gluing them has vanishing Maslov index. We write $\RFH_*(\mathcal{L}_W^{\hat{\sigma}},\mathcal{L})=\RFH_*^{(-\infty,\infty)}(\mathcal{L}_W^{\hat{\sigma}},\mathcal{L})$. In general, it may possess a larger set of chain-level generators than the chain complex for $\RFH_*(\mathcal{L})$. 
		
		For our purpose, we use a specific choice of almost complex structures on $\widehat{W}$. Let $J_W$ be a $d\lambda$-compatible almost complex structure on $\widehat{W}$ such that, on $(-2,\infty)\times Y$, $J_W$ agrees with some $J_Y\in \mathcal{J}_{(Y,\mathcal{L})}^{\mathrm{reg}}$. Let $\{ J_W^{\kappa} \}_{\kappa\in\N}$ be a family of almost complex structures on $\widehat{W}$ obtained by stretching $J_W$ along $\{-1\}\times Y\subset\widehat{W}$, e.g.~see \cite[Lemma 2.7]{DL1}. Here $\kappa$ is the stretching parameter. 
		We abbreviate $\mathcal{D}_{H_W}^\kappa:=(f_{\mathcal{L}},Z_{\mathcal{L}},H_W,J_W^\kappa)$ 
		and define the moduli space 
		\begin{equation*}
			\overline{\mathcal{M}}^*(\tilde{q}_{l}, \tilde{p}_{k};\mathcal{D}_{H_W}^\kappa)=\big\{[\mathbf{v}]=[(\tilde{v}_1,\dots,\tilde{v}_N)]\big\},
		\end{equation*}
		where $\tilde{v}_1,\dots,\tilde{v}_N$ are simple Floer strips in $(\widehat{W},\widehat{\mathcal{L}}_W)$, joined by trajectories of $Z_{\mathcal{L}}$, which connect $\tilde{q}_{l}$ and $\tilde{p}_{k}$. 
		The only difference from $\overline{\mathcal{M}}^*(\tilde{q}_{l}, \tilde{p}_{k};\mathcal{D}_H)$ is that $\tilde{v}_1,\dots,\tilde{v}_N$ are smooth maps from $(\R \times [0,1], \R \times \{0,1\})$ to $(\widehat{W},\widehat{\mathcal{L}}_W)$, rather than to $(\R \times Y, \R \times \mathcal{L})$, solving
		\[
		\partial_s \tilde{v} + J_W^\kappa(\tilde{v}) (\partial_t \tilde{v} - X_{H_W}(\tilde{v}) ) =0.
		\]
		Floer continuation strips in $(\widehat{W},\widehat{\mathcal{L}}_W)$ are defined analogously, and the Rabinowitz Floer homology $\RFH_*(\mathcal{L}_W^{\hat{\sigma}},\mathcal{L})$ is constructed in the same manner as $\RFH_*(\mathcal{L}^{\tilde{\sigma}})$. Similarly, the moduli space $\overline{\mathcal{M}}^* (\tilde{r}_{k''}, \tilde{p}_{k}, \tilde{q}_{k'}; \mathcal{D}^{P,\kappa}_{H_W})$ of simple Floer triangles with trajectories in $(\widehat{W},\widehat{\mathcal{L}}_{W})$  associated to $\mathcal{D}^{P,\kappa}_{H_W}= (f_{\mathcal{L}}, Z^1_{\mathcal{L}}, Z^2_{\mathcal{L}}, Z^3_{\mathcal{L}},H_W,J^{\kappa}_W)$  defines a ring structure on $\RFH_*(\mathcal{L}_W^{\hat{\sigma}},\mathcal{L})$.

		\begin{prop}
			\label{prop:index positivity B} 
			Assume that the stretching parameter $\kappa\in\N$ is  sufficiently large. Assume also conditions (B1) and (B2). Then the following hold. 
			\begin{enumerate}[(a)]
				\item For any $(\tilde{p},k),(\tilde{q},l)\in\Crit f_{\mathcal{L}}\times m_L\Z$ with $\mu(\tilde{p}_{k})-\mu(\tilde{q}_{l})=1$, 
			every element in $\overline{\mathcal{M}}^*(\tilde{q}_{l}, \tilde{p}_{k};\mathcal{D}_{H_W}^\kappa)$ lies in $(-1,\infty) \times Y$. The corresponding statement for moduli spaces of Floer continuation strips with trajectories holds when the asymptotic chords have the same index.
				\item Assume  $N_L \geq \mathrm{max} \{ \frac{1}{2} (\dim L +1),2\}$. Then, for any $(\tilde{p},k), (\tilde{q},k'), (\tilde{r},k'')\in \mathrm{Crit} f_{\mathcal{L}} \times m_L\Z$ with $\mu(\tilde{p}_{k}) + \mu(\tilde{q}_{k'}) -  \mu(\tilde{r}_{k''}) = \dim L$, every element in $\overline{\mathcal{M}}^* (\tilde{r}_{k''}, \tilde{p}_{k}, \tilde{q}_{k'}; \mathcal{D}^{P,\kappa}_{H_W})$
				lies in $(-1,\infty) \times Y$.
			\end{enumerate}
		\end{prop}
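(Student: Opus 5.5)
We argue by contradiction via neck-stretching, following the pattern of the proofs of Proposition \ref{prop:simple_no_escape}.(b) and Proposition \ref{prop:no_escape_triangle}.(b). Suppose that for a sequence $\kappa_\nu\to\infty$ there are elements $\mathbf{v}_\nu\in\overline{\mathcal{M}}^*(\tilde q_l,\tilde p_k;\mathcal{D}_{H_W}^{\kappa_\nu})$ meeting $\widehat W\setminus((-1,\infty)\times Y)$. By the maximum principle (the Hamiltonian is constant near $\{-1\}\times Y$ and convex at infinity), the curves stay in a compact set on the positive side. SFT compactness for stretching along $\{-1\}\times Y$ \cite{BEHWZ03} gives a limit building consisting of: a top level in $(\R\times Y,\R\times\mathcal{L})$ with the Floer data $(H,J_Y)$, namely a tuple of punctured Floer strips with $Z_{\mathcal{L}}$-trajectories in $\mathcal{M}_{N,m}(\tilde q_l,\tilde p_k;\mathbf{A};\mathcal{D}_H)$ with $m\ge1$ punctures; possibly intermediate $J_Y$-holomorphic levels in the symplectization; and a bottom level of $J_W$-holomorphic planes and half-planes in $(\widehat W,\widehat{\mathcal{L}}_W)$ asymptotic to Reeb orbits $\gamma_i$ and chords $c_j$ at the negative ends of the top level.

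The key step is the index bookkeeping. As in \eqref{eq:index_0},
\[
\mu(\tilde p_k)-\mu(\tilde q_l)=\ind_{f_L}(q)-\ind_{f_L}(p)+\mu_L(\mathbf{A})+\sum_i\mu_\CZ(\gamma_i)+\sum_j\mu_\RS(c_j),
\]
where the indices of $\gamma_i$, $c_j$ are computed with cappings in $Y$. Here (B1) and (B2) enter. Injectivity of $\pi_1(Y)\to\pi_1(W)$ and of $\pi_1(Y,\mathcal{L})\to\pi_1(W,\mathcal{L}_W)$ shows that an orbit or chord bounding a plane or half-plane in $W$ is already contractible in $Y$ (resp.\ relative to $\mathcal{L}$). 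The vanishing of $c_1^{TW}$ on $\pi_2(W)$ and of $\mu_{\mathcal{L}_W}$ on $\pi_2(W,\mathcal{L}_W)$ makes the index with a capping in $W$ agree with the one computed in $Y$. Therefore \eqref{eq:index_CZ_RS} applies, and contractibility gives $\mu_\CZ(\gamma_i)\ge 2c_\Sigma\ge N_L\ge2$ and $\mu_\RS(c_j)\ge N_L\ge2$.

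With these bounds we rerun \eqref{eq:ineq_index}. Pass to the underlying simple chain of pearls (for $\dim L\ge3$ as in \cite{BC4}, and by direct estimate otherwise), and use \eqref{eq:puncture_bound}. Since the index difference is now $1$ rather than $2$, we obtain
\[
1\ge 1+(\mu_L(\mathbf{A})-\mu_L(\mathbf{A}^*))+\sum_i(\mu_\CZ(\gamma_i)-1)+\sum_j(\mu_\RS(c_j)-1)\ge 1+m.
\]
This forces $m=0$, a contradiction. The same argument, with the bound $m\ge N_0^*-1$ and left side $0$, handles continuation strips of equal index. For (b) we argue as in Proposition \ref{prop:no_escape_triangle}. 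Besides punctures of the above type, which are excluded as before, the new possibility is a configuration split into two pieces joined through the bottom level. Its index estimate gives $1\ge -\dim L+\mu_\RS(c_1)+\mu_\RS(c_2)\ge-\dim L+2N_L$. This contradicts $N_L\ge\frac12(\dim L+1)$ provided the chords in the limit have $\mu_\RS\ge N_L$ and the slack from the bottom-level disks is accounted for. The case $\dim L\le2$ is a direct count as before.

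The main obstacle is the middle step: showing that the bottom-level curves in $W$ do not lower the index. Concretely, one must justify that the asymptotics are contractible in $Y$ rather than only in $W$, and that a bottom-level holomorphic curve in $W$ has nonnegative contribution. The latter follows by exactness and topological simplicity, since its index is then $\mu_\CZ$- or $\mu_\RS$-determined and nonnegative by genericity of $J_W$. I would treat this first, then check that equality $N_L=\frac12(\dim L+1)$ in (b) still leaves a strict inequality once the trivial strip contribution is included.
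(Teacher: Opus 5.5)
Your part (a) is the paper's argument made explicit. You stretch the neck, analyze only the top level, and use $\pi_1$-injectivity from (B1)--(B2) to make the asymptotic orbits and chords contractible in $Y$. That gives $\mu_\CZ(\gamma_i)\ge 2c_\Sigma\ge N_L\ge 2$ and $\mu_\RS(c_j)\ge N_L\ge 2$, and you then rerun \eqref{eq:ineq_index} with left-hand side $1$ (resp.\ $0$ for continuation strips). The lower levels enter only through this contractibility. No index estimate for curves in $W$ is needed, so the ``main obstacle'' of your last paragraph is not an obstacle.

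The gap is in (b). You imported the left-hand side $1$ from Proposition~\ref{prop:no_escape_triangle}, which concerns moduli spaces of dimension $\le 1$ under the stronger hypothesis $N_L>\max\{\frac12(\dim L+1),2\}$. With that normalization the argument fails in two places.
\begin{enumerate}
\item The two-piece breaking yields $1\ge -\dim L+2N_L$. This is no contradiction precisely when $2N_L=\dim L+1$, which is the case the weakened hypothesis is meant to admit (e.g.\ $\dim L=3$, $N_L=2$).
\item A puncture on a single top component is not excluded ``as before''. The triangle component carries no $\R$-translation, so the $+1$ gained in \eqref{eq:ineq_index} disappears. One only gets (left-hand side) $\ge\sum_i(\mu_\CZ(\gamma_i)-1)+\sum_j(\mu_\RS(c_j)-1)\ge m$, which permits $m=1$ when $N_L=2$.
\end{enumerate}
The remedy you suggest (slack from bottom-level disks, or a trivial strip contribution) is not available, because the estimate involves only the top-level pieces.

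What you need is the hypothesis of (b) itself. Only rigid triangles occur, i.e.\ $\mu(\tilde p_k)+\mu(\tilde q_{k'})-\mu(\tilde r_{k''})-\dim L=0$. The two estimates then become $0\ge m$ and $0\ge -\dim L+2N_L\ge 1$. This is the ``minor modification'' the paper refers to, and it is why $\ge$ can replace $>$.

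Finally, for $\dim L\le 2$ a bare count does not suffice when $N_L=2$: it reads $0\ge 0-2-2+0+0+2+2=0$. Equality forces $\ind_{f_L}(r)=0$, $\ind_{f_L}(p)=\ind_{f_L}(q)=2$, and Maslov index $0$ for both pieces, hence constant projections. Piece II would then be a constant disk at a point of $W^u_{Z^1_L}(r)\cap W^s_{Z^2_L}(p)=\{r\}\cap\{p\}$, which is impossible since $r\neq p$.
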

		This proposition follows from the proofs of Proposition \ref{prop:simple_no_escape} and Proposition \ref{prop:no_escape_triangle} with a minor modification. See \cite[Corollary 3.7.(ii)]{Ueb19} for the corresponding statement regarding Floer cylinders.  Since  we only consider the cases $\mu(\tilde{p}_{k})-\mu(\tilde{q}_{l})=1$ and $\mu(\tilde{p}_{k}) + \mu(\tilde{q}_{k'})- \mu(\tilde{r}_{k''})=\dim L$, the weaker assumption $N_L\geq 2$ is sufficient.

		\begin{prop}
			\label{prop:RFH_well_defined}
			Assume conditions (B1) and (B2). Then the following hold.
			\begin{enumerate}
				\item[(a)] The homology $\RFH_*(\mathcal{L};\Z_2)$ is well-defined and  isomorphic to $\RFH_*(\mathcal{L}_W,\mathcal{L};\Z_2)$.  
				\item[(b)] If further $N_{L} \geq \mathrm{max}\{\frac{1}{2}(\dim L+1), 2\}$, then $\RFH_*(\mathcal{L};\Z_2)$ admits the triangle product structure and the isomorphism in statement (a) is a ring isomorphism.	
				\item[(c)] If condition (B3) is also satisfied, $\RFH_*(\mathcal{L}^{\tilde{\sigma}})$ is well-defined, and statements (a) and (b) hold for $\RFH_*(\mathcal{L})$ and $\RFH_*(\mathcal{L}_W^{\hat{\sigma}},\mathcal{L})$.
			\end{enumerate}
		\end{prop}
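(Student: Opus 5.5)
The plan is a neck-stretching argument. Its first goal is to show that, for $\kappa$ large, the Floer complex of $(\widehat W,\widehat{\mathcal L}_W)$ with data $\mathcal D^\kappa_{H_W}$ \emph{restricted to the chords $\tilde p_k$ in $(-\eta,\eta)\times Y$} coincides with the complex $\FC_*^{(a,b)}(\mathcal L;\mathcal D_H)$ of Section \ref{sec:floer_chain}. This simultaneously gives well-definedness of $\RFH_*(\mathcal L;\Z_2)$ under $N_L\ge2$ and the comparison with the filled theory.

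\textbf{Step 1: identify the generators.} By (B1) and (B2), a chord $\tilde p_k$ is contractible in $(W,\mathcal L_W)$ if and only if it is contractible in $(Y,\mathcal L)$, since $\pi_1(Y,\mathcal L)\to\pi_1(W,\mathcal L_W)$ is injective. So the generators are exactly the $\tilde p_k$ with $k\in m_L\Z$. Cappings in $W$ and cappings in $Y$ differ by disks in $\pi_2(W,\mathcal L_W)$, on which $\mu_{\mathcal L_W}$ vanishes. Hence every generator carries a unique equivalence class of cappings, and the index agrees with \eqref{eq:index}. The action computed via \eqref{eq:action_filling} agrees with \eqref{eq:action} up to a constant depending on the normalisation of $g$. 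Thus the chain modules agree window by window.

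\textbf{Step 2: identify the differentials.} Proposition \ref{prop:index positivity B}(a) gives that for $\kappa$ large every index-one element of $\overline{\mathcal M}^*(\tilde q_l,\tilde p_k;\mathcal D^\kappa_{H_W})$ lies in $(-1,\infty)\times Y$. There $J^\kappa_W$ agrees (after the stretching reparametrisation) with a cylindrical $J_Y\in\mathcal J^{\mathrm{reg}}_{(Y,\mathcal L)}$, so these are exactly elements of $\overline{\mathcal M}^*(\tilde q_l,\tilde p_k;\mathcal D_H)$. Conversely, elements of the symplectization moduli space embed, and their transversality comes from Proposition \ref{prop:transv_strip}. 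Simplicity follows from Proposition \ref{prop:simple_no_escape}(a), which only needs $N_L\ge2$.

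\textbf{Step 3: compactness and $\partial^2=0$.} For $\partial_{\mathcal D_H}^2=0$ in the symplectization, the index-two moduli spaces must be compact up to breaking, and this is the key point I expect to be hardest. Proposition \ref{prop:simple_no_escape}(b) required $N_L>2$. Instead I would note that on the index-one spaces the two differentials coincide, so $\partial_{\mathcal D_H}$ is literally the filled differential, which squares to zero by standard compactness in $\widehat W$. There no escape to $-\infty$ is possible, since the filling is exact with the usual maximum principle at $+\infty$. The only subtlety is that a broken configuration in $\widehat W$ might pass through generators not in $(-\eta,\eta)\times Y$, or through chords noncontractible in $Y$. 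The first is excluded by \cite[Lemmas 2.2, 2.3]{CO18}, the second by Step 1.

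\textbf{Step 4: continuation maps and limits.} Continuation maps are handled the same way. By the second sentence of Proposition \ref{prop:index positivity B}(a), equal-index continuation strips stay in $(-1,\infty)\times Y$, so they are the unique strips of Proposition \ref{prop:conti_unique}(b), which also only needs $N_L\ge2$. Hence the direct and inverse limits agree and (a) follows.

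\textbf{Step 5: products.} For (b), repeat the argument with triangles, using Proposition \ref{prop:index positivity B}(b) for the index-zero triangle moduli spaces and Proposition \ref{prop:no_escape_triangle}(a) for simplicity. The chain-level products then coincide, and associativity and the chain-map property are inherited from the filled theory.

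\textbf{Step 6: integer coefficients.} For (c), the orientation lines must match. By (B3), the relative $\Pin$-structure $\hat\sigma$ restricts to $\tilde\sigma$. The canonical isomorphisms $\phi^{\hat\sigma}$ between cappings in $\widehat W$ therefore restrict to $\phi^{\tilde\sigma}$ for cappings inside $\R\times Y$, as in Proposition \ref{prop:orientation_commute}. The coherent orientations of the (identical) moduli spaces agree, so Steps 2–5 hold over $\Z$.
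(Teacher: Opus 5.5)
Your proposal is correct and follows essentially the same route as the paper, which likewise matches the generators via (B2), uses Proposition \ref{prop:index positivity B} to identify the boundary operators, continuation maps and triangle products of the symplectization and filled theories after neck-stretching, and handles (c) analogously via (B3). Your explicit point that $\partial_{\mathcal D_H}^2=0$ and the chain-map properties are inherited from the filled theory, rather than from symplectization compactness, is exactly why $N_L\ge 2$ suffices; one small correction is that the actions agree exactly rather than up to a constant, because $\lambda$ vanishes on the cylindrical part of $\widehat{\mathcal L}_W$ and $\mathcal L$ is connected, so $g$ is constant there and the term $g(x(0))-g(x(1))$ is zero.
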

		\begin{proof}
		This proposition is the Lagrangian counterpart of \cite[Corollaries 3.7 and 3.16]{Ueb19}. Recall that the chain-level generators for $\RFH_*(\mathcal{L};\Z_2)$ are $\tilde{p}_k$ for $(\tilde{p},k) \in \mathrm{Crit} f_{\mathcal{L}} \times m_L\Z$, where $k\in m_L\Z$ is equivalent to $\tilde{p}_k$ being contractible in $Y$ relative to $\mathcal{L}$ by Proposition \ref{prop:contractible}. By condition (B2), $\RFH_*(\mathcal{L};\Z_2)$ and $\RFH_*(\mathcal{L}_W,\mathcal{L};\Z_2)$ have the same chain-level generators. Moreover, by Proposition \ref{prop:index positivity B}, the boundary operators, continuation homomorphisms, and triangle products defining $\RFH_*(\mathcal{L};\Z_2)$ and $\RFH_*(\mathcal{L}_W,\mathcal{L};\Z_2)$ coincide via neck-stretching. This proves statements (a) and (b).
			Statement (c) can be proved analogously. 
		\end{proof}

		\begin{remark}
			In the language of augmentations, Proposition \ref{prop:index positivity B}.(a) shows that conditions (B1) and (B2) ensure that the augmentation induced by the filling $(W,\mathcal{L}_W)$ is trivial. 
			If these conditions are dropped, $\RFH_*(\mathcal{L}_W,\mathcal{L};\Z_2)$ can be viewed as the Rabinowitz Floer homology of $\mathcal{L}$ twisted by a potentially nontrivial augmentation induced by $(W,\mathcal{L}_W)$. It is tempting to conjecture that a variant of Theorem \ref{thm:isom_transfer} holds in this setting, namely, $\RFH_*(\mathcal{L}_W,\mathcal{L};\Z_2)$ corresponds to the quantum homology of $L$ equipped with a twisted differential.
		\end{remark}

		\subsection{Wrapped Floer homology and Viterbo transfer map}\label{sec:sh}
		In the rest of Section \ref{sec:RFH_filling}, we assume conditions (B1), (B2), and (B3). We recall the wrapped Floer homology $\HW_*(\mathcal{L}^{\hat{\sigma}}_{W})$ of $\mathcal{L}_{W}^{\hat{\sigma}}$ and the Viterbo transfer map $\mathcal{V} : \HW_*(\mathcal{L}^{\hat{\sigma}}_{W}) \to \RFH_*(\mathcal{L}^{\hat{\sigma}}_{W},\mathcal{L})$.

		To describe the  map $\mathcal{V}$, we define $\HW_*(\mathcal{L}^{\hat{\sigma}}_W)$ using $H_W:\widehat{W}\to\R$ appeared in the previous section. There are three types of  critical points for $\mathcal{A}_{H_W}$ given in \eqref{eq:action_filling}:
		\begin{itemize}
			\item[(I)] constant chords on $\widehat{W}\setminus((-\epsilon,0]\times Y)$,
			\item[(II)] non-constant chords in the region $(-\epsilon,-\epsilon+\eta) \times Y$, and
			\item[(III)] constant and non-constant chords in the region $(-\eta,\eta) \times Y$.
		\end{itemize}
		
		When defining the Rabinowitz Floer homology of $\mathcal{L}$ or $(\mathcal{L}_W,\mathcal{L})$, we exclude type I and II chords. In contrast,  the wrapped Floer homology of $\mathcal{L}_W$ incorporates all three types of chords.
		For any $b\in\R$, we consider the set $\m^{(a,b)}_\mathcal{L}(H)$ defined in Section \ref{sec:floer_chain} with $a=-\infty$,
		\[
		\m^{(-\infty,b)}_\mathcal{L}(H)= \{ k \in m_L \Z \,|\, \exists \, r \in (-\eta,\eta) \text{ with } h'(e^r) = \tfrac{k}{d} \text{ and } e^r h'(e^r) - h(e^r) <b\}.	
		\]
		Type II chords also appear in families similar to type III chords. For each $k\in \Z$,  we set
		\[
		\mathcal{L}_{k}^{H,\mathrm{II}}:=\big\{x=(r_x,c_x)\in \Crit\mathcal{A}_{H_W} \mid r_x \in (-\epsilon,-\epsilon+\eta),\,h'(e^{r_x})= k/d \big\},
		\]
		cf.~\eqref{eq:crit_L}. Since $h'<0$ on $(e^{-\epsilon},e^{-\epsilon+\eta}) $, see \eqref{eq:Hamiltonian}, $\mathcal{L}_{k}^{H,\mathrm{II}}$ is nonempty only for $k<0$. If $\mathcal{L}_{k}^{H,\mathrm{II}}$ is nonempty, then it is naturally identified with $\mathcal{L}$ by $(r_x,c_x)\mapsto c_x(0)$. We set
		\[
		\m^\mathrm{II}_\mathcal{L}(H):= \big\{ k \in m_L \Z \,|\, \exists \, r \in (-\epsilon,-\epsilon+\eta) \text{ with } h'(e^r) = k/d \big\}.	
		\]
		Each $(\tilde{p}, k)\in \Crit f_{\mathcal{L}}\times  \m^\mathrm{II}_\mathcal{L}(H)$ determines $\tilde{p}_k^\mathrm{II}\in \mathcal{L}_{k}^{H,\mathrm{II}}$ by  $\mathfrak{w}(\tilde{p}_k^\mathrm{II})=k$ and $\tilde{p}_k^\mathrm{II}(0)=\tilde p$. The orientation line $\mathfrak{o}(\tilde{p}_{k}^\mathrm{II})$ is defined in the same way as for type III chords and graded by 
		\begin{equation}\label{eq:index_type2}
			\mu(\tilde{p}_k^\mathrm{II}) := \dim \mathcal{L} -  \ind_{f_\mathcal{L}} (\tilde{p})+ \frac{2\tau_{\Sigma}k}{d}  - 1,
		\end{equation}
		where $-1$ is due to the concavity of $h$ on $(e^{-\epsilon},e^{-\epsilon+\eta})$, cf.~\eqref{eq:index}.

		The space of type I chords is canonically isomorphic to $\mathcal{L}_{W}\setminus((-\epsilon,0]\times \mathcal{L})$. We also use a Morse-Bott approach to type I chords following \cite[Section 3]{DL1}. We choose a Morse-Smale pair $(f_{\mathcal{L}_W},Z_{\mathcal{L}_W})$ on $\mathcal{L}_W$ such that $Z_{\mathcal{L}_W}$ equals $-\partial_r$  on $(-\epsilon,0]\times \mathcal{L}$. Let $x_q$ denote the type I chord at  $q \in \mathrm{Crit} f_{\mathcal{L}_W}$.
		As before, we have the associated orientation line $\mathfrak{o}(x_q)$ graded by \begin{equation*}
			\mu(x_q) := \dim \mathcal{L}_W -1 - \ind_{f_{\mathcal{L}_W} }(q) = \dim \mathcal{L} - \ind_{f_{\mathcal{L}_W}} (q).
		\end{equation*}

		We define 
		\begin{equation*}
			\FC_*^{(-\infty,b)}(\mathcal{L}^{\hat{\sigma}}_W;H_W) := \bigoplus_{q \in  \mathrm{Crit} f_{\mathcal{L}_W}} \mathfrak{o}(x_q) \oplus \bigoplus_{(\tilde{p},k)}  \mathfrak{o}(\tilde{p}_k^\mathrm{II})\oplus  \bigoplus_{(\tilde{p},k)}\mathfrak{o}(\tilde{p}_k), 
		\end{equation*}
		where the second and third big direct sums are over all  $(\tilde{p},k)\in \mathrm{Crit} f_{\mathcal{L}}\times \m^\mathrm{II}_\mathcal{L}(H) $ and $(\tilde{p},k)\in \mathrm{Crit} f_{\mathcal{L}}\times \m^{(-\infty,b)}_\mathcal{L}(H)$, respectively. 		Note that if $H_W$ is sufficiently large in $(\mathcal{H},\prec)$, then the Hamiltonian actions of type I and II chords are  lower than any chosen $b$. 
		The boundary map, as well as the continuation maps, are defined in a similar way as before. 
		Let us denote by $\FH^{(-\infty,b)}_*(\mathcal{L}^{\hat{\sigma}}_W;H_W)$ the resulting homology. The filtered and full wrapped Floer homologies of $\mathcal{L}_W^{\hat{\sigma}}$ are defined by
		\begin{equation*}
			\HW_*^{(-\infty,b)}(\mathcal{L}_W^{\hat{\sigma}}) :=   \varinjlim_{H_{W}} \FH^{(-\infty,b)}_*(\mathcal{L}^{\hat{\sigma}}_W;H_{W}),\qquad 	\HW_*(\mathcal{L}_W^{\hat{\sigma}}) := \varinjlim_{b \to \infty} \HW_*^{(-\infty,b)}(\mathcal{L}_W^{\hat{\sigma}}).
		\end{equation*}
		The triangle product  induces a ring structure on these homologies.

		Let $n=\dim L$. 
		For any $a<0<b$, the short exact sequence of chain complexes 
		\[
		0\to \FC_*^{(-\infty,a)}(\mathcal{L}^{\hat{\sigma}}_W; {H_W}) \to \FC_*^{(-\infty,b)}(\mathcal{L}^{\hat{\sigma}}_W; {H_W})\to \FC_*^{(a,b)}(\mathcal{L}^{\hat{\sigma}}_W; {H_W})\to 0,
		\]
		where the last one is the quotient complex, induces a long exact sequence 
		\[
			\cdots \to \HW^{n-i}_{(-\infty,-a)}(\mathcal{L}^{\hat{\sigma}}_W) \to  \HW_i^{(-\infty,b)} (\mathcal{L}^{\hat{\sigma}}_W)  \to \RFH_{i}^{(a,b)} ( \mathcal{L}^{\hat{\sigma}}_{W},\mathcal{L} ) \to  \HW^{n-i+1}_{(-\infty,-a)} (\mathcal{L}^{\hat{\sigma}}_W) \to \cdots
		\] 
		by taking the limit over $H_W$, see \cite{CFO10} for details. Here and below $\HW^{*}$ refers to wrapped Floer cohomology. 
		Passing to the limits $ \varinjlim\limits_{b\uparrow+\infty}\varprojlim\limits_{a\downarrow-\infty}$, we obtain 
		\begin{equation}\label{eq:les_SH}
			\cdots \to \HW^{n-i}(\mathcal{L}^{\hat{\sigma}}_W) \to  \HW_i (\mathcal{L}^{\hat{\sigma}}_W)  \xrightarrow{\mathcal{V}} \RFH_{i} ( \mathcal{L}^{\hat{\sigma}}_{W},\mathcal{L}) \to  \HW^{n-i+1} (\mathcal{L}^{\hat{\sigma}}_W) \to \cdots.
		\end{equation}   
		The Viterbo transfer map $\mathcal{V}$ is a ring homomorphism.
		
		Following \cite{Ueb19,CO26}, we define the reduced wrapped Floer homology 
		\[
		\HW^{\mathrm{red}}_*(\mathcal{L}^{\hat{\sigma}}_W):= \HW_* (\mathcal{L}^{\hat{\sigma}}_W) / \ker \mathcal{V}.
		\] 
		Since $\mathcal{V}$ is a ring homomorphism, $\ker \mathcal{V}$ is an ideal and $\HW^{\mathrm{red}}_*(\mathcal{L}^{\hat{\sigma}}_W)$ is a ring with unit. Thus $\mathcal{V}$ induces an injective ring homomorphism 
		\begin{equation*}
			\mathcal{V}^\mathrm{red}:\HW^{\mathrm{red}}_*(\mathcal{L}^{\hat{\sigma}}_{W})\longrightarrow\RFH_{*}(\mathcal{L}^{\hat{\sigma}}_{W},\mathcal{L}).
		\end{equation*}
		We observe from the construction of \eqref{eq:les_SH} that $\HW^*(\mathcal{L}^{\hat{\sigma}}_W)$ is generated by type I and II chords. Thus, by \eqref{eq:index_type2}, $\HW^*(\mathcal{L}^{\hat{\sigma}}_W)$ is supported in degrees  bounded from below. This implies 
		\begin{equation}\label{eq:sh_red}
			\HW_i(\mathcal{L}^{\hat{\sigma}}_W)= \HW_i^\mathrm{red}(\mathcal{L}^{\hat{\sigma}}_W), \qquad \mathcal{V} =\mathcal{V}^{\mathrm{red}}: \HW_i(\mathcal{L}^{\hat{\sigma}}_W) \stackrel{\cong}{\to} \RFH_i(\mathcal{L}^{\hat{\sigma}}_{W},\mathcal{L}),  \quad \forall i\gg0.
		\end{equation}   
		In fact, the map $\HW^{n-i}(\mathcal{L}^{\hat{\sigma}}_W) \to  \HW_i(\mathcal{L}^{\hat{\sigma}}_W)$ in \eqref{eq:les_SH} factors through the map $\H_{i+1}(\mathcal{L}_W;\mathcal{O}) \to \H_{i+1}(\mathcal{L}_W,\mathcal{L};\mathcal{O})$ induced by the inclusion, where $\mathcal{O}$ denotes the relevant orientation local system. Thus, $\HW_i(\mathcal{L}^{\hat{\sigma}}_W)=\HW^{\mathrm{red}}_i(\mathcal{L}^{\hat{\sigma}}_W)$ for all $i>n=\dim L$.

		\subsection{Localization of reduced wrapped Floer homology}
		Throughout this section, we assume that $N_L$ is finite and satisfies $N_L \geq \mathrm{max}\{ \frac{1}{2}(\dim L+1), 2\}$. By Proposition \ref{prop:RFH_well_defined}, $\RFH_*(\mathcal{L}^{\tilde{\sigma}})$ is isomorphic to $\RFH_*(\mathcal{L}_W^{\hat{\sigma}},\mathcal{L})$ as a ring, and we denote the multiplicative unit by $1_{\RFH}$. We choose a Morse function $f_L$ on $L$ with exactly one local minimum point, which we denote by $q \in \mathrm{Crit}f_L$.  
		As before, for $i=1,\dots, d$, let $q^i_0 \in \mathcal{L}^{H}_0$ be the Hamiltonian chord with $\w(q^i_0)=0$ and $q^i_0(0)=q^i \in \mathrm{Crit} f_{\mathcal{L}}$ that projects to $q$, i.e.,~$q_0^i$ is the constant chord at $q^i$. The orientation line $\mathfrak{o}(q^i_0)$ has a canonical orientation. 
		To see this, let $\hat{x}$ and $w$ be the constant cappings of $q^i_0$ and $q$, respectively. The horizontal part of  $D_{\hat{x}}\# 0$ and $D_{w}\#0$ have isomorphic kernel and cokernel as discussed in Section \ref{sec:orientation_lines}. Thus, the horizontal part has a canonical orientation. The vertical part of $D_{\hat{x}}\# 0$ has a canonical orientation as discussed below \eqref{eq:gluing_iso}. As a consequence, $D_{\hat{x}}\# 0$ has a canonical orientation, which determines a generator $o_{q^i_0}\in \mathfrak{o}(q^{i}_{0})$. The unit $1_{\RFH}$ is represented by 
		\begin{equation}
			\label{unit_RFH}
			o_{q_0}:=\sum_{i=1}^{d} o_{q^i_0}  \in \RFC_{n}(\mathcal{L}^{\tilde{\sigma}}).
		\end{equation}
		As discussed in Remark \ref{rem:RFH_laurent}, $\RFC_*(\mathcal{L}^{\tilde{\sigma}})$ admits a $\Z[T,T^{-1}]$-algebra structure which is compatible with the one in $\QC_*(L^\sigma)$. In the case where $\deg T=N_L$,  $T(1_\RFH)\in \RFH_{n+N_L}(\mathcal{L}^{\tilde{\sigma}})$ is represented by 
		\[
		o_{q_{m_L}}:= \sum_{i=1}^{d} o_{q^i_{m_L}} \in \RFC_{n+N_L}(\mathcal{L}^{\tilde{\sigma}}),
		\]
		where $o_{q^i_{m_L}}:=T o_{q^i_0} \in \mathfrak{o}(q^{i}_{m_L})$ for  $i=1,\dots,d$. When $\deg T=2N_L$, we have the same formula with $N_L$ and $m_L$ replaced by $2N_L$ and $2m_L$, respectively.

		\medskip
		
		\begin{lemma}\label{lem:T_image}
			The homology class $T(1_{\RFH})$ is in the image of the map $\mathcal{V}$.
		\end{lemma}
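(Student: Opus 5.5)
We will use the long exact sequence \eqref{eq:les_SH} and degree considerations. The class $T(1_{\RFH})$ lies in $\RFH_{n+N_L}(\mathcal{L}_W^{\hat\sigma},\mathcal{L})$ (or in degree $n+2N_L$ when $\deg T=2N_L$). By exactness, it suffices to show that its image under the connecting map $\RFH_{n+N_L}(\mathcal{L}^{\hat\sigma}_W,\mathcal{L})\to \HW^{\,n-(n+N_L)+1}(\mathcal{L}^{\hat\sigma}_W)=\HW^{1-N_L}(\mathcal{L}^{\hat\sigma}_W)$ vanishes. The most direct route is to show that this target group is zero, or at least that the connecting map vanishes on chain level on the specific cycle $o_{q_{m_L}}$.

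The first step is a degree analysis of the generators of $\HW^*(\mathcal{L}^{\hat\sigma}_W)$, which come only from type I and type II chords. Type I chords $x_q$ have $\mu(x_q)=n-\ind_{f_{\mathcal{L}_W}}(q)\in[-1,n]$. Type II chords $\tilde p^{\mathrm{II}}_k$ have $k<0$, $k\in m_L\Z$, so by \eqref{eq:index_type2} they have $\mu=n-\ind(\tilde p)+\frac{kN_L}{m_L}-1\le n-1-N_L$. Under the duality $\HW^{n-i}$ versus $\HW_i$ in \eqref{eq:les_SH}, the connecting map out of degree $i=n+N_L$ is modelled on the chain level by the part of the differential of the full complex $\FC^{(-\infty,b)}_*$ that goes from the type III generator $o_{q_{m_L}}$ (degree $n+N_L$) to generators of type I or II in degree $n+N_L-1$. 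Type I generators have degree at most $n<n+N_L-1$ because $N_L\ge 2$, and type II generators have degree at most $n-1-N_L$. Hence no generator of type I or II has degree $n+N_L-1$, the connecting map is zero on this class, and $o_{q_{m_L}}$ is a cycle in $\FC^{(-\infty,b)}_*(\mathcal{L}^{\hat\sigma}_W;H_W)$ whose class maps to $T(1_{\RFH})$ under $\mathcal{V}$. The same count works with $2N_L$ and $2m_L$.

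The main obstacle is the bookkeeping. One must make sure that the sign and indexing conventions of \eqref{eq:les_SH} really give a connecting map that lowers the chain degree by one, consistent with the gradings $\mu$ used above. One must also check that, in degree $n+N_L-1$, the quotient $\FC^{(a,b)}$ and the full complex differ only by the type I and II generators counted above, so that type III chords of low action are already excluded by the choice of $a$. A cruder way around this is the observation after \eqref{eq:sh_red}: $\HW_i=\HW_i^{\mathrm{red}}$ for all $i>n$. Combined with the degree gap, this shows that $\mathcal{V}$ is surjective in degree $n+N_L>n$, since the next term $\HW^{1-N_L}$ receives no contribution in the relevant degree. With this in hand, the lemma follows.
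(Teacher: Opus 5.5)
Your argument is correct and is essentially the paper's. Both show that $o_{q_{m_L}}$ is already a cycle in $\FC^{(-\infty,b)}_*(\mathcal{L}^{\hat\sigma}_W;H_W)$, and the type I and II components of $\partial_{H_W}(o_{q_{m_L}})$ are killed by exactly your degree count. The one difference is the type III component: you get it for free from $o_{q_{m_L}}$ being a cycle in the quotient $\FC^{(a,b)}$, which is legitimate since it represents $T(1_{\RFH})$. The paper instead checks it directly, using action for $k>m_L$, index for $k<m_L$, and the unique minimum of $f_L$ for $k=m_L$.

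To close your bookkeeping point: by convexity of $h$, a type III chord of winding $k$ has action at least $k/d-h(1)>k/d$. So the type III chords of action $<a<0$ have $k\le -m_L$ and degree at most $n-N_L$. Hence $\FC^{(-\infty,a)}_{n+N_L-1}=0$ and $\HW^{1-N_L}=0$. This vanishing is what makes $\mathcal{V}$ surjective in degree $n+N_L$. The identity $\HW_i=\HW_i^{\mathrm{red}}$ for $i>n$ plays no role there, since it only expresses injectivity of $\mathcal{V}$.
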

		\begin{proof}
			We present the proof for the case where $\deg T=N_L$. The same proof applies to the other case.
			Recall from Section \ref{sec:sh} that $\mathcal{V}$ is induced by the quotient map 
			\[
			\FC_*^{(-\infty,b)}(\mathcal{L}^{\hat{\sigma}}_W; {H_W}) \longrightarrow  \FC_*^{(a,b)}(\mathcal{L}^{\hat{\sigma}}_W; {H_W}).
			\]
			Let $a\ll 0\ll b$ and let $H_W$ be sufficiently large. The chain $o_{q_{m_L}}\in \FC_*^{(a,b)}(\mathcal{L}^{\hat{\sigma}}_W; {H_W})$ can be viewed as a chain in $\FC_*^{(-\infty,b)}(\mathcal{L}^{\hat{\sigma}}_W; {H_W})$, where these two chains are in correspondence via the above quotient map.   
			Therefore, it suffices to show that $o_{q_{m_L}}$ is a cycle in $\FC_*^{(-\infty,b)}(\mathcal{L}^{\hat{\sigma}}_W; {H_W})$, i.e.,~$\partial_{H_W}(o_{q_{m_L}})=0$, where $\partial_{H_{W}}$ is the boundary map. 
			To this end, we write 
			\begin{equation*}
				\partial_{H_W}(o_{q_{m_L}})= \sum_{z \in \Crit f_{\mathcal{L}_W} } a_z o_{x_z}  + \sum_{(\tilde p,k)} b_{\tilde{p},k} o_{\tilde{p}_k^\mathrm{II}}  + \sum_{(\tilde p,k)}   c_{\tilde{p},k} o_{\tilde{p}_k} 
			\end{equation*}
			where the last two sums range over  $(\tilde p,k)\in \Crit f_\mathcal{L}\times \m^\mathrm{II}_\mathcal{L}(H)$ and  $(\tilde p,k)\in \Crit f_\mathcal{L}\times \m^{(-\infty,b)}_\mathcal{L}(H)$. 
			Here, $o_{x_z} \in \mathfrak{o}(x_z)$, $o_{\tilde{p}_k^\mathrm{II}} \in \mathfrak{o}(\tilde{p}_k^\mathrm{II})$, $o_{\tilde{p}_k} \in \mathfrak{o}({\tilde{p}_k})$ are generators, and $a_z, b_{\tilde{p},k},c_{\tilde{p},k}\in\Z$.
			
			We first show $c_{\tilde{p},k}=0$. Since the Hamiltonian action decreases along $\partial_{H_W}$, we have $c_{\tilde{p},k}=0$ provided $k > m_L$. 
			For $k <m_L$, if $c_{\tilde{p},k} \neq 0$, we arrive at a contradiction:
			\[
			1= \mu(\tilde{q}_{m_L}) - \mu(\tilde{p}_{k}) = \frac{2\tau_{\Sigma} m_L}{d} -\frac{2\tau_{\Sigma}k}{d} -\ind_{f_{\mathcal{L}}}(\tilde{q}) + \ind_{f_{\mathcal{L}}}(\tilde{p}) \geq N_L \geq 2.
			\]
			Here we used that $N_L=\frac{2\tau_\Sigma m_L}{d}$ and $\ind_{f_{\mathcal{L}}}(\tilde{q})=0$. In the case of $k=m_L$, the boundary map contributing to $c_{\tilde{p},m_L}$ is solely given by flow lines of $Z_{\mathcal{L}}$. However, this contribution also vanishes since $q$ is the unique minimum point of $f_L$. This proves $c_{\tilde{p},m_L}=0$. 
			
			An analogous degree argument also shows $b_{\tilde{p}, k}=0$. Finally, $a_z = 0$ since otherwise
			\[
			1=\mu(\tilde q_{m_L})- \mu(z)=  n+\frac{2\tau_{\Sigma} m_L}{d} - \ind_{f_{\mathcal{L}}}(\tilde{q}) - (n - \ind_{f_{\mathcal{L}_W}}(z))= N_L + \ind_{f_{\mathcal{L}_W}}(z)  \geq 2.
			\]
			Therefore, $\partial_{H_W}(o_{q_{m_L}})=0$ holds, and $T(1_{\RFH})$ lies in the image of $\mathcal{V}$. 
		\end{proof}

		By Lemma \ref{lem:T_image}, there is a unique  $\beta\in \HW_*^{\mathrm{red}}(\mathcal{L}^{\hat{\sigma}}_W)$ satisfying $\mathcal{V}^\mathrm{red}(\beta)=T(1_\RFH)$. 	Since $\mathrm{\mathcal{V}}^{\mathrm{red}}$ is injective, $\beta$ lies in the center of the ring $\HW_*^{\mathrm{red}}(\mathcal{L}_{W}^{\hat{\sigma}})$.
		Using the triangle product  $\star$ on $\HW_*^{\mathrm{red}}(\mathcal{L}^{\hat{\sigma}}_W)$,
		we define a $\mathbb{Z}[T]$-algebra structure on $\HW_*^{\mathrm{red}}(\mathcal{L}^{\hat{\sigma}}_W)$ by
		\[
		\mathbb{Z}[T] \times \HW_*^{\mathrm{red}}(\mathcal{L}^{\hat{\sigma}}_W) \to \HW_*^{\mathrm{red}}(\mathcal{L}^{\hat{\sigma}}_W), \qquad (T^k,a) \mapsto \beta^k\star a.
		\]
		One can readily see that $\mathcal{V}^{\mathrm{red}}$ is $\Z[T]$-linear and hence a $\Z[T]$-algebra homomorphism. 
		Let us consider the localization 
		\[
		T^{-1}\HW_*^{\mathrm{red}}(\mathcal{L}^{\hat{\sigma}}_W) :=\big\{\beta^k\big\}_{k \in \N\cup\{0\}}^{-1}\, \HW_*^{\mathrm{red}}(\mathcal{L}^{\hat{\sigma}}_W),
		\]
		which has a natural $\mathbb{Z}[T,T^{-1}]$-algebra structure.

		\begin{prop}\label{prop:localization}
			The map $\mathcal{V}^\mathrm{red}$ induces an $\mathbb{Z}[T,T^{-1}]$-algebra isomorphism
			\[
			\widehat{\mathcal{V}}^\mathrm{red}:T^{-1}\HW_*^{\mathrm{red}}(\mathcal{L}^{\hat{\sigma}}_W) \to \RFH_*(\mathcal{L}_W^{\hat{\sigma}},\mathcal{L}) \cong \RFH_*(\mathcal{L}^{\tilde{\sigma}}),\qquad 
				\frac{A}{\beta^k} \mapsto T^{-k} (\mathcal{V}^\mathrm{red} (A)),
			\]
			where $A\in \HW_*^{\mathrm{red}}(\mathcal{L}^{\hat{\sigma}}_W)$ and $k\in\N\cup\{0\}$.
		\end{prop}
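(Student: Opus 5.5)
The plan is to verify well-definedness, then injectivity, then surjectivity of $\widehat{\mathcal{V}}^\mathrm{red}$. Everything rests on three facts already established: $\mathcal{V}^\mathrm{red}$ is an injective $\Z[T]$-algebra homomorphism; multiplication by $T$ is invertible on $\RFH_*(\mathcal{L}^{\tilde\sigma})$ (Remark \ref{rem:RFH_laurent} and Lemma \ref{lem:RFH_fg}); and $\mathcal{V}$ is an isomorphism in all degrees $i>n$ by \eqref{eq:sh_red}.

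\textbf{Well-definedness and algebra structure.} Since $T$ acts invertibly on the target, the universal property of localization gives the extension. Concretely, if $A/\beta^k=A'/\beta^{k'}$, then $\beta^{m}\star(\beta^{k'}\star A-\beta^{k}\star A')=0$ for some $m$. Applying $\mathcal{V}^\mathrm{red}$, which is $\Z[T]$-linear, gives $T^{m}(T^{k'}\mathcal{V}^\mathrm{red}(A)-T^k\mathcal{V}^\mathrm{red}(A'))=0$. Invertibility of $T$ then yields $T^{-k}\mathcal{V}^\mathrm{red}(A)=T^{-k'}\mathcal{V}^\mathrm{red}(A')$. Multiplicativity and $\Z[T,T^{-1}]$-linearity follow because $\mathcal{V}^\mathrm{red}$ is a ring map and $\beta$ is central, so the $T$-action commutes with the product. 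This uses the compatibility of $T$ with $\star$ stated at the end of Section \ref{sec:product}.

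\textbf{Injectivity.} Suppose $T^{-k}\mathcal{V}^\mathrm{red}(A)=0$. Then $\mathcal{V}^\mathrm{red}(A)=0$, and injectivity of $\mathcal{V}^\mathrm{red}$ gives $A=0$. Hence the class $A/\beta^k$ vanishes.

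\textbf{Surjectivity.} This is the main step. Take $x\in\RFH_j(\mathcal{L}_W^{\hat\sigma},\mathcal{L})$. Since $\deg T=N_L$ (or $2N_L$) is positive, we can choose $k\ge0$ with $j+k\deg T>n$. By \eqref{eq:sh_red}, the class $T^k x$ lies in degree $>n$, where $\mathcal{V}=\mathcal{V}^\mathrm{red}$ is an isomorphism, so $T^kx=\mathcal{V}^\mathrm{red}(A)$ for some $A$. Then $x=T^{-k}\mathcal{V}^\mathrm{red}(A)=\widehat{\mathcal{V}}^\mathrm{red}(A/\beta^k)$.

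The point needing the most care is that \eqref{eq:sh_red} really gives surjectivity of $\mathcal{V}$ in every degree $i>n$, not merely for $i\gg0$. This follows from the long exact sequence \eqref{eq:les_SH}: the next term $\HW^{n-i+1}(\mathcal{L}_W^{\hat\sigma})$ is generated by type I and type II chords, and its degrees are bounded via \eqref{eq:index_type2}. Even if only $i\gg0$ were available, the argument above still works, since $k$ can be taken as large as needed.

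Finally, the identification with $\RFH_*(\mathcal{L}^{\tilde\sigma})$ is the ring isomorphism of Proposition \ref{prop:RFH_well_defined}(c). It is compatible with the $T$-action because both $T$-actions are defined by the same shift on generators $\mathfrak{o}(\tilde p_k)\to\mathfrak{o}(\tilde p_{k+m_L})$.
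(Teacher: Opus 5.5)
Your proof is correct and is essentially the paper's argument: injectivity is inherited from $\mathcal{V}^{\mathrm{red}}$, and surjectivity comes from multiplying by a power of $T$ to land in the range where \eqref{eq:sh_red} makes $\mathcal{V}$ an isomorphism.

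There are two minor differences. First, the paper uses Lemma \ref{lem:RFH_fg} to reduce to finitely many $\Z[T,T^{-1}]$-generators, whereas you shift each homogeneous class individually, so finite generation is not needed. Second, your claim that $\mathcal{V}$ is surjective in every degree $i>n$ is not justified by the degree bound you cite: type I chords reach degree $n$, so that bound only gives $i\geq n+2$. As you note yourself, the $i\gg0$ version suffices, so that claim can be dropped.
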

		\begin{proof}
				By Lemma \ref{lem:RFH_fg}, $\RFH_*(\mathcal{L}^{\tilde{\sigma}})$ is generated by some $A_{1},\dots, A_{l} \in \RFH_*(\mathcal{L}^{\tilde{\sigma}})$ over $\Z[T,T^{-1}]$.
				As pointed out in \eqref{eq:sh_red},  $\mathcal{V}^\mathrm{red}:\HW_i^\mathrm{red}(\mathcal{L}^{\hat{\sigma}}_W) \to \RFH_i(\mathcal{L}^{\tilde{\sigma}})$ is an isomorphism for  large $i$. Thus, for large $N \in \mathbb{N}$, $T^N(A_j)$ lies in the image of $\mathcal{V}^\mathrm{red}$ for every $j=1,\dots,l$. This proves that $\widehat{\mathcal{V}}^\mathrm{red}$ is surjective. Moreover, $\widehat{\mathcal{V}}^\mathrm{red}$ is injective since $\mathcal{V}^\mathrm{red}$ is so.  This completes the proof. 
		\end{proof}

	\section{Applications}

	\subsection{Computations for the unit cotangent fiber of Zoll manifolds}\label{sec:CROSS}

	Let $T^*N$ be the cotangent bundle of a closed, connected, $n$-dimensional Riemannian manifold $N$ with $n \geq 3$, equipped with its canonical exact symplectic form.
	We denote the unit disk bundle by $D^*N$ and the unit circle bundle by $S^*N=\partial D^*N$ with respect to the dual metric on $T^*N$.
	The unit cotangent fiber $S^*_qN$ over $q\in N$ is a Legendrian sphere, which comes with an exact Lagrangian filling $D^*_qN$.
	Let $w_2^{TN}\in \H^2(N;\Z_2)$ denote the second Stiefel--Whitney class of the tangent bundle $TN$. For the projection $\mathrm{pr}:T^*N\to N$, we write 
	\begin{equation}
		\label{eq:background_class_cotangent_bdl}
		\hat{b} := \mathrm{pr}^* w_2^{TN}.
	\end{equation}
	Since $T^*_qN$ is contractible, there is a unique relative $\mathrm{Spin}$-structure $\hat\sigma$ on $T^*_qN$ with respect to the background class $\hat{b}$. Let $\tilde\sigma$ be the restriction of $\hat\sigma$ to $\R\times S^*_qN$. 
	We also use the notation $(D_q^*N;\hat{\sigma})=(D_q^*N)^{\hat{\sigma}}$ and $(S_q^*N;\tilde{\sigma})=(S_q^*N)^{\tilde\sigma}$.
	
	In general, the Rabinowitz Floer homology for the symplectization of $S^*N$ may not be well-defined. However, as in Section \ref{sec:Floer_complex_general}, one can define the Rabinowitz Floer homology $\RFH_*(D^*_qN,S^*_qN;\hat\sigma)$  for the pair $(D^*_qN,S^*_qN)$ with $\hat\sigma$ inside $T^*N$.

		\begin{lemma}
		\label{lem:viterbo_injective}
		The transfer map $\mathcal{V} : \HW_*(D^*_qN;\hat\sigma) \to \RFH_*(D^*_qN,S^*_qN;\hat\sigma)$ is injective. In particular, $\HW_*^{\mathrm{red}}(D_q^*N;\hat\sigma)=\HW_*(D_q^*N;\hat\sigma)$. 
	\end{lemma}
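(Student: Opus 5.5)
By the long exact sequence \eqref{eq:les_SH}, injectivity of $\mathcal{V}$ is equivalent to the vanishing of the connecting map $\HW^{n-i}(D^*_qN;\hat\sigma)\to \HW_i(D^*_qN;\hat\sigma)$ in every degree $i$. We therefore aim to show that this map is zero. We will use the factorization noted at the end of Section \ref{sec:sh}: the map factors through the inclusion-induced map
\[
\H_{i+1}(D^*_qN;\mathcal{O}) \longrightarrow \H_{i+1}(D^*_qN,S^*_qN;\mathcal{O}).
\]
Since $D^*_qN$ is a closed $n$-disk, $\H_{i+1}(D^*_qN;\mathcal{O})$ is nonzero only for $i+1=0$. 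In that degree the map $\H_0(D^n)\to\H_0(D^n,S^{n-1})$ vanishes, because $n\geq 3$ and so $S^{n-1}$ is nonempty and connected. Hence the middle group in the factorization is effectively zero in the relevant degrees, and the connecting map is zero. This gives the injectivity of $\mathcal{V}$.

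The step that needs justification is the factorization itself. It is stated in Section \ref{sec:sh} without proof and in the setting with (B1)--(B3). First we check that the hypotheses apply to $(T^*N, T^*_qN)$, or at least the parts of them used in the construction of \eqref{eq:les_SH}. The cotangent fiber $T^*_qN$ is contractible, so its Maslov class vanishes on $\pi_2(W,\mathcal{L}_W)$. The relative $\mathrm{Spin}$-structure $\hat\sigma$ exists by construction. Moreover, $c_1(T^*N)=0$, since $T^*N$ carries a Lagrangian polarization by the fibers.

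Next we establish the factorization at chain level. The map $\HW^{n-i}\to\HW_i$ is defined by a continuation from a Hamiltonian that is negatively sloped at infinity to one that is positively sloped. Following Cieliebak--Frauenfelder--Oancea, we would deform the composite continuation through the Hamiltonian that is $C^2$-small and Morse on $D^*_qN$. In this way the map factors as
\[
\HW^{n-i}\to \FH(\text{small, negative slope})\cong \H_*(\mathcal{L}_W,\mathcal{L})\to\cdots,
\]
and the middle piece is the inclusion-induced map above, with the degrees shifted by one. We expect this identification, together with the degree bookkeeping, to be the main obstacle. We would model it on the symplectic homology argument of \cite{CFO10}, carried over to Lagrangians as in \cite{Ueb19, CO26}.

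Finally, we must make sure that the degree-zero piece really contributes nothing. If the grading convention shifts the degrees so that $\H_n$ appears instead, the same argument goes through: $\H_n(D^n)=0$, so the inclusion-induced map vanishes there too. Thus every term in the factorization is zero, which gives $\ker\mathcal{V}=0$. The identity $\HW_*^{\mathrm{red}}(D_q^*N;\hat\sigma)=\HW_*(D_q^*N;\hat\sigma)$ then follows directly from the definition of the reduced wrapped Floer homology.
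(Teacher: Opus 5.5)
Your argument is correct, but it differs from the paper's. The paper does not use the factorization through $\H_*(\mathcal{L}_W;\mathcal{O})\to\H_*(\mathcal{L}_W,\mathcal{L};\mathcal{O})$ here. Instead it invokes the loop-space identifications \eqref{eq:basedloop}, $\HW_*(D^*_qN;\hat\sigma)\cong\H_{*-n+1}(\Omega^0N)$ and $\HW^{n-1-*}(D^*_qN;\hat\sigma)\cong\H^{-*}(\Omega^0N)$. These give $\HW_i=0$ for $i<n-1$ and $\HW^{n-1-i}=0$ for $i>0$, so in every degree either the source or the target of $\HW^{n-1-i}\to\HW_i$ vanishes.

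You instead route that map through the inclusion-induced map $\H_*(D^n)\to\H_*(D^n,S^{n-1})$, which is zero in every degree. Note that this is a statement about the map, not the group, since $\H_0(D^n)\neq 0$. It also only needs $S^{n-1}\neq\emptyset$, not $n\geq 3$ or connectedness.

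What your route buys:
\begin{itemize}
\item It avoids the Abbondandolo--Schwarz/Abouzaid isomorphism, and with it the role of the background class $\hat b=\mathrm{pr}^*w_2^{TN}$ in making that isomorphism untwisted.
\item It is insensitive to grading conventions. So writing $\HW^{n-i}$ where it should be $\HW^{n-1-i}$ (here $\dim S^*_qN=n-1$) is harmless.
\item It applies verbatim to any exact filling for which $\H_*(\mathcal{L}_W)\to\H_*(\mathcal{L}_W,\partial\mathcal{L}_W)$ vanishes.
\end{itemize}

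The cost is that all the weight rests on the Lagrangian version of the Cieliebak--Frauenfelder--Oancea factorization. The paper only asserts this at the end of Section~\ref{sec:sh}, and you only sketch it. The paper's route rests on published theorems instead. It is also economical in context, since the loop-space identification is needed anyway for Propositions~\ref{prop:algebra_iso_for_sphere} and~\ref{prop:algebra_iso_for_complex_proj_space}.

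A minor point: the Maslov class of $D^*_qN$ vanishes because of the vertical Lagrangian polarization, or because the fiber is contractible and $c_1=0$ on $\pi_2$. Contractibility alone does not give it.
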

	\begin{proof}
		We recall from \eqref{eq:les_SH} the long exact sequence
		\[\dots \to \HW^{n-1-i}(D^*_qN;\hat\sigma ) \to  \HW_{i} (D^*_qN;\hat\sigma) \xrightarrow{\mathcal{V}} \RFH_{i}(D^*_qN,S^*_qN;\hat\sigma)  \to  \cdots .\]
		As established in \cite{APS08, AS10a,Abo12,AS14}, we have
		\begin{equation}
			\label{eq:basedloop}
			\HW_*(D^*_qN;\hat\sigma) \cong \H_{*-n+1}(\Omega^0 N),\qquad \HW^{n-1-*}(D^*_qN;\hat\sigma) \cong \H^{-*}(\Omega^0 N),
		\end{equation}
		where $\Omega^0 N$ is the contractible component of the based loop space of $N$. Recall that, under our grading convention, the unit of $\HW_*(D^*_qN;\hat\sigma)$ lies in degree $\dim S_q^*N=n-1$. 
		This yields that $\HW_{i} (D^*_qN;\hat\sigma) =0$ for $i <n-1$, and $\HW^{n-1-i}(D^*_qN;\hat\sigma) =0$ for $i> 0$. Therefore, the map $\HW^{n-1-i}(D^*_qN;\hat\sigma) \to \HW_{i}(D^*_qN;\hat\sigma)$ vanishes, and $\mathcal{V}$ is injective. The last assertion follows from the definition of the reduced wrapped Floer homology.
	\end{proof}

	\begin{lemma}\label{lem:homotopy_cross}
		For any $k\in\N$, the map $\pi_k(S^*N,S^*_qN)\to \pi_k(D^*N,D^*_qN)$ induced by the inclusion $\iota:(S^*N,S^*_qN)\hookrightarrow (D^*N,D^*_qN)$ is an isomorphism. 
	\end{lemma}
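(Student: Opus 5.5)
The plan is to compare the two long exact sequences of homotopy groups of pairs for the fibrations given by restricting the projection $\mathrm{pr}:T^*N\to N$. The map $\mathrm{pr}|:(D^*N,D^*_qN)\to N$ is a fiber bundle of pairs whose fiber over $x\in N$ is $(D^*_xN, D^*_xN\cap D^*_qN)$. This is awkward to use directly, so instead I will work with absolute groups and the five lemma.

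Consider the commutative ladder formed by the long exact sequences of the pairs
\[
\cdots\to\pi_k(S^*_qN)\to\pi_k(S^*N)\to\pi_k(S^*N,S^*_qN)\to\pi_{k-1}(S^*_qN)\to\pi_{k-1}(S^*N)\to\cdots
\]
and
\[
\cdots\to\pi_k(D^*_qN)\to\pi_k(D^*N)\to\pi_k(D^*N,D^*_qN)\to\pi_{k-1}(D^*_qN)\to\pi_{k-1}(D^*N)\to\cdots,
\]
with vertical maps induced by $\iota$. Since $D^*_qN$ is contractible, and $D^*N$ deformation retracts to the zero section $N$, the bottom row reduces to $\pi_k(D^*N,D^*_qN)\cong\pi_k(N)$ for $k\ge 2$, with the connecting maps trivial. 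The five lemma then does not apply directly, because $\pi_k(S^*_qN)\to\pi_k(D^*_qN)=0$ is not injective. I will therefore identify the top row differently.

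The key step uses the sphere bundle $S^{n-1}\to S^*N\to N$ with fiber $S^*_qN$ over $q$. The standard fact for a fibration $F\to E\xrightarrow{p}B$ is that $p_*:\pi_k(E,F)\to\pi_k(B,b_0)$ is an isomorphism for all $k\ge1$. Applied to $S^*N\to N$, this gives $\pi_k(S^*N,S^*_qN)\cong\pi_k(N)$. The same statement for $D^*N\to N$ with fiber $D^*_qN$ gives $\pi_k(D^*N,D^*_qN)\cong\pi_k(N)$.

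These two isomorphisms are compatible with $\iota$, since $\mathrm{pr}\circ\iota=\mathrm{pr}|_{S^*N}$ as maps of pairs to $(N,q)$. Composing, $\iota_*$ is identified with the identity of $\pi_k(N)$, hence it is an isomorphism. For $k=1$, where the groups are only pointed sets, the fibration statement still gives a bijection, which suffices. The cases $k=0$ and basepoint issues are irrelevant here since $k\in\N$; connectedness of the fibers holds because $n\ge3$.

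The main point to check is that the relative-to-base isomorphism $\pi_k(E,F)\to\pi_k(B)$ holds under the paper's conventions. This is the homotopy lifting property applied to relative disks, as in Hatcher, Theorem 4.41. I expect no real obstacle beyond stating it.
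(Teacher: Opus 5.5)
Your argument is correct and is essentially the paper's proof: both projections $(S^*N,S^*_qN)\to(N,q)$ and $(D^*N,D^*_qN)\to(N,q)$ induce isomorphisms $\pi_k(E,F)\to\pi_k(N)$ by the fibration property you cite, and since $\mathrm{pr}|_{S^*N}=\mathrm{pr}\circ\iota$, the map $\iota_*$ must also be an isomorphism. The opening five-lemma discussion, which you yourself abandon, and the remark about connected fibers are not needed and can be dropped.
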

	\begin{proof}
		The projections $\wp:(D^*N,D^*_qN) \to(N,q)$ and $\wp|_{S^*N}:(S^*N,S^*_qN)\to (N,q)$ induce isomorphisms on homotopy groups. 
		Since $\wp|_{S^*N}=\wp\circ \iota$, $\iota$ also induces isomorphisms. 
	\end{proof}

	This lemma and the well-known fact $\mu_{D_q^*N}(\pi_2(D^*N,D_q^*N))=0$ verify condition (B2) (except the condition on $N_L$) introduced in Section \ref{sec:RFH_filling}. Condition (B1)  holds automatically in the current setting.
	
	For condition (B3) (as well as $N_L$) to be well-defined, we now assume that the metric on $N$ is Zoll, i.e.,~all geodesics are closed and have the same minimal length. 
	After rescaling, we normalize this minimal length to be $1$.
	The associated cogeodesic flow induces a free $S^1$-action on  $S^*N$, preserving the Liouville 1-form, and hence defining a prequantization bundle $\pi_{S^*N}:S^*N\to \Sigma:=S^*N/S^1$.
	For each $q\in N$, the unit cotangent fiber $S^*_qN$ projects to a possibly immersed Lagrangian sphere in $\Sigma$. 
	We further assume that all prime geodesics on $N$ are simple, i.e.,~have no self-intersections, so that 
	\begin{equation*}
		L_N:=\pi_{S^*N}(S^*_qN)
	\end{equation*}
	is an embedded Lagrangian sphere in $\Sigma$.

	\begin{remark}[Cf. Remark \ref{rem:pin}]\label{rem:spin}
		Since $S^*_q N$ and $D^*_q N$ are orientable, we consider relative $\mathrm{Spin}$-structures rather than relative $\Pin$-structures. 		
		Recall that $\mathrm{Spin}(n)$ is the unique central extension of $\mathrm{SO}(n)$ by $\Z_2$.
		An $n$-dimensional orientable submanifold $Q$ of $M$ is said to be relatively $\mathrm{Spin}$ if
		\[ 
		w_2^{TQ} \in \mathrm{im}\Big(\iota^*: \H^2(M;\Z_2) \to \H^2(Q;\Z_2)\Big).
		\]
		Let $\mathcal{U}$ and $\mathcal{V}$ be good covers of $Q$ and $M$, respectively, as in Remark \ref{rem:pin}. A relative $\mathrm{Spin}$-structure on $Q$ is an equivalence class of a pair 
		\[
		\sigma=(g,\beta) \in \check{\mathrm{C}}^1(\mathcal{U};\mathrm{Spin}(n)) \times \check{\mathrm{Z}}^2(\mathcal{V};\Z_2)
		\] 
		where $g$ is a lift of the element of $\check{\mathrm{C}}^1(\mathcal{U};\mathrm{SO}(n))$ representing the frame bundle of $TQ$ and $\beta \in \check{\mathrm{Z}}^2(\mathcal{V};\Z_2)$ satisfies $\beta|_L = \delta g$.
		The equivalence relation is given as in Remark \ref{rem:pin}.
	\end{remark}
	
	\begin{lemma}
		\label{lem:pin_cross}
		Assume $\dim N\geq 3$. Let $\hat\sigma=[(\hat{g},\hat{\beta})]$ be the unique relative $\mathrm{Spin}$-structure with background class $\hat{b}=\mathrm{pr}^*w_2^{TN}$ as above. If there exists $b\in  \H^2(\Sigma;\Z_2)$ satisfying 
		\begin{equation*}
			\iota^*\hat{b} =\pi_{S^*N}^*b \in \H^2(S^*N;\Z_2)
		\end{equation*}
		for $\iota:S^*N \hookrightarrow T^*N$, then, for each such $b$, there is a unique relative $\mathrm{Spin}$-structure $\sigma = [(g,\beta)]$ with $[\beta]=b$ on $L_N$ whose lift $\tilde{\sigma}$ to $\R\times S^*_q N$ coincides with $\hat{\sigma}$ in the sense of (B3). 
	\end{lemma}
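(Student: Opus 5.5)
We work at the level of \v{C}ech cochains, reading the statement through the diagram \eqref{eq:B3}. Since $L_N$ is a sphere of dimension $n-1\geq 2$, we have $\H^1(L_N;\Z_2)=0=\H^2(L_N;\Z_2)$. This has two consequences. The relative $\mathrm{Spin}$-structures on $L_N$ with a fixed background class $b$ form a torsor over $\H^1(L_N;\Z_2)=0$, so there is exactly one, say $\sigma_b=[(g,\beta)]$ with $[\beta]=b$; existence is automatic because $w_2^{TL_N}=0$ and the condition $\beta|_{L_N}=\delta g$ can be solved once $\beta|_{L_N}$ is exact. In the same way, $T^*_qN$ and $S^*_qN$ are simply connected, so relative $\mathrm{Spin}$-structures on $\R\times S^*_qN$ are determined by their background class in $\H^2(\R\times S^*N;\Z_2)=\H^2(S^*N;\Z_2)$. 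Uniqueness in the lemma is therefore immediate once $b$ is fixed. The real content is to show that the lift $\tilde\sigma_b$ of Remark \ref{rem:pin_lift} equals $\iota^*\hat\sigma$ in the sense required by (B3).

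The key steps are as follows. First, the lift $\tilde\sigma_b$ is represented by $(i_*\pi^*g,\pi^*\beta)$, so its background class is $\pi_{S^*N}^*b$. The restriction $\iota^*\hat\sigma$ is represented by $(\iota^*\hat g,\iota^*\hat\beta)$, with background class $\iota^*\hat b$. The hypothesis $\iota^*\hat b=\pi_{S^*N}^*b$ says these classes agree. Second, we make the cochains agree. We choose a $1$-cochain $c$ on $S^*N$ with $\pi^*\beta=\delta c\cdot\iota^*\hat\beta$, and replace $\iota^*\hat g$ by $c|_{S^*_qN}\cdot\iota^*\hat g$. This gives an equivalent pair with the same $\beta$-component, up to the allowed $(a,b)$-moves of Remark \ref{rem:pin}. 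Third, two spin lifts $g_1,g_2$ of the same frame cocycle on $\R\times S^*_qN$ satisfying $\delta g_1=\delta g_2=\pi^*\beta|$ differ by a $\Z_2$-valued $1$-cocycle. Its class lies in $\H^1(S^*_qN;\Z_2)=0$, so it is $\delta a$ for some $0$-cochain $a$. This shows the two pairs are equivalent, proving (B3) for $\sigma_b$. Running the same argument in reverse shows that any $\sigma$ whose lift matches $\hat\sigma$ must have background class $b'$ with $\pi^*b'=\iota^*\hat b$. For each such $b$, the structure is unique by the first paragraph.

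We expect the main obstacle to be bookkeeping rather than topology. The lift in Remark \ref{rem:pin_lift} uses the stabilization $\mathrm{Spin}(n-1)\to\mathrm{Spin}(n)$ together with the splitting $T(\R\times S^*_qN)=\R\partial_r\oplus\pi^*TL_N$. We need to check that this frame cocycle agrees, after stabilization, with the frame cocycle of $TT^*_qN$ restricted to the collar, so that the same $\mathrm{SO}(n)$-cocycle $h$ is being lifted. To do this, we would identify $\R\partial_r$ with the radial direction of $T^*_qN\setminus\{0\}\cong\R\times S^*_qN$ and observe that $TS^*_qN\cong\pi^*TL_N$ via $d\pi$. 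The two frame bundles then differ by an $\mathrm{SO}(n)$-valued gauge change on a simply connected space. This gauge change lifts to $\mathrm{Spin}(n)$, and the lift is unique up to $\H^1=0$, so the comparison goes through.
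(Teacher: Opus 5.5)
Your route is the paper's in substance: produce a relative $\mathrm{Spin}$-structure on $L_N$ with background class $b$. Then note that its lift and $\iota^*\hat\sigma$ both have background class $\pi_{S^*N}^*b=\iota^*\hat b$. Finally, conclude that they agree because relative $\mathrm{Spin}$-structures on $\R\times S^*_qN\subset\R\times S^*N$ with a fixed background class are unique. Your \v{C}ech argument via $\H^1(S^*_qN;\Z_2)=0$ is equivalent to the paper's injectivity of $\H^2(S^*N,S^*_qN;\Z_2)\to\H^2(S^*N;\Z_2)$. Your remark that $\sigma$ is unique for fixed $b$ because $\H^1(L_N;\Z_2)=0$ makes explicit a point the paper leaves implicit.

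There is, however, a false step in your existence argument. You assert $\H^2(L_N;\Z_2)=0$, but this fails when $\dim N=3$, a case the lemma allows and the paper later uses for $N=S^3$ and $N=\mathbb{RP}^3$. There $L_N\cong S^2$ and $\H^2(L_N;\Z_2)=\Z_2$. Since a relative $\mathrm{Spin}$-structure with background class $b$ exists only if $b|_{L_N}=w_2^{TL_N}=0$, existence is then not automatic. For example, for $N=S^3$ the sphere $L_N\subset Q^2\cong S^2\times S^2$ has class $\pm([S^2\times\mathrm{pt}]-[\mathrm{pt}\times S^2])$. The mod $2$ Poincar\'e dual of $[S^2\times\mathrm{pt}]$ restricts nontrivially to it, so that $b$ admits no such structure.

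You must use the lemma's hypothesis for existence too, as the paper does, by restricting $\iota^*\hat b=\pi_{S^*N}^*b$ to $S^*_qN$. The left side is $(\mathrm{pr}^*w_2^{TN})|_{S^*_qN}=0$, since $\mathrm{pr}$ is constant on $S^*_qN$. The map $\pi_{S^*N}|_{S^*_qN}\colon S^*_qN\to L_N$ is a diffeomorphism, so $b|_{L_N}=0=w_2^{TL_N}$ and $\beta|_{L_N}$ is exact. With this correction, the rest of your proposal goes through.
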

	\begin{proof}
		We consider the diagram in \eqref{eq:B3} with $\Pin$ replaced by $\mathrm{Spin}$. Let $[(\iota^*\hat g,\iota^*\hat\beta)]$ be a relative $\mathrm{Spin}$-structure on $\R\times  S^*_qN$ obtained from $\hat\sigma$. Since $[\iota^*\hat\beta]=\iota^*\hat{b}$, we have $w_2^{TS_q^*N}=(\iota^*\hat b)|_{S_q^*N}=(\pi_{S^*N}^*b)|_{S_q^*N}$. Since $\pi^*_{S^*N}$ restricts to a diffeomorphism from $S^*_qN$ to $L_N$, we have $w_2^{TL_N}=b|_{L_N}$ and in particular $w_2^{TL_N}$ is in the image of the map $\H^2(\Sigma;\Z_2)\to \H^2(L_N;\Z_2)$. Therefore, there exists a relative $\mathrm{Spin}$-structure $[(g,\beta)]$ on $L_N$ with $[\beta]=b$. 
		
		It remains to show that the lift $\tilde\sigma$ of $\sigma$ with background class $[\pi_{S^*N}^*\beta]$ agrees with $[(\iota^*\hat g,\iota^*\hat\beta)]$. 
		Since $\dim N\geq 3$, the map $\H^2(S^*N,S^*_qN;\Z_2)\to \H^2(S^*N;\Z_2)$ is injective. Hence, there exists a unique relative $\mathrm{Spin}$-structure with respect to the background class $\iota^*\hat{b}$. Both $\pi^*_{S^*N}\beta$ and $\iota^*\hat\beta$ represent $\iota^*\hat{b}$, and this completes the proof. 
	\end{proof}

	\begin{cor}\label{cor:zoll}
		Let $N$ be a closed, connected, Riemannian Zoll manifold with $\dim N \geq 3$, all of whose prime geodesics are simple. Assume that there exists a relative $\mathrm{Spin}$-structure $\sigma$ on $L_N$ described in Lemma \ref{lem:pin_cross} and that the minimal Maslov number of $L_{N}$ is at least $\mathrm{max}\{\frac{1}{2}(\dim L_{N}+1),2\}$. Then, there exist $\Z[T,T^{-1}]$-algebra isomorphisms
		\[
		\QH_*(L_N^\sigma) \cong \RFH_*(S^*_qN;\tilde\sigma) \cong T^{-1}\HW_*(D_q^*N;\hat\sigma),
		\]
		where the $\Z[T]$-algebra structure on $\HW_*(D_q^*N;\hat\sigma)\cong \HW_*^\mathrm{red}(D_q^*N;\hat\sigma)$ is defined before Proposition \ref{prop:localization}.
	\end{cor}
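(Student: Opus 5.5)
The plan is to chain three isomorphisms already available in the paper and check that the hypotheses of each are met in the Zoll setting. First, since all prime geodesics are simple and have the same length $1$, the cogeodesic flow gives a free $S^1$-action and $\pi_{S^*N}:S^*N\to\Sigma$ is a prequantization bundle. The fiber $S^*_qN$ is a Legendrian lift of $L_N$, and $\pi_{S^*N}|_{S^*_qN}$ is a diffeomorphism onto $L_N$, so $d=1$ and condition (C1) holds. The hypothesis on $N_L$ gives $N_L\geq\max\{\frac12(n+1),2\}$, where $n=\dim L_N=\dim N-1$.

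Second, I will verify (B1)--(B3) for the fillings $(D^*N,D^*_qN)$. (B1) holds because $\pi_1(S^*N)\to\pi_1(D^*N)$ is an isomorphism for $\dim N\geq3$ and $c_1^{T(T^*N)}=0$. (B2) follows from Lemma \ref{lem:homotopy_cross} together with the vanishing of the Maslov class of $D^*_qN$ on $\pi_2(D^*N,D^*_qN)$. (B3) is exactly the compatibility of $\hat\sigma$ and $\tilde\sigma$ provided by Lemma \ref{lem:pin_cross} for the chosen $\sigma$.

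Third, I will apply the earlier results in sequence.
\begin{enumerate}
\item Proposition \ref{prop:RFH_well_defined}.(b),(c) shows that $\RFH_*(S^*_qN;\tilde\sigma)$ is well defined as a ring and is ring-isomorphic to $\RFH_*(D^*_qN,S^*_qN;\hat\sigma)$.
\item Proposition \ref{prop:localization} identifies the latter with $T^{-1}\HW^{\mathrm{red}}_*(D^*_qN;\hat\sigma)$ as $\Z[T,T^{-1}]$-algebras.
\item Lemma \ref{lem:viterbo_injective} gives $\HW^{\mathrm{red}}_*=\HW_*$.
\item The isomorphism $\QH_*(L_N^\sigma)\cong\RFH_*(S^*_qN;\tilde\sigma)$ should come from Theorem \ref{thm:isom_transfer}.(c) with $d=1$.
\end{enumerate}

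The main obstacle is step 4. Theorem \ref{thm:isom_transfer}.(c) as stated needs the strict inequality $N_L>\max\{\frac12(n+1),2\}$, whereas the corollary only assumes $\geq$. In the filled setting I would instead rerun the proof of Section \ref{sec:proof_thm} in $(\widehat W,\widehat{\mathcal L}_W)$ with a neck-stretched $J^\kappa_W$. Proposition \ref{prop:index positivity B} confines the index-$1$ strips and the index-$0$ triangles to $(-1,\infty)\times Y$. There they are honest Floer strips and triangles in the symplectization, so the lifting bijections of Lemmas \ref{lem:hol_lift}, \ref{lem:hol_lift2} and Propositions \ref{prop:correspondence_strip}, \ref{prop:correspondence_triangle} apply verbatim. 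The orientation comparison of Proposition \ref{prop:orientation_commute} then makes $\Theta$ a chain isomorphism that intertwines the products.

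Finally, I will check $\Z[T,T^{-1}]$-linearity. On the $\RFH$/$\QH$ side this holds because $\Theta$ sends $T:\mathfrak o(\tilde p_k)\to\mathfrak o(\tilde p_{k+m_L})$ to $T$ on $\QC_*$ (Remark \ref{rem:RFH_laurent}). On the wrapped side it holds by construction of the $\Z[T]$-structure via $\beta$, with $\mathcal V^{\mathrm{red}}(\beta)=T(1_{\RFH})$.
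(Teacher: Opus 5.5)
Your proposal is correct and matches the paper's proof, which chains Theorem \ref{thm:isom_transfer}, Proposition \ref{prop:RFH_well_defined}, Proposition \ref{prop:localization} and Lemma \ref{lem:viterbo_injective} exactly as in your steps 1--4, with (B1)--(B3) verified in the text just before the corollary. Your remark about the strict inequality in Theorem \ref{thm:isom_transfer}.(c) is fair, as is the related point that (C2) itself fails if $N_L=2$. The paper handles this tacitly by invoking the filled version of that theorem announced in the introduction, which rests on Propositions \ref{prop:index positivity B} and \ref{prop:RFH_well_defined} under the non-strict bound; this is precisely what your step 4 sketches.
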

	\begin{proof}
		By Theorem \ref{thm:isom_transfer}, Proposition \ref{prop:RFH_well_defined}, and Proposition \ref{prop:localization}, we have isomorphisms 
		\[
		\QH_*(L_N^\sigma) \cong\RFH_*(S^*_{q}N;\tilde{\sigma}) \cong \RFH_*(D^*_qN,S^*_qN;\hat\sigma) \cong T^{-1} \HW_*^{\mathrm{red}}(D^*_qN;\hat\sigma).
		\]
		By Lemma \ref{lem:viterbo_injective}, $\HW_*^{\mathrm{red}}(D^*_qN;\hat\sigma) \allowbreak = \HW_*(D^*_qN;\hat\sigma)$, and this finishes the proof.
	\end{proof}

	In the rest of this section, we examine the Lagrangian sphere $L_N$ for the cases $N\in\{S^k,\mathbb{RP}^k,\mathbb{CP}^k\}$ equipped with the standard metric. In these cases,  well-known computations of $\H_*(\Omega^0N)$ enable one to compute $\QH_*(L_N^\sigma)$ as a ring.

	\subsubsection{$N=S^k$ for $k\geq 3$}
	\label{sec:sphere}
	We consider the unit cotangent bundle of the unit sphere $S^k\subset \R^{k+1}$,
	\begin{equation*}
		D^*S^{k} = \{(\mathbf{a},\mathbf{b}) \in \R^{k+1} \times \R^{k+1} \mid |\mathbf{a}|^2\leq 1,\; |\mathbf{b}|^2 = 1,\; \mathbf{a} \cdot \mathbf{b} =0\}
	\end{equation*} 
	and the smooth quadric 
	\[
	Q^{k-1} = \{ [z_0:\dots :z_{k}] \in \mathbb{CP}^{k} \mid z_0^2 +\dots +z_{k}^2=0\}
	\]
	equipped with the restriction of the Fubini-Study form $\omega_{\mathrm{FS}}$. 
	The cogeodesic flow on $S^*S^k$ induces the $S^1$-action on $S^*S^k$			
	\[t \cdot (\mathbf{a},\mathbf{b}) = \big(\cos(2\pi t) \mathbf{a} - \sin(2\pi t) \mathbf{b}, \sin (2\pi t)\mathbf{a} + \cos (2\pi t) \mathbf{b}\big),\qquad  t\in \R/\Z=S^1.
	\]
	The associated prequantization bundle is given by 
	\[
	\begin{split}
		\pi_{S^*S^k}:S^*S^k\to Q^{k-1},\qquad (\mathbf{a},\mathbf{b}) &\mapsto [a_{0}+i b_{0}: \dots : a_{k} + i b_{k}]
	\end{split}
	\]
	with the Euler class $-[\omega_\mathrm{FS}|_{Q^{k-1}}]$.
	The unit cotangent fiber $S^*_qS^k$ projects to \\ 
	\[
		L_{S^k} =\left\{ [i : x_1:\dots :x_k] \in \mathbb{CP}^{k} \mid  \sum_{j=1}^{k} x_j^2 =1, x_j\in \R\right\} \subset Q^{k-1}.
	\]
	It is a monotone Lagrangian sphere with minimal Maslov number $2(k-1)$ as $(Q^{k-1},\omega_{\mathrm{FS}}|_{Q^{k-1}})$ is monotone with minimal Chern number $k-1$.

	Next, we investigate relative $\mathrm{Spin}$-structures on $L_{S^k}$. Let $[\omega_\mathrm{FS}|_{Q^{k-1}}]_{\Z_2}\in\H^2(Q^{k-1};\Z_2)$ be the class corresponding to $[\omega_\mathrm{FS}|_{Q^{k-1}}]$. Since the second Stiefel--Whitney class $w_2^{TS^k}  \in \H^2(S^k;\Z_2)$ vanishes, the background class $\hat{b} \in \H^2(T^*S^k;\Z_2)$ defined in \eqref{eq:background_class_cotangent_bdl} is zero. Thus $\hat{\sigma}$ is actually an absolute $\mathrm{Spin}$-structure.
	The Gysin exact sequence
	\begin{equation*}
		\dots \to \H^0(Q^{k-1};\Z_2) \xrightarrow{\cup\, [\omega_\mathrm{FS}|_{Q^{k-1}}]_{\Z_2}} \H^2(Q^{k-1};\Z_2) \xrightarrow{\pi_{S^*S^k}^*} \H^2(S^*S^k;\Z_2) \to 0,
	\end{equation*}
	where we used the fact $\H^1(Q^{k-1};\Z_2) =0$, implies that $\pi_{S^*S^k}^*$ is surjective and thus an (absolute) $\mathrm{Spin}$-structure $\sigma$ on $L_{S^k}$ satisfying $\tilde\sigma=\iota^*\hat\sigma$  exists due to  Lemma \ref{lem:pin_cross}.
	Furthermore, there are precisely two such relative $\mathrm{Spin}$-structures since, by Lemma \ref{lem:pin_cross}, there is a unique one for each element in $(\pi_{S^*S^k}^*)^{-1}(\iota^*\mathrm{pr}^*w_2^{TS^{k}})=\ker(\pi_{S^*S^k}^*)\cong\Z_2\langle[\omega_\mathrm{FS}|_{Q^{k-1}}]\rangle$.
	Let $\sigma$ be one of these two relative $\mathrm{Spin}$-structures on $L_{S^k}$. 
	
	\begin{prop}\label{prop:algebra_iso_for_sphere}(Cf.~\cite[Remark 36]{KS21a})
		There exists a ring isomorphism 
		\[
		\QH_*(L_{S^k}^{\sigma})[k-1]  \cong \mathbb{Z}[x,x^{-1}],
		\]
		where $x$ has degree $k-1$. Here $[k-1]$ denotes a degree shift by $k-1$.
	\end{prop}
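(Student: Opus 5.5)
Our route is Corollary \ref{cor:zoll}, applied with $N=S^k$. Every hypothesis has already been checked. The standard round metric is Zoll and all its prime geodesics are simple. The relative $\mathrm{Spin}$-structure $\sigma$ exists by the Gysin argument above together with Lemma \ref{lem:pin_cross}. The minimal Maslov number is $N_{L_{S^k}}=2(k-1)$, and $2(k-1)\geq \max\{\tfrac12 k,2\}$ because $k\geq 3$. Since $\dim L_{S^k}=k-1$, the corollary gives a $\Z[T,T^{-1}]$-algebra isomorphism
\[
\QH_*(L_{S^k}^\sigma)\cong T^{-1}\HW_*(D_q^*S^k;\hat\sigma).
\]
What remains is to compute the right-hand side as a ring.

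\textbf{Computing the wrapped Floer homology.} By \eqref{eq:basedloop}, $\HW_*(D^*_qS^k;\hat\sigma)\cong \H_{*-k+1}(\Omega S^k)$. Here $\Omega^0S^k=\Omega S^k$ because $k\geq 3$. The isomorphism of \cite{AS10a,Abo12} intertwines the triangle product with the Pontryagin product. Since $\hat b=0$, the structure $\hat\sigma$ is the honest spin structure on the fiber, so these isomorphisms hold with $\Z$ coefficients and no twisting. By Bott--Samelson, $\H_*(\Omega S^k)\cong \Z[u]$ with $\deg u=k-1$. After the shift by $k-1$, which makes the product degree zero, we get
\[
\HW_*(D_q^*S^k;\hat\sigma)[k-1]\cong \Z[u], \qquad \deg u=k-1.
\]

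\textbf{Identifying $\beta$.} The class $\beta$ satisfies $\mathcal{V}(\beta)=T(1_\RFH)$. Since $N_L=2(k-1)$ is even, $\deg T=N_L=2(k-1)$, so after the shift $\beta$ lies in degree $2(k-1)$. That degree is spanned by $u^2$, so $\beta=c\,u^2$ for some $c\in\Z$, and the key step is to show $c=\pm1$. First we reduce to $\beta$ being a unit multiple of a power of $u$. The quantum complex has generators $\mathfrak o(p,j)$ with $p$ ranging over the two critical points of a perfect Morse function on $S^{k-1}$. Every differential shifts degree by one, while the degrees $\pm(k-1)+2j(k-1)$ are all multiples of $k-1\geq 2$, so the differential vanishes. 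Hence $\QH_*(L^\sigma)$ is free of rank one in each degree divisible by $k-1$. Through the isomorphism above, $T^{-1}\Z[u]$ must also be free of rank one in each such degree. This forces $c\neq0$, and torsion-freeness rules out $|c|>1$. Indeed, the localization $\Z[u][(cu^2)^{-1}]=\Z[1/c][u,u^{-1}]$, whose degree-zero part $\Z[1/c]$ is not isomorphic to $\Z$ unless $c=\pm1$. This rank comparison, with enough control of the integral structure to pin down $c$, is where I expect the main obstacle.

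\textbf{Fallback and conclusion.} If the rank argument proves too coarse, I would compute $\beta$ at chain level. Lemma \ref{lem:T_image} represents $T(1_\RFH)$ by the chord $o_{q_{m_L}}$, which corresponds to a single Reeb chord class, that is, one geodesic loop through $q$ traversed once. Under the Abbondandolo--Schwarz map, this chord corresponds to the generator $u^2$: in the energy Morse theory of $\Omega S^k$, the once-traversed great circle through $q$ and $-q$ gives a critical $(k-1)$-sphere of index $k-1$ whose fundamental class carries degree $2(k-1)$. Once $\beta=\pm u^2$ is established, the localization is $\Z[u][u^{-2}]=\Z[u,u^{-1}]$. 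This gives $\QH_*(L_{S^k}^\sigma)[k-1]\cong\Z[x,x^{-1}]$ with $x=u$ of degree $k-1$.
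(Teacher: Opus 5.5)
Your proposal is correct and follows the paper's proof. The paper applies Corollary \ref{cor:zoll}, computes $\HW_*(D_q^*S^k;\hat\sigma)[k-1]\cong \H_*(\Omega S^k)\cong\Z[x]$ via Bott--Samelson, identifies $T(1_{\HW})$ with $\pm x^2$, and localizes. The only difference is that the paper simply asserts that $T(1_{\HW})$ generates the relevant degree, while you justify $c=\pm1$ explicitly: the pearl complex has zero differential for degree reasons, and the degree-zero part of $\Z[u][(cu^2)^{-1}]$ is $\Z[1/c]$. Note that it is the failure of $\Z[1/c]$ to be finitely generated, not torsion, that rules out $|c|>1$.
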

	\begin{proof}
		The minimal Maslov number $2(k-1)$ of ${L_{S^k}}$ is greater than $\mathrm{max}\{ \frac{1}{2}(\dim L_{S^{k}}\allowbreak+1), 2\}$. It follows from Corollary \ref{cor:zoll} that
		$\QH_*(L_{S^k}^{\sigma})  \cong T^{-1}\HW_*(D_q^*S^k;\hat\sigma)$
		as $\mathbb{Z}[T,T^{-1}]$-algebras. 	
		By \cite[Theorem III.1.A]{BS53} and \eqref{eq:basedloop}, we have $\HW_*(D_{q}^* S^k;\hat\sigma)[k-1] \cong \H_*(\Omega S^{k})\cong \mathbb{Z}[x]$ where the formal variable $x$ has degree $k-1$. Since $T(1_{\HW})$ is a generator of $\HW_{3(k-1)}({D_{q}^*S^k};\hat\sigma) $, it corresponds to $\mu x^2\in \H_{2(k-1)}(\Omega S^k)$ for $\mu\in\{-1,1\}$. Therefore, we have
		\[
		T^{-1}\HW_*(D_q^*S^k;\hat\sigma)[k-1]\cong T^{-1}\Z[x]/(T-\mu x^2)\cong \Z[x,x^{-1}],
		\]
		where $x^{-1}=\mu T^{-1}x$. 
		This completes the proof.
	\end{proof}

	\subsubsection{$N = \mathbb{RP}^k$ for $k\geq 3$}
	\label{sec:real_proj_space}
	Let $\hat \sigma '$ and $\hat \sigma$ be relative $\mathrm{Spin}$-structures on $T^*_{[q]}\mathbb{RP}^k$ and $T^*_qS^k$, respectively, with the background classes described in \eqref{eq:background_class_cotangent_bdl}.
	Since $\RFH_*(D^*_{[q]}\mathbb{RP}^k ,S^*_{[q]}\mathbb{RP}^k;\hat{\sigma}')$ is defined using only contractible chords and the homology and cohomology groups of $\Omega^0 \mathbb{RP}^{k}$ are naturally isomorphic to those of $\Omega S^{k}$, we obtain 
	\begin{equation}\label{eq:isom_projective_sphere}
	\RFH_*(D^*_{[q]}\mathbb{RP}^k ,S^*_{[q]}\mathbb{RP}^k;\hat{\sigma}')\cong \RFH_*(D^*_qS^k,S^*_qS^k;\hat\sigma).	
	\end{equation}
	In what follows, we provide an alternative perspective of this isomorphism.
	We equip $\mathbb{RP}^k$ with the metric induced from $S^k$ via the antipodal quotient. 
	The cogeodesic flow induces the prequantization bundle 
	\[
	\pi_{S^*\mathbb{RP}^k}:S^*\mathbb{RP}^k\longrightarrow Q^{k-1}
		\] 
	with the Euler class $-2[\omega_\mathrm{FS}|_{Q^{k-1}}]$. Moreover, 
	\[
	L_{\mathbb{RP}^k}=\pi_{S^*\mathbb{RP}^k}(S^*_{[q]}\mathbb{RP}^k)=\pi_{S^*S^k}(S^*_{q} S^k)=L_{S^k}.
	\] 
	The second Stiefel--Whitney class of $T\mathbb{RP}^k$ is computed as 
	\[
	w_2^{T\mathbb{RP}^k} = \binom{k+1}{2} \in  \Z_2 \cong \H^2(\mathbb{RP}^k;\Z_2).
	\]
	By the Gysin exact sequence 
	\[
	\dots \to \H^0(Q^{k-1};\Z_2) \xrightarrow{\cup\, 2[\omega_\mathrm{FS}|_{Q^{k-1}}]_{\Z_2}}\H^2(Q^{k-1};\Z_2) \xrightarrow{\pi_{S^*\mathbb{RP}^k}^*} \H^2(S^*\mathbb{RP}^k;\Z_2)\to 0,
	\]
	we know that $\pi_{S^*\mathbb{RP}^k}^*$ is an isomorphism. 
	Thus, there exists a unique  class $b\in\H^2(Q^{k-1};\Z_2)$ satisfying $\pi^*_{S^*\mathbb{RP}^k}b=\iota^*\mathrm{pr}^*w_2^{T\mathbb{RP}^k}$. 
	By Lemma \ref{lem:pin_cross}, there is a unique relative $\mathrm{Spin}$-structure $\sigma'$ on $L_{\mathbb{RP}^k}$ which lifts to $\tilde\sigma'$, the restriction of $\hat\sigma'$ to $\R \times S^*_{[q]}\mathbb{RP}^k$. 
	The relative $\mathrm{Spin}$-structure $\sigma'$ lifts to $\tilde \sigma$, the restriction of $\hat\sigma$ to $\R \times S^*_qS^k$.
	For such $\sigma'$, we have								
	\[
	\RFH_*(S^*_{[q]}\mathbb{RP}^k;\tilde\sigma')\cong \QH_*(L_{\mathbb{RP}^k}^{\sigma'})= \QH_*(L_{S^k}^{\sigma'})\cong \RFH_*(S^*_qS^k;\tilde\sigma)
	\]
	where the isomorphisms follow from Corollary \ref{cor:zoll}. By Proposition \ref{prop:RFH_well_defined}, the first and last terms are identified with $\RFH_*(D^*_{[q]}\mathbb{RP}^k ,S^*_{[q]}\mathbb{RP}^k;\hat{\sigma}')$ and $\RFH_*(D^*_qS^k,S^*_qS^k;\hat\sigma)$, respectively. Thus, we recover the isomorphism in \eqref{eq:isom_projective_sphere}.
	
	\subsubsection{$N=\mathbb{CP}^k$ for $k\geq2$}\label{sec:CP^k}
	
	We consider the metric on $\mathbb{CP}^k$ induced by the round metric on $S^{2k+1}$ via  
	the Hopf fibration $\pi_{S^{2k+1}}:S^{2k+1}\to \mathbb{CP}^k$. We identify the unit tangent bundle $S\mathbb{CP}^k$ with the Hilbert 1-form and $S^*\mathbb{CP}^k$ with the Liouville 1-form via the musical isomorphism. The set of horizontal unit vectors of $T_qS^{2k+1}$ is written as
	\[
	\{v\in T_qS^{2k+1}\mid v\cdot iq=0 \}=\{v\in \R^{2k+2}\mid \|v\|=1,\;v\cdot q=0,\; v\cdot iq=0\},
	\] 
 where $i$ is the complex structure on $\R^{2k+2}$ and $\cdot$ is the dot product. 
	The map $\pi_{S^{2k+1}}$ gives a bijection between oriented closed geodesics on $\mathbb{CP}^k$ and oriented horizontal great circles in $S^{2k+1}$ modulo the $S^1$-action on $S^{2k+1}$. The latter corresponds to oriented horizontal real 2-planes in $\C^{k+1}$ modulo the $\C^\times$-action. Here, a 2-plane spanned by $x,y\in\R^{2k+2}$ is said to be horizontal if $x\cdot iy=0$. 
	This correspondence implies the following.
	For $(\bar{q},\bar{v})\in S\mathbb{CP}^k$, we  choose $(q,v)\in TS^{2k+1}$ such that $v$ is horizontal and $(q,v)$ projects to $(\bar{q},\bar{v})$ by $\pi_{S^{2k+1}}$. The closed geodesic on $\mathbb{CP}^k$ with initial velocity $\bar{v}$ at $\bar{q}$ lifts to the oriented great circle given by the intersection of $S^{2k+1}$ and the oriented horizontal real 2-plane in $\C^{k+1}$ spanned by $\{q,v\}$. The $S^1$-action on $S\mathbb{CP}^k$ given by the geodesic flow induces the prequantization bundle 
	\[
	\begin{split}
		\pi_{S\mathbb{CP}^k}:S\mathbb{CP}^k&\longrightarrow F:=\{ ([z],[w]) \in \mathbb{CP}^k \times \mathbb{CP}^k \mid  \langle z,w\rangle=0\},\\[1ex]
		(\bar{q},\bar{v})&\longmapsto ([q+iv],[q-iv])
	\end{split}
	\]
	where $\langle z,w\rangle$ denotes the standard Hermitian inner product. 
	Indeed, 
	\[
	\begin{split}
		\pi_{S{\mathbb{CP}^k}}(t\cdot (\bar{q},\bar{v})) &= \big([\cos(2\pi t)q - \sin(2\pi t) v+i( \sin (2\pi t)q + \cos (2\pi t) v)],\\
		&\qquad [\cos(2\pi t)q - \sin(2\pi t) v-i( \sin (2\pi t)q + \cos (2\pi t) v)]\big) \\[.5ex]
		&= \big([e^{2\pi it}(q+iv)], [e^{-2\pi it}(q-iv)]\big)\\[.5ex]
		&=\pi_{S{\mathbb{CP}^k}}(\bar{q},\bar{v}),
	\end{split}
	\]
	and 
	\[
	\langle q+iv, q-iv\rangle= \|q\|^2-\|v\|^2+ i (\langle q,v\rangle +\overline{\langle q,v\rangle})=0
	\]
	since $q$ and $v$ are orthogonal unit vectors. The base $F$ is equipped with the symplectic form  $\omega_F:=(\omega_{\mathrm{FS}} \oplus -\omega_{\mathrm{FS}})|_F$. A straightforward computation shows that $-\pi_{S{\mathbb{CP}^k}}^*\omega_F$ equals the exterior derivative of the Hilbert 1-form. The unit tangent fiber $S_q\mathbb{CP}^k$ at $q=[1:0:\cdots:0]$ projects to the Lagrangian sphere 
	\[
	L_{\mathbb{CP}^k} = \{([1:w_1:\dots:w_k],[1:-w_1:\dots:-w_k]) \in F\}.
	\]
	Its minimal Maslov number is $2k$ as the minimal Chern number of $F$ is $k$.

	To investigate relative $\mathrm{Spin}$-structures, we observe	
	\begin{equation}\label{eq:SW_class}
	w_2^{T\mathbb{CP}^k} = k+1   \in \Z_2 \cong \H^2(\mathbb{CP}^k;\Z_2).		
	\end{equation}
	The Gysin exact sequence for $S^{2k-1}\hookrightarrow S\mathbb{CP}^k \stackrel{\mathrm{pr}}{\to} \mathbb{CP}^k$ shows that
	\[ 
	\mathrm{pr}^*:\H^2(\mathbb{CP}^k;\Z_2) \longrightarrow\H^2(S\mathbb{CP}^k;\Z_2)
	\]
	for $k\geq2$ is an isomorphism. By the Gysin exact sequence for $S^1\hookrightarrow S\mathbb{CP}^k \to F$,
	\[\dots \to  \H^0(F;\Z_2) \xrightarrow{\cup [\omega_F]_{\Z_2}} \H^2(F;\Z_2) \xrightarrow{\pi_{S\mathbb{CP}^k}^*} \H^2(S\mathbb{CP}^k;\Z_2) \to 0 ,
	\]
	where we used $\H^{1}(F;\Z_2) =0$,  $\pi_{S\mathbb{CP}^k}^*$ is surjective with kernel generated by $[\omega_F]_{\Z_2} \in  \H^2(F;\Z_2)$, the class corresponding to $[\omega_F]$. By the Lefschetz hyperplane theorem, $\H^2(F;\Z_2)\cong \H^2(\mathbb{CP}^k\times \mathbb{CP}^k;\Z_2)$. By \eqref{eq:SW_class}, $\pi^*_{S\mathbb{CP}^k}(b)=\mathrm{pr}^*w_2^{T\mathbb{CP}^k}$ holds for $b\in\{0,[\omega_F]_{\Z_2}\}$ when $k$ is odd, and for $b\in \H^2(F;\Z_2) \setminus \{0,[\omega_F]_{\Z_2}\}$ when $k$ is even. For each such background class $b$, there exists a unique $\mathrm{Spin}$-structure on $L_{\mathbb{CP}^k}$ by Lemma \ref{lem:pin_cross}. Let $\sigma$ be any such relative $\mathrm{Spin}$-structure.

	\begin{prop}\label{prop:algebra_iso_for_complex_proj_space}
		
		There exists a ring isomorphism
		\[
		\QH_*(L_{\mathbb{CP}^k}^{\sigma})[2k-1] \cong \Z[x,y,y^{-1}]/(x^2),
		\]
		where $x$ has degree $1$ and $y$ has degree $2k$.
	\end{prop}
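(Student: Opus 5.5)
We follow the argument for Proposition \ref{prop:algebra_iso_for_sphere}. First we check that the hypotheses of Corollary \ref{cor:zoll} hold. The metric on $\mathbb{CP}^k$ is Zoll and all prime geodesics are simple. We have $\dim\mathbb{CP}^k=2k\geq 3$ for $k\geq 2$, and $\sigma$ is a relative $\mathrm{Spin}$-structure of the type produced by Lemma \ref{lem:pin_cross}. The minimal Maslov number $2k$ of $L_{\mathbb{CP}^k}$ exceeds $\max\{\frac12(2k-1+1),2\}=\max\{k,2\}$. Corollary \ref{cor:zoll} then gives a $\Z[T,T^{-1}]$-algebra isomorphism
\[
\QH_*(L_{\mathbb{CP}^k}^\sigma)\cong T^{-1}\HW_*(D_q^*\mathbb{CP}^k;\hat\sigma).
\]
Since $L_{\mathbb{CP}^k}$ is orientable and $N_L=2k$ is even, the generator $T$ has degree $N_L=2k$. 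Next we compute the wrapped side. By \eqref{eq:basedloop}, $\HW_*(D_q^*\mathbb{CP}^k;\hat\sigma)[2k-1]\cong \H_*(\Omega\mathbb{CP}^k)$ as rings; the Pontryagin product matches the triangle product by the cited results of Abbondandolo--Schwarz and Abouzaid. Note that $\Omega\mathbb{CP}^k$ is connected, so $\Omega^0=\Omega$. We then use the classical fact that $\Omega\mathbb{CP}^k\simeq S^1\times\Omega S^{2k+1}$ as $H$-spaces, which comes from the fibration $S^1\to S^{2k+1}\to\mathbb{CP}^k$ after looping. This gives the ring isomorphism $\H_*(\Omega\mathbb{CP}^k)\cong\Lambda[x]\otimes\Z[y]=\Z[x,y]/(x^2)$ with $\deg x=1$ and $\deg y=2k$. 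Graded commutativity is consistent here, since $x$ is odd and $y$ is even. One must be careful that the $H$-space splitting is multiplicative integrally. Alternatively, we can run the Serre spectral sequence of the path fibration, or take the Bott--Samelson type computation as a citation.

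The key step is to identify the class $\beta$ with $\mathcal{V}(\beta)=T(1_{\RFH})$ inside $\HW$. This class lies in degree $(2k-1)+2k$, so after the shift it lies in $\H_{2k}(\Omega\mathbb{CP}^k)\cong\Z\langle y\rangle$ (the degree-$2k$ part of $\Z[x,y]/(x^2)$ is spanned by $y$, since $xy$ has degree $2k+1$). Hence $\beta=c\,y$ for some $c\in\Z$. To show $c=\pm1$, we follow the sphere case and use the proof of Lemma \ref{lem:T_image}. There, $T(1_{\RFH})$ is represented at chain level by the single orientation generator $o_{q_{m_L}}$, where $q$ is the minimum of $f_L$; note that $d=1$ here, so this is a single chord. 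This chord of winding $m_L$ is exactly the shortest nonconstant contractible chord type: the contractible Reeb chords of $S_q^*\mathbb{CP}^k$ correspond to geodesic loops at $q$, and the closed geodesics have period one. In the Morse--Bott model of $\HW$ coming from the energy functional on $\Omega\mathbb{CP}^k$, the minimal-degree cycle formed by this family is the fundamental class of the critical manifold of index-$(2k-1)$ geodesics. That critical manifold is $S^{2k-1}$ by the direction of departure, and its class represents a generator of $\H_{2k}$. Verifying that $o_{q_{m_L}}$ maps to a generator, rather than a multiple, is the main obstacle. We would argue integrally by using Lemma \ref{lem:viterbo_injective} and \eqref{eq:sh_red}: $\mathcal{V}$ is an isomorphism in high degrees, and multiplication by $T$ on $\RFH$ is invertible. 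These two facts force $y$ to act invertibly after localization, so the index of $\beta\Z$ in $\Z y$ must be $1$. Concretely, if $\beta=cy$ with $|c|>1$, then the localized ring $T^{-1}\HW$ would have torsion $\Z/c$ in degrees where $\RFH$ is the surjective image of $\HW$, contradicting surjectivity of $\mathcal{V}$ in high degrees.

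With $\beta=\mu y$ for $\mu\in\{\pm1\}$, we conclude
\[
T^{-1}\Z[x,y]/(x^2)\cong\Z[x,y,y^{-1}]/(x^2).
\]
Combining this with the first display, after the shift by $2k-1$, gives the stated ring isomorphism.
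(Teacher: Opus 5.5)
Your proposal has the same structure as the paper's proof: Corollary~\ref{cor:zoll}, then $\HW_*(D^*_{[q]}\mathbb{CP}^k;\hat\sigma)[2k-1]\cong\H_*(\Omega\mathbb{CP}^k)\cong\Z[x,y]/(x^2)$ (the paper simply cites \cite[Section 3]{CJY04} for this ring), and then $\beta=\pm y$, which the paper asserts ``for a degree reason.''

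The one step you should repair is your justification of $c=\pm1$. If $\beta=cy$, inverting $\beta$ gives $\Z[1/c][x,y,y^{-1}]/(x^2)$, which is torsion-free, so the claimed ``torsion $\Z/c$'' does not arise. The ingredients you name do suffice, used as follows. For $m$ large, $\mathcal{V}$ is an isomorphism in degrees $2k-1+2km$ and $2k-1+2k(m+1)$ by \eqref{eq:sh_red}; in fact the exact sequence in the proof of Lemma~\ref{lem:viterbo_injective} gives this in all degrees $\geq 2$. Moreover $\mathcal{V}(\beta\star a)=T(1_{\RFH})\star\mathcal{V}(a)=T\,\mathcal{V}(a)$ and $T$ is invertible on $\RFH$, so $\beta\star\colon\HW_{2k-1+2km}\to\HW_{2k-1+2k(m+1)}$ is an isomorphism. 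Under the identification with $\H_*(\Omega\mathbb{CP}^k)$, this map is $y^m\mapsto c\,y^{m+1}$ between two copies of $\Z$, which forces $|c|=1$.

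With this argument, the chain-level Morse--Bott discussion of the first nonconstant chord family is unnecessary.
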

	
	\begin{proof}
		We argue as in the proof of Proposition \ref{prop:algebra_iso_for_sphere}. 
		\
		Since the minimal Maslov number of ${L_{\mathbb{CP}^k}}$ equals $2k$, Corollary \ref{cor:zoll} yields 
		$
		\QH_*(L_{\mathbb{CP}^k}^{\sigma}) \cong T^{-1}\HW_*(D_{[q]}^*\mathbb{CP}^k;\hat\sigma)
		$
		as rings. 
		It is computed in \cite[Section 3]{CJY04} that 
		\[
		\HW_*(D_{[q]}^*\mathbb{CP}^k;\hat\sigma)[2k-1]\cong \H_*(\Omega \mathbb{CP}^k)\cong\Z[x,y]/(x^2),
		\] 
		where the formal variables $x$ and $y$ have degrees $1$ and $2k$, respectively. For a degree reason, $T(1_{\HW})\in \HW_{4k-1}(D_{[q]}^*\mathbb{CP}^k;\hat\sigma)$ corresponds to $\mu y\in \H_{2k}(\Omega \mathbb{CP}^k)$ where $\mu\in\{-1, 1\}$. This completes the proof. 				
	\end{proof}
	
	\begin{prop}
		For the Lagrangian $3$-sphere $L_{\mathbb{CP}^2}\subset F\subset\mathbb{CP}^2\times\mathbb{CP}^2$ with its unique absolute $\mathrm{Spin}$-structure $\sigma_0$, we have
		\[
		\QH_*(L_{\mathbb{CP}^2}^{\sigma_0};\mathbb{K})=0,
		\] 
		where $\mathbb{K}$ is any field of characteristic different from $2$.
	\end{prop}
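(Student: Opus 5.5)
The plan is to run the argument of Proposition \ref{prop:algebra_iso_for_complex_proj_space} over $\mathbb{K}$, with the absolute $\mathrm{Spin}$-structure $\sigma_0$, and show that the element playing the role of $T$ is nilpotent in the based loop space ring. Since $L_{\mathbb{CP}^2}$ is a $3$-sphere, it has a unique $\mathrm{Spin}$-structure, whose background class is $b=0$.

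For $k=2$ (even), the analysis before Proposition \ref{prop:algebra_iso_for_complex_proj_space} says that $b=0$ does \emph{not} lift $\hat\sigma$. Concretely, $\pi_{S\mathbb{CP}^2}^*0=0\neq \mathrm{pr}^*w_2^{T\mathbb{CP}^2}$. So the lift $\tilde\sigma_0$ on $\R\times S^*_q\mathbb{CP}^2$ differs from the restriction of $\hat\sigma$. Since $\H^2(S^*\mathbb{CP}^2,S^*_q\mathbb{CP}^2;\Z_2)\to\H^2(S^*\mathbb{CP}^2;\Z_2)$ is injective, $\tilde\sigma_0$ is the restriction of the relative $\mathrm{Spin}$-structure $\hat\sigma_0$ on $T^*_q\mathbb{CP}^2$ with background class $0$. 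Thus (B3) holds for $\hat\sigma_0$ in place of $\hat\sigma$. Corollary \ref{cor:zoll}, Lemma \ref{lem:viterbo_injective} and Proposition \ref{prop:localization} then go through verbatim with $\mathbb{K}$-coefficients, since $N_L=4>\max\{2,2\}$. They give
\[
\QH_*(L_{\mathbb{CP}^2}^{\sigma_0};\mathbb{K})\cong T^{-1}\HW_*(D^*_q\mathbb{CP}^2;\hat\sigma_0;\mathbb{K}).
\]

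Next I identify $\HW_*(D^*_q\mathbb{CP}^2;\hat\sigma_0)$. By Abouzaid's result on background classes, wrapped Floer homology with background class $0$ on $T^*N$ is isomorphic to the homology of $\Omega N$ twisted by the local system determined by $w_2^{TN}$. Equivalently, it is the Pontryagin ring of $\Omega\mathbb{CP}^2$ with signs twisted by the transgression of $w_2$, which is the nontrivial class on $\pi_1(\Omega\mathbb{CP}^2)=\Z$ since $w_2^{T\mathbb{CP}^2}\neq0$. Over $\mathbb{K}$ I would model $\Omega\mathbb{CP}^2\simeq S^1\times\Omega S^5$. The twisted homology ring should then be $\mathbb{K}[x,y]/(x^2-\epsilon y)$ or a graded-commutativity-twisted version of it, with $\deg x=1$ and $\deg y=4$.

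The key claim is that the degree-$4$ generator $y$ is a square of the degree-$1$ class up to a factor of $2$, or is annihilated in the twisted ring. This yields $y^2=0$ or $2y=0$ in the relevant degree. The class $T(1_{\HW})$ lies in degree $4k-1=7$ in the shifted grading, and corresponds to a multiple of $y$ by degree reasons, as in Proposition \ref{prop:algebra_iso_for_complex_proj_space}. If $y$ is nilpotent, or $2y=0$ so that $y=0$ over $\mathbb{K}$, then inverting $T$ kills everything, and $T^{-1}\HW_*=0$.

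An alternative route avoids the loop space. The isomorphism of Proposition \ref{prop:algebra_iso_for_complex_proj_space} for the good background class $b\notin\{0,[\omega_F]_{\Z_2}\}$ gives a model. Changing $b$ by a class $\beta\in\H^2(F;\Z_2)$ twists the pearl differential by the signs $(-1)^{\beta(A)}$ on disks in class $A$. In the model $\Z[x,y,y^{-1}]/(x^2)$ the Maslov-$4$ disks contribute $\pm$ terms. After twisting, the differential on the pearl complex of $S^3$ (generators $\mathfrak{o}(\min,k)$ and $\mathfrak{o}(\max,k)$, with degrees differing by $3$ and $N_L=4$) becomes multiplication by $\pm1\pm1=\pm2$ from $\max$ at level $k$ to $\min$ at level $k+1$. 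The differential is zero for the good class, and becomes $\pm2$ after the twist. That is invertible over $\mathbb{K}$, which would give $\QH=0$ directly.

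The main obstacle is the sign bookkeeping: showing the twisted count is exactly $\pm2$ rather than $0$. I would do this by counting Maslov-$4$ disks through a point of $L$ using the two $\mathbb{CP}^1$-families (classes $A_1,A_2$ with $\beta(A_1)\neq\beta(A_2)$).
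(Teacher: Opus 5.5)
Your reduction to $T^{-1}\HW_*(D^*_q\mathbb{CP}^2;\hat\sigma_0;\mathbb{K})$ is fine and is exactly what the paper does. The gap is the computation of this wrapped Floer homology, which is the whole content of the proof.

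The ring $\mathbb{K}[x,y]/(x^2-\epsilon y)$ with $\deg x=1$, $\deg y=4$ is not even homogeneous. Your ``key claim'' that $y$ is a square of the degree-one class up to a factor $2$ is impossible for degree reasons. So nothing in the proposal actually shows that $T(1_{\HW})$ is nilpotent.

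What you are missing is that the twisted homology vanishes outright. Since $\Omega S^5$ is $3$-connected, the local system on $\Omega\mathbb{CP}^2\simeq S^1\times\Omega S^5$ is pulled back from the $S^1$-factor, with monodromy $-1$. Then $\H_*(S^1;\mathbb{K}_w)$ is the homology of $\mathbb{K}\xrightarrow{\,\pm 2\,}\mathbb{K}$, which is zero when $\operatorname{char}\mathbb{K}\neq 2$. By K\"unneth, $\HW_*(D^*_q\mathbb{CP}^2;\hat\sigma_0;\mathbb{K})=0$. Its localization therefore vanishes without any ring-theoretic input, and there are no classes $x$, $y$ at all. The paper obtains the same vanishing differently: it uses $\SH_*(D^*\mathbb{CP}^2;\mathbb{K})=0$ (Seidel) and the fact that $\HW_*$ is a unital module over $\SH_*$.

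Your alternative pearl-complex route stops at precisely the step that carries the content. Vanishing of the differential for the good background class only tells you that the signed counts of Maslov-$4$ disks through two generic points satisfy $\pm n_{A_1}\pm n_{A_2}=0$. This is equally consistent with $n_{A_1}=n_{A_2}=0$, or with contributions from other Maslov-$4$ classes in $\pi_2(F,L)\cong\Z^2$. In those cases twisting by $\beta$ changes nothing. You would need an independent computation showing $n_{A_1}=\pm1=n_{A_2}$ and that no other class contributes; without it, this route does not yield $\QH_*(L_{\mathbb{CP}^2}^{\sigma_0};\mathbb{K})=0$.
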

	\begin{proof}
		Let $\hat\sigma_0$ be the absolute $\mathrm{Spin}$-structure on $T_{[q]}^*\mathbb{CP}^2$ whose restriction is the lift $\tilde\sigma_0$ of $\sigma_0$.
		It follows from \cite{Sei10}, see also \cite[Section 6.3]{BKK24}, that $\SH_*(D^*\mathbb{CP}^2;\mathbb{K})=0$. Since $\HW_*(D_{[q]}^*\mathbb{CP}^2;\hat{\sigma}_0,\mathbb{K})$ is a module over $\SH_*(D^*\mathbb{CP}^2;\mathbb{K})$, it also vanishes. 
		Arguing as in the proof of Corollary \ref{cor:zoll}, we conclude $\QH_*(L_{\mathbb{CP}^2}^{\sigma_0};\mathbb{K}) \cong T^{-1}\HW_*(D_{[q]}^*\mathbb{CP}^2;\hat{\sigma}_0,\mathbb{K})=0$.			
	\end{proof}

	\subsection{Some consequences of quantum invertibility of $\omega$}\label{subsection:invertibilityofeulerclass}
	Let $(\Sigma,\omega)$ be a closed, connected symplectic manifold with an integral lift $[\omega]_\Z\in\H^2(\Sigma)$ as before. 
	In this section, we present some consequences of the invertibility of $[\omega]_{\Z}$ in the quantum cohomology $\QH^*(\Sigma)$ of $\Sigma$. 
	\begin{lemma}\label{lem:invertibility}
		Assume that  $[\omega]_\Z$  is invertible in $\QH^*(\Sigma)$. Then any other integral lift of $[\omega]$ is also invertible.
	\end{lemma}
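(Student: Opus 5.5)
The plan is to show that any two integral lifts of $[\omega]$ differ by a torsion class, and that adding a torsion class to an invertible element of $\QH^*(\Sigma)$ keeps it invertible. If $[\omega]_\Z$ and $[\omega]'_\Z$ are two integral lifts, then $t:=[\omega]'_\Z-[\omega]_\Z$ lies in the kernel of $\H^2(\Sigma;\Z)\to\H^2(\Sigma;\R)$, which is the torsion subgroup. So $t$ is torsion, and some $N\in\N$ satisfies $Nt=0$.

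The key step is that every torsion class in $\QH^*(\Sigma)$ is nilpotent. The quantum cohomology $\QH^*(\Sigma)$ is $\H^*(\Sigma;\Z)\otimes\Lambda$ with a deformed product $t\ast s=t\cup s+\sum_{A\neq 0}(\cdots)q^{A}$. The quantum corrections are weighted by positive powers of the Novikov variable, because $\Sigma$ is monotone: each nonconstant class $A$ has $\omega(A)>0$. I would argue in two parts.

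\begin{itemize}
\item Since $Nt=0$, the element $t$ generates a torsion ideal, and every power $t^{\ast m}$ is $N$-torsion.
\item Consider the filtration by cup-degree plus $2c_1$-degree of the Novikov variable; the product is homogeneous of degree $\deg t=2$. The classical part $t^{\cup m}$ vanishes for $2m>\dim\Sigma$. The quantum part of $t^{\ast m}$ consists of terms $a_j q^{j}$ with $a_j\in\H^{2m-2jc_\Sigma}(\Sigma)$, each $N$-torsion.
\end{itemize}

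This alone does not give nilpotency over a Laurent Novikov ring, so I will avoid it. Instead, the cleaner route is the following. Let $u=[\omega]_\Z$ be invertible and set $u'=u+t$. Then $u'=u\ast(1+u^{-1}\ast t)$, and $s:=u^{-1}\ast t$ satisfies $Ns=0$. It therefore suffices to show that $1+s$ is invertible whenever $Ns=0$.

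I expect showing that $1+s$ is invertible to be the main obstacle. First I would reduce modulo each prime $p\mid N$ and work with $\Z_{(p)}$-localizations. The plan is to prove that $s$ is nilpotent, which can be checked after tensoring with each $\mathbb{F}_p$, using finiteness. Since $\QH^*(\Sigma)$ is finitely generated over the Laurent ring $\Z[q,q^{-1}]$, the torsion submodule $\mathcal{T}$ is a finitely generated module killed by $N$, and it is an ideal.

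If nilpotency fails, the fallback is a determinant argument. Multiplication by $1+s$ on the free module $\QH^*(\Sigma)/\mathcal{T}$ is the identity, because $s\ast x\in\mathcal{T}$. On $\mathcal{T}$ it is an endomorphism of a finitely generated $\Z/N[q^{\pm}]$-module; since $q$ is central and of positive degree, each graded piece $\mathcal{T}_k$ is a finite abelian group. On finite groups, injectivity implies surjectivity, so in each degree I would show that $1+s$ is injective. By the five lemma applied to $0\to\mathcal{T}\to\QH^*\to\QH^*/\mathcal{T}\to 0$, it then follows that $1+s$ is bijective. Hence $1+s$ is a unit, and so $u'$ is invertible.
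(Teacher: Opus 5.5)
\textbf{Gap: the proposal never proves that $1+s$ is invertible.} Your reduction $u'=u\ast(1+s)$ with $s=u^{-1}\ast t$ and $Nt=0$ is the right one. But invertibility of $1+s$ is the entire content of the lemma, and neither of your routes supplies it.

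\begin{itemize}
\item The claim that nilpotency ``can be checked after tensoring with $\mathbb{F}_p$'' only moves the problem: you never show that $s$ is nilpotent mod $p$.
\item The fallback is circular. On a finite group $\mathcal{T}_k$, injectivity of $1+s$ is equivalent to bijectivity, so reducing to injectivity gains nothing, and you give no reason for $1+s$ to be injective. Without more information about $s$, nothing you have said rules out, say, $s$ acting as $-\mathrm{id}$ on some torsion summand.
\end{itemize}
The five-lemma bookkeeping and the observation that $1+s$ is the identity on $\QH^*(\Sigma)/\mathcal{T}$ are fine, but they do not touch the real issue.

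\textbf{The missing idea: a torsion class in $\H^2$ has no quantum corrections at all.}
\begin{itemize}
\item By the divisor axiom, $\mathrm{GW}_{A,3}(\cdot,\cdot,t)=\langle t,A\rangle\,\mathrm{GW}_{A,2}(\cdot,\cdot)$ for $A\neq 0$.
\item The Kronecker pairing $\langle t,A\rangle$ vanishes because $t$ is torsion. Hence $t\ast x=t\cup x$ for every $x$.
\item Therefore $t^{\ast m}=t^{\cup m}=0$ for $2m>\dim\Sigma$. This is exactly where your first attempt stalled: the ``quantum part'' of $t^{\ast m}$ is not merely $N$-torsion, it is zero.
\item Even-degree classes are central, so $s^{\ast m}=u^{-m}\ast t^{\ast m}=0$ for large $m$.
\item The finite geometric series $\sum_{i\ge 0}(-s)^{\ast i}$ then inverts $1+s$.
\end{itemize}
This is the paper's argument. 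Once you have it, the localization, five-lemma and finite-group machinery are unnecessary.
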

	\begin{proof}
		Let $\beta$ be another integral lift of $[\omega]$.  Then, $\mu:=[\omega]_\Z-\beta$ is a torsion class.  By the divisor axiom for the Gromov-Witten invariants, $\mathrm{GW}_{A,3}(\cdot,\cdot,\mu)=\mathrm{GW}_{A,2}(\cdot,\cdot)\mu(A)$ for every nonzero $A\in\H_2(\Sigma)$. Since $\mu$ is torsion, these Gromov-Witten invariants vanish, and hence the quantum cup product with $\mu$ agrees with the ordinary cup product with $\mu$. In particular, $\mu$ is nilpotent in $\QH^*(\Sigma)$. If we denote by $[\omega]_\Z^{-1}$ the inverse of $[\omega]_\Z$, then $[\omega]_\Z^{-1}\star \mu$ is also nilpotent, where $\star$ denotes the quantum cup product. Since $\beta= [\omega]_\Z-\mu=[\omega]_\Z\star(1_{\QH}-[\omega]_\Z^{-1}\star\mu)$, we have  
		\[
		\beta\star \Big(\big(1_\QH+ ([\omega]_\Z^{-1}\star \mu)+\cdots +([\omega]_\Z^{-1}\star \mu)^m\big)\star[\omega]_\Z^{-1}\Big)= 1_{\QH}
		\]
		for sufficiently large $m$, where $1_\QH$ denotes the unit.  This proves the lemma.
	\end{proof}

	\begin{prop}\label{prop:torsion_QH_base}
		Let $L\subset (\Sigma,\omega)$ be a closed, connected monotone Lagrangian submanifold such that $N_L \geq 3$ and the map $\pi_2(\Sigma,L)\to\H_1(L)$ is surjective. If $[\omega]_\Z\in \QH^*(\Sigma)$ is invertible, then, for any $\Pin$-structure $\sigma$ on $L$, $\QH_*(L^{\sigma})$ is torsion of order $d=\frac{2c_\Sigma}{\gcd(2c_\Sigma, m_\Sigma N_L)}$.
	\end{prop}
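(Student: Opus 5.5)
The plan is to use the closed-open map $\mathcal{CO}:\QH^*(\Sigma)\to\QH_*(L^\sigma)$, which is a unital ring homomorphism into the center of the Lagrangian quantum ring. Under it, the class $[\omega]_\Z$ should act on $\QH_*(L^\sigma)$ as multiplication by a scalar determined by the disk counts of $L$. Invertibility of $[\omega]_\Z$ then forces that scalar to be a unit on $\QH_*(L^\sigma)$. Separately, a topological argument shows that $d$ times the scalar, or rather $d\cdot[\omega]$, acts as zero. Together these give $d\cdot\QH_*(L^\sigma)=0$.

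\textbf{Step 1: reduce to the image of $[\omega]_\Z$ under $\mathcal{CO}$.} By Lemma \ref{lem:degree}, the surjectivity of $\pi_2(\Sigma,L)\to\H_1(L)$ gives a Legendrian lift with covering degree $d=\frac{2c_\Sigma}{\gcd(2c_\Sigma,m_\Sigma N_L)}$. Since $\mathcal{CO}$ is unital and $[\omega]_\Z$ is invertible, $\mathcal{CO}([\omega]_\Z)$ is invertible in $\QH_*(L^\sigma)$. It therefore suffices to show $d\cdot\mathcal{CO}([\omega]_\Z)=0$. Indeed, we then get $d\cdot 1_L=d\,\mathcal{CO}([\omega]_\Z)\star\mathcal{CO}([\omega]_\Z^{-1})=0$, and every class $x$ satisfies $d\,x=(d\,1_L)\star x=0$. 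By Lemma \ref{lem:invertibility}, I may also choose a convenient integral lift of $[\omega]$.

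\textbf{Step 2: compute $\mathcal{CO}([\omega]_\Z)$.} Classically, $\mathcal{CO}(\beta)$ is the sum of the ordinary restriction $\beta|_L$ and quantum corrections counting disks through a cycle Poincaré dual to $\beta$. By the divisor axiom, each disk of class $A$ is weighted by $\beta(A)$. For $\beta=[\omega]_\Z$, the restriction $[\omega]_\Z|_L\in\H^2(L)$ is the Euler class of the pulled-back circle bundle $\pi^{-1}(L)\to L$. This bundle admits the multisection $\mathcal{L}$ of degree $d$, so $d\cdot[\omega]_\Z|_L=0$; more precisely, the holonomy statement \eqref{eq:holonomy} shows the bundle becomes trivial after the $d$-fold quotient $Y\to Y/\Z_d$. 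For the quantum terms, a disk $A\in\pi_2(\Sigma,L)$ has $\omega(A)\in\frac{m_L}{d}\Z$, so $d\,\omega(A)\in\Z$. I would realize $[\omega]_\Z$ by a cycle relative to $L$ and use the Euler class trivialization on $L$ to conclude that $d$ times each weighted term is zero modulo exact terms. Concretely, $d[\omega]_\Z$ lifts to a class in $\H^2(\Sigma,L)$ whose value on $A$ is $d\,\omega(A)$. Then $\mathcal{CO}(d[\omega]_\Z)$ equals the relative divisor operation, which acts by the chain map $x\mapsto \sum_A d\,\omega(A)\,\cdot(\text{pearl count})$. This is $2\tau_\Sigma$-proportionally the Maslov grading operator, via $\mu_L=2\tau_\Sigma\omega$.

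\textbf{Step 3: the main obstacle.} The hard step is showing that this grading-type operator acts as zero on homology, i.e.\ is chain-homotopic to zero, with $\Z$-coefficients and Pin orientations. Here I would instead use the paper's own machinery, following Corollary \ref{cor:transfer} and Appendix \ref{sec:appendix}: invertibility of $[\omega]_\Z$ forces $\RFH_*(Y)=0$, and then the distinguished class $1_\RFH$ vanishes. Since $\RFH_*(\mathcal{L}^{\tilde\sigma})$ is unital when $N_L\ge 3$, this gives $\RFH_*(\mathcal{L}^{\tilde\sigma})=0$. Corollary \ref{cor:transfer} then gives $d\cdot\QH_*(L^\sigma)=0$. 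I expect the vanishing of $1_\RFH$ under only $N_L\ge3$ to be the delicate point. It likely needs the Gysin-type exact sequence relating $\RFH_*(Y)$ to cap product with $[\omega]_\Z$ on $\QH^*(\Sigma)$, which is an isomorphism exactly when $[\omega]_\Z$ is invertible.
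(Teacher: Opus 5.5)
Your Step~3, which is the argument you actually carry to the end, breaks at ``$\RFH_*(\mathcal{L}^{\tilde\sigma})$ is unital when $N_L\ge 3$''. No product on $\RFH_*(\mathcal{L}^{\tilde\sigma})$ is available under $N_L\ge 3$ alone. The triangle product needs $N_L>\max\{\tfrac12(\dim L+1),2\}$ for the compactness in Proposition~\ref{prop:no_escape_triangle}. The paper also notes that deducing $\RFH_*(\mathcal{L}^{\tilde\sigma})=0$ from $\RFH_*(Y)=0$ via a module structure requires $N_L>\dim L+3$. So for, say, $\dim L\ge 5$ and $N_L=3$, knowing $1_\RFH=0$ does not give $\RFH_*(\mathcal{L}^{\tilde\sigma})=0$.

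The fix is to use the unit on the quantum side, where the ring structure exists for all $N_L\ge 2$. The transfer sends the minimum generator $o_{q,0}$ to $\sum_i o_{q^i_0}$, which represents $1_\RFH$. Hence $d\cdot 1_\QH=\wp\circ\tau(1_\QH)=\wp(1_\RFH)=0$, and so $d\,x=(d\,1_\QH)\star x=0$ for every $x$. This is exactly the paper's alternative proof, Theorem~\ref{thm:torsion_QH}. The vanishing of $1_\RFH$ itself is Proposition~\ref{prop:vanishing_1_RFH}, a chain-level argument with the maps $\mathfrak{m}_1,\mathfrak{m}_2$, whose compactness is checked only for specific index triples. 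The Gysin sequence enters only in showing $\RFH_*(Y)=0$.

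The paper's own proof of this proposition is the route of your Steps~1--2, and the ``main obstacle'' you identify there does not exist. With $N_L\ge 3$, degree alone kills every quantum correction. In total degree $\dim L-2$, a term of $\mathrm{CO}(\PD_\Sigma[\omega]_\Z)$ in $\mathfrak{o}(p,k)$ with $k\ge 1$ would need $\ind_{f_L}(p)=2-kN_L<0$. So $\mathrm{CO}(\PD_\Sigma[\omega]_\Z)$ is just the Morse cycle representing $\iota_!\PD_\Sigma[\omega]_\Z$.

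Your multisection/holonomy argument correctly gives $d\,\iota^*[\omega]_\Z=0$. The paper proves the equivalent statement $d[\omega]_\Z\in\mathrm{im}\,\jmath^*$ by evaluating on $\H_2(\Sigma,L)$ plus an $\mathrm{Ext}$ argument; yours is shorter. Hence $d$ times that Morse cycle equals $\partial_0 b$ for a Morse chain $b$ on index-$1$ critical points at $k=0$. The quantum part of $\partial b$ would sit on critical points of index $2-lN_L<0$, so it vanishes. Thus $d\,\mathrm{CO}(\PD_\Sigma[\omega]_\Z)$ is the boundary $\partial b$, hence zero in $\QH_*(L^\sigma)$, and invertibility finishes as in your Step~1.

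No relative divisor formula is needed. Even granting yours, the operator $\sum_A\frac{\mu_L(A)}{N_L}\partial_A=[D,\partial]$, with $D$ recording the $T$-exponent, vanishes on homology automatically, since $[D,\partial]x=-\partial(Dx)$ for any cycle $x$.
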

	\begin{proof} 

		We consider the long exact sequence 
		\begin{equation*}
			\dots \longrightarrow \H^2(\Sigma, L) \stackrel{\jmath^*}{\longrightarrow} \H^2(\Sigma) \stackrel{\iota^*}
			{\longrightarrow}\H^2(L) \longrightarrow \cdots					
		\end{equation*}
		and claim that $d[\omega]_\Z\in \mathrm{im\,} \jmath^*$. To see this, we choose any $A \in \H_2(\Sigma, L)$. By the surjectivity of $\pi_2(\Sigma,L)\to \H_2(\Sigma, L)\to \H_1(L)$, there exists $B\in \mathrm{im}(\pi_2(\Sigma,L)\to\H_2(\Sigma, L))$ such that $A+B\in \mathrm{im}(\H_2(\Sigma) \to \H_2(\Sigma,L))$. Since $\omega$ is integral, we have $\omega(A+B)\in \Z$.
		Using the monotonicity of $L$ and $c_\Sigma=\tau_\Sigma m_\Sigma$, we have
		\[
		d\cdot\omega(B) = \frac{2\tau_\Sigma m_\Sigma\omega(B) }{\gcd (2c_\Sigma, m_\Sigma N_L)} 
		=  \frac{m_\Sigma \mu_L(B)}{\gcd (2c_\Sigma, m_\Sigma N_L)}\in \frac{m_\Sigma N_L}{\gcd (2c_\Sigma, m_\Sigma N_L)} \Z \subset \Z.
		\]
		It follows that $d\cdot \omega(A)\in \Z$.
		Hence there exists a class $\eta \in \H^2(\Sigma,L)$ such that $\jmath^*\eta-d[\omega]_\Z$ is torsion, i.e.,~
		\[\jmath^* \eta-d[\omega]_{\Z} \in \mathrm{im}(\mathrm{Ext}(\H_1(\Sigma),\Z)\to \H^2(\Sigma)),\]
		where the map $\mathrm{Ext}(\H_1(\Sigma),\Z)\to \H^2(\Sigma)$ is the natural homomorphism arising from the universal coefficient theorem. 
		By our assumption, the map $\H_2(\Sigma,L)\to \H_1(L)$ is  surjective, and thus the map $\H_1(\Sigma) \to \H_1(\Sigma,L)$ is injective.
		Since $\Z$ is a principal ideal domain, this injectivity implies that the induced map $\mathrm{Ext}(\H_1(\Sigma,L),\Z) \to \mathrm{Ext}(\H_1(\Sigma),\Z)$ is surjective.
		Consequently, \[
		d[\omega]_\Z \in \jmath^*\big(\eta + \mathrm{im}(\mathrm{Ext}(\H_1(\Sigma,L),\Z)\to \H^2(\Sigma,L))\big),
		\]
		and therefore $d[\omega]_\Z \in \mathrm{im\,} \jmath^*$. This proves the claim.
		
		Next, we recall from \cite[Section 5.2]{Bir06} and \cite[Proposition 15]{KS21b} the spectral sequence converging to $\QH_*(L^{\sigma})$ whose first page is given by
		\[
		E^1= \H_*(L;\Z[T,T^{-1}]\otimes \mathcal{O}) \oplus \H_*(L;\mathcal{T}\otimes \mathcal{O})[N_L],
		\]
		where $\mathcal{O}$ is the local system of orientations and $\mathcal{T}$ is a local system of $\Z[T,T^{-1}]$-module with $\deg T=2N_L$. Here $[N_L]$ denotes a grading shift by $N_L$. 
		The image of $\PD_\Sigma([\omega]_\Z)$ under the closed-open map $\mathrm{CO} : \QH_*(\Sigma) \to \QH_{*-\dim L}(L^\sigma)$ is represented by the class 
		\[\iota_!(\PD_\Sigma([\omega]_{\Z}))\otimes 1 \in \H_*(L;\mathcal{O})\otimes \Z[T,T^{-1}] \cong \H_*(L;\Z[T,T^{-1}]\otimes \mathcal{O}) \]
		in $E^1$ for degree reasons by the assumption $N_L \geq 3$. Here, $\iota_! = \PD_L \circ \iota^* \circ \PD_\Sigma^{-1} : \H_*(\Sigma) \to \H_{*-\dim L}(L;\mathcal{O})$.
		The invertibility of $[\omega]_\Z$ implies that $\mathrm{CO}(\PD_\Sigma([\omega]_\Z))\in \QH_{\dim L -2}(L^\sigma)$ is invertible. 
		Combining this with $\iota^*(d[\omega]_\Z)=0$, we conclude that $\QH_*(L^\sigma)$ is $d$-torsion.
	\end{proof}
	
	\begin{remark}\label{rem:field_invertible}
		Using field coefficients $\mathbb{F}$ makes it more likely  that the homology class corresponding to $\omega$ is invertible. 
		In this case, the proof of Proposition \ref{prop:torsion_QH_base} yields $\QH_*(L^\sigma;\mathbb{F})=0$, provided the characteristic of $\mathbb{F}$ does not divide $d$. 
	\end{remark}
	
	Recall from Lemma \ref{lem:degree} that the Lagrangian submanifold $L$ in Proposition \ref{prop:torsion_QH_base} admits a Legendrian lift to the prequantization bundle $Y$ over $(\Sigma,[\omega]_\Z)$. 
	The covering degree of this lift is $\frac{2c_\Sigma}{\gcd(2c_\Sigma, m_\Sigma N_L)}$, which coincides with the torsion degree of $\QH_*(L^{\sigma})$. 
	In fact, Proposition \ref{prop:torsion_QH_base} admits a proof that relies on the prequantization bundle $Y$, as demonstrated by the following theorem.
			
	\begin{theorem}\label{thm:torsion_QH}
		Let $L\subset (\Sigma,\omega)$ be a closed, connected monotone Lagrangian submanifold with $N_L \geq 3$. Assume that $L$ admits a $d$-fold Legendrian lift $\mathcal{L}$ to the prequantization bundle $Y$ over $(\Sigma,[\omega]_\Z)$. If $[\omega]_\Z\in \QH^*(\Sigma)$ is invertible,
		then, for any $\Pin$-structure $\sigma$ on $L$, $\QH_*(L^{\sigma})$ is $d$-torsion.
	\end{theorem}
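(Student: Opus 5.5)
The plan is to show $\RFH_*(\mathcal{L}^{\tilde\sigma})=0$ and then apply Corollary \ref{cor:transfer}. Since $\wp\circ\tau=d$ on $\QH_*(L^\sigma)$ and $\tau$ factors through $\RFH_*(\mathcal{L}^{\tilde\sigma})$, vanishing of $\RFH$ forces $d\cdot\QH_*(L^\sigma)=0$. With $N_L\ge 3$, Theorem \ref{thm:isom_transfer}(a),(b) gives well-definedness of $\RFH_*(\mathcal{L}^{\tilde\sigma})$ and the transfer/projection pair. Because the module structure alone is available under $N_L\ge3$, we cannot simply kill the ring and must work with the unit class $1_\RFH$ and a module action.

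The vanishing itself goes through the periodic theory. Under $N_L\ge 3$ we get $c_\Sigma\ge 2$, hence $\mu_\CZ(\gamma)\ge 4$ for contractible orbits. So the periodic Rabinowitz Floer homology $\RFH_*(Y)$ of the prequantization bundle is well defined without a filling, as in \cite{BKK24}. By the periodic analogue of Theorem \ref{thm:isom_transfer}, it is computed by $\QH^*(\Sigma)$ localized at $[\omega]_\Z$; equivalently (\cite{BKK24}) there is a Gysin-type exact triangle in which the map is quantum multiplication by the Euler class $-[\omega]_\Z$. Invertibility of $[\omega]_\Z$ (for any integral lift, by Lemma \ref{lem:invertibility}) therefore gives $\RFH_*(Y)=0$. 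The aim is then the key lemma announced in the remark after Theorem \ref{thm:lag_trace}: if $\RFH_*(Y)=0$ and contractible orbits and chords satisfy $\mu_\CZ\ge 4-n$ and $\mu_\RS\ge 3-\frac n2$, then $1_\RFH\in\RFH_n(\mathcal{L}^{\tilde\sigma})$ vanishes.

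The main obstacle is proving this lemma. The natural tool is a closed--open map $\RFH_*(Y)\to\RFH_*(\mathcal{L}^{\tilde\sigma})$ sending the unit to $1_\RFH$. In general, though, the compactness of half-cylinders with one interior and one boundary puncture needs a stronger index bound than $N_L\ge 3$. I would try a chain-level version instead. Since $\RFH_*(Y)=0$, the unit of $\RFC_*(Y)$ is a boundary, $1_Y=\partial\beta$. Counting Floer half-cylinders only in the low-index range needed to check $\mathrm{CO}(1_Y)=1_\RFH$ and $\mathrm{CO}(\partial\beta)=\partial\,\mathrm{CO}(\beta)$ should require only the breakings that the index bounds exclude, by the puncture-counting argument of Proposition \ref{prop:simple_no_escape}(b). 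That would give $1_\RFH=\partial\,\mathrm{CO}(\beta)$ at the chain level.

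Finally, to pass from $1_\RFH=0$ to $\RFH_*(\mathcal{L}^{\tilde\sigma})=0$ without a ring structure, I would use the module action of $\RFH_*(Y)$ (or the triangle product with $1_\RFH$ in a low-degree window), for which $1_\RFH$ acts as the identity. Every class $x$ then satisfies $x=1_\RFH\cdot x=0$. Alternatively, one can transport $1_\RFH$ to $1_{\QH}$ under the isomorphism of Theorem \ref{thm:isom_transfer}(b) and use that $\QH_*(L^\sigma)$ is a unital ring over any $N_L\ge 2$. From $\tau(1_\QH)=1_\RFH=0$ we get $d\cdot1_\QH=\wp\tau(1_\QH)=0$, hence $d\,\QH_*(L^\sigma)=0$ by unitality, which also gives the statement directly.
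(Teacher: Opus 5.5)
This is essentially the paper's proof. It runs as follows: $\RFH_*(Y)=0$ by \cite{BKK24}; then $1_\RFH=0$ by a chain-level argument restricted to the few index combinations where compactness survives \eqref{eq:ind_nondeg}; finally $d\cdot 1_\QH=\wp\tau(1_\QH)=\wp(1_\RFH)=0$, and unitality of $\QH_*(L^\sigma)$ gives the result. In the middle step the paper phrases your closed--open map as the module map $\mathfrak{m}_2(\,\cdot\,\otimes o_c)$, with the unit $o_c$ as boundary input and a homotopy to $\mathfrak{c}\circ\mathfrak{m}_1(o_p\otimes o_c)=\mathfrak{c}(o_c)$. Its compactness check must also exclude splittings into several top-level Floer pieces with extra negative punctures, as in Proposition \ref{prop:compactness_module}, and not only the plane and half-plane bubbling handled in Proposition \ref{prop:simple_no_escape}(b).

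Only your second option for the last step is valid. The first one, killing all of $\RFH_*(\mathcal{L}^{\tilde\sigma})$ via a module action or triangle product, is not available under $N_L\ge 3$ alone: those structures need $N_L>\dim L+3$ and $N_L>\max\{\frac12(\dim L+1),2\}$ respectively, and a low-degree window cannot reach arbitrary classes.
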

	
	\begin{proof}
		We consider the Rabinowitz Floer homology $\RFH_*(Y)$, whose chain-level generators consist of constant orbits in $Y$ and periodic orbits of the vector fields $\pm R$, where $R$ denotes the Reeb vector field on $Y$. Since $L$ is monotone and $N_L\geq3$, $(\Sigma,\omega)$ is also monotone and the minimal Chern number is at least 2 or $\omega(\pi_2(\Sigma))=0$. Therefore, $\RFH_*(Y)$ is well-defined. 
		By \cite[Corollary 1.8]{BKK24}, the invertibility of $[\omega]_\Z$ implies $\RFH_*(Y)=0$. 
		Provided that a unital product structure on $\RFH_*(Y)$ is well-defined and $\RFH_*(\mathcal{L}^{\tilde{\sigma}})$ admits a module structure over $\RFH_*(Y)$, it follows that $\RFH_*(\mathcal{L}^{\tilde{\sigma}})=0$, and consequently $\QH_*(L^{\sigma})$ is $d$-torsion by Corollary \ref{cor:transfer}.
		This holds under the stronger condition $N_L>\dim L+3$.

		Now consider the case $N_L\geq 3$ as in the statement. 
		Let $1_{\RFH}\in \RFH_*(\mathcal{L}^{\tilde{\sigma}})$ be the homology class generated by $o_{q_0}$, see \eqref{unit_RFH}. By Proposition \ref{prop:vanishing_1_RFH}, we  have $1_\RFH=0$.  
		The unit class $1_{\QH} \in \QH_*(L^{\sigma})$ is represented by the canonical positive generator
		$o_{q,0} \in \mathfrak{o}(q,0) \subset \QC_{\dim L} (L^{\sigma})$
		for the unique local minimum $q$ of $f_L$. 
		The transfer map $\tau : \QC_* (L^{\sigma}) \to \RFC_*(\mathcal{L}^{\tilde{\sigma}})$ mentioned in Corollary \ref{cor:transfer} is explicitly given by
		$\tau(o_{q,0}) =  o_{q^1_0} +\cdots +o_{q^d_0}=o_{q_0}$. Hence, by Corollary \ref{cor:transfer}, 
		\begin{equation}\label{eq:transfer}
		d\cdot 1_{\QH} = d [o_{q,0}] = \pi\circ \tau([o_{q,0}]) = \pi ([o_{q_0}]) = \pi (1_{\RFH}) =0.
		\end{equation}	
			Since $1_{\QH}$ is the unit, this proves  that $\QH_*(L^{\sigma})$ is $d$-torsion.
	\end{proof}
	
	\begin{cor}
		\label{cor:torsion_QH_projectivespace}
		Let $L \subset \mathbb{CP}^n$ be a closed, connected monotone Lagrangian submanifold with $N_L \geq 3$ and equipped with a $\Pin$-structure $\sigma$. 
		Then $\QH_*(L^{\sigma})$ is $\frac{2(n+1)}{N_L}$-torsion.
	\end{cor}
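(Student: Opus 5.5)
The plan is to reduce the statement to Theorem \ref{thm:torsion_QH}. That theorem requires three things: $[\omega]_\Z$ invertible in $\QH^*(\mathbb{CP}^n)$, a $d$-fold Legendrian lift of $L$ to the prequantization bundle over $(\mathbb{CP}^n,[\omega]_\Z)$, and the identification $d=\frac{2(n+1)}{N_L}$.

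\textbf{Normalization.} Normalize $\omega=\omega_{\mathrm{FS}}$ so that $[\omega]_\Z$ is the hyperplane class $h$, so $m_\Sigma=1$. Then the prequantization bundle is $Y=S^{2n+1}\to\mathbb{CP}^n$, with $c_\Sigma=n+1$ and $\tau_\Sigma=n+1$. Rescaling $\omega$ does not change monotonicity or $N_L$. If the paper's integrality convention requires a different scaling, the same computation goes through with $m_\Sigma$ adjusted; by Lemma \ref{lem:invertibility} the choice of integral lift is irrelevant.

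\textbf{Invertibility.} In $\QH^*(\mathbb{CP}^n)\cong\Z[h,q^{\pm1}]/(h^{n+1}-q)$ (or $\Z[h]/(h^{n+1}-1)$ after specializing), we have $h\star h^{n}=q$, so $h$ is a unit. Hence $[\omega]_\Z$ is invertible.

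\textbf{The lift and its degree.} Since $\pi_1(\mathbb{CP}^n)=0$ and $\pi_2(\mathbb{CP}^n)\to\pi_2(\mathbb{CP}^n,L)\to\pi_1(L)\to 0$ is exact, the boundary map $\pi_2(\mathbb{CP}^n,L)\to\pi_1(L)$ is surjective, and hence so is its composition to $\H_1(L)$. Also $\omega(\pi_2(\mathbb{CP}^n))=\Z\neq0$. Lemma \ref{lem:degree} therefore applies and gives the holonomy condition \eqref{eq:holonomy} with
\[
d=\frac{2(n+1)}{\gcd(2(n+1),N_L)}.
\]
Since $2c_\Sigma$ is a multiple of the Maslov index of the generator of $\pi_2(\mathbb{CP}^n)$ viewed in $\pi_2(\mathbb{CP}^n,L)$, the integer $N_L$ divides $2c_\Sigma=2(n+1)$. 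Thus the gcd equals $N_L$ and $d=\frac{2(n+1)}{N_L}$. Section \ref{sec:lift} then provides a $d$-fold Legendrian lift $\mathcal{L}\subset S^{2n+1}$.

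\textbf{Conclusion.} With $N_L\geq3$ and $\sigma$ a $\Pin$-structure, Theorem \ref{thm:torsion_QH} yields that $\QH_*(L^\sigma)$ is $d$-torsion, which is the claim. Alternatively, Proposition \ref{prop:torsion_QH_base} applies directly, since its surjectivity hypothesis was verified above.

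\textbf{Main obstacle.} The only delicate point is the bookkeeping of the normalization of $\omega$, so that $m_\Sigma=1$ and $N_L\mid 2(n+1)$ hold. I would check this via the divisibility $N_L\mid 2c_\Sigma$ already used in the proof of Proposition \ref{prop:simple_no_escape}.
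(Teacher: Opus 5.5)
Your proposal is correct and is exactly how the paper intends this corollary to follow. The hyperplane class $h=[\omega_{\mathrm{FS}}]_\Z$ is invertible in $\QH^*(\mathbb{CP}^n)$, and $\pi_1(\mathbb{CP}^n)=0$ makes $\pi_2(\mathbb{CP}^n,L)\to\H_1(L)$ surjective, so Lemma \ref{lem:degree} gives $d=\frac{2(n+1)}{N_L}$ (since $N_L$ divides $2(n+1)$, the Maslov index of a line viewed in $\pi_2(\mathbb{CP}^n,L)$), after which Theorem \ref{thm:torsion_QH} or Proposition \ref{prop:torsion_QH_base} applies.

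The only thing to fix is your hedge about scaling. The normalization $m_\Sigma=1$ is forced, not a convention: $kh$ is not invertible in $\QH^*(\mathbb{CP}^n)$ over $\Z$ for $k\geq 2$, so the computation would not go through with $m_\Sigma$ adjusted. This costs nothing, because $\QH_*(L^\sigma)$ is unchanged when $\omega$ is rescaled, so you may always work with $\omega_{\mathrm{FS}}$.
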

	
	\begin{remark}\label{rem:generalize_rel_pin}
		Proposition \ref{prop:torsion_QH_base} and Theorem \ref{thm:torsion_QH} extend to relative $\Pin$-structures by working with the quantum homology of $\Sigma$ twisted by the background cohomology class, see \cite[Proposition 15.(c)]{KS21b}. 
		The quantum homology ring of $\mathbb{CP}^n$ remains unchanged under such twisting, and thus Corollary \ref{cor:torsion_QH_projectivespace} extends to relative $\Pin$-structures. This was proved in \cite[Lemma 11]{KS21b} under the additional assumption $\H^1(L;\mathbb{Z}) =0$.
				
		To relate the twisted  quantum homology of $\Sigma$ with the Rabinowitz Floer homology of $Y$ along the lines of \cite{BKK24}, it is necessary to twist the boundary operator of the latter as well. 
	\end{remark}

	\begin{prop}
		Let $(\Sigma,\omega)$ and $L^\sigma$ be as in Theorem \ref{thm:torsion_QH}. Let $p$ be any prime number. Then, $\dim \QH_{i}(L^{\sigma};\Z_p) = \dim \QH_{i+1}(L^{\sigma};\Z_p)$ for all $i\in \Z$.
	\end{prop}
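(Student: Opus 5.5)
Fix the prime $p$. I will combine the $d$-torsion conclusion of Theorem \ref{thm:torsion_QH} with the universal coefficient theorem for the quantum chain complex. The complex $\QC_*(L^\sigma;\mathcal{D})$ is free over $\Z$ with finitely many generators in each degree; see \eqref{eq:generator_base}. Hence for every $i$ there is a split short exact sequence
\[
0\to \QH_i(L^\sigma)\otimes\Z_p \to \QH_i(L^\sigma;\Z_p)\to \mathrm{Tor}(\QH_{i-1}(L^\sigma),\Z_p)\to 0 .
\]
Each $\QH_j(L^\sigma)$ is finitely generated. By Theorem \ref{thm:torsion_QH} it is also annihilated by $d$, so it is a finite abelian group.

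For a finite abelian group $G$, the groups $G\otimes\Z_p$ and $\mathrm{Tor}(G,\Z_p)$ have the same $\Z_p$-dimension, namely the number of cyclic $p$-primary summands of $G$. Write $r_j$ for this number for $G=\QH_j(L^\sigma)$. The sequence above then gives
\[
\dim\QH_i(L^\sigma;\Z_p)=r_i+r_{i-1}.
\]
This identity alone does not yield $\dim\QH_i=\dim\QH_{i+1}$, since that would require $r_{i-1}=r_{i+1}$. So more input is needed, and I will use the $\Z[T,T^{-1}]$-module structure of Remark \ref{rem:laurent}. It gives periodicity $\QH_j\cong\QH_{j+N_L}$ (or with period $2N_L$), so the sequence $(r_j)$ is periodic. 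Periodicity still does not give the shift by one, however, so this route is insufficient on its own.

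The better route is a field-coefficient version of the vanishing argument. Since $\QH_*(L^\sigma)$ is $d$-torsion, the unit satisfies $d\cdot1_{\QH}=0$ in $\QH_*(L^\sigma)$. If $p\nmid d$, then $d$ is invertible in $\Z_p$, so $1=0$ in $\QH_*(L^\sigma;\Z_p)$ and the whole homology vanishes; the claim is trivial. The main case is therefore $p\mid d$. Here I will realise multiplication by $d$ on $\QC_*(L^\sigma)$ as null-homotopic by making \eqref{eq:transfer} hold at chain level. That is, $d\cdot\mathrm{id}=\wp\circ\tau$ at chain level, and $\tau(o_{q,0})=o_{q_0}$ is a boundary by Proposition \ref{prop:vanishing_1_RFH}. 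Choose $h$ with $\partial h=o_{q_0}$ and set $\eta:=\wp(h)\in\QC_{\dim L+1}(L^\sigma)$. Then $\partial\eta=d\,o_{q,0}$. Reducing mod $p$ with $p\mid d$, the class $\bar\eta$ is a cycle in $\QC_*(L^\sigma;\Z_p)$, of degree $\dim L+1$.

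Quantum product with $[\bar\eta]$ gives a degree $+1$ map $\QH_i(L^\sigma;\Z_p)\to\QH_{i+1}(L^\sigma;\Z_p)$. I will show it is an isomorphism by exhibiting an inverse. Its $\mathbb{Z}$-lift $\eta$ is a Bockstein-type element: the connecting map of $0\to\Z\xrightarrow{p}\Z\to\Z_p\to0$ sends $[\bar\eta]$ to $(d/p)\,1$. In the cleanest case $d=p$, this means the Bockstein $\beta$ satisfies $\beta([\bar\eta]\star x)=x\pm[\bar\eta]\star\beta x$ by the Leibniz rule. The dimension equality then follows from a standard argument with $\beta$ and $[\bar\eta]$. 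For general $p\mid d$ I would instead use higher Bocksteins, or compare ranks by combining the universal coefficient sequence above with the long exact sequence of $\cdot p$.

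I expect this last step, turning the chain-level identity $\partial\eta=d\,o_{q,0}$ into an honest degree-one isomorphism of $\Z_p$-homology, to be the main obstacle. My first attempt will be an acyclicity argument for the cone of $\cdot p$ on $\QC_*(L^\sigma)$ under quantum multiplication by $\eta$. If $d=p^a m$, I would first reduce to the $p$-primary part.
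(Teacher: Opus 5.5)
Your universal-coefficient step is correct and is exactly how the paper sets things up. Since $\QH_*(L^\sigma)$ is finitely generated in each degree and $d$-torsion, you get $\dim\QH_i(L^\sigma;\Z_p)=r_i+r_{i-1}$, the case $p\nmid d$ is trivial, and what remains is $r_{i+1}=r_{i-1}$. You are also right that the $T$-periodicity (period $N_L\geq 3$ or $2N_L$) does not give this.

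\textbf{The missing idea.} The $2$-periodicity comes from the hypothesis of Theorem \ref{thm:torsion_QH} that $[\omega]_\Z$ is invertible in $\QH^*(\Sigma)$. The closed--open map is a unital ring homomorphism, so $\mathrm{CO}(\PD_\Sigma([\omega]_\Z))\in\QH_{\dim L-2}(L^\sigma)$ is invertible; this is noted in the proof of Proposition \ref{prop:torsion_QH_base}. Quantum multiplication by it is therefore a group isomorphism $\QH_i(L^\sigma)\cong\QH_{i-2}(L^\sigma)$ for every $i$. Hence $r_{i+1}=r_{i-1}$, and
$\dim\QH_{i+1}(L^\sigma;\Z_p)=r_{i+1}+r_i=r_i+r_{i-1}=\dim\QH_i(L^\sigma;\Z_p)$.
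This is the paper's argument, and it completes your first half in two lines.

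\textbf{Why the Bockstein route fails.} It uses only the chain-level witness $\eta$ of $d\cdot 1_{\QH}=0$, the product, and the Leibniz rule, and all three are compatible with a counterexample. Let $C$ be the free $\Z[T,T^{-1}]$-module on $1$ and $\eta$, with $|1|=n$, $|\eta|=n+1$, $|T|=N_L\geq 3$, $\partial\eta=p\cdot 1$ and $\eta\star\eta=0$. This is consistent with the Leibniz rule since $\eta$ is odd. It is a unital differential graded algebra whose integral homology is $p$-torsion and whose Bockstein satisfies $\beta[\bar\eta]=1$. Yet $H_*(C;\Z_p)$ is one-dimensional exactly in degrees $n$ and $n+1$ modulo $N_L$. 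So $[\bar\eta]\star\colon H_{n+1}(C;\Z_p)\to H_{n+2}(C;\Z_p)=0$ is not an isomorphism, and the dimension equality fails.

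The identity $\beta\circ([\bar\eta]\star)\pm([\bar\eta]\star)\circ\beta=\mathrm{id}$ (in the case $d=p$) only says that $\beta$ is acyclic on $\QH_*(L^\sigma;\Z_p)$. For $d=p$ that already follows from the universal coefficient computation, and it says nothing about $r_{i+1}$ versus $r_{i-1}$. The invertibility of $[\omega]_\Z$ must therefore enter through more than its consequence $d\cdot 1_{\QH}=0$.
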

	\begin{proof}
		By Theorem \ref{thm:torsion_QH}, there are positive integers $m_{q,i,1}\leq\dots\leq m_{q,i,\ell(i)}$   such that
		\[
		\QH_{i}(L^{\sigma}) \cong \bigoplus_{q|d}\,(\Z_{q^{m_{q,i,1}}} \oplus \cdots \oplus \Z_{q^{m_{q,i,\ell(i)}}}),
		\]
		where the big direct sum runs over all prime numbers $q$ dividing $d$.
		Moreover, by the universal coefficient theorem, we have for any prime number $p$ dividing $d$
		\begin{equation}\label{primary_decomposition_QH}
			\begin{split}
				\QH_{i}(L^{\sigma};\Z_p)&\cong \big(\QH_{i}(L^{\sigma}) \otimes_{\Z} \Z_p \big)\oplus \mathrm{Tor}(\QH_{i-1}(L^{\sigma}),\Z_p)\\[.5ex]
				&\cong (\Z_{p^{m_{p,i,1}}} \oplus \cdots \oplus \Z_{p^{m_{p,i,\ell(i)}}})\oplus (\Z_{p^{m_{p,i-1,1}}} \oplus \cdots \oplus \Z_{p^{m_{p,i-1,\ell(i-1)}}}).
			\end{split}
		\end{equation}
		Since $\mathrm{CO}(\PD_\Sigma([\omega]_\Z))\in \QH_*(L^\sigma)$ is invertible as mentioned in the proof of Proposition \ref{prop:torsion_QH_base}, we have $\QH_i(L^{\sigma})\cong \QH_{i+2}(L^{\sigma})$ for all $i\in\Z$. This together with \eqref{primary_decomposition_QH} proves the claim for $p$ dividing $d$. If $p$ does not divide $d$,  we obtain $\QH_i(L^\sigma;\Z_p)=0$ for all $i\in\Z$. This finishes the proof. 
	\end{proof}

	In the rest of this section, we revisit several results from \cite{Bir06}. 
	An interesting question raised in \cite[Remark in p.283]{Bir06} (also implicitly in \cite{BC01}) asks whether the following is true: for a symplectic manifold $(X,\omega_X)$, if $(\mathbb{CP}^n\times X, \omega_{\mathrm{FS}}\oplus \omega_X)$ admits a Lagrangian sphere, then $\dim X=2n+2$. If indeed $\dim X=2n+2$ and $\omega_X$ is sufficiently large, then $\mathbb{CP}^n\times X$ admits a Lagrangian sphere, see \cite[Section 4.3]{BC01}. 
	
	Now we assume that $X$ is closed (or geometrically bounded) and symplectically aspherical, i.e.,~both $c_1^{TX}$ and $\omega_X$ vanish on $\pi_2(X)$. Then, by  \cite[Theorem A]{BC01}, we know that there is no Lagrangian sphere in  $\mathbb{CP}^n\times X$ with $\dim X \leq 2n$. 
	Moreover, it is proved in \cite[Theorem D]{Bir06} that if $\mathbb{CP}^n\times X$ has a Lagrangian sphere, then $\dim X+2n+2$ is divisible by $4n+4$. See the same reference for related results. 
	As a consequence of Proposition \ref{prop:torsion_QH_base}, we obtain the following slight generalization.
	
	\begin{prop}\label{prop:lag_sphere}(Cf.~\cite[Theorem E]{Bir06})
		Let $(X,\omega_X)$ be a closed symplectically aspherical symplectic manifold, possibly of dimension $0$. Let $(M,\omega_M)$ be a closed monotone symplectic manifold such that $c_M\geq2$ and $[\omega_M]\in\QH^*(M;\R)$ is invertible. 
		If there exists a Lagrangian sphere $L$ of $(M \times X,\omega_{M} \oplus \omega_X)$ with $\dim L\geq2$, then $ \dim M + \dim X+2$ is divisible by $4c_M$. 
	\end{prop}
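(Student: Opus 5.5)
The plan is to apply Proposition \ref{prop:torsion_QH_base} (in its field-coefficient form from Remark \ref{rem:field_invertible}) to $\Sigma = M\times X$ with a suitable integral rescaling of the symplectic form. This will give vanishing of $\QH_*(L;\mathbb{F})$ for a suitable field. We then compare that vanishing with the Oh/Biran spectral sequence for a sphere, which forces a divisibility relation on $N_L$.

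The first step is to set up monotonicity and the Maslov number. Since $X$ is symplectically aspherical and $L\cong S^\ell$ with $\ell=\dim L\geq 2$, we have $\pi_1(L)=0$, so $\pi_2(M\times X,L)\cong\pi_2(M\times X)\cong\pi_2(M)\oplus\pi_2(X)$. Both $c_1$ and $\omega_X$ vanish on $\pi_2(X)$. Hence $L$ is monotone, and $\mu_L = 2c_1$ on $\pi_2$. This gives $N_L = 2c_M\geq 4\geq 3$.

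Next we arrange integrality. After rescaling, we may assume $[\omega_M]$ is a positive multiple of $c_1^{TM}$ in $H^2(M;\R)$ restricted to spherical classes. To avoid integrality issues on the $X$ factor, we work rationally: we perturb $\omega_X$ within its cohomology class if necessary, so that $[\omega_M\oplus\omega_X]$ is rational. We then multiply by an integer so that it has an integral lift. Since $\pi_1(L)=0$, the map $\pi_2(\Sigma,L)\to H_1(L)=0$ is trivially surjective. Lemma \ref{lem:degree} then supplies a Legendrian lift with some finite $d$.

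The invertibility step is the main obstacle: we must show that $[\omega]$ is invertible in $\QH^*(M\times X;\R)$ (or over $\mathbb{Q}$). By asphericity of $X$, holomorphic spheres in $M\times X$ with the product structure are constant in the $X$ factor. The quantum product therefore splits as $\QH^*(M)\otimes H^*(X)$, and $[\omega]=[\omega_M]\otimes1+1\otimes[\omega_X]$. The second summand is nilpotent, so invertibility follows from that of $[\omega_M]$, by the same geometric-series argument as in Lemma \ref{lem:invertibility}. Justifying the product formula for quantum cohomology with an aspherical factor is the step needing care. I would argue it with a split almost complex structure, for which the relevant moduli spaces are regular.

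Remark \ref{rem:field_invertible} with $\mathbb{F}=\mathbb{Q}$ (characteristic $0$) then gives $\QH_*(L;\mathbb{Q})=0$. Relative Pin issues disappear since $L$ is a sphere of dimension $\geq2$, hence Spin.

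The last step reads off the divisibility. For $L=S^\ell$, the pearl complex over $\mathbb{Q}$ has generators in degrees $0$ and $\ell$ shifted by multiples of $N_L$. The only possible nonzero differential is $\mathfrak{o}(\text{max})\to\mathfrak{o}(\text{min})$ with a degree drop of $1$. Vanishing of the homology requires the class of the point and the fundamental class to cancel, so $\ell+1 \equiv 0 \pmod{N_L}$. That is, $4c_M$ would need to divide $2(\ell+1)=\dim M+\dim X+2$. Concretely, $2c_M\mid \ell+1$ with $\ell=\tfrac12(\dim M+\dim X)$ gives $4c_M\mid \dim M+\dim X+2$.
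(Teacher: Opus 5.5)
Your proof follows the paper's route. Invertibility of $[\omega_M\oplus\omega_X]$ comes from the Künneth splitting plus a commuting nilpotent summand (Corollary \ref{cor:product_invertible} via Lemma \ref{lem:auto_nilpotent}). Vanishing of $\QH_*(L)$ over a field of characteristic zero comes from Proposition \ref{prop:torsion_QH_base} and Remark \ref{rem:field_invertible}. A degree count on the pearl complex of $S^\ell$ then gives $2c_M\mid \ell+1$. In that count, excluding a differential from $\mathfrak{o}(\min,\cdot)$ to $\mathfrak{o}(\max,\cdot)$ uses that holomorphic disks have positive Maslov index, which you leave implicit.

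\textbf{The integrality step fails as written.}
\begin{itemize}
\item Perturbing $\omega_X$ within its cohomology class leaves $[\omega_M\oplus\omega_X]$ unchanged, so it cannot make the class rational.
\item $[\omega_M]$ itself need not be rational: monotonicity only constrains $\omega_M$ on spherical classes.
\item Changing the class instead would generally break either the Lagrangian condition on $L$ or the product structure your Künneth step relies on.
\end{itemize}
So you cannot pass to $\mathbb{Q}$-coefficients as stated.

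\textbf{The step is unnecessary.} Work over $\R$, where the hypothesis is given and where the paper concludes $\QH_*(L^\sigma;\R)=0$. In the proof of Proposition \ref{prop:torsion_QH_base}, integrality is used only to show that $d[\omega]_\Z$ restricts to zero on $L$. Over $\R$ this is automatic: $\iota^*[\omega]=[\omega|_L]=0$ because $L$ is Lagrangian. The same argument then shows that the invertible class $\mathrm{CO}(\PD[\omega])$ vanishes, so $\QH_*(L;\R)=0$. Neither an integral lift nor the Legendrian lift from Lemma \ref{lem:degree} is needed.

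\textbf{A minor point.} For $\ell=2$, the map $\pi_2(M\times X)\to\pi_2(M\times X,L)$ is only surjective, not an isomorphism. Surjectivity is enough to get $N_L=2c_M$.
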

	
	The proof of Proposition \ref{prop:lag_sphere} relies on the following simple lemma. 	
	\begin{lemma}\label{lem:auto_nilpotent}
		Let $V$ and $W$ be two modules over the same ring. Let $f$ be an automorphism of $V$, and let $g$ be a nilpotent endomorphism of $W$. Then $f\otimes \mathrm{id}_W+\mathrm{id}_V\otimes g$ is an automorphism of $V \otimes W$.
	\end{lemma}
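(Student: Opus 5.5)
The plan is to invert $f\otimes \mathrm{id}_W+\mathrm{id}_V\otimes g$ explicitly by a terminating geometric series. Write $F:=f\otimes \mathrm{id}_W$ and $G:=\mathrm{id}_V\otimes g$, both endomorphisms of $V\otimes W$ (tensor over the common ring, which I take to be commutative, as in the application to quantum homology). Three facts will be checked in turn.

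First, $F$ is an automorphism with inverse $F^{-1}=f^{-1}\otimes \mathrm{id}_W$, since $(f^{-1}\otimes\mathrm{id}_W)\circ(f\otimes\mathrm{id}_W)=\mathrm{id}_V\otimes\mathrm{id}_W$ by functoriality of the tensor product, and similarly in the other order.

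Second, $F$ and $G$ commute, because both composites equal $f\otimes g$. Consequently $F^{-1}$ also commutes with $G$.

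Third, $G$ is nilpotent: if $g^m=0$, then $G^m=\mathrm{id}_V\otimes g^m=0$.

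With these in hand, set $N:=F^{-1}G$. Since $F^{-1}$ and $G$ commute, $N^m=F^{-m}G^m=0$. Now write $F+G=F(\mathrm{id}+N)$. The factor $\mathrm{id}+N$ has the two-sided inverse $\sum_{j=0}^{m-1}(-N)^j$, because the telescoping product equals $\mathrm{id}-(-N)^m=\mathrm{id}$. Hence $F+G$ is a composition of two automorphisms and is therefore an automorphism, with inverse $\bigl(\sum_{j=0}^{m-1}(-N)^j\bigr)F^{-1}$.

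There is no genuine obstacle. The only point needing care is the commutation of $F^{-1}$ with $G$, which ensures that $N$ is nilpotent. This is immediate from the componentwise action on elementary tensors, extended additively.

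If one only wants \emph{graded} modules with degree-shifting maps, as in the intended application to $\QH_*(M)\otimes\QH_*(X)$, the same argument works verbatim. The possible Koszul signs in $\mathrm{id}_V\otimes g$ do not affect commutativity up to sign, and they do not affect nilpotency.
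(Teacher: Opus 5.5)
Your proof is correct and is essentially the paper's argument. The paper also inverts $f\otimes\mathrm{id}_W+\mathrm{id}_V\otimes g$ by an explicit terminating geometric series, and your inverse $\bigl(\sum_{j=0}^{m-1}(-N)^j\bigr)F^{-1}$ expands to exactly $\sum_{i=1}^m f^{-i}\otimes(-g)^{i-1}$.

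Your alternating signs are genuinely needed. The paper's displayed inverse $\sum_{i=1}^m f^{-i}\otimes g^{i-1}$ drops the factor $(-1)^{i-1}$: already for $m=2$ it gives $\mathrm{id}_V\otimes\mathrm{id}_W+2f^{-1}\otimes g$ rather than the identity. So your version is the correct one, and unlike the paper, which checks only one composite, you also obtain a two-sided inverse.
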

	\begin{proof} 
		Let $m\in\N$ be such that $g^m=0$. Then, the lemma follows from 
		\[
		(f\otimes \mathrm{id}_{W}+\mathrm{id}_{V}\otimes g) \circ \Big(\sum_{i=1}^m f^{-i}\otimes  g^{i-1}\Big)= \mathrm{id}_{V} \otimes \mathrm{id}_{W}.
	\]
	\end{proof}

	\begin{cor}\label{cor:product_invertible}
		Let $(X,\omega_X)$ and $(M,\omega_M)$ be as in Proposition \ref{prop:lag_sphere}.
		Then, $[\omega_M \oplus \omega_X]$ is invertible in $\QH^*(M\times X;\mathbb{R})$.
	\end{cor}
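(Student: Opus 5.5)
We need to prove that $[\omega_M\oplus\omega_X]$ is invertible in $\QH^*(M\times X;\R)$. The plan is a Künneth decomposition: write the class as $\mathrm{pr}_M^*[\omega_M]+\mathrm{pr}_X^*[\omega_X]$, show that quantum multiplication by it is an automorphism using Lemma \ref{lem:auto_nilpotent}, and then conclude invertibility.

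\textbf{Step 1 (Künneth for quantum cohomology).} With real coefficients, $\QH^*(M\times X;\R)\cong \QH^*(M;\R)\otimes \QH^*(X;\R)$ as rings. This follows from the quantum Künneth formula, or more directly from the following observation. Since $X$ is symplectically aspherical, every $J$-holomorphic sphere in $M\times X$ (with a product almost complex structure) has constant $X$-component. Hence the Gromov--Witten invariants of classes $(A,B)$ vanish unless $B=0$, and for $B=0$ they reduce to $\mathrm{GW}^M_A$ paired with the classical intersection product on $X$. Consequently $\QH^*(X;\R)=\H^*(X;\R)$ with the ordinary cup product, extended by the Novikov variables, and the quantum product on $M\times X$ is the tensor product of the two products. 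Monotonicity of $M\times X$ is not needed here; only the vanishing of curves in nonzero $X$-classes is used.

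\textbf{Step 2 (Decomposing quantum multiplication).} Under the Künneth identification, $[\omega_M\oplus\omega_X]=[\omega_M]\otimes 1+1\otimes[\omega_X]$. Therefore quantum multiplication by this class on $V\otimes W$, where $V=\QH^*(M;\R)$ and $W=\H^*(X;\R)$, equals
\[
f\otimes\mathrm{id}_W+\mathrm{id}_V\otimes g,
\]
with $f=[\omega_M]\star\cdot$ and $g=[\omega_X]\cup\cdot$. The map $f$ is an automorphism of $V$ because $[\omega_M]$ is invertible by hypothesis. The map $g$ is nilpotent, since $[\omega_X]^{k}=0$ for $2k>\dim X$. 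If $\dim X=0$ then $g=0$, which is also nilpotent.

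\textbf{Step 3 (Conclusion).} Lemma \ref{lem:auto_nilpotent} shows that quantum multiplication by $[\omega_M\oplus\omega_X]$ is a bijection of $\QH^*(M\times X;\R)$. Surjectivity produces an element $y$ with $[\omega_M\oplus\omega_X]\star y=1$. Since the ring is (graded) commutative, $y$ is a two-sided inverse.

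\textbf{Main obstacle.} The delicate point is Step 1, the product formula for quantum cohomology with a symplectically aspherical factor. The subtleties are the choice of coefficient (Novikov) ring and the fact that $M\times X$ need not be monotone, which complicates the definition of $\QH^*$ if $\omega_X$ is not proportional to $c_1$. I would handle this by citing the standard quantum Künneth theorem, or by arguing directly via product almost complex structures that there are no nonconstant $X$-components. In the setting of Proposition \ref{prop:lag_sphere}, the existence of a monotone Lagrangian with $c_1^{TX}=\omega_X=0$ on $\pi_2(X)$ makes the product effectively monotone on spherical classes, so the ring is well-defined.
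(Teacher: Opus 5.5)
Your proposal is correct and follows the paper's proof essentially verbatim: the quantum K\"unneth formula (the paper cites McDuff--Salamon, Exercise 11.1.19) splits $\star[\omega_M\oplus\omega_X]$ as $\star[\omega_M]\otimes\mathrm{id}+\mathrm{id}\otimes\star[\omega_X]$ with the second term nilpotent, and Lemma \ref{lem:auto_nilpotent} finishes. Your worry that $M\times X$ might not be monotone is unnecessary: $c_1^{TX}$ and $\omega_X$ both vanish on $\pi_2(X)$, so $M\times X$ is monotone with the same constant as $M$ (the existence of a Lagrangian plays no role here), and this is exactly how the paper justifies the setup.
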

	\begin{proof}
		Since $(X,\omega_X)$ is symplectically aspherical, $M \times X$ is monotone and the map $\star[\omega_X] :\QH^*(X;\R) \to \QH^*(X;\R)$ is nilpotent. By \cite[Exercise 11.1.19]{MS12}, we have 
		\[
		\star[\omega_M\oplus \omega_X]= \star[\omega_M]\otimes \mathrm{id}_{\QH(X)}+\mathrm{id}_{\QH(M)}\otimes \star[\omega_X]
		\] 
		through the isomorphism $
		\QH^*(M \times X;\R) \cong \QH^*(M;\R) \otimes \QH^*(X;\R)$. It is an isomorphism by Lemma \ref{lem:auto_nilpotent}. This finishes the proof.
	\end{proof}
	\begin{proof}[Proof of Proposition \ref{prop:lag_sphere}]
		Note that the minimal Chern number of $M\times X$ equals $c_M$. Since $\pi_1(L)=0$, we have $N_L= 2c_M\geq3$. Let $\sigma$ be a unique $\mathrm{Spin}$-structure on $L$.  
		On the one hand, the chain module $\QC(L^\sigma;\R)$ has generators in degrees $\{0,\dim L\}+N_L\Z $. On the other hand, Proposition \ref{prop:torsion_QH_base} and Remark \ref{rem:field_invertible} imply  $\QH_*(L^\sigma;\R)=0$. Hence, for degree reasons, $N_L=2c_M$ divides $\dim L+1=\frac{1}{2}(\dim M+\dim X)+1$. This completes the proof.
	\end{proof}
	
	\begin{remark}(Cf.~\cite[Theorem F]{Bir06})
		Proposition \ref{prop:lag_sphere} and Corollary \ref{cor:product_invertible} also hold when $(X,\omega_X)$ is monotone with the same monotonicity constant as $(M,\omega_M)$ and $\star([\omega_X])$ is nilpotent. 
		In this case, if there is a Lagrangian sphere $L$ in $(M \times X,\omega_{M} \oplus \omega_X)$, then $ \dim M + \dim X+2$ is divisible by $4c_{M\times X}$.
	\end{remark}
	
	\begin{cor}(Cf.~\cite[Theorem B]{Bir06})
		Let $(X,\omega_X)$ be a closed symplectically aspherical integral symplectic manifold of dimension $2n+2$. If there is a closed simply connected Lagrangian submanifold $L\subset (\mathbb{CP}^n\times X,\omega_{\mathrm{FS}}\oplus\omega_X)$, then $\H_*(L;\Z_2)\cong \H_*(S^{2n+1};\Z_2)$.
		If, in addition, $L$ admits a $\mathrm{Spin}$-structure, then the isomorphism holds with $\Z$-coefficients.
	\end{cor}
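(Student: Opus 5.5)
The plan is to follow the proof of Theorem~B in \cite{Bir06}, replacing the Lagrangian spectral sequence input with the vanishing furnished by Theorem~\ref{thm:torsion_QH} and Corollary~\ref{cor:product_invertible}.

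\textbf{Setting up.} Take $\Sigma=\mathbb{CP}^n\times X$. It is monotone with minimal Chern number $c_\Sigma=n+1$, since $X$ is symplectically aspherical. Because $L$ is simply connected, $\pi_2(\Sigma)\to\pi_2(\Sigma,L)$ is surjective. Hence $L$ is monotone with $N_L=2(n+1)\ge 3$ (for $n\ge1$; the case $n=0$ will be handled separately or excluded). Also $\H_1(L)=0$, so Lemma~\ref{lem:degree} gives a Legendrian lift with $d=1$.

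\textbf{Invertibility and vanishing.} By Corollary~\ref{cor:product_invertible} with $M=\mathbb{CP}^n$, the class $[\omega_{\mathrm{FS}}\oplus\omega_X]$ is invertible in $\QH^*(\Sigma;\R)$. The proof of Lemma~\ref{lem:auto_nilpotent} works over any ring, and over $\Z_2$ the hyperplane class $h$ is invertible in $\QH^*(\mathbb{CP}^n;\Z_2)$ because $h^{n+1}=T$. So the same argument gives invertibility in $\QH^*(\Sigma;\Z_2)$ and, for the second statement, in $\QH^*(\Sigma;\Z)$. Then Theorem~\ref{thm:torsion_QH}, Remark~\ref{rem:field_invertible} and Corollary~\ref{cor:transfer} with $d=1$ yield
\[
\QH_*(L;\Z_2)=0,
\]
using that $\Z_2$ coefficients need no $\Pin$-structure. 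If $L$ is $\mathrm{Spin}$, they also yield $\QH_*(L^\sigma;\Z)=0$.

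\textbf{Degree analysis.} We have $\dim L=2n+1$ and $N_L=2n+2=\dim L+1$. Use the pearl complex with a perfect-ish Morse function, or better the Oh/Biran spectral sequence with $E^1=\H_*(L)\otimes\Z[T,T^{-1}]$ where $\deg T=N_L$. Since $N_L=\dim L+1$, the only possible differential is $d_1\colon \H_j(L)\to \H_{j-1+N_L}(L)\,T^{-1}$ in total degree shift $-1$. This forces $j=0$ and target degree $\dim L$. In particular the homology in degrees $1,\dots,\dim L-1$ survives to $E^\infty$. Vanishing of $\QH$ therefore forces
\[
\H_j(L;\Z_2)=0 \quad\text{for } 0<j<2n+1.
\]
It also forces $d_1\colon\H_0\to\H_{2n+1}$ to be an isomorphism. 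Since $L$ is closed and connected, $\H_0=\H_{2n+1}=\Z_2$, which gives $\H_*(L;\Z_2)\cong\H_*(S^{2n+1};\Z_2)$.

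\textbf{Integer case.} When $L$ is $\mathrm{Spin}$ it is orientable, and the same spectral sequence over $\Z$ gives $\H_j(L;\Z)=0$ for $0<j<2n+1$. The map $\H_0=\Z\to\H_{2n+1}=\Z$ must then be an isomorphism, which gives the integral statement.

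The main obstacle is justifying that only $d_1$ can be nonzero and that the spectral sequence input used in Proposition~\ref{prop:torsion_QH_base} applies with these gradings. I would check this directly via a filtration of $\QC_*$ by powers of $T$.
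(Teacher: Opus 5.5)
Your proposal is correct and follows the paper's proof. The paper uses Corollary~\ref{cor:product_invertible} over $\Z_2$ (resp.\ $\Z$) to get invertibility of $[\omega_{\mathrm{FS}}\oplus\omega_X]$, then the torsion result with $d=1$ to get vanishing of quantum homology, then a degree argument; it cites Proposition~\ref{prop:torsion_QH_base} rather than Theorem~\ref{thm:torsion_QH}, which makes no difference. Your filtration check is exactly the paper's ``for degree reasons'': since $N_L=\dim L+1$, the only quantum terms in the pearl differential run between the two extremal Morse degrees, so $\H_j(L)$ for $0<j<\dim L$ appears unchanged in $\QH_*$ and must therefore vanish.
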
 
	\begin{proof}
		Since $(X,\omega_X)$ is symplectically aspherical, $(\mathbb{CP}^n\times X,\omega_\mathrm{FS}\oplus\omega_X)$ is monotone with monotonicity constant $2n+2$. 
		Since $\pi_1(L)$ is trivial, we have $N_L=2n+2$. 
		By Proposition \ref{prop:torsion_QH_base}, Remark \ref{rem:field_invertible}, and Corollary \ref{cor:product_invertible} applied to $\Z_2$-coefficients, we have $\QH_*(L;\Z_2)=0$. 
		Therefore, for degree reasons, $L$ is a $\Z_2$-homology sphere. The same argument proves the assertion for $\Z$-coefficients.	
		\end{proof}

		\subsection{Polarized symplectic manifold and Lagrangian trace}
	\label{sec:polarized}
	
	In this section, we prove Theorem \ref{thm:lag_trace}. We begin by recalling the statement of the theorem. Let $(X,\Omega,\Sigma)$ be a polarized symplectic manifold. The complement $X\setminus\Sigma$ is compactified into a Weinstein domain $W$, whose boundary $Y=\partial W$ is a prequantization bundle over $\Sigma$ with projection  $\pi:Y\to \Sigma$. Let $\varphi$ be the associated Morse function on $W$. Let $\Delta(p)\subset W$ denote the Lagrangian cocore disk associated to $p\in\Crit\, \varphi$ with $\ind_{\varphi}(p)=\frac{1}{2}\dim W$. 
	Let $L$ be a closed monotone Lagrangian submanifold of $\Sigma$ with $N_L\geq3$. We assume that $L$ admits a $d$-fold Legendrian lift $\mathcal{L}\subset Y$ and $L\cap \pi(\partial\Delta(p)) =\emptyset$ for all $p\in\Crit\, \varphi$ with $\ind_{\varphi}(p)=\frac{1}{2}\dim W$. 
		Theorem \ref{thm:lag_trace} asserts that, for any $\Pin$-structure $\sigma$ on $L$, $\QH_*(L^\sigma)$ is torsion of order $d$.
	\medskip
	
	\begin{proof}[Proof of Theorem \ref{thm:lag_trace}]
	By assumption, there is an open neighborhood $\mathcal{U}$ of $\pi^{-1}(L)$ such that $\mathcal{U}\cap\partial\Delta(p)=\emptyset$ for all $p\in\mathrm{Crit}\,\varphi$ satisfying $\ind_\varphi(p)=\frac{1}{2}\dim W$. Then, there is a subcritical Weinstein subdomain $W_{(+1)}\subset \mathrm{int}(W)$ such that $\phi_Z^{-\epsilon}(\mathcal{U})\subset \partial W_{(+1)}$ for small $\epsilon>0$. Here $\phi_Z$ denotes the flow of the Liouville vector field $Z$. The boundary $\partial W_{(+1)}$ is the result of performing contact $(+1)$-surgery on $\partial W$ along $\partial\Delta(p)$. The Reeb flow starting from $\mathcal{L}_{-\epsilon}:=\phi_Z^{-\epsilon}(\mathcal{L})$ is periodic and its image is exactly $\phi_Z^{-\epsilon}(\pi^{-1}(L))$. The Weinstein cobordism $V:=W\setminus \mathrm{int}(W_{(+1)})$ contains the trivial cobordisms $\widehat{\mathcal{L}}:=\displaystyle\bigcup_{0\leq \tau\leq \epsilon}\phi_Z^{-\tau}(\mathcal{L})$ and $\widehat{\mathcal U}:=\displaystyle\bigcup_{0\leq \tau\leq \epsilon}\phi_Z^{-\tau}(\mathcal{U})$.

	We denote by $\RFH_*(\mathcal{L};\partial W)$ and $\RFH_*(\mathcal{L}_{-\epsilon};\partial W_{(+1)})$ the Rabinowitz Floer homology of $\mathcal{L}$ in $\partial W$ and of $\mathcal{L}_{-\epsilon}$ in $\partial W_{(+1)}$, respectively. We employ the lift of a $\Pin$-structure $\sigma$ for orientations and suppress this choice from the notation. Both homologies are well-defined for sufficiently stretched almost complex structures as in Section \ref{sec:Floer_complex_general}   since the indices of the Reeb chords are sufficiently large due to the assumption $N_L\geq3$, see Remark \ref{rem:inde_nondeg}.  Arguing as in \cite[Proposition 9.19]{CO18}, we obtain 
	\begin{equation}\label{eq:rfh_correspondence}
	\RFH_*(\mathcal{L};\partial W)\stackrel{\cong}{\longleftarrow}\HW_*(\widehat{\mathcal{L}};V)\stackrel{\cong}{\longrightarrow} \RFH_*(\mathcal{L}_{-\epsilon};\partial W_{(+1)}).
	\end{equation}
	Here, $\HW_*(\widehat{\mathcal{L}};V)$ is the wrapped Floer homology for the Lagrangian cobordism $\widehat{\mathcal{L}}\subset V$, whose chain-level generators are Morse critical points in $\widehat{\mathcal{L}}$, the Reeb chords of $\mathcal{L}$, and the negative Reeb chords of $\mathcal{L}_{-\epsilon}$; see \cite[Section 8.3]{CO18}.  Indeed, there is an exact sequence 
	\[
	\cdots\to\HW_*(\widehat{\mathcal{L}},\mathcal{L}_{-\epsilon} ; V) \to \HW_*(\widehat{\mathcal{L}};V)\to \RFH_*(\mathcal{L}_{-\epsilon};\partial W_{(+1)})\to \HW_{*-1}(\widehat{\mathcal{L}},\mathcal{L}_{-\epsilon}; V)\to\cdots
	\]
	where $\HW_*(\widehat{\mathcal{L}},\mathcal{L}_{-\epsilon}; V)$ is the homology whose chain-level generators are Morse critical points in $\widehat{\mathcal{L}}$, the Reeb chords of $\mathcal{L}$, and the Reeb chords of $\mathcal{L}_{-\epsilon}$. By employing a sequence of Hamiltonians which have linear slope in $\widehat{\mathcal{U}}$, where the slopes are chosen to avoid the periods of the Reeb chords of $\mathcal{L}$, one can show that $\HW_*(\widehat{\mathcal{L}},\mathcal{L}_{-\epsilon}; V)=0$. This proves $\HW_*(\widehat{\mathcal{L}};V)\cong \RFH_*(\mathcal{L}_{-\epsilon};\partial W_{(+1)})$. By a similar argument, $\RFH_*(\mathcal{L};\partial W)\cong\HW_*(\widehat{\mathcal{L}};V)$ also holds, and thus \eqref{eq:rfh_correspondence} follows. See \cite[Proposition 9.19]{CO18} for the corresponding proof in the periodic case. 
	
	There are maps $\H^*(\mathcal{L})\to \RFH_*(\mathcal{L};\partial W)$ and $\H^*(\mathcal{L})\to \RFH_*(\mathcal{L}_{-\epsilon};\partial W_{(+1)})$ induced by continuation homomorphisms, and we write $1_{\RFH(\mathcal{L};\partial W)}$ and $1_{\RFH(\mathcal{L}_{-\epsilon}; \partial W_{(+1)})}$ for the image of the unit of $\H^*(\mathcal{L})$ under these maps, respectively.  Since $W_{(+1)}$ is subcritical, $1_{\RFH(\mathcal{L}_{-\epsilon}; \partial W_{(+1)})}$ is zero by Proposition \ref{prop:vanishing_1_RFH}. Note that in Appendix \ref{sec:appendix}, we consider contact manifolds that do not necessarily admit exact fillings, provided they satisfy a certain index condition for periodic orbits, see the first condition in \eqref{eq:ind_nondeg}. The discussion therein extends straightforwardly to the current setting, where $Y$ is equipped with a Weinstein filling $W$. Under the isomorphism $\RFH_*(\mathcal{L};\partial W)\cong \RFH_*(\mathcal{L}_{-\epsilon};\partial W_{(+1)})$, $1_{\RFH(\mathcal{L}_{-\epsilon}; \partial W_{(+1)})}$ corresponds to $1_{\RFH(\mathcal{L};\partial W)}$, and thus $1_{\RFH(\mathcal{L};\partial W)}$ is also zero. Arguing as in \eqref{eq:transfer}, this yields that the unit $1_\QH$ of $\QH_*(L^\sigma)$ is $d$-torsion. This completes the proof. 
\end{proof}

	\subsection{Topologically simple exact Lagrangian fillings}\label{sec:filling_proof}
	We again consider a closed, connected symplectic manifold $(\Sigma,\omega)$  with an integral lift $[\omega]_{\mathbb{Z}} \in \H^2(\Sigma;\Z)$. In this section, we assume that 
	\begin{itemize}
		\item the minimal Chern number $c_{\Sigma}$ of $\Sigma$ satisfies $c_\Sigma\geq 2$;
		\item the prequantization bundle $(Y,\alpha)$ over $(\Sigma,[\omega]_\Z)$ admits a topologically simple exact filling $(W,\lambda)$;
		\item a closed monotone Lagrangian $L\subset \Sigma$ admits a $d$-fold covering Legendrian lift $\mathcal{L}\subset Y$.
	\end{itemize}  
		We denote by $[\omega]_{\Z_2}$ the image of $[\omega]_{\mathbb{Z}}$ under the map $\H^2(\Sigma;\Z)\to \H^2(\Sigma;\Z_2)$ induced by the nontrivial ring homomorphism $\Z\to \Z_2$.
	\begin{theorem}
		\label{thm:nonexistence_filling_Z2}
		 Assume that $N_L=2$, $\QH_*(L;\Z_2)\neq 0$, and $[\omega]_{\mathbb{Z}_2}$ is invertible in $\QH_*(\Sigma;\mathbb{Z}_2)$. If $d$ is odd, then			
		$\mathcal{L}$ does not admit a topologically simple exact Lagrangian filling in $(W,\lambda)$.
	\end{theorem}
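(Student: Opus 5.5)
Argue by contradiction: assume $\mathcal{L}$ admits a topologically simple exact Lagrangian filling $\mathcal{L}_W\subset W$. Together with the topologically simple filling $W$ and $N_L=2$, this gives conditions (B1) and (B2). With $\Z_2$-coefficients no $\Pin$-structure is needed, so (B3) is irrelevant. Proposition \ref{prop:RFH_well_defined}(a) then shows that $\RFH_*(\mathcal{L};\Z_2)$ is well-defined and isomorphic to $\RFH_*(\mathcal{L}_W,\mathcal{L};\Z_2)$. Proposition \ref{prop:filling} gives $\RFH_*^{\Z_d}(\mathcal{L};\Z_2)\cong\QH_*(L;\Z_2)$. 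The transfer and projection maps of Corollary \ref{cor:transfer}, whose construction is purely chain-level and also works in the filled setting, satisfy $\wp\circ\tau=d\cdot\mathrm{id}$. Since $d$ is odd, $d\cdot\mathrm{id}$ is the identity on a $\Z_2$-vector space. Hence $\QH_*(L;\Z_2)\neq0$ forces $\RFH_*(\mathcal{L};\Z_2)\neq 0$; more precisely, $1_{\RFH}=\tau(1_{\QH})\neq 0$, as in \eqref{eq:transfer}.

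Next show that $1_{\RFH}=0$, using the vanishing of periodic Rabinowitz Floer homology. By \cite[Corollary 1.8]{BKK24} with $\Z_2$-coefficients, invertibility of $[\omega]_{\Z_2}$ together with $c_\Sigma\geq2$ gives $\RFH_*(Y;\Z_2)=0$. The filling $W$ is topologically simple, and the contractible Reeb orbits satisfy $\mu_\CZ\geq 2c_\Sigma\geq4$. So $\RFH_*(Y;\Z_2)\cong\RFH_*(W;\Z_2)$ by neck-stretching, as in \cite{Ueb19}, and $\RFH_*(W;\Z_2)$ is a unital ring whose unit is the image of $1\in\H^*(W)$. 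Its vanishing forces $\SH_*(W;\Z_2)=0$, i.e.\ the unit of symplectic homology of $W$ vanishes, again via the Viterbo map and \cite{Ueb19,CO18}. Wrapped Floer homology $\HW_*(\mathcal{L}_W;\Z_2)$ is a unital module over $\SH_*(W;\Z_2)$ via the closed–open map, so it vanishes. By Lemma \ref{lem:T_image} and Proposition \ref{prop:localization}, which require $N_L\geq\max\{\frac12(\dim L+1),2\}$, one would get $\RFH_*(\mathcal{L}_W,\mathcal{L};\Z_2)=0$. That hypothesis may fail here, so instead use only the unit: the Viterbo map $\mathcal{V}$ of \eqref{eq:les_SH} sends $1_{\HW}$ to $1_{\RFH}$ at the chain level, since both are represented by the minimum of the Morse function, and $1_{\HW}=0$. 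Hence $1_{\RFH}=0$, contradicting the first paragraph. This is the same mechanism as Proposition \ref{prop:vanishing_1_RFH}, now with a genuine filling available.

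The main obstacle is the second paragraph: justifying that $\mathcal{V}(1_{\HW})=1_{\RFH}$ and that $\HW_*(\mathcal{L}_W;\Z_2)$ is a module over $\SH_*(W;\Z_2)$ with compatible units, when only $N_L=2$ and $c_\Sigma\geq2$ are available. The compactness/index estimates must be checked: they are needed only for index-difference-one strips and the closed–open map of index zero, so Proposition \ref{prop:index positivity B}(a) and the bounds $\mu_\CZ\geq4$, $\mu_\RS\geq2$ should suffice. If the module structure is delicate, the fallback is to invoke Proposition \ref{prop:vanishing_1_RFH} directly. Its hypotheses $\mu_\CZ\geq 4-n$ and $\mu_\RS\geq 3-\frac n2$ hold when $n\geq2$, and the low-dimensional cases are handled by hand.
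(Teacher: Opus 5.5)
Your proof works, but it takes a longer route than the paper's. The paper never uses $\SH_*(W)$, $\HW_*(\mathcal{L}_W)$ or the Viterbo map here. It notes that $\RFH_*(\mathcal{L}_W,\mathcal{L};\Z_2)$ is a unital module over the closed-string $\RFH_*(W,Y;\Z_2)$ \cite{Rit13,CO18}, which vanishes by \cite{BKK24}. So the Lagrangian $\RFH$ vanishes outright, and Proposition \ref{prop:RFH_well_defined} gives $\RFH_*(\mathcal{L};\Z_2)=0$. Corollary \ref{cor:transfer} then makes $\QH_*(L;\Z_2)$ $d$-torsion, which is impossible for $d$ odd.

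You instead use the module structure of $\HW$ over $\SH$ together with the unitality of $\mathcal{V}$, and kill only $1_{\RFH}$. That is enough, because your first paragraph correctly gives $1_{\RFH}=\tau(1_{\QH})\neq 0$. The price is two extra inputs, and your justifications for both need fixing:
\begin{itemize}
\item $\RFH_*(W;\Z_2)=0$ does not force $\SH_*(W;\Z_2)=0$ ``via the Viterbo map'': knowing $\mathcal{V}(1_{\SH})=0$ says nothing about $1_{\SH}$. You need the known equivalence $\SH_*(W)=0\Leftrightarrow\RFH_*(\partial W)=0$ \cite{CFO10,Rit13}.
\item $\mathcal{V}(1_{\HW})=1_{\RFH}$ is not a chain-level match of minima. $\mathcal{V}$ is induced by the quotient map that discards type I and II chords, so the type I minimum of $f_{\mathcal{L}_W}$ is sent to $0$. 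The identity does hold on homology: at small slope $\mathcal{V}$ is the restriction $\H^*(\mathcal{L}_W;\Z_2)\to\H^*(\mathcal{L};\Z_2)$, which sends $1$ to $1$, and $\mathcal{V}$ commutes with continuation maps.
\end{itemize}

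What you call the main obstacle is not one. $W$ and $\mathcal{L}_W$ are exact, so there is no bubbling and these module structures need no index estimates. Topological simplicity and $N_L\geq 2$ enter only through Proposition \ref{prop:RFH_well_defined}.

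Your fallback would not work. By Remark \ref{rem:inde_nondeg}, after the Morse--Bott perturbation the chord condition $\mu_\RS\geq 3-\frac{n}{2}$ of Proposition \ref{prop:vanishing_1_RFH} is equivalent to $N_L\geq 3$, which fails when $N_L=2$.
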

	
	\begin{proof}
		Let $\RFH_*(W,Y;\Z_2)$ be the Rabinowitz Floer homology generated by generalized periodic Reeb orbits on $Y$ contractible in $W$; this was denoted by $\SH_*(\partial W)$ in \cite{BKK24}. 
		By \cite[Theorem 1.1 and Remark 1.2]{BKK24}, the invertibility of $[\omega]_{\Z_2}$ with the assumption $c_\Sigma\geq2$ implies $\RFH_*(W,Y;\Z_2)=0$. 
		
		Assume that $\mathcal{L}$ admits a topologically simple exact Lagrangian filling $\mathcal{L}_W$ in $(W,\lambda)$. Then, $\RFH_*(\mathcal{L}_W,\mathcal{L};\Z_2)$ defined in Section \ref{sec:Floer_complex_general} is a module over $\RFH_*(W,Y;\Z_2)$, see \cite{Rit13,CO18}. Therefore, $\RFH_*(\mathcal{L}_W,\mathcal{L};\Z_2)=0$  and hence, by Corollary \ref{cor:transfer} and Proposition \ref{prop:RFH_well_defined}, $\QH_*(L;\Z_2)$ is $d$-torsion. 
		Since $d$ is assumed to be odd, this leads to a contradiction.
	\end{proof}
	
	\begin{remark}
		The only place where $c_\Sigma\geq 2$ is used is in deducing $\RFH_*(W,Y;\Z_2)=0$ from  the invertibility of $[\omega]_{\Z_2}$. We refer to \cite[Remark 1.2]{BKK24} for the necessary and sufficient condition for $\RFH_*(W,Y;\Z_2) =0$ when $c_\Sigma=1$. 
	\end{remark}

	Recall from Lemma \ref{lem:degree} that if the map $\pi_2(\Sigma,L) \to \H_1(L)$ is surjective, then $L$ admits a Legendrian lift with the covering degree $d= \frac{2c_\Sigma}{\gcd(2c_\Sigma,m_\Sigma N_L)}$. Thus if $N_L=2$ and $c_\Sigma$ is odd, then $d$ is odd. 
	An example of this is an even-dimensional complex projective space. 
	
	\begin{cor}
		\label{cor:nonexistence_filling_CP^2n}
		Let $L$ be a closed monotone Lagrangian submanifold of $\mathbb{CP}^{2n}$ for some $n\geq 1$ such that $N_L=2$ and $\QH_*(L;\Z_2)\neq 0$. Then the Legendrian lift $\mathcal{L}$ of $L$ in  $S^{4n+1}=\partial{B}^{4n+2}$ does not admit a topologically simple exact Lagrangian filling in $B^{4n+2}$.
	\end{cor}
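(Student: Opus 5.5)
The corollary follows from Theorem \ref{thm:nonexistence_filling_Z2} once its hypotheses are verified for $\Sigma=\mathbb{CP}^{2n}$ with the Fubini--Study form normalized so that $[\omega]_\Z$ is the generator $h$ of $\H^2(\mathbb{CP}^{2n})$. With this normalization the prequantization bundle $Y$ is $S^{4n+1}$, and $W=B^{4n+2}$ is an exact filling. It is topologically simple: $\pi_1(S^{4n+1})=0$, so injectivity on $\pi_1$ is automatic, and $\pi_2(B^{4n+2})=0$, so $c_1^{TW}$ vanishes on $\pi_2(W)$. The minimal Chern number is $c_\Sigma=2n+1\geq 3\geq 2$. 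In $\QH^*(\mathbb{CP}^{2n};\Z_2)\cong\Z_2[h,T^{\pm1}]/(h^{2n+1}=T)$, or without Novikov variable $h^{2n+1}=1$ in the monotone $\Z_2$-graded sense, the class $[\omega]_{\Z_2}=h$ is invertible with inverse $h^{2n}T^{-1}$. The assumptions $N_L=2$ and $\QH_*(L;\Z_2)\ne0$ are part of the hypotheses.

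It remains to show that $L$ has a Legendrian lift to $S^{4n+1}$ whose covering degree $d$ is odd. Here $\H^1(\mathbb{CP}^{2n})=0$, so the holonomy of any connection form $\alpha$ with $d\alpha=\pi^*\omega$ along a loop $\gamma$ in $L$ is determined by $\omega$ on a disk that $\gamma$ bounds in $\mathbb{CP}^{2n}$. Since $\pi_1(\mathbb{CP}^{2n})=0$, every loop in $L$ bounds such a disk, and $\pi_2(\Sigma,L)\to\pi_1(L)$ is surjective. Hence so is the composition $\pi_2(\Sigma,L)\to\H_1(L)$, and Lemma \ref{lem:degree} applies with $m_\Sigma=1$ and $c_\Sigma=2n+1$:
\[
d=\frac{2(2n+1)}{\gcd(2(2n+1),2)}=2n+1,
\]
which is odd. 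In particular (C1) holds and the lift $\mathcal{L}\subset S^{4n+1}$ is defined. One caveat: "the Legendrian lift" in the statement should mean this lift. Any other lift differs by the Reeb flow $\phi_R^t$, which extends to a unitary rotation of $B^{4n+2}$ preserving $\lambda$, so fillability is unaffected.

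Theorem \ref{thm:nonexistence_filling_Z2} then says $\mathcal{L}$ admits no topologically simple exact Lagrangian filling in $B^{4n+2}$. The only potentially delicate step is confirming that the quantum invertibility of $[\omega]_{\Z_2}$ holds in the grading conventions of \cite{BKK24} used in that theorem. I expect this to be routine, since $h^{2n+1}$ is a unit times the Novikov parameter.
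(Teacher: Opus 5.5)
Your proposal is correct and follows the paper's argument: apply Theorem \ref{thm:nonexistence_filling_Z2} with $\Sigma=\mathbb{CP}^{2n}$ and $W=B^{4n+2}$, and obtain the odd covering degree $d=2n+1$ from Lemma \ref{lem:degree} using $N_L=2$, $c_\Sigma=2n+1$, $m_\Sigma=1$. Surjectivity of $\pi_2(\Sigma,L)\to\H_1(L)$ comes from $\pi_1(\mathbb{CP}^{2n})=0$, and your extra checks (topological simplicity of the ball, invertibility of $h$, independence of the choice of lift) fill in details the paper leaves implicit.
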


	By working with orientations, we can extend Theorem \ref{thm:nonexistence_filling_Z2} to the case where $d$ is even, and thereby obtain a version of Corollary \ref{cor:nonexistence_filling_CP^2n} for odd-dimensional complex projective spaces.

	\begin{prop}
		\label{prop:nonexistence_filling_char_zero}
		Let $\sigma$ be a relative $\Pin$-structure on $L$, and let $\tilde\sigma$ be the lift of $\sigma$ on $\R \times\mathcal{L}$. Assume that $N_L=2$ and that one of the following conditions holds.
		\begin{itemize}
			\item[(i)] The class $[\omega]_\Z\in\QH^*(\Sigma)$ is invertible and $\QH_*(L^{\sigma})$ is not $d$-torsion.
			\item[(ii)] The class $[\omega]_\mathbb{Q}\in\QH^*(\Sigma;\mathbb{Q})$, obtained from $[\omega]_\Z$ via the inclusion $\Z\hookrightarrow\mathbb{Q}$, is invertible, and $\QH_*(L^{\sigma};\mathbb{Q}) \neq 0$.
		\end{itemize}
		Then, $\mathcal{L}$ does not admit a topologically simple exact Lagrangian filling in $(W,\lambda)$ equipped with a relative $\Pin$-structure that restricts to $\tilde{\sigma}$ on $\R\times \mathcal{L}$, in the sense of  (B3) in Section \ref{sec:RFH_filling}.
	\end{prop}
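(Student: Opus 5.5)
We argue by contradiction, following the proof of Theorem \ref{thm:nonexistence_filling_Z2} but with integer (or rational) coefficients and orientations. Suppose $\mathcal{L}$ admits a topologically simple exact Lagrangian filling $\mathcal{L}_W\subset W$ with a relative $\Pin$-structure $\hat\sigma$ on $\widehat{\mathcal{L}}_W$ restricting to $\tilde\sigma$. Then conditions (B1), (B2), and (B3) hold; (B2) uses $N_L=2$. By Proposition \ref{prop:RFH_well_defined}.(c), $\RFH_*(\mathcal{L}^{\tilde\sigma})$ is well-defined and isomorphic to $\RFH_*(\mathcal{L}_W^{\hat\sigma},\mathcal{L})$. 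By the version of Theorem \ref{thm:isom_transfer} valid under these fillings, together with Corollary \ref{cor:transfer}, we have transfer and projection maps $\tau,\wp$ with $\wp\circ\tau=d$.

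Next we use the closed string side. Let $\RFH_*(W,Y)$ be the Rabinowitz Floer homology of $Y$ with generators contractible in $W$, taken with $\Z$-coefficients in case (i) and $\mathbb{Q}$-coefficients in case (ii). Since $W$ is topologically simple and $c_\Sigma\geq2$, \cite[Theorem 1.1 and Corollary 1.8]{BKK24} show that invertibility of $[\omega]_\Z$ (resp. $[\omega]_\mathbb{Q}$) gives $\RFH_*(W,Y)=0$ (resp. $\RFH_*(W,Y;\mathbb{Q})=0$). In the exact filled setting, $\RFH_*(\mathcal{L}_W^{\hat\sigma},\mathcal{L})$ is a unital module over the unital ring $\RFH_*(W,Y)$ via the closed–open module structure \cite{Rit13,CO18}, with orientations handled through $\hat\sigma$. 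A module over the zero ring is zero, so $\RFH_*(\mathcal{L}^{\tilde\sigma})=0$, or $0$ over $\mathbb{Q}$ in case (ii).

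Corollary \ref{cor:transfer} then shows that $\QH_*(L^\sigma)$ is $d$-torsion, which contradicts (i). In case (ii), the same transfer argument over $\mathbb{Q}$, where $d$ is invertible, gives $\QH_*(L^\sigma;\mathbb{Q})=0$. Alternatively, $\QH_*(L^\sigma)\otimes\mathbb{Q}=0$ follows from $d$-torsion, and universal coefficients apply since the complexes are free. Either way this contradicts (ii).

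The main obstacle is the module structure with signs. We must check that the product of $\RFH_*(W,Y)$ on $\RFH_*(\mathcal{L}_W^{\hat\sigma},\mathcal{L})$ is well-defined and unital over $\Z$ when $N_L=2$ and $c_\Sigma\geq2$. The compactness needed for this is supplied by neck-stretching and the index positivity of Proposition \ref{prop:index positivity B}, as in the periodic case of \cite{Ueb19}. Coherent orientations of the relevant moduli spaces (pair-of-pants with boundary) come from $\hat\sigma$ on the Lagrangian and the trivial twisting on $W$. We would also need the identification of the vanishing of $\RFH_*(W,Y)$ over $\Z$, not just over $\Z_2$, from \cite{BKK24}.
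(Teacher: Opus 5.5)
Your proposal is correct and follows essentially the same route as the paper. Assuming a topologically simple filling with $\hat\sigma$ satisfying (B3), the closed-string Rabinowitz Floer homology of $(W,Y)$ vanishes by \cite{BKK24}. The unital module structure then forces $\RFH_*(\mathcal{L}_W^{\hat\sigma},\mathcal{L})=0$, and Proposition \ref{prop:RFH_well_defined} together with Corollary \ref{cor:transfer} gives the contradiction.

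One small correction concerns case (ii): discard your ``alternatively'' route. It needs $\QH_*(L^\sigma)$ to be $d$-torsion over $\Z$, which requires $[\omega]_\Z$ to be invertible, whereas (ii) only gives invertibility of $[\omega]_{\mathbb{Q}}$. The direct argument with $\mathbb{Q}$-coefficients, which is what the paper uses, is the one that works.
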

	
	\begin{proof}
		The proof follows along similar lines to that of Theorem \ref{thm:nonexistence_filling_Z2}. We outline the proof for case (ii). 
		Assume that $\mathcal{L}$ admits a topologically simple exact Lagrangian filling $\mathcal{L}_W$ in $(W,\lambda)$ equipped with a relative $\Pin$-structure $\hat{\sigma}$ satisfying  (B3). 
		Then $\RFH_*(\mathcal{L}_W^{\hat{\sigma}},\mathcal{L};\mathbb{Q})=0$, and consequently $\QH_*(L^{\sigma};\mathbb{Q}) =0$. This contradiction completes the proof.
	\end{proof}

	\begin{cor}
		\label{cor:nonexistence_filling_CP^2n+1}
		Let $L^\sigma$ be a closed monotone Lagrangian submanifold of $\mathbb{CP}^{2n+1}$ equipped with a relative $\Pin$-structure. Assume that $N_L=2$ and that either $\QH_*(L^{\sigma})$ is not $(2n+2)$-torsion or $\QH_*(L^{\sigma};\mathbb{Q})\neq 0$. Then, the Legendrian lift $\mathcal{L}$ of $L$ in  $S^{4n+3}=\partial{B}^{4n+4}$ does not admit a topologically simple exact Lagrangian filling equipped with a relative $\Pin$-structure that restricts to $\tilde\sigma$ on $\R\times \mathcal{L}$.
	\end{cor}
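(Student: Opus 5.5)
The argument runs parallel to the proof of Proposition \ref{prop:nonexistence_filling_char_zero}, but with the lift of $L^\sigma$ to $S^{4n+3}$ as the input. Take $\Sigma=\mathbb{CP}^{2n+1}$ with $[\omega]_\Z$ the positive generator of $\H^2(\mathbb{CP}^{2n+1})$. The prequantization bundle is the Hopf fibration $Y=S^{4n+3}\to\mathbb{CP}^{2n+1}$, and its filling is the round ball $W=B^{4n+4}$ with the standard Liouville form.

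\textbf{Checking the standing hypotheses.}
\begin{enumerate}[(1)]
\item The ball is topologically simple, since $\pi_1(S^{4n+3})=0$ and $\pi_2(B^{4n+4})=0$.
\item We have $c_\Sigma=2n+2\geq 2$ and $m_\Sigma=1$.
\item Since $\H^2(\mathbb{CP}^{2n+1})$ has no torsion, there is a unique integral lift, so the Euler class is correct.
\item $L$ admits a Legendrian lift by Lemma \ref{lem:Leg_lift}. The covering degree is $d=\frac{2c_\Sigma}{\gcd(2c_\Sigma,N_L m_\Sigma)}=\frac{4n+4}{2}=2n+2$. The relevant fact is that, because $\H_1(\mathbb{CP}^{2n+1})=0$, the boundary map $\H_2(\Sigma,L)\to \H_1(L)$ is surjective. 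I would then apply Lemma \ref{lem:degree} (or Lemma \ref{lem:Leg_lift} with $b=\frac{2c_\Sigma}{N_L}$, using $\omega(\H_2(\Sigma,L))=\frac{1}{2n+2}\Z$ by homological monotonicity) to obtain this $d$.
\end{enumerate}
The one point needing care is that monotonicity is stated on $\pi_2$. Surjectivity of $\pi_2(\Sigma,L)\to\H_1(L)$ is not automatic, but since $\pi_1(\mathbb{CP}^{2n+1})=0$ we have $\pi_2(\Sigma,L)\to\pi_1(L)$ surjective, hence also onto $\H_1(L)$ via Hurewicz. So Lemma \ref{lem:degree} applies directly.

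\textbf{Invertibility of $[\omega]$.} In $\QH^*(\mathbb{CP}^{2n+1})\cong\Z[h,q^{\pm1}]/(h^{2n+2}-q)$, the hyperplane class $h=[\omega]_\Z$ is invertible with inverse $q^{-1}h^{2n+1}$. Hence $[\omega]_\Z$ is invertible over $\Z$, and $[\omega]_\mathbb{Q}$ is invertible over $\mathbb{Q}$.

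\textbf{Conclusion.} With $d=2n+2$ and $N_L=2$, the first hypothesis ($\QH_*(L^\sigma)$ not $(2n+2)$-torsion) is exactly condition (i) of Proposition \ref{prop:nonexistence_filling_char_zero}. The second hypothesis ($\QH_*(L^\sigma;\mathbb{Q})\neq0$) is condition (ii). In either case that proposition rules out a topologically simple exact Lagrangian filling of $\mathcal{L}$ in $B^{4n+4}$ carrying a relative $\Pin$-structure that restricts to $\tilde\sigma$.

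The step I expect to be the main obstacle is the identification of $d$. Everything else is bookkeeping: the integral invertibility of $h$ in the quantum ring and the topological simplicity of the ball are standard.
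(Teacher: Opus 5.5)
Your proposal is correct and follows essentially the route the paper intends. The corollary is a direct application of Proposition \ref{prop:nonexistence_filling_char_zero} with $W=B^{4n+4}$, invertibility of the hyperplane class in $\QH^*(\mathbb{CP}^{2n+1})$, and $d=2n+2$. That value of $d$ comes from Lemma \ref{lem:degree}, whose surjectivity hypothesis $\pi_2(\Sigma,L)\to\H_1(L)$ holds because $\pi_1(\mathbb{CP}^{2n+1})=0$, exactly as you argue.
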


	\begin{remark}\label{remark equator in two sphere_short}
		As proved in \cite{Cho04}, the Floer cohomology of the Clifford torus $T_\mathrm{Cl}\subset \mathbb{CP}^n$ equipped with the standard $\mathrm{Spin}$-structure $\sigma_{\mathrm{std}}$ is isomorphic to its singular cohomology, i.e.,~in our terminology,  
		\begin{equation}
			\label{eq:clifford torus}
			\QH_*(T_\mathrm{Cl}^{\sigma_{\mathrm{std}}};\mathbb{Q}) \cong \H^*(T_\mathrm{Cl};\mathbb{Q})\otimes \mathbb{Q}[T,T^{-1}].
		\end{equation}
		This isomorphism also holds with $\mathbb{Q}$ replaced by $\Z_2$. It is also shown in \cite{Cho04} that, for even $n$, $T_\mathrm{Cl}$ has vanishing quantum homology for any nonstandard $\mathrm{Spin}$-structure, whereas, for odd $n$, it admits exactly one nonstandard $\mathrm{Spin}$-structure $\sigma_\mathrm{nstd}$ for which \eqref{eq:clifford torus} holds.  
	
		The Legendrian knot $\mathcal{L}\subset S^3$ that is a double cover of an equator $S^1_\mathrm{eq}$ of $\mathbb{CP}^1$ admits an exact Lagrangian disk filling $\mathcal{L}_{B^4}\subset B^4$. 
		According to Corollary \ref{cor:nonexistence_filling_CP^2n+1} and \eqref{eq:clifford torus}, the $\mathrm{Spin}$-structure $\tilde\sigma$ on $\R \times \mathcal{L}$ does not extend to $\mathcal{L}_{B^4}$, where $\tilde\sigma$ is the lift of either $\mathrm{Spin}$-structure on $S^1_\mathrm{eq}$. Indeed, $\tilde\sigma$ is standard for any choice of $\sigma$ due to the two-fold covering $\mathcal{L}\to S^1_\mathrm{eq}$. In contrast, the unique $\mathrm{Spin}$-structure $\hat{\sigma}$ on $\mathcal{L}_{B^4}$ restricts to a nonstandard one on $\R\times \mathcal{L}$.

		The situation is different for the Legendrian knot $S^*_qS^2\subset S^*S^2$ that is a bijective lift of the equator $S^1_\mathrm{eq}\subset\mathbb{CP}^1$. The unit disk cotangent fiber $D^*_qS^2\subset D^*S^2$ gives an exact Lagrangian filling of $S^*_qS^2$. Let $\hat{\sigma}$ be the unique $\mathrm{Spin}$-structure on $D^*_qS^2$. The restriction of $\hat{\sigma}$ to $\R \times S_q^*S^2$ is the lift $\tilde{\sigma}_{\mathrm{nstd}}$ of the nonstandard $\mathrm{Spin}$-structure ${\sigma}_{\mathrm{nstd}}$ on $S^1_\mathrm{eq}$. Theorem \ref{thm:isom_transfer} and Proposition \ref{prop:RFH_well_defined} yield the following isomorphisms
		\[
		\QH_*((S^1_\mathrm{eq})^
		{\sigma_{\mathrm{nstd}}};\mathbb{Q})\cong \RFH_*(S_q^*S^2;{\tilde{\sigma}_{\mathrm{nstd}}},\mathbb{Q}) \cong \RFH_*(D_q^*S^2,S^*_{q} S^2;{\hat{\sigma}},\mathbb{Q}).
		\]
		Moreover, by \cite[Theorem 1.5]{CHO25}, we have
		\[
		\RFH_j(D_q^*S^2,S^*_{q} S^2;{\hat{\sigma}},\mathbb{Q})\cong \H_{j-1}(\Omega S^2;\mathbb{Q}) \oplus \H^{-j}(\Omega  S^2;\mathbb{Q})\cong \mathbb{Q}\qquad \forall j\in\Z.
		\]
		Note that the second Stiefel--Whitney class $w_2^{TS^2}$ of $TS^2$  vanishes. 
		This computation agrees with the known result for $\QH_*((S^1_\mathrm{eq})^
		{\sigma_{\mathrm{nstd}}};\mathbb{Q})$ mentioned after \eqref{eq:clifford torus}.
	\end{remark}

	\appendix		
	\section{Appendix:~Proof of $1_\RFH=0$}\label{sec:appendix}

	In this section, let $(Y,\alpha)$ be a $(2n+1)$-dimensional closed contact manifold, which is not necessarily a prequantization space. Let $\mathcal{L}$ be a closed $n$-dimensional Legendrian submanifold of $Y$. Let $\tilde\sigma$ be a $\Pin$-structure on $\R\times \mathcal{L}$. For simplicity, we assume that all contractible periodic orbits and all contractible chords of the Reeb vector field $R$ on $(Y,\mathcal{L})$ are nondegenerate; however, our arguments below can be adapted to the Morse-Bott setting. Here contractibility refers to being trivial in $\pi_1(Y)$ and $\pi_1(Y,\mathcal{L})$, respectively. We assume that every contractible periodic Reeb orbit $\gamma$ and every contractible Reeb chord $c$ satisfy
		\begin{equation}\label{eq:ind_nondeg}
			\mu_\CZ(\gamma)\geq 4-n,\qquad \mu_\RS(c)\geq 3-\frac{n}{2}.
		\end{equation}
		
		\begin{remark}\label{rem:inde_nondeg}
		These index conditions correspond to $c_\Sigma\geq 2$ and $N_L\geq 3$ when $Y$ is a prequantization bundle over $\Sigma$ with a connection 1-form $\alpha$ and $\mathcal{L}$ is a lift of a Lagrangian $L$. In this case, Reeb chords appear in $\mathcal{L}$-families. We consider one such family containing a contractible Reeb chord $c$. If we perturb $\alpha$ using a small Morse function $f_\mathcal{L}$ so that this $\mathcal{L}$-family of Reeb chords bifurcates into $\Crit f_\mathcal{L}$ many nondegenerate Reeb chords with indices $\mu_\RS$ in the range $[\mu_\RS(c)-\frac{n}{2},\mu_\RS(c)+\frac{n}{2}]$. Since $\mu_\RS(c)\geq N_L$, the latter condition \eqref{eq:ind_nondeg} agrees with our standing assumption $N_L\geq3$.
		
			Our convention on $\mu_\CZ(\gamma)$ and $\mu_\RS(c)$ here is that the moduli space of unparametrized finite energy planes resp.~half-planes in $(\R\times Y,\R\times \mathcal{L})$ asymptotic to $\gamma$ resp.~$c$ has virtual dimension $\mu_\CZ(\gamma)+n-2$ resp.~$\mu_\RS(c)+\frac{n-2}{2}$.  
			Let $\mathcal{M}_k(x_+,x_-,c_1,\dots,c_k,\gamma_1,\dots,\gamma_l;H,J)$ be the moduli space of Floer strips $\tilde{v}$ satisfying $\ev_\pm(\tilde v)=x_\pm$, with $k$ negative boundary punctures asymptotic to $c_1,\dots,c_k$ and $l$ negative interior punctures asymptotic to $\gamma_1,\dots,\gamma_l$. 
			Then, the virtual dimension of this moduli space equals 
			\[
			\mu_\RS(x_+)-\mu_\RS(x_-)-\sum_{j=1}^k\Big(\mu_\RS(c_j)+\frac{n-2}{2}\Big)-\sum_{j=1}^l\Big(\mu_\CZ(\gamma_j)+{n-2}\Big).
			\] 
			Since there is an $\R$-action translating the domain, if $\mu_\RS(x_+)-\mu_\RS(x_-)\leq 2$, then this moduli space is empty for a generic choice of $J$ by the assumption \eqref{eq:ind_nondeg}, cf.~\cite[Corollary 3.7]{Ueb19}.
		\end{remark}
		
		Let $J$ be an $S^1$-family of almost complex structures on $\R\times Y$ which is cylindrical outside a neighborhood of $\{0\}\times Y$. We choose a generic $J$ such that the moduli spaces in this section are cut out transversely. Let $H_a \in \mathcal{H}$, where the subscript $a$ indicates the slope in   \eqref{eq:Hamiltonian}. Nonconstant chords of $X_{H_a}$ with endpoints on $\R\times\mathcal{L}$ are nondegenerate, but we still have an $\mathcal{L}$-family of constant chords. Thus we choose an auxiliary Morse function $f_\mathcal{L}$ on $\mathcal{L}$.
		
		For real numbers $b_1<b_2$, we define $\FH_*^{(b_1,b_2)}(\mathcal{L}^{\tilde{\sigma}};H_a)$ in the same manner as in Sections \ref{sec:floer_chain} and \ref{sec:lag_rfh}. Its chain module is the direct sum of the orientation lines of the critical points of $f_\mathcal{L}$ and the nonconstant chords of $X_{H_a}$ appearing in $(-\eta,\eta)\times Y$ whose $\mathcal{A}_{H_a}$-values lie in $(b_1,b_2)$. We define the grading $\mu$ by $\mu(x)=\mu_\RS(x)+\frac{n-1}{2}$ for nonconstant  chords $x=(r_x,c_x)$ and $\mu(c)=n-\ind_{f_{\mathcal{L}}}(c)$ for $c\in \Crit f_\mathcal{L}$. This convention matches \eqref{eq:index}. Recall from Section \ref{sec:Ham_chords} that  $\mu_\RS(x)=\mu_\RS(c_x)+\frac{1}{2}$ as $H_a$ is convex in the $\R$-direction along $x$. Note that $X_{H_a}$ has chords other than those contained in $(-\eta,\eta)\times Y$. Nevertheless, by \cite[Lemmas 2.2 and 2.3]{CO18}, this chain module generated by chords in $(-\eta,\eta)\times Y$ carries a well-defined boundary operator. The index conditions in \eqref{eq:ind_nondeg} exclude any potential bubbling-off of finite energy (half-)planes as explained in Remark \ref{rem:inde_nondeg}. 
		As before $\RFH_*(\mathcal{L}^{\tilde{\sigma}})$ is defined as $\varinjlim\limits_{b_2\uparrow+\infty}\varprojlim\limits_{b_1\downarrow-\infty}  \varinjlim\limits_{H_a\in\mathcal{H}}\FH_*^{(b_1,b_2)}(\mathcal{L}^{\tilde{\sigma}};H_a)$.

		\medskip
		
		We briefly recall the construction of the Rabinowitz Floer homology $\RFH_*(Y)$ based on periodic orbits, and refer to \cite{BKK24} for details, where $\RFH_*(Y)$ is denoted by $\SH_*(Y)$.
		We only consider 1-periodic orbits of $X_{H_a}$ appearing near $\{0\}\times Y$. These correspond to constant orbits on $Y$ and periodic orbits of $R$ or $-R$ with period less than $a$. We choose a Morse function $f_{Y}$ on the space of constant orbits, which is diffeomorphic to $Y$. Each nonconstant orbits appear in $S^1$-families. We then perturb $H_a$ in a time-dependent manner near each such family so that it gives rise to exactly two 1-periodic orbits. For brevity, the perturbed Hamiltonian is again denoted by $H_a$. 
		The Floer chain module $\FC_*^{(b_1,b_2)}(H_a)$ is the direct sum of the orientation lines associated to nonconstant orbits and critical points of $f_{Y}$ whose action values lie in $(b_1,b_2)$. Its homology $\FH_*^{(b_1,b_2)}(H_a)$ is well-defined due to \cite[Lemmas 2.2 and 2.3]{CO18} and the first index condition in \eqref{eq:ind_nondeg}. We define the grading $\mu_Y$ by $\mu_Y(q)=\mu_\CZ(q)$ if $q$ is nonconstant and $\mu_Y(q)=n+1-\ind_{f_Y}(q)$ if $q$ is constant. Then $\RFH_*(Y)$ is defined as  $\varinjlim\limits_{b_2\uparrow+\infty}\varprojlim\limits_{b_1\downarrow-\infty}  \varinjlim\limits_{H_a\in\mathcal{H}}\FH_*^{(b_1,b_2)}(H_a)$. 
		
		\medskip
		
		Now we describe the moduli space used to define $\RFH_*(Y)$-module structure on $\RFH_*(\mathcal{L}^{\tilde{\sigma}})$. 
		Note that this module structure is not well-defined in full generality due to the lack of a filling of $\mathcal{L}^{\tilde\sigma}$. We consider the surface
		\[
		S^{M}:=\big((\R \times [-1,-\tfrac{1}{2}]) \sqcup (\R \times [-\tfrac{1}{2},\tfrac{1}{2}]) \sqcup  (\R \times [\tfrac{1}{2},1]) \big)/ \sim^{M}.
		\] 
		The equivalence relation $\sim^{M}$ is given by 
		\begin{itemize}
			\item $(s,-\frac{1}{2}^{+}) \sim (s,-\frac{1}{2}^{-})$ and $(s,\frac{1}{2}^{+}) \sim (s,\frac{1}{2}^{-})$ for $s \leq 0$,
			\item $(s,-\frac{1}{2}^{+}) \sim (s,\frac{1}{2}^{-})$ and $(s,\frac{1}{2}^{+})\sim (s,-\frac{1}{2}^{-})$ for $s \geq 0$,
		\end{itemize}
		where $-\frac{1}{2}^{-}\in  [-1,-\frac{1}{2}]$, $-\frac{1}{2}^{+}\in [-\frac{1}{2},\frac{1}{2}]$, $\frac{1}{2}^{-}\in [-\frac{1}{2},\frac{1}{2}]$, and $\frac{1}{2}^{+}\in [\frac{1}{2},1]$.
		As in the case of $S^{P}$  in Section \ref{sec:product}, $S^{M}$ is endowed with a natural complex structure, and the global coordinate $z=s+it$ is holomorphic everywhere except at the branching point $(0,-\frac{1}{2}^\pm)=(0,\frac{1}{2}^\pm)$. 

		Let $H_{a_1},H_{a_2},H_{a_1+a_2}\in\mathcal H$ satisfy 
		$H_{a_1}+H_{a_2}\leq H_{a_1+a_2}$.
		Let $q$ be a 1-periodic orbit of $X_{H_{a_1}}$, which is either a critical point of $f_{Y}$ or a  nonconstant orbit. Let $c_+$ and $c_-$ be chords of $X_{H_{a_2}}$ and $X_{H_{a_1+a_2}}$, respectively. If any of $c_\pm$ is constant, then we assume that it is a critical point of $f_{\mathcal{L}}$. 
		Let $(H_z,J_z)_{z\in S^M}$ be a domain-dependent Floer datum which agrees with $(H_{a_1},J)$ on the positive cylindrical end, and with $(H_{a_2},J)$ and $(H_{a_1+a_2},J)$ on the positive and negative strip-like ends, respectively.	We require the condition $\partial_s H_z\leq 0$ to ensure that the maximum principle holds.
		We choose a generic $J_z$ so that the moduli spaces in \eqref{eq:moduli_for_module_str} are cut out transversely.
		We consider the moduli space
		\begin{equation}\label{eq:moduli_for_module_str}
		\mathcal{M}(q,c_+,c_-;H_{a_1},H_{a_2},H_{a_1+a_2})	
		\end{equation}
		consisting of smooth maps $\tilde{v}:(S^{M},\partial S^{M}) \to (\R\times Y,\R\times \mathcal{L})$ satisfying
		$\partial_s\tilde v+J_z(\tilde v)(\partial_t\tilde v-X_{H_z}(\tilde v))=0$ on the interior 
		and the following asymptotic conditions:
		\begin{itemize}
			\item $\displaystyle \lim_{s\to \infty} \tilde{v} (s, t-\tfrac{1}{2}) $ is $q$ or in the stable manifold of $-\nabla f_Y$ at $q$ if $q$ is constant,
			\item $\displaystyle\lim_{s\to \infty} \tilde{v} (s, t-1+\lfloor 2t \rfloor)$ is $c_+$ or in the stable manifold of $-\nabla f_\mathcal{L}$ at  $c_+$ if $c_+$ is constant,
			\item $\displaystyle\lim_{s\to -\infty} \tilde{v} (s, 2t-1)$ is $c_-$ or in the unstable manifold of $-\nabla f_\mathcal{L}$ at $c_-$ if $c_-$ is constant.
		\end{itemize}
		Here $\lfloor 2t \rfloor$ denotes the greatest integer less than or equal to $2t$.  
		This moduli space has dimension $\mu_Y(q)+\mu(c_+)-\mu(c_-)-(n+1)$.
		We are particularly interested in the case where $\mu_Y(q)=n+2$, $\mu(c_+)=\mu(c_-)=n$. Then, this moduli space is 1-dimensional, and its possible Floer-type breaking configurations are as in Figure \ref{fig:breaking_floer}. 
		\begin{figure}[h]
			\centering
			\includegraphics[height = 4.7cm]{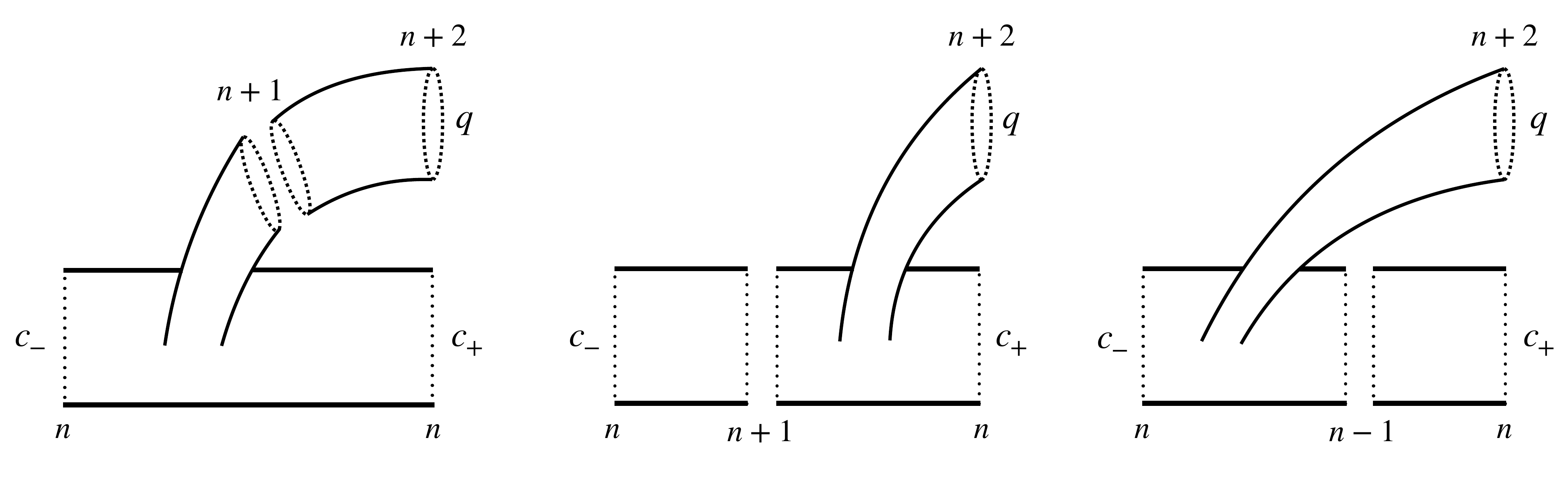}
			\caption{Possible configurations of broken curves}
			\label{fig:breaking_floer}
		\end{figure} 
		
		The last type of breaking contributes trivially when $c_+$ is a cycle.
		Even though $\mathcal{L}$ might not admit an exact Lagrangian filling and the condition in \eqref{eq:ind_nondeg} is not sufficiently strong, we can establish compactness of the above moduli spaces under suitable restrictions on the indices.

		\begin{prop}
			\label{prop:compactness_module}
			We assume that
			\[
			\big(\mu_Y(q),\mu(c_+),\mu(c_-)\big)\in \big\{(n+2,n,n),(n+1,n,n),(n+2,n,n+1)\big\}.
			\]
			Then, there exist real numbers $r_-<r_+$ such that the image of every element of the moduli space $\mathcal{M}(q,c_+,c_-;H_{a_1},H_{a_2},H_{a_1+a_2})$ is contained in $(r_-,r_+)\times Y$. 
		\end{prop}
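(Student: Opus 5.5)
We argue by contradiction, following the pattern of Proposition \ref{prop:simple_no_escape}.(b) and Proposition \ref{prop:no_escape_triangle}.(b). Suppose there is a sequence $\tilde v_\nu\in\mathcal{M}(q,c_+,c_-;H_{a_1},H_{a_2},H_{a_1+a_2})$ whose images are not contained in any fixed $(r_-,r_+)\times Y$. Because $\partial_s H_z\le 0$, the maximum principle of \cite[Lemmas 2.2 and 2.3]{CO18} bounds the images from above. So the only possible escape is to the negative end. By SFT compactness \cite{BEHWZ03,BO2}, a subsequence converges to a holomorphic building. We then split into two cases, according to whether the top level is connected or disconnected.

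\emph{Case 1: the top level is connected.} Then the top component $\tilde v_\infty$ is a punctured Floer solution on $S^M$. It has $k\ge0$ negative boundary punctures asymptotic to contractible Reeb chords $c_1,\dots,c_k$ and $l\ge0$ negative interior punctures asymptotic to contractible Reeb orbits $\gamma_1,\dots,\gamma_l$, with $k+l\ge1$. As in Remark \ref{rem:inde_nondeg}, its virtual dimension is
\[
\mu_Y(q)+\mu(c_+)-\mu(c_-)-(n+1)-\sum_{j}\Big(\mu_\RS(c_j)+\tfrac{n-2}{2}\Big)-\sum_j\Big(\mu_\CZ(\gamma_j)+n-2\Big).
\]
The index assumptions in the statement give $\mu_Y(q)+\mu(c_+)-\mu(c_-)-(n+1)\le 1$. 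Condition \eqref{eq:ind_nondeg} says each puncture lowers the dimension by at least $2$. Hence the dimension is $\le -1$, which is impossible for a generic domain-dependent $J_z$. The lower levels are holomorphic planes and half-planes in the symplectization. These carry an $\R$-action and so only strengthen the inequality.

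\emph{Case 2: the top level is disconnected.} This is the analogue of the U-shaped curve in Figure \ref{fig:u_curve} and is the main obstacle. Here the strip-like ends can separate: the limit splits into a piece carrying the cylindrical end $q$ together with some strip ends, and a piece carrying the remaining strip ends. These pieces are joined through lower levels by chords $c',c''$, or, in the degenerate pinching of the cylinder, by an orbit. I would enumerate the finitely many topological splittings of $S^M$ and add up the index of the pieces.

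Consider the splitting where the cylinder end separates from a strip carrying $c_+$ and $c_-$. The $q$-piece is a cylinder with a negative end on a Reeb orbit $\gamma$ (a Floer plane for $H_{a_1}$ with a puncture). Its dimension is at least $0$ by genericity, which forces $\mu_Y(q)-\mu_\CZ(\gamma)-(n-2)\ge0$, i.e. $\mu_\CZ(\gamma)\le n+4-n-2=2$ when $\mu_Y(q)\le n+2$. This conflicts with $\mu_\CZ(\gamma)\ge 4-n$ unless $n$ is large. So in that regime I instead combine it with the strip piece, whose dimension also must be nonnegative. The total index of the strip piece is then $\mu(c_+)-\mu(c_-)+\mu_\CZ(\gamma)+(n-2)-1\ge0$. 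This is where I expect the precise restriction on the triple $(\mu_Y(q),\mu(c_+),\mu(c_-))$ to be used.

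The honest approach is a dimension-additivity count: the sum over components of (dimension $+$ number of free $\R$-actions) equals the original dimension ($\le1$) minus the contributions of the intermediate asymptotics. Each contractible intermediate chord or orbit contributes at least $\mu_\RS+\frac{n-2}{2}\ge 2$ or $\mu_\CZ+n-2\ge2$ by \eqref{eq:ind_nondeg}. Each nontrivial symplectization component carries an $\R$-action, which costs at least $1$. Showing that every splitting therefore forces some component to have negative dimension is the key computation.

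Contractibility of the intermediate chords and orbits follows because $q$, $c_\pm$ are contractible and the building glues back to $\tilde v_\nu$. This is what allows \eqref{eq:ind_nondeg} to be applied. With both cases excluded, the uniform bounds $r_\pm$ follow.
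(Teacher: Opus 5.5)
Your Case 1 (connected top level, so only (half-)plane bubbling) is fine and matches the paper's remark that no plane or half-plane can bubble off. The gap is Case 2. You correctly call it the main obstacle but do not resolve it: you stop at ``showing that every splitting forces some component to have negative dimension is the key computation,'' and the partial computation you give goes wrong.

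\begin{itemize}
\item In the splitting where the cylindrical end breaks off along Reeb orbits, the top-level piece carrying $c_+$ and $c_-$ has the orbit $\gamma$ as a \emph{negative} interior puncture. So $\gamma$ \emph{subtracts} $\mu_\CZ(\gamma)+n-2$ from that piece's dimension; you added it.
\item The piece carrying only $q$ is the wrong place to look. Its index involves $\mu_Y(q)$ with no compensating $-\mu(c_-)$, so requiring it to be nonnegative gives no contradiction, as you found.
\item Your global ``dimension-additivity'' count cannot work as phrased. Each intermediate orbit or chord is a negative end of an upper component and a positive end of a lower one, so its contributions cancel in the total rather than being subtracted.
\end{itemize}

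The missing idea is to look at the top-level component that contains the output end $c_-$. Under the hypothesis on the index triple, its unpunctured index is at most $1$. It must also carry at least one negative puncture, which costs at least $2$ by \eqref{eq:ind_nondeg}.

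\begin{itemize}
\item If the cylindrical end splits off, this component is a strip from $c_+$ to $c_-$ with a negative interior puncture at an orbit $q''$. Its dimension is $\mu(c_+)-\mu(c_-)-(\mu_\CZ(q'')+n-2)\le 0-2<0$, since $\mu(c_+)\le\mu(c_-)$ for all three triples.
\item If instead the strip end at $c_+$ splits off along chords joined by a U-shaped strip in a lower level (a case you did not examine), this component is a closed--open curve from $q$ to $c_-$ with a negative boundary puncture at a chord $c'$. Its dimension is $\mu_Y(q)-\mu(c_-)-1-(\mu_\RS(c')+\frac{n-2}{2})\le 1-2<0$, since $\mu_Y(q)-\mu(c_-)\le 2$ for all three triples.
\end{itemize}

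Either way a rigid-or-better component has negative virtual dimension, so the escaping configuration cannot occur. This is precisely the paper's argument.
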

		\begin{proof}
			We argue as in the proof of Proposition \ref{prop:no_escape_triangle}. Let $\{\tilde{v}_\nu\}_{\nu\in\N}$ be a sequence in the moduli space $\mathcal{M}(q,c_+,c_-;H_{a_1},H_{a_2},H_{a_1+a_2})$. In view of \eqref{eq:ind_nondeg} and Remark \ref{rem:inde_nondeg}, no $J$-holomorphic (half-)plane bubbles off in a limit of $\{\tilde{v}_\nu\}_{\nu\in\N}$. The only noncompact scenario is illustrated in Figure \ref{fig:breaking}. While the SFT-limit of $\{\tilde{v}_\nu\}_{\nu\in\N}$ could consist of more than two levels, our argument remains valid as we derive a contradiction by analyzing the topmost component. 
			
			\begin{figure}[h]
			\centering
			\includegraphics[height = 5.5cm]{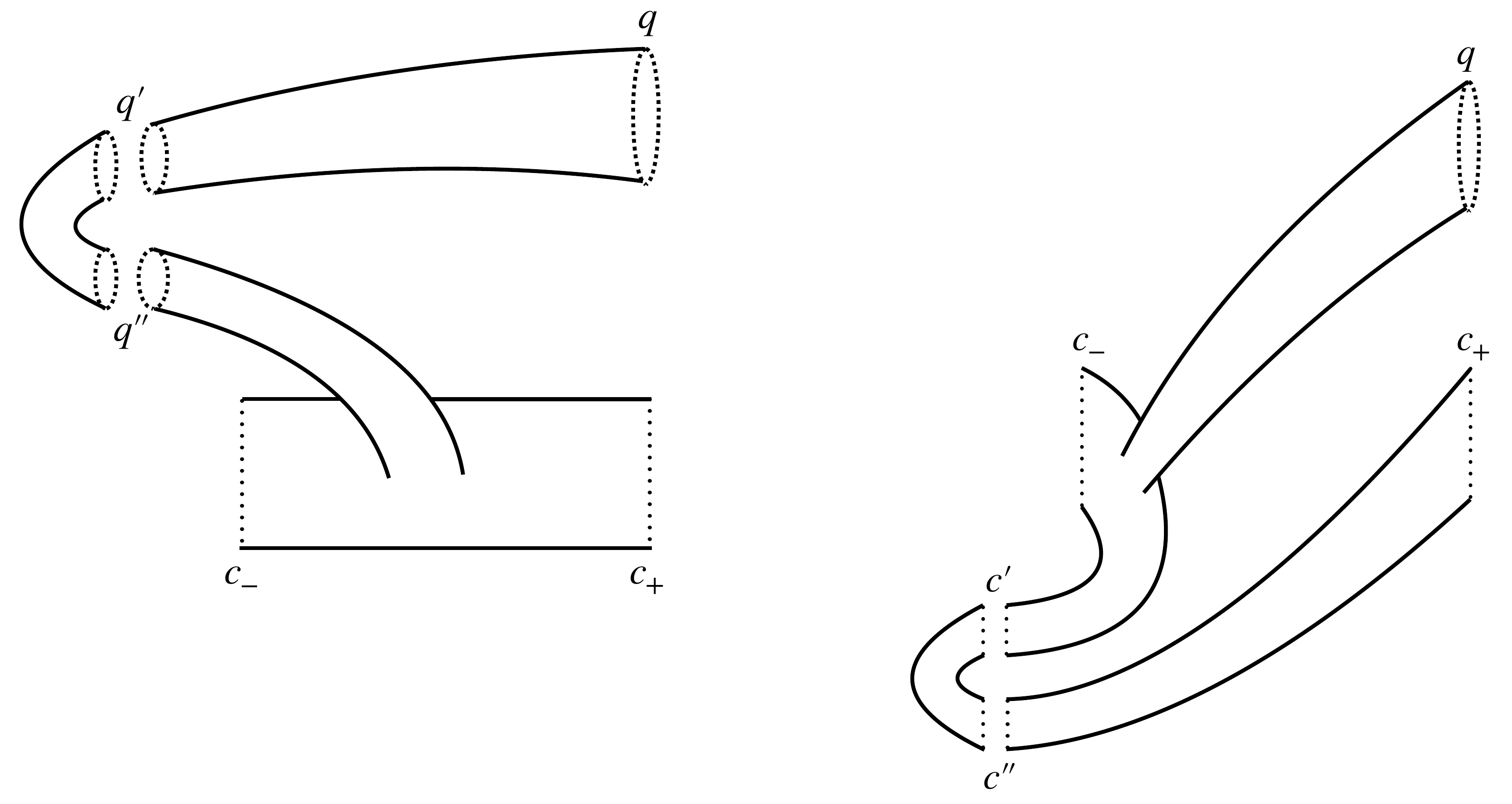}
			\caption{Possible configurations of SFT-broken curves}
			\label{fig:breaking}
		\end{figure} 

			In the first case of Figure \ref{fig:breaking}, the lower-right curve has index $\mu(c_+)-\mu(c_-)-(\mu_\CZ(q'')+n-2)$, which is negative by \eqref{eq:ind_nondeg}. In the second case, the upper-right curve has index $\mu_Y(q)-\mu(c_-)-1-(\mu_\RS(c')+\frac{n-2}{2})$, which is negative again by \eqref{eq:ind_nondeg}. Thus neither case can occur, and this finishes the proof.
		\end{proof}

		We may assume that $f_\mathcal{L}$ has a unique minimum point $c$. Then, for a sufficiently small $\varepsilon>0$, the canonical orientation $o_c\in \FC_n^{(b_1,b_2)}(\mathcal{L}^{\tilde{\sigma}};H_\varepsilon)$, where $b_1<0<b_2$,  associated to $c$ is a cycle. We write $1_\RFH \in\RFH_n(\mathcal{L}^{\tilde{\sigma}})$ for the limit of $[o_c]$ under continuation homomorphisms. In other words, the unit of $\H^*(\mathcal{L})$ maps to $1_\RFH$ in the limit. It serves as the unit for the triangle product on $\RFH_*(\mathcal{L}^{\tilde{\sigma}})$ when this product structure is well-defined.

		\begin{prop}
			\label{prop:vanishing_1_RFH}
			We assume the index conditions in \eqref{eq:ind_nondeg}. If $\RFH_*(Y)=0$, then, for any $\Pin$-structure $\tilde\sigma$ on $\R\times \mathcal{L}$, $1_\RFH \in \RFH_n(\mathcal{L}^{\tilde{\sigma}})$ is zero.
		\end{prop}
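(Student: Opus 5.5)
We will show that $1_\RFH$ is the image of the unit of $\RFH_*(Y)$ under a partially defined module action. Since $\RFH_*(Y)=0$, this image must vanish. The difficulty is that the module structure is not globally defined, so we work only at chain level in the index range covered by Proposition \ref{prop:compactness_module}. For $\RFH_*(Y)$ the unit $1_Y$ lives in degree $\mu_Y=n+1$, represented by the minimum $m$ of $f_Y$ for a small slope $\varepsilon$. Since $\RFH_*(Y)=0$, after passing to a sufficiently large Hamiltonian $H_{a_1}$ (via continuation) and action window, the cycle representing $1_Y$ becomes a boundary. Concretely, there is a chain $\xi\in \FC_{n+2}^{(b_1,b_2)}(H_{a_1})$ with $\partial \xi = \mathfrak{c}(m)$, where $\mathfrak{c}$ is the continuation image of $m$.

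Next we define the chain-level operation
\[
\mathcal{M}_\star:\FC_k(H_{a_1})\otimes \FC_n(\mathcal{L}^{\tilde\sigma};H_{a_2})\to \FC_{k-1}(\mathcal{L}^{\tilde\sigma};H_{a_1+a_2}),\qquad k\in\{n+1,n+2\},
\]
by counting the zero-dimensional spaces $\mathcal{M}(q,c_+,c_-;\dots)$ with $(\mu_Y(q),\mu(c_+),\mu(c_-))\in\{(n+1,n,n),(n+2,n,n+1)\}$. Orientations come from $\tilde\sigma$ together with the standard orientation lines for periodic orbits. Proposition \ref{prop:compactness_module} gives compactness for exactly these index triples. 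It also gives compactness for the one-dimensional spaces with triple $(n+2,n,n)$, whose boundary is described in Figure \ref{fig:breaking_floer}.

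The analysis of these one-dimensional spaces yields the relation
\[
\mathcal{M}_\star(\partial\xi, o_c) \pm \mathcal{M}_\star(\xi,\partial o_c) = \partial\,\mathcal{M}_\star(\xi,o_c)
\]
in degree $n$. Since $o_c$ is a cycle, this gives $\mathcal{M}_\star(\mathfrak{c}(m),o_c)=\partial(\cdots)$.

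It remains to identify $\mathcal{M}_\star(\mathfrak{c}(m),o_c)$ with the continuation image of $o_c$ in $\FC_n(\mathcal{L}^{\tilde\sigma};H_{a_1+a_2})$. To do this we degenerate the count, homotoping the data so that $a_1\to\varepsilon$ and the input is $m$ itself. This is the standard unit-action argument. The solutions with input the minimum $m$ of $f_Y$ are constant-like: the interior puncture carrying a point-constraint-free minimum can be forgotten. Forgetting it identifies the count with the continuation strips of Proposition \ref{prop:conti_unique}(b), which are unique and give $C([\mathbf{v}])$ on each $\mathfrak{o}(\tilde p_k)$. Combined with the homotopy invariance of the counts in the allowed index range, which again relies on Proposition \ref{prop:compactness_module}, this shows that $\mathfrak{c}(o_c)$ is a boundary. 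Passing to the limit then gives $1_\RFH=0$.

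The main obstacle is this last step. We need to carry out the chain homotopies while staying inside the index window where no SFT breaking occurs. Every auxiliary moduli space involved has to be of the types $(n+2,n,n)$, $(n+1,n,n)$ or $(n+2,n,n+1)$, or be covered by the index conditions \eqref{eq:ind_nondeg}. We would first try to check this by the index count used in the proof of Proposition \ref{prop:compactness_module}, applied to parametrized moduli spaces with one extra dimension, where the relevant degrees are $\mu(c_+)=\mu(c_-)=n$.
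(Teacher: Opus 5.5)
Your proposal is correct and follows the paper's proof. You kill $\mathfrak{c}_Y(o_p)=\partial_Y\xi$ using $\RFH_*(Y)=0$, use Proposition \ref{prop:compactness_module} to get $\partial\,\mathfrak{m}_2(\xi\otimes o_c)=\mathfrak{m}_2(\mathfrak{c}_Y(o_p)\otimes o_c)$, and compare this with $\mathfrak{c}\circ\mathfrak{m}_1(o_p\otimes o_c)$ via a chain homotopy. The one difference is the final identification: the paper takes all slopes small, $(H_\varepsilon,H_\varepsilon,H_{2\varepsilon})$, so that by energy only the constant solution at $c$ contributes and $\mathfrak{m}_1(o_p\otimes o_c)=o_c$. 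This avoids your forgetful-map argument and your appeal to Proposition \ref{prop:conti_unique}(b), which in any case concerns prequantization bundles rather than the general $Y$ of the appendix.
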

		\begin{proof}
			Let $a>0$ be sufficiently large. Let $\mathfrak{c}:\FC_*(\mathcal{L}^{\tilde{\sigma}};H_\varepsilon)\to \FC_*(\mathcal{L}^{\tilde{\sigma}};H_{a+\varepsilon})$ be a continuation homomorphism. Here and below, $\FC_*$ denotes $\FC_*^{(-\infty,\infty)}$. We show that $\mathfrak{c}(o_c)$ is a boundary. 
			
			 We may assume that $f_Y$ has a unique minimum point $p\in Y$. Then the canonical orientation $o_p\in \FC_{n+1}(H_\varepsilon)$ associated to $p$ is a cycle for a sufficiently small $\varepsilon>0$. Note that $o_p$ represents the unit when the pair of pants product is well-defined. Let $\mathfrak{c}_Y: \FC_*(H_\varepsilon)\to \FC_*(H_a)$ be a continuation homomorphism. Since $\RFH_*(Y)=0$, $\mathfrak{c}_Y(o_p)$ is a boundary in $\FC_*(H_a)$. Thus, there exists $\xi\in \FC_{n+2}(H_a)$ such that $\partial_{Y}(\xi)=\mathfrak{c}_Y(o_p)$, where $\partial_{Y}$ is the boundary map in $\FC_*(H_a)$. We consider the following diagram.
			  \begin{equation*}
				\begin{tikzcd}
					\FC_*(H_a)\otimes \FC_*(\mathcal{L}^{\tilde{\sigma}};H_\varepsilon)  & \FC_*(\mathcal{L}^{\tilde{\sigma}};H_{a+\varepsilon}) \\ \FC_*(H_a)\otimes \FC_*(\mathcal{L}^{\tilde{\sigma}};H_\varepsilon)& \FC_*(\mathcal{L}^{\tilde{\sigma}};H_{a+\varepsilon})
					\arrow[from=1-1, to=1-2, "\mathfrak{m}_2"]
					\arrow[from=1-1, to=2-1, "\partial_Y \otimes\mathrm{id} \pm \mathrm{id}\otimes \partial "]
					\arrow[from=1-2, to=2-2, "\partial"]
					\arrow[from=2-1, to=2-2, "\mathfrak{m}_2"]
				\end{tikzcd}
			\end{equation*}
			Here, $\mathfrak{m}_2$ is defined via the moduli spaces in \eqref{eq:moduli_for_module_str}. It induces a module structure on homology when $\mathcal{L}^{\tilde\sigma}$ has an exact Lagrangian filling.  By Proposition \ref{prop:compactness_module}, this diagram commutes when restricted to $\xi\otimes o_c\in \FC_{n+2}(H_a)\otimes \FC_n(\mathcal{L}^{\tilde{\sigma}};H_\varepsilon)$:
			\begin{equation}\label{eq:commute}
			\begin{split}
			\partial\circ\mathfrak{m}_2(\xi\otimes o_c) = \mathfrak{m}_2(\partial_Y(\xi)\otimes o_c)=\mathfrak{m}_2(\mathfrak{c}_Y(o_p)\otimes o_c).
			\end{split}				
			\end{equation}
			Similarly, $\mathfrak{m}_1$ denotes the corresponding chain-level map defined using the Hamiltonians $(H_\varepsilon,H_\varepsilon,H_{2\varepsilon})$.
			Then, the diagram 
			 \begin{equation*}
				\begin{tikzcd}
					\FC_*(H_\varepsilon)\otimes \FC_*(\mathcal{L}^{\tilde{\sigma}};H_\varepsilon)  & \FC_*(\mathcal{L}^{\tilde{\sigma}};H_{2\varepsilon}) \\ \FC_*(H_a)\otimes \FC_*(\mathcal{L}^{\tilde{\sigma}};H_\varepsilon)& \FC_*(\mathcal{L}^{\tilde{\sigma}};H_{a+\varepsilon}) 
					\arrow[from=1-1, to=1-2, "\mathfrak{m}_1"]
					\arrow[from=1-1, to=2-1, "\mathfrak{c}_Y\otimes\mathrm{id} "]
					\arrow[from=1-2, to=2-2, "\mathfrak{c}"]
					\arrow[from=2-1, to=2-2, "\mathfrak{m}_2"]
				\end{tikzcd}
			\end{equation*}
			commutes up to homotopy when restricted to $o_p\otimes o_c$ by an argument analogous to that of Proposition \ref{prop:compactness_module}. Let $\mathcal{H}$ denote the corresponding chain homotopy. Since both $o_p$ and $o_c$ are cycles, we have 
			\begin{equation}\label{eq:commute_homotopy}
			 \mathfrak{c}\circ \mathfrak{m}_1(o_p\otimes o_c)
			=  \mathfrak{m}_2 (\mathfrak{c}_Y(o_p)\otimes o_c) + \partial\circ \mathcal{H}(o_p\otimes o_c).
			\end{equation}
			By energy considerations, $\mathcal{M}(p,c,c';H_{\varepsilon},H_{\varepsilon},H_{2\varepsilon})=\emptyset$ for $c'\neq c$, and $\mathcal{M}(p,c,c;H_{\varepsilon},H_{\varepsilon},H_{2\varepsilon})$ consists only of the constant curve along $c$, and thus $\mathfrak{m}_1(o_p\otimes o_c)=o_c$. This together with \eqref{eq:commute} and  \eqref{eq:commute_homotopy} yields
			\[
			\mathfrak{c}(o_c)=  \mathfrak{c}\circ \mathfrak{m}_1(o_p\otimes o_c) 
			= \partial (\mathfrak{m}_2(\xi \otimes o_c)+ \mathcal{H}(o_p\otimes o_c)).
			\]			
			Hence, $\mathfrak{c}(o_c)$ is a boundary in $\FC_*(\mathcal{L}^{\tilde{\sigma}};H_{a+\varepsilon})$ and also in $\FC_*^{(b_1,b_2)}(\mathcal{L}^{\tilde{\sigma}};H_{a+\varepsilon})$ for $b_1<0 \ll b_2$. This proves $1_\RFH=0$ as desired. 
		\end{proof}

	\bibliographystyle{amsalpha}
	\bibliography{./ref.bib}

\end{document}